\documentclass[pdftex]{amsart}
\usepackage{amsmath}
\usepackage{amsfonts}
\usepackage{amssymb}
\usepackage{ascmac}
\usepackage{layout}
\usepackage{amsthm}

\title[Semiproperness of Namba forcings and Ideals]{On Semiproperness of Namba forcings and Ideals in Prikry extensions}

\address{
Department of Industrial \& Systems Engineering, Faculty of Science and Engineering, Hosei University, 184-8584, Japan}
\email{kenta.tsukura.85@hosei.ac.jp}

\subjclass[2020]{03E35, 03E40, 03E55}
\keywords{Namba forcing, Semistationary reflection principles, Prikry-type forcing, saturated ideal, huge cardinal}

\author{Kenta Tsukuura}


\newcommand{\force}{\Vdash}

\theoremstyle{plain}
\newtheorem{thm}{Theorem}[section]

\newtheorem{lem}[thm]{Lemma}
\newtheorem{coro}[thm]{Corollary}
\newtheorem{clam}[thm]{Claim}
\newtheorem{prop}[thm]{Proposition}
\newtheorem{ques}[thm]{Question}

\begin{document}

\maketitle
\begin{abstract}
 In this paper, we study some variations of Namba forcing $\mathrm{Nm}(\kappa,\lambda)$ over $\mathcal{P}_{\kappa}\lambda$ and show that its semiproperness implies $\mathrm{SSR}([\lambda]^{\omega},<\kappa)$. 

 In particular, Prikry forcing at $\mu$ forces Namba forcing $\mathrm{Nm}(\mu^{+})$ is not semiproper. This shows that there is no semiproper saturated ideal in the extensions by Prikry-type forcings. We also show that $[\lambda^{+}]^{\mu^{+}}$ cannot carry a $\lambda^{+}$-saturated semiproper ideal if $\mu$ is singular with countable cofinality. 
\end{abstract}

\section{Introduction}
The notion of a saturated ideal is one of generic large cardinal axioms. The first model of a saturated ideal over $\aleph_1$ is due to Kunen. He established 
\begin{thm}[Kunen~\cite{MR495118} for $\mu = \aleph_0$]\label{kunenideal}
 If $\kappa$ is a huge cardinal then for every regular cardinal $\mu < \kappa$, there is a poset which forces that $\mu^{+} = \kappa$ carries a saturated ideal. 
\end{thm}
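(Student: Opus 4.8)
The plan is to run Kunen's classical argument, obtaining the saturated ideal as the ideal dual to a generic elementary embedding produced by lifting a huge embedding through a Lévy collapse. Fix $\mu$ regular with $\mu<\kappa$ and an elementary embedding $j\colon V\to M$ with critical point $\kappa$ witnessing hugeness, so that $j(\kappa)=\lambda$ and $M^{\lambda}\subseteq M$. Let $\mathbb{P}=\mathrm{Coll}(\mu,<\kappa)$, which forces $\kappa=\mu^{+}$, and observe that $j(\mathbb{P})=\mathrm{Coll}(\mu,<\lambda)$ factors as $\mathbb{P}*\dot{\mathbb{Q}}$ with $\dot{\mathbb{Q}}=\mathrm{Coll}(\mu,[\kappa,\lambda))$. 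Since every condition of $\mathbb{P}$ has support of size $<\mu<\kappa$ and mentions only ordinals below $\kappa=\mathrm{crit}(j)$, each such condition is fixed by $j$; hence for $G$ generic over $V$ we get $j"G=G$.

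First I would force with $\mathbb{P}$ to pass to $V[G]$, where $\kappa=\mu^{+}$, and then take $H$ generic for $\mathbb{Q}$ over $V[G]$. Because $j"G=G\subseteq G*H$ and $G*H$ is $j(\mathbb{P})$-generic over $M$, the standard lifting lemma yields an elementary $j\colon V[G]\to M[G*H]$. Next I would define an ideal $I$ on $\kappa$ in $V[G]$ directly from this embedding, declaring $A\in I^{+}$ iff some $q\in\mathbb{Q}$ forces $\check\kappa\in j(\dot A)$, equivalently $A\in I$ iff $\Vdash_{\mathbb{Q}}\check\kappa\notin j(\dot A)$; this is definable in $V[G]$ from $j$ and $G$. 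The usual seed-and-regressive-function computations, using $j(f)(\kappa)<\kappa$ for regressive $f$, show that $I$ is a uniform, $\kappa$-complete, normal ideal and that forcing with $\mathcal{P}(\kappa)/I$ yields precisely the generic ultrapower associated to $j$; in particular $I$ is precipitous.

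The heart of the matter, and the step I expect to be the main obstacle, is showing that $I$ is $\kappa^{+}$-saturated. The chain condition of $\mathbb{Q}$ alone only delivers $\lambda$-saturation, which is far too weak since $\lambda$ is inaccessible and $\kappa^{+}<\lambda$ in $V[G]$; the finer bound must come from the embedding. The key point is that the closure $M^{\lambda}\subseteq M$ survives the $\lambda$-c.c.\ forcing $j(\mathbb{P})$ of size $\lambda$, so the generic ultrapower $N\cong M[G*H]$ is closed under $\kappa$-sequences in $V[G][U]$ (here $U$, the generic ultrafilter, is definable from $H$, so $V[G][U]\subseteq V[G][H]$). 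By the standard fact that a normal precipitous ideal whose generic ultrapower is closed under $\kappa$-sequences is $\kappa^{+}$-saturated, we conclude. Concretely, given a maximal antichain $\langle A_{\alpha}:\alpha<\kappa^{+}\rangle$ in $\mathcal{P}(\kappa)/I$, the map $[A]\mapsto\|\check\kappa\in j(\dot A)\|_{\mathbb{Q}}$ carries it to an antichain of $\mathrm{RO}(\mathbb{Q})$, and the closure of $M[G*H]$ lets one reflect and reconstruct the relevant membership data inside the ultrapower, forcing the antichain to have size at most $\kappa$ and contradicting the assumption.

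Finally I would check the cardinal arithmetic: $\mathbb{P}$ is $\kappa$-c.c.\ of size $\kappa$, so it preserves cardinals $\ge\kappa$ and keeps $\mu$ regular while making $\kappa=\mu^{+}$ in $V[G]$. Thus for each regular $\mu<\kappa$, running the construction with $\mathbb{P}=\mathrm{Coll}(\mu,<\kappa)$ produces a poset forcing that $\mu^{+}=\kappa$ carries the saturated ideal $I$, as required.
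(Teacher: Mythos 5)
Your construction stops one step too early, and the step you invoke to rescue it is false. The ideal you define in $V[G]$ (after only $\mathrm{Coll}(\mu,<\kappa)$) is provably \emph{not} $\kappa^{+}$-saturated. Indeed, by Foreman's duality theorem (cited in this paper as \cite{MR3038554}), the quotient $\mathcal{P}(\kappa)/I$ of the induced ideal is forcing-equivalent to $j(\mathbb{P})/G\cong\mathrm{Coll}(\mu,[\kappa,\lambda))$, which collapses $\kappa$ itself (hence also $\kappa^{+}$) to $\mu$; a $\kappa^{+}$-c.c.\ forcing cannot do that, so $I$ is only $\lambda$-saturated, exactly the weak bound you yourself identified. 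The ``standard fact'' you appeal to --- that a normal precipitous ideal whose generic ultrapower is closed under $\kappa$-sequences must be $\kappa^{+}$-saturated --- is not a fact: the implication runs in the opposite direction (saturation yields closure, as in Lemma~\ref{ultrapowerbasic}(2) of this paper), and your own construction is a counterexample, since $M[G\ast H]$ \emph{is} closed under $\kappa$-sequences (even $\lambda$-sequences) in $V[G\ast H]$ while $I$ fails to be saturated. The concluding ``reflect and reconstruct'' paragraph is not an argument and cannot be repaired, because the statement it aims at is false.

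What Kunen's proof actually requires is that the final model be $V[G\ast H]$ for a two-step iteration $\mathbb{P}\ast\dot{\mathbb{Q}}$, where $\dot{\mathbb{Q}}$ collapses every cardinal in $(\kappa,\lambda)$ to $\kappa$, so that $\lambda=j(\kappa)$ \emph{becomes} $\kappa^{+}$; only then does the $\lambda$-c.c.\ of the quotient $j(\mathbb{P}\ast\dot{\mathbb{Q}})/(G\ast H)$ translate into $\kappa^{+}$-saturation. This forces two further obstacles your proposal never meets: (i) $\mathbb{P}\ast\dot{\mathbb{Q}}$ must embed completely into $j(\mathbb{P})$, which fails for the plain L\'evy collapse and is why Kunen uses a \emph{universal} collapse for $\mathbb{P}$ (and Silver's collapse for $\dot{\mathbb{Q}}$); and (ii) $j$ must be lifted to $V[G\ast H]$, which needs a master condition below $j\mathord{``}H$ --- this is precisely where the hugeness closure $M^{\lambda}\subseteq M$ is spent, not on the spurious closure-implies-saturation step. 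The flavor of the correct architecture is visible in Proposition~\ref{semipropersaturated} of the paper: the saturated ideal arises from a projection of $j$'s image forcing onto the full two-step iteration, with the second collapse built into the final model.
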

Kunen's proof is useful to construct a model with a saturated ideal over $\mathcal{P}_{\mu^{+}}\lambda$. Many saturated ideals, which have strengthnengths of a saturation property, were found by Kunen's method like in~\cite{MR673792} and ~\cite{MR925267}. 
  In~\cite{MR4654825}, the author proved that Kunen's original ideal of Theorem~\ref{kunenideal} is not so strong in the sense of saturation properties. 

But, if $\mu \geq \aleph_1$ then the saturated ideals obtained by collapsing a huge cardinal are proper* (For example, see Proposition~\ref{semipropersaturated}). It seems that properness* of ideals is very strong by the results of~\cite{MR1978224}. Here, by proper*, we mean the properness in the sense of forcing. For an ideal $I$ over $Z$, the properness of $I$ is $Z \not\in I$ in the usual sense. To distinguish properness of ideals and that of forcings, we write ``proper*'' according to~\cite{MR2191239}.

In the above story, $\mu$ is a regular cardinal. We are interested in the case of singular cardinal $\mu$. A model with a saturated ideal over $\mu^{+}$ was given by Foreman~\cite{MR730584}. Then, the ideal is not proper*. Moreover, it is known that $\mathcal{P}_{\mu^{+}}\lambda$ cannot carry a proper* ideal if $\mu$ singular by Matsubara--Shelah~\cite{MR1900548}. 

On the other hand, Sakai obtained a model with a semiproper ideal over $\mathcal{P}_{\mu^{+}}\lambda$ in~\cite{MR2191239}. In Sakai's proof, $\mu$ can be taken as a singular cardinal. Notice that the ideal of Sakai's model is not saturated. We ask
\begin{ques}\label{mainquestion}
 For a singular cardinal $\mu$, can $\mu^{+}$ carry a saturated and semiproper ideal?
\end{ques}
All known models with a saturated ideal over a successor of a singular cardinal were given by using Prikry-type forcing, as far as the author knows. This paper aims to show all known saturated ideals are not semiproper by proving the following theorem. 

\begin{thm}\label{maintheorem3}
 Suppose that $V \subseteq W$ are inner models and $\mu < \lambda$ are cardinals such that:
\begin{enumerate}
 \item $\mu$ and $\lambda$ are regular cardinals in $V$.
 \item $\mu$ is a singular cardinal in $W$ of cofinality $\omega$.
 \item $\lambda$ is a regular cardinal in $W$. 
 \item $(E^{\lambda}_{\mu})^{V}$ is a stationary subset in $W$. 
\end{enumerate}
Then there is no semiproper precipitous ideals over $\mathcal{P}_{\mu^{+}}\lambda$ in $W$. In particular, Prikry-type forcing over $\mu$ (like Prikry forcing and Woodin's modification) forces there is no semiproper precipitous ideals over $\mathcal{P}_{\mu^{+}}\lambda$ for all regular $\lambda \geq \mu^{+}$. 
\end{thm}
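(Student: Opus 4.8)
The plan is to argue by contradiction and to reduce the statement to a failure of semistationary reflection, exploiting the paper's principle that semiproperness yields $\mathrm{SSR}$. So suppose $W$ has a semiproper precipitous ideal $I$ on $Z=\mathcal{P}_{\mu^{+}}\lambda$, which we may take to be normal and fine, and work throughout in $W$. First I would pass to the generic extension by the quotient $\mathbb{P}_I=\mathcal{P}(Z)/I$, obtaining a generic elementary embedding $j\colon W\to M\subseteq W[G]$ with $M$ transitive, $\mathrm{crit}(j)=(\mu^{+})^{W}$, and seed $[\mathrm{id}]_G=j``\lambda$. Since every element of $Z$ has size $\le\mu$ and $\mathrm{crit}(j)=\mu^{+}$, in $M$ we have $|j``\lambda|\le j(\mu)=\mu$, so $M\models|\lambda|\le\mu$ and $\mathrm{cf}^{M}(\lambda)=\mathrm{cf}^{M}(\sup j``\lambda)\le\mu$. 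As $M\subseteq W[G]$ and $j\restriction\lambda\in W[G]$, this shows $\mathbb{P}_I$ singularizes (indeed collapses) $\lambda$; note also $\mathrm{cf}^{W[G]}(\mu)=\omega$, since the cofinal $\omega$-sequence witnessing (2) lies in $V\subseteq M$ and is fixed by $j$.

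Next I would invoke semiproperness. By Shelah's characterization, a semiproper poset preserves the semistationarity of every subset of $[\lambda]^{\omega}$; combined with the embedding above, the standard reflection argument for ideals on $\mathcal{P}_{\mu^{+}}\lambda$ upgrades to show that the mere existence of $I$ implies $\mathrm{SSR}([\lambda]^{\omega},<\mu^{+})$. Concretely, given a semistationary $S\subseteq[\lambda]^{\omega}$ in $W$, the seed $j``\lambda$ witnesses in $M$ that $j(S)$ reflects below a set of $M$-size $<j(\mu^{+})$, and semiproperness guarantees that this reflection is semistationary rather than merely nonstationary; pulling back by elementarity produces a $Y\in[\lambda]^{\le\mu}$ with $S\cap[Y]^{\omega}$ semistationary in $[Y]^{\omega}$. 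This is the ideal-theoretic counterpart of the implication that semiproperness of $\mathrm{Nm}(\mu^{+},\lambda)$ gives $\mathrm{SSR}([\lambda]^{\omega},<\mu^{+})$ proved earlier, and one could alternatively route through that theorem after checking that $\mathbb{P}_I$ absorbs the relevant Namba forcing.

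The heart of the argument, and the step I expect to be hardest, is to contradict $\mathrm{SSR}([\lambda]^{\omega},<\mu^{+})$ using (1), (2) and (4). In $V$, where $\mu$ is regular, fix for each $\delta\in E^{\lambda}_{\mu}$ a club $c_{\delta}\subseteq\delta$ of order type $\mu$ with increasing enumeration $e_{\delta}$; in $W$ fix $\langle\mu_{n}:n<\omega\rangle$ increasing and cofinal in $\mu$, which exists by (2). I would then build a semistationary $S\subseteq[\lambda]^{\omega}$ concentrating on $T=(E^{\lambda}_{\mu})^{V}$ --- which is stationary in $W$ by (4) and, since $\mathrm{cf}^{W}(\mu)=\omega$, satisfies $T\subseteq E^{\lambda}_{\omega}$ --- by prescribing, for $x$ with $\sup x=\delta\in T$, a condition on the trace $e_{\delta}^{-1}[x\cap c_{\delta}]\subseteq\mu$ relative to the blocks $[\mu_{n},\mu_{n+1})$. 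The discrepancy between $\mathrm{otp}(c_{\delta})=\mu$, computed in $V$, and $\mathrm{cf}^{W}(\mu)=\omega$ is exactly what lets one diagonalize the coloring against all potential reflecting sets: any $Y\in[\lambda]^{\le\mu}$ meets only $\mu$-many of the relevant $\delta$, while the stationarity of $T$ supplies $\delta$ on which the prescribed trace is violated throughout $[Y]^{\omega}$, so that $S\cap[Y]^{\omega}$ fails to be semistationary. Checking that $S$ is semistationary in $W$ yet reflects to no such $Y$ is the main combinatorial obstacle, and it is precisely where the inner model $V$ and its regular computation of $\mu$ are genuinely used.

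Finally, for the \emph{in particular} clause I would verify that a Prikry-type extension realizes (1)--(4). Starting from $V$ with a measurable $\mu$ and any regular $\lambda\ge\mu^{+}$, let $W=V^{\mathbb{R}}$ for $\mathbb{R}$ Prikry forcing, or Woodin's modification, at $\mu$. Then $\mu$ is regular in $V$ and becomes singular of cofinality $\omega$ in $W$, while $\lambda$ remains regular because $\mathbb{R}$ adds no bounded subsets of $\mu$ and preserves cardinals and cofinalities above $\mu$; and $(E^{\lambda}_{\mu})^{V}$ stays stationary in $W$ since Prikry-type forcings preserve stationary subsets of $E^{\lambda}_{\mu}$ for regular $\lambda>\mu$. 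Thus (1)--(4) hold, and the main part of the theorem applies to give that no such $W$ carries a semiproper precipitous ideal on $\mathcal{P}_{\mu^{+}}\lambda$.
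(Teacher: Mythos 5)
Your first half is essentially the paper's own argument: passing to the generic ultrapower of the semiproper precipitous ideal and using the closure of $M$ together with elementarity to conclude that $W$ satisfies a semistationary reflection principle is exactly the content of Lemma~\ref{maintheorem2:ideals} (the paper runs it at the level of $[\mu^{+}]^{\omega}$, you run it at $[\lambda]^{\omega}$; both versions work). One slip here: the cofinal $\omega$-sequence in $\mu$ witnessing (2) lies in $W$, not in $V$ --- it cannot lie in $V$, where $\mu$ is regular by (1) --- though this does not damage the structure of the step.

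The genuine gap is in your third paragraph, which you yourself flag as ``the main combinatorial obstacle'': you never actually refute $\mathrm{SSR}([\lambda]^{\omega},{<}\mu^{+})$. Your plan --- fix clubs $c_{\delta}$ of order type $\mu$ in $V$, fix $\langle\mu_{n}\mid n<\omega\rangle$ cofinal in $\mu$ in $W$, and build a semistationary $S$ concentrating on $T=(E^{\lambda}_{\mu})^{V}$ by prescribing trace conditions relative to the blocks $[\mu_{n},\mu_{n+1})$ --- is left entirely unexecuted, and in effect it amounts to reproving, ad hoc, the implication from semistationary reflection to stationary reflection of ordinals. The paper avoids this construction altogether by quoting Sakai's theorem (Theorem~\ref{sakai}): $\mathrm{SSR}([\lambda]^{\omega},{<}\mu^{+})$ implies $\mathrm{Refl}(E^{\lambda}_{\omega},{<}\mu^{+})$. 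Refuting the latter is easy and needs no new combinatorics: $T=(E^{\lambda}_{\mu})^{V}$ is stationary in $W$ by (4) and contained in $(E^{\lambda}_{\omega})^{W}$ by (2), yet it cannot reflect to any $\alpha$ of small cofinality --- if $\mathrm{cf}^{V}(\alpha)\leq\mu$, the $V$-club of limit points of a club of order type $\mathrm{cf}^{V}(\alpha)$ avoids $T$, and if $\mathrm{cf}^{W}(\alpha)=\omega$, a cofinal $\omega$-sequence of successor ordinals does (in the Prikry-type applications these cases are exhaustive, since uncountable cofinalities below $\mu$ are preserved). So the repair is to replace your third paragraph by an appeal to Theorem~\ref{sakai} plus this classical non-reflection fact; that is precisely how the paper's proof of Theorem~\ref{maintheorem2} runs, and the proof of Theorem~\ref{maintheorem3} reuses it verbatim via Lemma~\ref{maintheorem2:ideals}. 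Your final paragraph on Prikry and Woodin extensions matches the paper's Lemma~\ref{prikryextension} and is fine.
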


Moreover, if we consider an ideal over $[\lambda^{+}]^{\mu^{+}}$ instead of $\mathcal{P}_{\mu^{+}}\lambda$, the answer of Question~\ref{mainquestion} is NO. 
\begin{thm}\label{maintheorem4}
Suppose that $\mu$ is a singular cardinal with cofinality $\omega$ and $\lambda> \mu$ is a regular cardinal. If $[\lambda^{+}]^{\mu^{+}}$ carries a normal, fine, $\mu^{+}$-complete $\lambda^{+}$-saturated ideal $I$ then $I$ is \emph{not} semiproper. 
\end{thm}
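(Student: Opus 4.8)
The plan is to assume toward a contradiction that $I$ is semiproper, force with the quotient $P=\mathcal{P}([\lambda^{+}]^{\mu^{+}})/I$, and read off from the resulting generic ultrapower a singularization of $\mu^{+}$ of exactly the Namba type, to which our analysis of semiproper Namba forcing then applies. Since a $\lambda^{+}$-saturated ideal is precipitous, forcing with $P$ yields a generic elementary embedding $j\colon V\to M\cong\mathrm{Ult}(V,G)$ with $M\subseteq V[G]$, and $\lambda^{+}$-saturation gives the closure ${}^{\lambda}M\cap V[G]\subseteq M$. First I would pin down the critical point. Write $[\mathrm{id}]$ for the seed; fineness gives $j[\lambda^{+}]\subseteq[\mathrm{id}]$, while by the theorem of \L o\'s, $M\models|[\mathrm{id}]|=j(\mu^{+})$ since every element of $[\lambda^{+}]^{\mu^{+}}$ has cardinality $\mu^{+}$. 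As the $\lambda^{+}$-c.c. preserves $\lambda^{+}$ and $M\subseteq V[G]$, we get $j(\mu^{+})=|[\mathrm{id}]|^{M}\ge|j[\lambda^{+}]|^{V[G]}=\lambda^{+}>\mu^{+}$, while $\mu^{+}$-completeness gives $j\restriction\mu^{+}=\mathrm{id}$. Hence $\mathrm{crit}(j)=\mu^{+}$.

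The next step is to extract the Namba behaviour. From $j(\mu)=\mu$ and $j(\mu^{+})>\mu^{+}$ one has $M\models|(\mu^{+})^{V}|=\mu$, so $\mu^{+}$ is collapsed in $V[G]$ while $\mu$ remains a cardinal; and since $\mathrm{cf}(\mu)=\omega$ with a cofinal $\omega$-sequence lying in $M$ by closure, one checks that $\mathrm{cf}^{V[G]}(\mu^{+})=\omega$. Thus $P$ is a semiproper forcing that preserves $\omega_{1}$, keeps $\mu$ a cardinal, and singularizes $\mu^{+}$ to cofinality $\omega$: this is precisely the situation of a semiproper Namba forcing $\mathrm{Nm}(\mu^{+})$. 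Running the argument behind our result that semiproperness of $\mathrm{Nm}(\kappa,\lambda)$ implies $\mathrm{SSR}([\lambda]^{\omega},<\kappa)$ — now applied to $P$ in place of the Namba poset, with $\kappa=\mu^{+}$ and domain $\lambda^{+}$ — should yield the reflection principle $\mathrm{SSR}([\lambda^{+}]^{\omega},<\mu^{+})$ in $V$.

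To finish I would derive a contradiction from $\mathrm{cf}(\mu)=\omega$, the point being that this instance of semistationary reflection is false at the successor of a singular cardinal of countable cofinality. Fixing an increasing cofinal sequence $\langle\mu_{n}:n<\omega\rangle$ in $\mu$, I would build a semistationary set $T\subseteq[\lambda^{+}]^{\omega}$ whose semistationarity is defeated on every candidate reflecting set $W\in[\lambda^{+}]^{\le\mu}$ with $\omega_{1}\subseteq W$: the sequence $\langle\mu_{n}\rangle$ is used to thread such a $W$ by a cofinal $\omega$-chain that destroys the $\omega_{1}$-end-extension structure needed for $T\cap[W]^{\omega}$ to be semistationary in $[W]^{\omega}$. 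This contradicts the conclusion of the previous paragraph, so $I$ cannot have been semiproper.

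I expect the genuine difficulty to lie in the last two steps rather than in the ultrapower computation. Transferring the Namba analysis to the quotient $P$ requires checking that semiproperness of $P$ is exploited only through the cofinality change it induces on $\mu^{+}$, and the construction of the non-reflecting semistationary $T$ at a successor of a singular of cofinality $\omega$ is the real combinatorial core. Verifying that $\mathrm{cf}^{V[G]}(\mu^{+})$ is genuinely $\omega$, rather than merely some regular cardinal below $\mu$, also deserves care; this is exactly where the closure ${}^{\lambda}M\cap V[G]\subseteq M$ furnished by $\lambda^{+}$-saturation is essential.
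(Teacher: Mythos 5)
Your opening ultrapower computation (critical point $\mu^{+}$, $j(\mu^{+})=\lambda^{+}$, closure under $\lambda$-sequences) agrees with the paper's Lemmas~\ref{ultrapowerbasic} and~\ref{ultrapowercofinality}, but both of your remaining steps have genuine gaps, and the last one is fatal. For step (b): you cannot ``run the Namba argument'' on $P=\mathcal{P}([\lambda^{+}]^{\mu^{+}})/I$ with domain $\lambda^{+}$. That argument needs the forcing to add an increasing $\omega$-chain of sets in $(\mathcal{P}_{\mu^{+}}\lambda^{+})^{V}$ with union $\lambda^{+}$; but $P$ is $\lambda^{+}$-c.c., so $\lambda^{+}=j(\mu^{+})$ remains regular in $V[G]$ and no such chain exists (only $\lambda$ is singularized, not $\lambda^{+}$). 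What the generic embedding actually yields is the paper's Lemma~\ref{maintheorem2:ideals}, proved by \L o\'s plus closure of $M$ rather than by a tree argument: semiproperness of $I$ gives reflection of semistationary subsets of $[\mu^{+}]^{\omega}$ to the traces $x\cap\mu^{+}$ ($x\in Z$), which have size $<\mu^{+}$, i.e.\ $\mathrm{SSR}([\mu^{+}]^{\omega},{<}\mu^{+})$ --- not $\mathrm{SSR}([\lambda^{+}]^{\omega},{<}\mu^{+})$. (Running \L o\'s with domain $\lambda^{+}$ makes the reflecting sets the $x$'s themselves, of size exactly $\mu^{+}$, giving only the weaker $\mathrm{SSR}([\lambda^{+}]^{\omega},{<}\mu^{++})$.) Your side claim that $\mathrm{cf}^{V[G]}(\mu^{+})=\omega$ is also unjustified when $\lambda>\mu^{+}$, since $(\mu^{++})^{V}$ is collapsed and Shelah's lemma does not apply; fortunately it is not needed.

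The fatal gap is step (c): the ``ZFC fact'' you want is false. It is not a theorem of $\mathrm{ZFC}$ that $\mathrm{SSR}([\lambda^{+}]^{\omega},{<}\mu^{+})$ (or $\mathrm{SSR}([\mu^{+}]^{\omega},{<}\mu^{+})$) fails when $\mu$ is singular of countable cofinality: under $\mathrm{MM}$, equivalently under $(\dagger)$, every semistationary subset of any $[\lambda]^{\omega}$ reflects to a set of size $\aleph_{1}$, and hence these principles hold with $\mu=\aleph_{\omega}$. So no direct construction of your non-reflecting semistationary $T$ from a cofinal $\omega$-sequence in $\mu$ can succeed; the non-reflection must be extracted from the ideal itself. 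This is exactly the paper's key move, which your proposal is missing: in $V$ the set $S=E^{\lambda^{+}}_{\lambda}$ is a non-reflecting stationary set (here regularity of $\lambda$ is used, not singularity of $\mu$); after forcing with $I$, $S$ remains stationary by saturation, lies in $M$ by closure, and --- since $\mathrm{cf}^{V[G]}(\lambda)=\mathrm{cf}(\mu)=\omega$ by Lemma~\ref{ultrapowercofinality} --- becomes a non-reflecting stationary subset of $(E^{\lambda^{+}}_{\omega})^{M}=j(E^{\mu^{+}}_{\omega})$. Elementarity of $j$ then produces a non-reflecting stationary subset of $E^{\mu^{+}}_{\omega}$ in $V$, so $\mathrm{SSR}([\mu^{+}]^{\omega},{<}\mu^{+})$ fails by Theorem~\ref{sakai}, contradicting the consequence of semiproperness recorded above. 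In short: the paper converts $\mathrm{ZFC}$ non-reflection at the successor of the regular $\lambda$ into non-reflection at the successor of the singular $\mu$ by pulling $E^{\lambda^{+}}_{\lambda}$ back through $j$; without this step the reflection principle you aim to contradict is simply consistent.
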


The key of the proof of Theorems~\ref{maintheorem3} and~\ref{maintheorem4} is the study of semiproperness of Namba forcing. Namba forcing was introduced by Namba~\cite{MR0297548} as an example of Boolean algebras that is $(\omega,\omega_1)$-distributive but not $(\omega,\omega_2)$-distributive. 

In recent research, Namba forcing is studied in the context of semiproper forcing sometimes. For example, Namba forcing appears as one of the characterizations of dagger principle $(\dagger)$ that was introduced by Foreman--Magidor--Shelah~\cite{MR924672}. 

\begin{thm}[Shelah~\cite{MR1623206} for (1) $\leftrightarrow$ (2), Doebler--Schindler~\cite{MR2576698} for (3) $\to$ (2)]\label{basictheorem}The following are equivalent:
\begin{enumerate}
 \item $(\dagger)$ holds. That is, every $\omega_1$-stationary preserving poset is semiproper.
 \item $\mathrm{SSR}([\lambda]^\omega)$ holds for all $\lambda \geq \aleph_2$.
 \item $\mathrm{Nm}(\lambda)$ is semiproper for all $\lambda \geq \aleph_2$. 
\end{enumerate}
\end{thm}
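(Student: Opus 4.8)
The plan is to prove the three statements equivalent by establishing the cycle $(1)\Rightarrow(3)\Rightarrow(2)\Rightarrow(1)$. Since $(1)\Leftrightarrow(2)$ is Shelah's theorem, the genuinely new content lives in the interaction with Namba forcing, and the only routine link is $(1)\Rightarrow(3)$. For that step I would first recall that $\mathrm{Nm}(\lambda)$ is $\omega_1$-stationary preserving in $\mathrm{ZFC}$: it never collapses $\omega_1$, and a standard fusion argument shows that given a name $\dot C$ for a club in $\omega_1$ and a condition $T$, one can thin $T$ to a condition forcing $\dot C$ to meet any prescribed stationary set. Granting this, $(\dagger)$ applies verbatim to $\mathrm{Nm}(\lambda)$ and yields its semiproperness, so $(1)\Rightarrow(3)$.

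For $(3)\Rightarrow(2)$ I would follow the Doebler--Schindler route. Fix a semistationary $T\subseteq[\lambda]^\omega$ and force with $\mathrm{Nm}(\lambda)$. The generic adds a cofinal $f\colon\omega\to\lambda$, so $\mathrm{cf}(\lambda)^{V[G]}=\omega$. Working below an $(M,\mathrm{Nm}(\lambda))$-semigeneric condition for a suitable countable $M\prec H_\theta$, I would use the preservation $M[\dot G]\cap\omega_1=M\cap\omega_1$ together with the generic $\omega$-sequence to build in $V[G]$ a continuous $\in$-increasing chain $\langle W_\xi:\xi<\omega_1\rangle$ of countable subsets of $\lambda$ whose union $W$ has size $\aleph_1$, with $\omega_1\subseteq W$, absorbing the range of $f$. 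Semigenericity keeps the semistationary trace of $T$ alive along the chain, so that $T\cap[W]^\omega$ is semistationary in $[W]^\omega$ already in $V$; as $W$ has hereditary size $\aleph_1$ and $\omega_1$ is preserved, this reflecting $W$ witnesses $\mathrm{SSR}([\lambda]^\omega)$.

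For $(2)\Rightarrow(1)$ I would run Shelah's argument. Let $P$ be $\omega_1$-stationary preserving and suppose toward a contradiction that $P$ is not semiproper; fix $p$ and a large $\theta$ so that the set of countable $M\prec H_\theta$ with $P,p\in M$ admitting no $(M,P)$-semigeneric $q\le p$ is stationary. Setting $\lambda=2^{|P|}$ and coding these bad models by $M\mapsto M\cap\lambda$, I would verify that the resulting $T\subseteq[\lambda]^\omega$ is semistationary, apply $\mathrm{SSR}([\lambda]^\omega)$ to reflect it to some $W$ with $|W|=\aleph_1$ and $\omega_1\subseteq W$, and then inside $W$ assemble a continuous chain of bad models together with a decreasing sequence of conditions refuting semigenericity at each stage. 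This configuration yields a $P$-name for a club in $\omega_1$ disjoint from the stationary set $E=\{\sup(M\cap\omega_1):M\text{ in the chain}\}$, contradicting that $P$ preserves the stationarity of $E$.

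The main obstacle is the $(2)\Rightarrow(1)$ direction. The delicate points are coding the failure of semigenericity as a genuinely semistationary (not merely stationary) subset of $[\lambda]^\omega$, and threading the reflected chain of bad models through a single decreasing sequence of conditions whose diagonal name destroys $E$; this is exactly where $\omega_1$-stationary preservation of $P$ is contradicted. By comparison $(3)\Rightarrow(2)$ is more tractable once the generic cofinal sequence is in hand, and $(1)\Rightarrow(3)$ is essentially bookkeeping.
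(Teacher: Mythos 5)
Your step $(3)\Rightarrow(2)$ contains a genuine gap, and it is the one place where your proposal needed to supply real content. The set $W$ you build in $V[G]$ cannot witness $\mathrm{SSR}([\lambda]^{\omega})$ in $V$, because it is not an element of $V$: since you require $W$ to absorb the range of the generic cofinal function $f\colon\omega\to\lambda$, $W$ is a set of size $\aleph_1$ that is cofinal in $\lambda$, and no such set exists in $V$, where $\lambda$ is regular and $\geq\aleph_2$. Consequently the assertion that ``$T\cap[W]^{\omega}$ is semistationary in $[W]^{\omega}$ already in $V$'' does not even parse. (Semistationarity of a set $T\in V$ over a set $W$ \emph{that lies in $V$} is downward absolute from $V[G]$ to $V$, so the strategy would be fine if $W\in V$; but your $W$ is precisely not such a set, and dropping the requirement $\mathrm{ran}(f)\subseteq W$ does not help, since an $\omega_1$-chain constructed in $V[G]$ still need not lie in $V$, and semigenericity of one condition for one countable $M$ gives no control over an $\omega_1$-chain.) The correct route --- both in Doebler--Schindler and in this paper --- is the contrapositive: if a semistationary $S\subseteq[\lambda]^{\omega}$ reflects to no $R$ of size $\aleph_1$ containing $\omega_1$, fix for each such $R$ a function $F_R\colon[R]^{<\omega}\to R$ such that no $x\in {S}^{\mathbf{cl}}\cap[R]^{\omega}$ is closed under $F_R$, and amalgamate these functions along the Namba generic sequence into a single name $\dot F$ witnessing that ${S}^{\mathbf{cl}}$ becomes nonstationary in $V[G]$; by Shelah's characterization (semiproper $\Leftrightarrow$ preserves semistationarity, recorded in Section 2), $\mathrm{Nm}(\lambda)$ is then not semiproper. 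This amalgamation is exactly what Lemma~\ref{main1:lem1} carries out for $\mathrm{Nm}(\kappa,\lambda,I)$, and Theorem~\ref{maintheorem} with $\kappa=\aleph_2$ is the paper's generalization of the implication you are after.

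For the rest: note that the paper does not prove Theorem~\ref{basictheorem} at all --- it is quoted with attributions --- so the comparison is really with the paper's own machinery just cited. Your $(1)\Rightarrow(3)$ is the standard observation the paper records immediately after the theorem ($\mathrm{Nm}(\lambda)$ is $\omega_1$-stationary preserving, hence semiproper under $(\dagger)$), and is fine as a sketch. Your $(2)\Rightarrow(1)$ is a compressed but faithful outline of the skeleton of Shelah's argument; the two ``delicate points'' you flag (that the bad models form a semistationary set after projecting to $[\lambda]^{\omega}$, and the construction of the decreasing sequence of conditions killing a stationary subset of $\omega_1$) are indeed the entire difficulty, and your proposal defers rather than resolves them --- acceptable for a cited result, but it means that the only direction you actually attempted in earnest, $(3)\Rightarrow(2)$, is the one that fails as written.
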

$\mathrm{SSR}([\lambda]^{\omega})$ and $\mathrm{Nm}(\lambda)$ are semistationary reflection principles and Namba forcing over $\lambda$, respectively. For the definition of them, we refer to Section 2. There is also a local relationship between (2) and (3) above as follows. 
\begin{itemize}
 \item For $\lambda$-strongly compact cardinal $\kappa$, the Levy collapse $\mathrm{Coll}(\aleph_1,<\kappa)$ forces both $\mathrm{SSR}([\lambda]^{\omega})$ and the semiproperness of $\mathrm{Nm}(\lambda)$. 
 \item (Todor\v{c}evi\'{c}~\cite{MR1261218}) If $\mathrm{Nm}(\aleph_2)$ is semiproper then $\mathrm{SR}([\aleph_2]^{\omega})$ holds. In particular, $\mathrm{SSR}([\aleph_{2}]^{\omega})$ holds. 
\end{itemize}
$\mathrm{SR}([\aleph_2]^{\omega})$ is the stationary reflection principle. We are interested in the tolerance between $\mathrm{SSR}([\lambda]^{\omega})$ and the semiproperness of $\mathrm{Nm}(\lambda)$. In this paper, we introduce Namba forcing $\mathrm{Nm}(\kappa,\lambda)$ over $\mathcal{P}_{\kappa}\lambda$ and show the following.
\begin{thm}\label{maintheorem}Suppose that $\mu^{\omega} < \kappa$ for all $\mu \in \kappa\setminus \aleph_2$ and $\aleph_2 \leq \kappa \leq \lambda$ are regular cardinals. The following are equivalent:
\begin{enumerate}
 \item $\mathrm{Nm}(\kappa,\lambda)$ preserves the semistationarity of any subset of $[\lambda]^{\omega}$.
 \item $\mathrm{SSR}([\lambda]^{\omega},{<}\kappa)$.
\end{enumerate}
\end{thm}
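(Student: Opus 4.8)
The plan is to establish the two implications separately: I would derive the preservation property (1) from the reflection principle (2) by a reflection-plus-fusion argument, and I would prove the converse in contrapositive form, manufacturing from a non-reflecting semistationary set a Namba condition that destroys its semistationarity. Throughout, recall that for $X,Y \in [\lambda]^{\omega}$ one writes $X \sqsubseteq Y$ when $X \subseteq Y$ and $X \cap \omega_1 = Y \cap \omega_1$, and that $S \subseteq [\lambda]^{\omega}$ is semistationary exactly when its $\sqsubseteq$-upward closure is stationary.

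For (2) $\Rightarrow$ (1): Fix a semistationary $S \subseteq [\lambda]^{\omega}$ and suppose toward a contradiction that a condition $p$ forces that $\dot C$ is a club in $([\lambda]^{\omega})^{V[\dot G]}$ disjoint from the $\sqsubseteq$-closure of $S$. Applying $\mathrm{SSR}([\lambda]^{\omega},{<}\kappa)$ to a suitable coding of $S$ together with $p$ and a name for $\dot C$, I obtain $W \in \mathcal{P}_{\kappa}\lambda$ with $\omega_1 \subseteq W$ such that $S \cap [W]^{\omega}$ is semistationary in $[W]^{\omega}$. Here the hypothesis $\mu^{\omega} < \kappa$ is crucial: it forces $|[W]^{\omega}| < \kappa$, so that the subtree of $p$ whose nodes lie below $W$, together with the local behaviour of $\dot C \restriction [W]^{\omega}$, can be captured by a ${<}\kappa$-fusion. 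I would then run a fusion through $p$, at each splitting level reading off countable approximations to $\dot C$, and use the semistationarity of $S \cap [W]^{\omega}$ inside the small structure $[W]^{\omega}$ to locate $X \in S \cap [W]^{\omega}$ together with a fusion-branch forcing some $Y \in \dot C$ with $X \sqsubseteq Y$. This contradicts the choice of $p$ and $\dot C$.

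For (1) $\Rightarrow$ (2): I argue contrapositively. Assume $\mathrm{SSR}([\lambda]^{\omega},{<}\kappa)$ fails and fix a semistationary $S \subseteq [\lambda]^{\omega}$ that does not reflect, so that for every $W \in \mathcal{P}_{\kappa}\lambda$ the trace $S \cap [W]^{\omega}$ is non-semistationary in $[W]^{\omega}$; choose for each such $W$ a club $C_W \subseteq [W]^{\omega}$ disjoint from the local $\sqsubseteq$-closure of $S \cap [W]^{\omega}$. The goal is a condition forcing that the $\sqsubseteq$-closure of $S$ is non-stationary in $([\lambda]^{\omega})^{V[G]}$. The generic of $\mathrm{Nm}(\kappa,\lambda)$ yields a cofinal tower in $\mathcal{P}_{\kappa}\lambda$; diagonalizing the clubs $C_W$ along the Namba tree, I would define in $V[G]$ a club $C$ of countable sets, each lying in the club $C_W$ attached to the relevant level $W$ of the generic, and verify using genericity and a fusion that $C$ is indeed club and that no element of $C$ is $\sqsubseteq$-above a member of $S$. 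Thus $S$ is a subset of $[\lambda]^{\omega}$ whose semistationarity $\mathrm{Nm}(\kappa,\lambda)$ does not preserve, as required.

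I expect the main obstacle to be the fusion in the direction (2) $\Rightarrow$ (1): synchronizing a ${<}\kappa$-fusion through the tree $p$ with the semistationary reflection inside $[W]^{\omega}$, so that the branch produced simultaneously stays inside the reflecting structure $W$ and forces a genuine element of $\dot C$ sitting $\sqsubseteq$-above $S$. The delicate point is that $\sqsubseteq$ tracks the trace $X \cap \omega_1$ and not merely $\subseteq$, so the bookkeeping of the fusion must control these traces along every branch; the cardinal arithmetic $\mu^{\omega} < \kappa$ is precisely what keeps $[W]^{\omega}$ small enough for this coordination to terminate below $\kappa$, while the regularity of $\kappa \le \lambda$ is what guarantees the splitting sets remain large enough to survive the fusion.
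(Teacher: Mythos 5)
Your decomposition is exactly the paper's: (2) $\Rightarrow$ (1) by reflecting $S$ to a small $W \in \mathcal{P}_{\kappa}\lambda$ and preserving semistationarity locally, and (1) $\Rightarrow$ (2) contrapositively by converting a non-reflecting semistationary set into a set whose semistationarity $\mathrm{Nm}(\kappa,\lambda)$ destroys. However, in both directions the step you leave as a sketch is the entire mathematical content, and in each case the sketch as written would not go through. For (2) $\Rightarrow$ (1), the problem is to show that the set of \emph{ground-model} $z \in [W]^{\omega}$ such that some $q \leq p$ forces $z$ into (the Menas projection to $[W]^{\omega}$ of) $\dot{C}$ contains a club of $[W]^{\omega}$; only after that can stationarity of the $\sqsubseteq_{\omega_1}$-closure of $S \cap [W]^{\omega}$ be invoked. ``Running a fusion reading off countable approximations to $\dot{C}$'' does not produce this: before any fusion can converge you must know \emph{which} $z$ are reachable, and that is the paper's Lemma~\ref{namba:clubpullback}, proved by a genuinely different mechanism --- for each $z$ in a club $D \subseteq [W]^{\omega}$ of size $<\kappa$ one defines an open game $G_z$, invokes determinacy, shows Player II wins on a club of $z$'s via countable elementary submodels (here $|D|<\kappa$, i.e.\ the hypothesis $\mu^{\omega}<\kappa$, together with $\kappa$-completeness of the bounded ideal is what keeps a union of $<\kappa$ many bad successor-sets in the ideal), and only then fuses along a winning strategy. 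Moreover, your one concrete structural suggestion, working with ``the subtree of $p$ whose nodes lie below $W$'', is not available: for $W \in \mathcal{P}_{\kappa}\lambda$ the set $\{a \in \mathrm{Suc}_{p}(s) \mid a \subseteq W\}$ is bounded in $\mathcal{P}_{\kappa}\lambda$, so that subtree is never a condition of $\mathrm{Nm}(\kappa,\lambda)$; the tree must retain unbounded splitting, and only the target club $D$ is small.

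For (1) $\Rightarrow$ (2) there are two gaps. First, you diagonalize the clubs $C_W$ themselves, but a ground-model club of $[W]^{\omega}$ need not remain closed, let alone club, in $([W]^{\omega})^{V[G]}$, since the forcing can add countable sets; so ``each $x$ whose traces lie in the $C_{\dot{g}_i}$'' does not define a club of the extension. The paper instead fixes, for each $a$, a \emph{function} $F_a:[a]^{<\omega} \to a$ such that no member of the local $\sqsubseteq_{\omega_1}$-closure of $S$ is $F_a$-closed, and interleaves all $F_{\dot{g}_i}$ into a single name $\dot{F}:[\lambda]^{<\omega} \to \lambda$ via a pairing coding; closure points of a function do reinterpret as a club in $V[G]$. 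Second, the verification that no element of your club is $\sqsubseteq_{\omega_1}$-above a member of $S$ --- the only place where semistationarity rather than stationarity matters --- cannot be carried out pointwise in $V[G]$: if $y \in S$ and $y \sqsubseteq_{\omega_1} x$ with $x$ in the club, there is in general \emph{no} $i$ with $y \subseteq \dot{g}_i$, since a countable subset of $\bigcup_i \dot{g}_i$ need not be captured at any finite stage. The paper argues by contradiction and density --- extend the condition until $y \subseteq a$ for some $a$ in the trunk --- and then derives the contradiction from the hull trick: $\mathrm{Hull}(y,F_a) \subseteq x \cap a$ is squeezed between $y$ and $x$, hence has the same $\omega_1$-trace as $y$, so it is an $F_a$-closed member of the local closure of $S$, contradicting the choice of $F_a$. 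Your proposal contains neither the function-reinterpretation, nor the density step, nor this trace-preserving hull argument, and without them the contrapositive direction is unsupported.
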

$\mathrm{SSR}([\lambda]^{\omega},{<}\kappa)$ is one of variations of semistationary reflection principles introduced by Sakai~\cite{MR2387938}. This is equivalent with $\mathrm{SSR}([\lambda]^{\omega})$ if $\kappa=\aleph_2$. The semiproperness of $\mathrm{Nm}(\kappa,\kappa)$ is also equivalent with that of $\mathrm{Nm}(\kappa)$ (See Lemma~\ref{namba:semiproperchar}). Theorem~\ref{maintheorem} generalizes Todor\v{c}evi\'c's result. We can regard the semiproperness of Namba forcing as one of the reflection principles.

Theorem~\ref{maintheorem} brings us the following observation. The proof of Theorem~\ref{maintheorem3} is almost the same as that of the following theorem.
\begin{thm}\label{maintheorem2}
 Suppose that $V \subseteq W$ are inner models and $\mu < \lambda$ are cardinals such that:
\begin{enumerate}
 \item $\mu$ and $\lambda$ are regular cardinals in $V$.
 \item $\mu$ is a singular cardinal in $W$ of cofinality $\omega$.
 \item $\lambda$ is a regular cardinal in $W$. 
 \item $(E^{\lambda}_{\mu})^{V}$ is a stationary subset in $W$. 
\end{enumerate}
Then $\mathrm{Nm}(\mu^{+},\lambda)$ is \emph{not} semiproper for all regular $\lambda \geq \mu^{+}$ in $W$. In particular, Prikry-type forcing over $\mu$ (like Prikry forcing and Woodin's modification) forces $\mathrm{Nm}(\mu^{+},\lambda)$ is \emph{not} semiproper for all regular $\lambda \geq \mu^{+}$. In particular, $\mathrm{Nm}(\mu^{+})$ is forced to be \emph{non} semiproper\footnote{This also follows from the proof of~\cite[Theorem 5.7]{MR2576698}.}.
\end{thm}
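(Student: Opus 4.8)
The plan is to argue by contradiction: assume that $\mathrm{Nm}(\mu^{+},\lambda)$ is semiproper in $W$ and extract a violation of the stationarity of $(E^{\lambda}_{\mu})^{V}$. Since every semiproper poset preserves the semistationarity of an arbitrary subset of $[\lambda]^{\omega}$, the assumption places $\mathrm{Nm}(\mu^{+},\lambda)$ in case (1) of Theorem~\ref{maintheorem}. Note that the cardinal arithmetic hypothesis $\nu^{\omega}<\mu^{+}$ of Theorem~\ref{maintheorem} \emph{fails} in the present situation, because $\mathrm{cf}^{W}(\mu)=\omega$ forces $\mu^{\omega}>\mu$, hence $\mu^{\omega}\geq\mu^{+}$; so I would not invoke the full equivalence. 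Instead I would run only the implication $(1)\to(2)$ of that theorem, i.e.\ the generic reflection step, applied to the \emph{single} test set below. That direction produces reflection from preservation of semistationarity and does not appeal to the arithmetic assumption.

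First I would fix the test set. Put $T=(E^{\lambda}_{\mu})^{V}$ and $S=\{x\in[\lambda]^{\omega}:\sup x\in T\}$. Since $\mathrm{cf}^{W}(\mu)=\omega$, every $\alpha\in T$ satisfies $\mathrm{cf}^{W}(\alpha)=\omega$, so $T\subseteq(E^{\lambda}_{\omega})^{W}$, and $T$ is stationary in $W$ by hypothesis~(4). A routine closing-off argument then shows that $S$ is stationary, hence semistationary, in $[\lambda]^{\omega}$ in $W$, since $T$ is a stationary set of points of $W$-cofinality $\omega$. Feeding the preservation of semistationarity of $S$ into the generic reflection step, I obtain in $W$ a set $X\in[\lambda]^{<\mu^{+}}$ with $\omega_{1}\subseteq X$ for which $S\cap[X]^{\omega}$ is semistationary in $[X]^{\omega}$. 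Writing $\delta=\sup X$, the semistationarity of $S\cap[X]^{\omega}$ unwinds (using $\omega_{1}\subseteq X$ to align the $\sqsubseteq$-parts) to the statement that $T\cap\delta$ is stationary in $\delta$ in $W$, while $\mathrm{cf}^{W}(\delta)\leq|X|\leq\mu$.

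The core of the argument is to contradict this last line. If $\mathrm{cf}^{V}(\delta)\leq\mu$, then already in $V$ the set $T\cap\delta=\{\alpha<\delta:\mathrm{cf}^{V}(\alpha)=\mu\}$ is nonstationary in $\delta$: a $V$-club in $\delta$ of order type $\mathrm{cf}^{V}(\delta)\leq\mu$ consisting of limit points has every member of $V$-cofinality strictly below $\mu$, hence disjoint from $T$. Such a club stays closed and unbounded in $W$, so $T\cap\delta$ is nonstationary in $W$ too, a contradiction. This settles every $\delta$ of $V$-cofinality at most $\mu$, and in particular closes the whole argument whenever the passage $V\subseteq W$ preserves cofinalities above $\mu$. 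This is precisely the case for Prikry forcing and Woodin's modification, which singularize only $\mu$, sending $\mathrm{cf}(\mu)$ to $\omega$ while leaving $\mathrm{cf}(\gamma)>\mu$ untouched; this yields the ``In particular'' clauses, and the special case $\lambda=\mu^{+}$ follows via the equivalence of $\mathrm{Nm}(\mu^{+},\mu^{+})$ with $\mathrm{Nm}(\mu^{+})$ from Lemma~\ref{namba:semiproperchar}.

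The main obstacle is the remaining configuration $\mathrm{cf}^{V}(\delta)>\mu\geq\mathrm{cf}^{W}(\delta)$, in which $T\cap\delta$ can genuinely be stationary (the $V$-club of order type $\mathrm{cf}^{V}(\delta)$ now meets $T$ on a stationary set of indices) and the absoluteness trick breaks down. To handle this in the abstract setting of Theorem~\ref{maintheorem2}, I would trace the reflecting set back not merely to $[\lambda]^{<\mu^{+}}$ in $W$ but to a cover lying in $V$: the reflecting $X$ is obtained from the union of the generic $\omega$-sequence of members of $\mathcal{P}_{\mu^{+}}\lambda$, and using that no new $\leq\mu$-sized subset of $\lambda$ escapes the $V$-covers, one arranges $X\in V$ of $V$-size $\leq\mu$, whence $\mathrm{cf}^{V}(\delta)\leq\mu$ and we fall back to the settled case. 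Establishing this covering --- equivalently, localizing the generic reflection witness to $V$ --- is the delicate point where hypotheses (1)--(4) are actually used, and it is the step I expect to occupy the bulk of the work.
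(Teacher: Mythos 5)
Your skeleton is in fact the paper's: reduce everything to the failure of stationary reflection of $T=(E^{\lambda}_{\mu})^{V}$ at ordinals of $W$-cofinality $\leq\mu$, and invoke only the direction of Theorem~\ref{maintheorem} (i.e.\ Lemma~\ref{main1:lem1}) that needs no cardinal arithmetic; your observation that $\mu^{\omega}\geq\mu^{+}$ in $W$, so the full equivalence is unavailable, is correct and is implicitly how the paper argues too. But two of your steps have genuine gaps. The first is the ``unwinding'' step: for your test set $S=\{x\in[\lambda]^{\omega}\mid\sup x\in T\}$, semistationarity of $S\cap[X]^{\omega}$ does \emph{not} yield stationarity of $T\cap\delta$ in $\delta=\sup X$, because $x\sqsubseteq_{\omega_{1}}y$ does not entail $\sup x=\sup y$. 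Concretely, a \emph{single} $\gamma\in T$ with $X\cap\gamma$ unbounded in $\gamma$ already makes $S\cap[X]^{\omega}$ semistationary: any $y\in[X]^{\omega}$ with $y\cap\gamma$ unbounded in $\gamma$ satisfies $y\cap\gamma\sqsubseteq_{\omega_{1}}y$ (as $\gamma>\omega_{1}$) and $\sup(y\cap\gamma)=\gamma\in T$, and such $y$ are stationary in $[X]^{\omega}$; meanwhile $T\cap\delta$ can be a singleton. Converting semistationary reflection into stationary reflection of a set of $\omega$-cofinal ordinals is exactly the content of Sakai's theorem and requires a smarter test set (each $x$ must carry a fixed cofinal $\omega$-sequence of $\sup x$). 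The paper avoids this entirely: it derives $\lnot\mathrm{SSR}([\lambda]^{\omega},{<}\mu^{+})$ from $\lnot\mathrm{Refl}(E^{\lambda}_{\omega},{<}\mu^{+})$ by quoting Theorem~\ref{sakai} as a black box, and only then applies the Namba direction. You should do the same rather than re-derive the transfer by hand.

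The second gap is the case $\mathrm{cf}^{V}(\delta)>\mu\geq\mathrm{cf}^{W}(\delta)$, which you correctly isolate but leave open, and the repair you sketch (covering the reflecting set by a $V$-set of size $\leq\mu$) cannot work from hypotheses (1)--(4) alone: nothing in them gives covering of $W$-sets by $V$-sets. Indeed, over $W_{1}=V[\mathrm{Prikry}]$ one can further force with a $\sigma$-distributive club-shooting poset adding a club of order type $\omega_{1}$ through $(E^{\nu}_{\mu})^{V}\cup\{\alpha<\nu\mid\mathrm{cf}^{W_{1}}(\alpha)>\omega\}$ for a $V$-regular $\nu\in(\mu,\lambda)$; hypotheses (1)--(4) survive, yet in the resulting $W$ the set $T$ reflects to $\nu$, whose $W$-cofinality is $\omega_{1}$ while $\mathrm{cf}^{V}(\nu)=\nu>\mu$, so no $V$-cover of the kind you want exists. (This subtlety is real: the paper's own proof simply asserts that $E^{\lambda}_{\mu}$ does not reflect to any ordinal of $(E^{\lambda}_{<\mu^{+}})^{W}$, which likewise requires justification beyond (1)--(4). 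For the Prikry/Woodin applications both arguments are complete, since clause (3) of the quoted theorems gives $\omega<\mathrm{cf}^{W}(\delta)\leq\mu\Rightarrow\mathrm{cf}^{V}(\delta)\leq\mu$, after which your $V$-club argument closes every case, with $\lambda=\mu^{+}$ handled by Lemma~\ref{namba:semiproperchar} as you say.) So, modulo the first gap, your proof of the ``In particular'' clauses is essentially sound, but the abstract statement remains unproved in your write-up, and the covering route you propose for it is a dead end.
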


The structure of this paper is as follows. In Section 2, we recall the basic facts of semistationary subsets, Namba forcings, and saturated ideals. In Section 3, we introduce Namba forcing $\mathrm{Nm}(\kappa,\lambda)$ over $\mathcal{P}_{\kappa}\lambda$. We study the basic properties of $\mathrm{Nm}(\kappa,\lambda)$. In Section 4, we prove Theorems~\ref{maintheorem} and~\ref{maintheorem2}. In Section 5, we study the semiproperness of ideals in Prikry-type extensions. The proofs of Theorems~\ref{maintheorem3} and~\ref{maintheorem4} are given in this section. Section 6 is not related to saturated ideals. However, our results can be positioned in the context of strong compactness of cardinals. We will call these principles $(\dagger)$-aspects of strong compactness and conclude this paper with some observations.

\section{Preliminaries}
In this section, we recall basic facts of semistationary subsets, Namba forcings, and saturated ideals. We use~\cite{MR1994835} as a reference for set theory in general. For the topics of Namba forcings and saturated ideals, we refer to~\cite[Section XII]{MR1623206} and ~\cite{MR2768692}, respectively.

Our notation is standard. In this paper, by $\kappa$ and $\lambda$, we mean regular cardinals greater than $\aleph_2$ unless otherwise stated. We use $\mu$ to denote an infinite cardinal. For $\kappa < \lambda$, $E^{\lambda}_\kappa$ and $E^{\lambda}_{<\kappa}$ denote the set of all ordinals below $\lambda$ of cofinality $\kappa$ and $<\kappa$, respectively. We also write $[\kappa,\lambda] = \{\xi \mid \kappa \leq \xi \leq \lambda\}$. By $\mathrm{Reg}$, we mean the class of regular cardinals. For $\subseteq$-increasing finite sequences $s,t \in [\mathcal{P}(X)]^{<\omega}$, by $s \sqsubseteq t$, we means $t$ end-extends $s$ that is, $\forall a \in t \setminus s\forall b \in s(b \subseteq a)$. $\lambda$-strongly compact cardinal $\kappa$ is a regular cardinal in which $\mathcal{P}_{\kappa}\lambda$ carries a fine ultrafilter. 

In the proof of Theorem~\ref{maintheorem3} and~\ref{maintheorem2}, we use the properties of Prikry forcing and Woodin's modifications. The former was introduced in~\cite{prikry} and the letter appeared in~\cite{MR1007865}. 
 \begin{thm}[Prikry~\cite{prikry}]If $\mu$ is a measurable cardinal then there is a poset $P$ with the following conditions:
\begin{enumerate}
 \item $P$ is $\mu$-centered and $P$ preserves all cardinal. 
 \item $P \force \dot{\mathrm{cf}}(\mu) = \omega$. 
 \item If $P \force \omega <\dot{\mathrm{cf}}(\delta)<\mu$ then $P \force \dot{\mathrm{cf}}(\delta) = \check{cf}(\delta)$. 
\end{enumerate} 
 \end{thm}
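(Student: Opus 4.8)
The plan is to realize $P$ as the standard Prikry forcing associated to a normal measure. Fix a normal (hence $\mu$-complete) ultrafilter $U$ on $\mu$, and let $P$ consist of pairs $(s,A)$ where $s \in [\mu]^{<\omega}$ is a finite increasing stem and $A \in U$ with $\min A > \max s$; order $P$ by declaring $(t,B) \leq (s,A)$ iff $s$ is an initial segment of $t$, $t \setminus s \subseteq A$, and $B \subseteq A$. I also record the auxiliary \emph{direct extension} relation $(t,B) \leq^{*} (s,A)$, meaning $(t,B)\leq(s,A)$ together with $t=s$.

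For clause (1) I first observe $\mu$-centeredness: two conditions sharing a stem $s$, say $(s,A)$ and $(s,B)$, have the common refinement $(s,A\cap B)$, so the family of conditions with a fixed stem is directed; since $|[\mu]^{<\omega}| = \mu$, $P$ is the union of $\mu$ many directed pieces. In particular $P$ has the $\mu^{+}$-c.c., which preserves all cardinals and cofinalities $\geq\mu^{+}$. To preserve cardinals $\leq \mu$ I would show that $P$ adds no new bounded subset of $\mu$. This rests on two facts. First, $\leq^{*}$ is ${<}\mu$-closed: given a $\leq^{*}$-decreasing sequence $\langle (s,A_\alpha) : \alpha<\gamma\rangle$ with $\gamma<\mu$, all stems equal $s$ and $\bigcap_{\alpha<\gamma} A_\alpha \in U$ by $\mu$-completeness, giving a lower bound. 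Second, and this is the crux, is the \emph{Prikry property}.

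The Prikry property states that for every condition $(s,A)$ and every sentence $\varphi$ of the forcing language there is $B\subseteq A$ with $B\in U$ such that $(s,B)$ decides $\varphi$. I would prove it by the Rowbottom partition argument: color each finite increasing tuple $\vec a \in [A]^{<\omega}$ by $0$, $1$, or $2$ according to whether some measure-one set $C$ with $\min C > \max \vec a$ makes $(s^{\frown}\vec a, C)$ force $\varphi$, force $\neg\varphi$, or neither; by the partition property of the normal measure $U$ there is a homogeneous $H\in U$ with $H\subseteq A$, meaning the coloring is constant on each $[H]^{n}$; then a minimal-length absorption argument shows that $(s,H)$ itself must decide $\varphi$, because the uniform behaviour of one-point stem-extensions over $H$ propagates down to the empty extension. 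I expect this homogenization-plus-absorption step to be the main obstacle, since it is the only place where the full strength of the measure---normality and the partition property, not merely completeness---is used.

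Granting the Prikry property, I would combine it with the ${<}\mu$-closure of $\leq^{*}$ in the usual fusion style: given a name $\dot f$ for a function from some $\delta<\mu$ into the ordinals and a condition $(s,A)$, I build a $\leq^{*}$-decreasing chain of length $\delta$ that decides the value $\dot f(\xi)$ at stage $\xi$, take the lower bound afforded by closure, and conclude that below some condition $\dot f$ is forced equal to a ground-model function. Hence no new bounded subset of $\mu$ appears, so every cardinal $\leq\mu$ is preserved; together with the $\mu^{+}$-c.c.\ this gives (1). For (2), the set of $(s,A)$ with $\max s > \beta$ is dense for each $\beta<\mu$, so the union of the stems occurring in the generic filter is a cofinal $\omega$-sequence in $\mu$, whence $P\force \dot{\mathrm{cf}}(\mu)=\omega$. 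Finally (3) is read off from cofinality preservation: if $\mathrm{cf}^{V}(\delta)=\theta$ with $\omega<\theta<\mu$, then $\theta$ stays regular because no bounded subset of $\mu$ is added, and a ground-model cofinal map $\theta\to\delta$ still witnesses $\mathrm{cf}(\delta)=\theta$ in the extension; the remaining $V$-cofinalities ($\omega$, $\mu$, and those $\geq\mu^{+}$) are sent to $\omega$, $\omega$, and values $\geq\mu^{+}$ respectively, none lying in $(\omega,\mu)$, so whenever the extension sees $\omega<\dot{\mathrm{cf}}(\delta)<\mu$ the value must coincide with $\check{\mathrm{cf}}(\delta)$.
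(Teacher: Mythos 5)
The paper itself does not prove this theorem: its ``proof'' is a pointer to Gitik's handbook chapter for (1)--(2) and to Cummings for (3). Your proposal reconstructs the standard argument that those references contain --- Prikry forcing from a normal measure, $\mu$-centeredness via the $\mu$ many stems (hence $\mu^{+}$-c.c.\ and preservation above $\mu$), the Prikry property proved by Rowbottom homogeneity plus the minimal-length absorption step, ${<}\mu$-closure of the direct extension order $\leq^{*}$, density of long stems for clause (2), and the case analysis on $\mathrm{cf}^{V}(\delta)$ for clause (3). So in substance you are supplying exactly the proof the paper outsources, and the skeleton is correct.

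One step is misstated, and as written it would fail. You claim that, given a name $\dot f$ for a function from some $\delta<\mu$ into the ordinals, you can build a $\leq^{*}$-decreasing chain deciding each value $\dot f(\xi)$. Direct extensions cannot in general decide ordinal values: take $\dot f(0)$ to be the first Prikry point above $\max s$; for any $(s,B)\leq^{*}(s,A)$ and any two $\alpha<\beta$ in $B$, the extensions $(s^{\frown}\langle\alpha\rangle, B\setminus(\alpha+1))$ and $(s^{\frown}\langle\beta\rangle, B\setminus(\beta+1))$ force different values, so $(s,B)$ decides nothing. The Prikry property only decides \emph{sentences} by direct extensions. The fix is routine and is what the standard proof does: run the fusion on a name $\dot A$ for a subset of $\delta<\mu$, at stage $\xi$ using the Prikry property to pass to a direct extension deciding the sentence $\check\xi\in\dot A$; the ${<}\mu$-closure of $\leq^{*}$ then yields a single direct extension forcing $\dot A$ to equal a ground-model set. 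This gives ``no new bounded subsets of $\mu$,'' which is all that your subsequent bookkeeping (preservation of cardinals $\leq\mu$, regularity of $\theta\in(\omega,\mu)$, and hence clause (3)) actually uses; with that substitution the rest of your argument goes through unchanged.
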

\begin{proof}
 For (1) and (2), we refer to~\cite{Gitik}. For (3), see \cite[Section 11]{MR1838355}
\end{proof}

 \begin{thm}[Woodin]If $\mu$ is a measurable cardinal and $2^{\mu}= \mu^{+}$ then there is a poset $P$ with the following conditions:
\begin{enumerate}
 \item $P$ is $\mu$-centered. 
 \item $P \force \mu = \aleph_{\omega}$. 
 \item If $P \force \omega <\dot{\mathrm{cf}}(\delta)<\mu$ then $\omega <{cf}(\delta) <\mu$. 
\end{enumerate} 
 \end{thm}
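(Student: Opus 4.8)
The plan is to realize $P$ as a Prikry-type forcing on a normal measure $U$ on $\mu$ in which L\'evy collapses are interleaved between consecutive Prikry points, arranged so that the $n$-th point becomes $\aleph_{n+1}$ and hence $\mu$ becomes $\aleph_{\omega}$. Fix a normal measure $U$ on $\mu$ and, using normality, thin every measure-one set so that it consists of inaccessible cardinals. A condition is a sequence $\langle\alpha_0,c_0,\dots,\alpha_{k-1},c_{k-1},A\rangle$, where $\alpha_0<\dots<\alpha_{k-1}<\mu$ are the Prikry points, $c_0\in\mathrm{Coll}(\omega,{<}\alpha_0)$, each $c_i\in\mathrm{Coll}(\alpha_{i-1},{<}\alpha_i)$ for $1\le i\le k-1$ collapses the gap below the $i$-th point, and $A\in U$ with $\min A>\alpha_{k-1}$ is the reservoir for future points. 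One extends a condition by shrinking $A$, extending the finitely many collapse conditions, and appending new points $\beta\in A$ together with a collapse condition for the newly created gap. Write $q\leq^{*}p$ for a \emph{direct extension}, i.e.\ one that adds no new Prikry point.

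For (2), a genericity argument produces an $\omega$-sequence $\langle\alpha_n:n<\omega\rangle$ cofinal in $\mu$ (each $A$ is $U$-large, hence unbounded) together with a generic object for each interleaved collapse. Since $\mathrm{Coll}(\omega,{<}\alpha_0)$ makes $\alpha_0=\aleph_1$ and $\mathrm{Coll}(\alpha_{n-1},{<}\alpha_n)$ makes $\alpha_n=\alpha_{n-1}^{+}$, induction gives $\alpha_n=\aleph_{n+1}$, so $\mu=\sup_n\alpha_n=\aleph_{\omega}$; as $P$ is $\mu^{+}$-cc (below), $\mu^{+}$ is preserved and becomes $\aleph_{\omega+1}$.

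For (1), two conditions sharing the same stem $\langle\alpha_0,\dots,\alpha_{k-1}\rangle$ and the same collapse data $\langle c_0,\dots,c_{k-1}\rangle$ are compatible: their common refinement keeps the stem and collapses and intersects the reservoirs, which stays in $U$ by $\mu$-completeness. Thus $P$ is covered by the centred pieces indexed by pairs (stem, collapse data). There are $\mu$ stems, and for a fixed stem the collapse data ranges over a set of size ${<}\mu$ because $\mu$ is inaccessible (so each $\mathrm{Coll}(\alpha_{i-1},{<}\alpha_i)$ has size ${<}\mu$ and $2^{<\mu}=\mu$); the hypothesis $2^{\mu}=\mu^{+}$ serves to control the number of nice names and the cardinal arithmetic above $\mu$. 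Hence there are $\mu$ centred pieces and $P$ is $\mu$-centred, in particular $\mu^{+}$-cc.

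The heart of the proof, and the main obstacle, is the \emph{Prikry property}: for every condition $p$ and every statement $\varphi$ there is a direct extension $q\leq^{*}p$ deciding $\varphi$. I expect this to be established by a fusion/diagonal-intersection argument that combines the normality and $\mu$-completeness of $U$ with the closure of the interleaved collapses; confining the collapses to gaps strictly below the points is exactly what makes the argument go through. Granting it, (3) follows. Suppose $p\force\omega<\dot{\mathrm{cf}}(\delta)=\theta<\mu$ witnessed by a cofinal map $\dot f:\theta\to\delta$, where we have first strengthened $p$ to decide the value $\theta$ and to push its top gap above $\theta$. Then the direct-extension order below $p$ is ${<}\theta^{+}$-closed: measure-one sets intersect by $\mu$-completeness, and the top collapse is ${\geq}\theta^{+}$-closed. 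Building a $\leq^{*}$-decreasing sequence that decides $\dot f(\xi)$ for each $\xi<\theta$ via the Prikry property, and taking its lower bound by closure, yields a single $q\leq^{*}p$ forcing $\dot f$ equal to a ground-model sequence $g$. Then $g\in V$ is cofinal in $\delta$ of length $\theta$, so $\mathrm{cf}^{V}(\delta)\leq\theta<\mu$; and $\mathrm{cf}^{V}(\delta)>\omega$, since a ground-model $\omega$-sequence cofinal in $\delta$ would force $\dot{\mathrm{cf}}(\delta)=\omega$, contrary to $\theta>\omega$. Hence $\omega<\mathrm{cf}^{V}(\delta)<\mu$, as required.
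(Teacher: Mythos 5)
Your sketch is the \emph{naive} interleaving of collapses into Prikry forcing, and the Prikry property you defer is not merely the missing step: for the poset as you defined it, it is \emph{false}, and conclusion (2) actually fails. First, a direct extension (adding no point) places no constraint whatsoever on the collapse condition that will accompany the next point, so for any direct extension $q$ of a given condition, a statement such as ``the generic collapse of the gap above the current top point sends $(\alpha_{k-1}^{+},0)$ to $0$'' is forced one way by one one-point extension of $q$ and the other way by another; hence no direct extension decides it. Second, and fatally: your map forgetting the collapse data is a projection onto ordinary Prikry forcing, and over the Prikry extension $V[\vec\alpha]$ the quotient of your poset is forcing equivalent to the \emph{finite-support} product $\prod^{\mathrm{fin}}_{n<\omega}\mathrm{Coll}(\alpha_{n-1},<\alpha_n)$, because a condition constrains only the finitely many gaps it has already filled, and two conditions with the same stem and collapse data are compatible. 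Finite support across infinitely many coordinates is lethal: given any condition and any $\gamma<\alpha_k$, one may extend it at an unused coordinate $n$ so that the coordinate-$n$ generic collapsing function takes a value in $(\gamma,\alpha_k)$; by genericity each $\alpha_k$ therefore acquires cofinality $\omega$, witnessed by a sequence assembled across infinitely many coordinates. Since coordinate $k$ collapses every cardinal in $(\alpha_{k-1},\alpha_k)$ to $\alpha_{k-1}$, each $\alpha_k$ becomes an $\omega$-union of sets of size at most $\alpha_{k-1}$, hence is collapsed; inductively every $\alpha_k$, and then $\mu$, becomes countable. Your poset is essentially $\mathrm{Coll}(\omega,\mu)$, so it does not force $\mu=\aleph_\omega$.

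What is missing is precisely Woodin's device (this is the content of the construction in the reference the paper cites), and it is where $2^{\mu}=\mu^{+}$ is genuinely used --- not ``to control nice names.'' Conditions must constrain the collapses attached to \emph{future} points: the topmost collapse coordinate is a condition in $\mathrm{Coll}(\alpha_{k-1}^{+},<\mu)$ reaching all the way up to $\mu$ (it gets truncated at the next point when that point appears), and the reservoir carries a function $H$ with $H(\beta)\in\mathrm{Coll}(\beta^{+},<\mu)$ bounding the new top collapse when $\beta$ is appended, subject to $[H]_U\in K$, where $K$ is a fixed filter on $\mathrm{Coll}(\mu^{+},<j(\mu))^{M}$ generic over $M=\mathrm{Ult}(V,U)$ (a \emph{guiding generic}). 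The hypothesis $2^{\mu}=\mu^{+}$ is exactly what makes $K$ exist: $M$ has only $\mu^{+}$ many maximal antichains of that collapse, and one meets them along a decreasing $\mu^{+}$-chain using ${}^{\mu}M\subseteq M$ and closure of the collapse in $M$. With the pair (top collapse, $H$) present, a single condition constrains \emph{every} future gap at once, so the quotient over $V[\vec\alpha]$ is no longer a finite-support product and the collapse of the $\alpha_k$'s above is blocked; $\mu$-centeredness survives because $K$ is directed; and the Prikry lemma is proved by integrating, over $\beta\in A$, decisions taken below $H(\beta)$, with genericity of $K$ making these choices cohere. Two smaller points: your closure claim in (3) is false even for the corrected poset, since direct extensions may extend the lower, non-closed collapses (one must factor below $p$ into the finite product of completed gaps times the upper part and argue there); and in fact, once (1) and (2) are established, the weak form of (3) stated in the theorem already follows from $\mu^{+}$-c.c.\ together with $\Vdash\mathrm{cf}(\mu)=\omega$.
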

\begin{proof}
 See~\cite{MR1007865}. 
\end{proof}
\begin{lem}\label{prikryextension}
 If $W$ is a generic extension by Prikry forcing over $\mu$ or Woodin's modification over $\mu$. Then, for every regular $\lambda \geq \mu^{+}$, the following holds.
\begin{enumerate}
 \item $\mu$ is a singular cardinal in $W$ of cofinality $\omega$.
 \item $\lambda$ is a regular cardinal in $W$. 
 \item $(E^{\lambda}_{\mu})^{V}$ is a stationary subset in $W$. 
\end{enumerate}
\end{lem}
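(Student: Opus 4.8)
The plan is to extract the three clauses from the structural properties of the two posets recorded above, the unifying observation being that Prikry forcing and Woodin's modification are both $\mu$-centered, and hence $\mu^{+}$-cc. Clause (1) is essentially immediate: for Prikry forcing it is clause (2) of Prikry's theorem, $P \force \dot{\mathrm{cf}}(\mu)=\omega$, and since $\mu$ is measurable (so regular) in $V$ it becomes singular of cofinality $\omega$ in $W$; for Woodin's modification, $P \force \mu = \aleph_{\omega}$ gives $\mathrm{cf}^{W}(\mu)=\omega$ likewise, so $\mu$ is singular of cofinality $\omega$ in $W$ in both cases.

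For clause (2) I would first observe that a $\mu$-centered poset $P = \bigcup_{i<\mu}P_{i}$ (with each $P_{i}$ centered) is $\mu^{+}$-cc: two conditions lying in a common $P_{i}$ are compatible, so any antichain meets each $P_{i}$ at most once and therefore has size at most $\mu$. A $\mu^{+}$-cc forcing preserves cardinals and cofinalities ${\geq}\,\mu^{+}$, so every $V$-regular $\lambda \geq \mu^{+}$ remains regular in $W$ (even though Woodin's modification collapses cardinals below $\mu$).

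Clause (3) is the heart of the matter, and here I would appeal to the standard preservation fact that a $\mu^{+}$-cc forcing preserves stationary subsets of any regular $\lambda \geq \mu^{+}$; since $\mu<\lambda$ are regular in $V$, the set $(E^{\lambda}_{\mu})^{V}$ is stationary in $V$, and the conclusion follows. The argument I have in mind runs in $V$: given a condition $p$ forcing that $\dot{C}$ is club in $\lambda$, for each $\alpha<\lambda$ pick a maximal antichain below $p$ deciding $\min(\dot{C}\setminus(\alpha+1))$; by $\mu^{+}$-cc this antichain has size at most $\mu$, so the supremum $f(\alpha)$ of the decided values lies below $\lambda$ (as $\lambda$ is regular and $>\mu$). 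The set $D$ of closure points of $f$ is a club of $V$, and for every $\delta\in D$ one checks that $p$ forces $\dot{C}\cap\delta$ to be cofinal in $\delta$, whence $p \force \check{\delta}\in\dot{C}$ because $\dot{C}$ is forced to be closed. Choosing $\delta\in D\cap(E^{\lambda}_{\mu})^{V}$ then shows that $p$ cannot force $\dot{C}$ to avoid $(E^{\lambda}_{\mu})^{V}$.

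I do not anticipate a genuine obstacle; of the three, clause (3) is the least routine, but it is immediate once the chain-condition preservation theorem is in hand. The one feature worth flagging is that the $V$-cofinality-$\mu$ points acquire cofinality $\omega$ in $W$ (since $\mathrm{cf}^{W}(\mu)=\omega$), so that $(E^{\lambda}_{\mu})^{V}$ survives as a $V$-definable stationary subset of $(E^{\lambda}_{\omega})^{W}$. This shift of cofinality is precisely what makes the lemma useful for the later theorems, yet it is invisible to the preservation argument, which rests solely on the $\mu^{+}$-cc.
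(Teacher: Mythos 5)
Your proposal is correct, and it is essentially the argument the paper has in mind: the paper's own proof of this lemma is literally ``Easy,'' and the standard route you take --- $\mu$-centered $\Rightarrow$ $\mu^{+}$-cc $\Rightarrow$ preservation of cardinals and cofinalities ${\geq}\,\mu^{+}$, plus the antichain-counting club argument showing $\mu^{+}$-cc forcing preserves stationary subsets of a regular $\lambda \geq \mu^{+}$ --- is exactly what is being left to the reader.
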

\begin{proof}
 Easy. 
\end{proof}

\subsection{Semistationary subsets and Namba forcing}

For a set $W \supseteq \omega_1$, a semistationary subset is an $S \subseteq [W]^{\omega}$ such that ${S}^{\mathbf{cl}} = \{x \in [W]^{\omega} \mid \exists y \in S(y \sqsubseteq_{\omega_1} x)\}$ is stationary in $[\lambda]^{\omega}$. By $y \sqsubseteq_{\omega_1} x$, we mean $y \subseteq x \land y \cap \omega_1 = x \cap \omega_1$. 

For a poset $P$, we say that $P$ is semiproper if and only if $\{M \in [\mathcal{H}_{\theta}]^{\omega} \mid \forall p\in M \cap P\exists q \leq p(q$ is $(M,P)$-semigeneric$)\}$ contains a club for all sufficiently large regular $\theta$. $(M,P)$-semigeneric is a condition that forces $M[\dot{G}] \cap \omega_1 = M \cap \omega_1$. The following lemma is well-known. 

\begin{lem}[Shelah~\cite{MR1623206}]
 The following are equivalent:
\begin{enumerate}
 \item $P$ is semiproper.
 \item $P$ preserves the semistationarity of any sets. 
\end{enumerate}
\end{lem}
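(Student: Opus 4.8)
The plan is to prove both directions through countable elementary submodels $M \prec \mathcal{H}_\theta$ for sufficiently large regular $\theta$, relying on the standard projection fact that for $W \subseteq \mathcal{H}_\theta$ and $T \subseteq [W]^{\omega}$, $T$ is stationary in $[W]^{\omega}$ iff $\{M \in [\mathcal{H}_\theta]^{\omega} : M \cap W \in T\}$ is stationary in $[\mathcal{H}_\theta]^{\omega}$. Applying this with $T = S^{\mathbf{cl}}$ turns the semistationarity of $S$ into the stationarity of the set of $M$ with $M \cap W \in S^{\mathbf{cl}}$, which is what lets the two notions talk to each other.

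For $(1)\Rightarrow(2)$, fix a semistationary $S \subseteq [W]^{\omega}$, a condition $p$, and a name $\dot{C}$ for a club of $([W]^{\omega})^{V[G]}$; I must produce $q \le p$ forcing $S^{\mathbf{cl}}\cap \dot{C}\neq\emptyset$. The set of $M$ with $M \cap W \in S^{\mathbf{cl}}$ is stationary, and semiproperness supplies a club of $M$ each of whose conditions admits an $(M,P)$-semigeneric extension; intersecting these with the club of $M$ containing $p,P,S,W,\dot{C}$, I choose a countable $M \prec \mathcal{H}_\theta$ lying in all of them, with a witness $y \in S$ satisfying $y \subseteq M\cap W$ and $y \cap \omega_1 = M \cap \omega_1$. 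Taking an $(M,P)$-semigeneric $q \le p$ and any generic $G \ni q$, elementarity gives $M[G] \prec \mathcal{H}_\theta^{V[G]}$, semigenericity gives $M[G]\cap\omega_1 = M\cap\omega_1$, and since $\dot{C}^{G}$ is a club of $[W]^{\omega}$ belonging to $M[G]$ we get $x := M[G]\cap W \in \dot{C}^{G}$. Then $y \subseteq M\cap W \subseteq x$ and $y\cap\omega_1 = M\cap\omega_1 = M[G]\cap\omega_1 = x\cap\omega_1$, so $y \sqsubseteq_{\omega_1} x$ and hence $x \in (S^{\mathbf{cl}})^{V[G]}\cap \dot{C}^{G}$; as $p,\dot{C}$ were arbitrary, $P$ keeps $S$ semistationary.

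For $(2)\Rightarrow(1)$ I argue contrapositively, and I may assume $P$ preserves $\omega_1$, since a forcing collapsing $\omega_1$ already destroys the semistationarity of the set of countable ordinals in $[\omega_1]^{\omega}$ (once $\omega_1^{V}$ is countable the space $[\omega_1^{V}]^{\omega}$ has a greatest element, and a subset is stationary only if it contains that element). So suppose $P$ preserves $\omega_1$ but is not semiproper, witnessed at some large $\theta$: the set $B$ of countable $M \prec \mathcal{H}_\theta$ possessing some $p \in M\cap P$ with no $(M,P)$-semigeneric extension is stationary. Each $M \in B$ selects such a $p_M \in M$; since $M \mapsto p_M$ is regressive, normality of the club filter on $[\mathcal{H}_\theta]^{\omega}$ yields a single $p^{*}$ and a stationary $B' \subseteq B$ with $p^{*}\in M$ and no $q\le p^{*}$ being $(M,P)$-semigeneric, for every $M\in B'$. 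Unwinding the definition, for each $M \in B'$ the conditions forcing $M[\dot{G}]\cap\omega_1 \neq M\cap\omega_1$ are dense below $p^{*}$, whence $p^{*}\Vdash M[\dot{G}]\cap\omega_1\neq M\cap\omega_1$. As $B'$ is stationary it is semistationary in $V$, so it remains to show $p^{*}$ forces it to stop being so.

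This last step is the crux. Fix generic $G \ni p^{*}$ and, in $V[G]$, let $C$ consist of all $N \cap \mathcal{H}_\theta^{V}$ for countable $N \prec \mathcal{H}_{\theta'}^{V[G]}$ with $\{G,p^{*},B',\mathcal{H}_\theta^{V}\}\subseteq N$; this $C$ contains a club, so it suffices to check $C \cap (B')^{\mathbf{cl}} = \emptyset$. Suppose some $x = N\cap \mathcal{H}_\theta^{V}$ had a witness $y \in B'$ with $y \subseteq x$ and $y\cap\omega_1 = x\cap\omega_1 = N\cap\omega_1 =: \delta$. Since $p^{*}\in y$ and $y$ is bad, $y[G]\cap\omega_1 \neq \delta$ while $\delta \subseteq y \subseteq y[G]$, so some ordinal $\alpha$ with $\delta\le\alpha<\omega_1$ lies in $y[G]$; writing $\alpha=\tau^{G}$ with $\tau$ a name in $y$, we have $\tau \in y\subseteq N$ and $G\in N$, so the evaluation $\alpha=\tau^{G}$ belongs to $N$, forcing $\alpha \in N\cap\omega_1=\delta$ and contradicting $\alpha\ge\delta$. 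Hence $(B')^{\mathbf{cl}}$ is non-stationary in $V[G]$, so $p^{*}$ forces $B'$ to lose semistationarity, contradicting $(2)$. The main obstacle is precisely this argument: one must route the contradiction through $y \subseteq N$ and $G \in N$ (rather than the unavailable $y \in N$), so that $N$ genuinely evaluates the ground-model names in $y$ and thereby sees the ordinal that the generic pushes into $y[G]$; making the pressing-down to a single $p^{*}$ and the passage between $\mathcal{H}_\theta^{V}$ and $\mathcal{H}_{\theta'}^{V[G]}$ precise is where the remaining care lies.
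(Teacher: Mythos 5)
The paper offers no proof of this lemma at all --- it is quoted as a black box from Shelah~\cite{MR1623206} --- and your argument is correct and is essentially the standard one from that source: the forward direction via semigeneric conditions over models $M$ with $M\cap W\in S^{\mathbf{cl}}$, the converse by pressing the bad conditions down to a single $p^{*}$ on a stationary set $B'$ of bad models and then showing in $V[G]$ that $(B')^{\mathbf{cl}}$ misses the club of traces $N\cap\mathcal{H}_{\theta}^{V}$ of countable $N\prec\mathcal{H}_{\theta'}^{V[G]}$ with $G\in N$, since any witness $y\sqsubseteq_{\omega_1}N\cap\mathcal{H}_{\theta}^{V}$ would let $N$ evaluate the names lying in $y$ and pull the new ordinal of $y[G]\cap\omega_1$ below $N\cap\omega_1$. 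Your dispatch of the $\omega_1$-collapsing case is also correct; the only point left tacit is the equally standard fact that a semiproper $P$ preserves $\omega_1$, which is what licenses reading ``$\omega_1$'' indifferently in $V$ and $V[G]$ in your $(1)\Rightarrow(2)$ computation of $\sqsubseteq_{\omega_1}$.
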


We also introduce a useful lemma due to Menas to study semistationary subsets. 
\begin{lem}[Menas~\cite{MR0357121}]\label{menas}Let $W$ and $\overline{W}$ be sets with $\omega_1 \subseteq W \subseteq \overline{W}$. 
\begin{enumerate}
 \item  If $C \subseteq [W]^{\omega}$ is a club then the set $\{\overline{x} \in [\overline{W}]^{\omega} \mid \overline{x} \cap W \in C\}$ is a club in $[\overline{W}]^{\omega}$.
 \item If $\overline{C} \subseteq [\overline{W}]^{\omega}$ is a club then the set $\{\overline{x} \cap W \in [{W}]^{\omega} \mid \overline{x} \in \overline{C}\}$ contains a club in $[W]^{\omega}$. 
\end{enumerate}
\end{lem}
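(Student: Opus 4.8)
The plan is to rely on two standard facts about the club filter on $[X]^{\omega}$: a set is a club exactly when it is closed under unions of $\subseteq$-increasing $\omega$-chains and cofinal under $\subseteq$, and every club contains the set $C_F = \{x \in [X]^{\omega} \mid x \text{ is closed under } F\}$ of closure points of some function $F : X^{<\omega} \to X$ (and each such $C_F$ is itself a club). With these in hand, part (1) is a direct verification, while part (2) is handled by transporting a closure function from $\overline{W}$ down to $W$.

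For part (1), write $D = \{\overline{x} \in [\overline{W}]^{\omega} \mid \overline{x} \cap W \in C\}$ and observe that the map $\overline{x} \mapsto \overline{x} \cap W$ commutes with unions of increasing chains: if $\langle \overline{x}_n \mid n < \omega\rangle$ is $\subseteq$-increasing in $D$ then $(\bigcup_n \overline{x}_n) \cap W = \bigcup_n(\overline{x}_n \cap W)$, which lies in $C$ by closedness of $C$; hence $D$ is closed. For cofinality, given $\overline{y} \in [\overline{W}]^{\omega}$ I would iterate: set $\overline{x}_0 = \overline{y}$, and given $\overline{x}_n$ choose $z_n \in C$ with $\overline{x}_n \cap W \subseteq z_n$ (possible since $C$ is cofinal in $[W]^{\omega}$ and $\overline{x}_n \cap W \in [W]^{\omega}$), then set $\overline{x}_{n+1} = \overline{x}_n \cup z_n$. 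The union $\overline{x} = \bigcup_n \overline{x}_n$ satisfies $\overline{y} \subseteq \overline{x}$ and $\overline{x} \cap W = \bigcup_n z_n$, an increasing union of members of $C$, so $\overline{x} \cap W \in C$ and $\overline{x} \in D$.

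For part (2), fix a function $F : \overline{W}^{<\omega} \to \overline{W}$ with $C_F \subseteq \overline{C}$, and for $x \in [W]^{\omega}$ let $\mathrm{cl}_F(x)$ denote its (countable) closure under $F$ in $\overline{W}$. Define $C = \{x \in [W]^{\omega} \mid \mathrm{cl}_F(x) \cap W = x\}$. First, $C$ is contained in the projection $\{\overline{x} \cap W \mid \overline{x} \in \overline{C}\}$: for $x \in C$ the set $\overline{x} := \mathrm{cl}_F(x)$ lies in $C_F \subseteq \overline{C}$ and projects exactly to $x$. Then I would check $C$ is a club. Since $F$ is finitary, closures commute with countable increasing unions, which gives closedness of $C$ just as in part (1); for cofinality, given $y \in [W]^{\omega}$ I iterate the operation $y \mapsto \mathrm{cl}_F(y) \cap W$ through $\omega$ stages and take the union $x$, which is a fixed point, i.e. $x \in C$, with $y \subseteq x$.

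The only genuine subtlety --- and the step I would be most careful about --- is in part (2): closing a subset of $W$ under $F$ inside $\overline{W}$ can introduce new elements of $W$ (because $F$ may leave $W$ and return), so the naive projection of a single closure point need not equal $x$. This is exactly why I pass to the fixed points of $y \mapsto \mathrm{cl}_F(y) \cap W$, and why I use that $F$ is finitary: finitariness guarantees both that each closure stays countable and that the closure operator commutes with the countable increasing unions used to produce closedness and the fixed point.
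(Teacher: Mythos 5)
Your proof is correct. The paper itself gives no argument for this lemma (it is quoted from Menas), and your proof is the standard one: a direct verification of closure and cofinality for (1), and for (2) the reduction via the fact that the club filter on $[\overline{W}]^{\omega}$ is generated by closure-point sets $C_F$ of finitary functions, together with passing to fixed points of $y \mapsto \mathrm{cl}_F(y) \cap W$ --- which correctly handles the one genuine subtlety, namely that $F$ may leave $W$ and return.
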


\begin{lem}\label{ssr:upward}
 If $S \subseteq [W]^{\omega}$ is semistationary in $[W]^{\omega}$ then $S$ is semistationary in $[\overline{W}]^{\omega}$. 
\end{lem}
\begin{proof}
 This directly follows from Lemma~\ref{menas}.
\end{proof}

$\mathrm{SSR}([\lambda]^{\omega},<\kappa)$ is the statement that claims, for every semistationary subset $S \subseteq [\lambda]^{\omega}$, there is an $R \in \mathcal{P}_{\kappa}\lambda$ with the following properties:
\begin{enumerate}
 \item $\omega_1 \subseteq R \cap \kappa \in \kappa$
 \item $S \cap [R]^{\omega}$ is semistationary. 
\end{enumerate} 
We say that $S$ reflects to $R$ if $(2)$ above holds. To give an witness $R \in \mathcal{P}_{\kappa}\lambda$ of $\mathrm{SSR}([\lambda]^{\omega},{<}\kappa)$, it is enough to give an $R \in \mathcal{P}_{\kappa}\lambda$ in which some semistatoinary $S\subseteq [\lambda]^{\omega}$ reflects to $R$. So $(1)$ can be removed as follows.

 \begin{lem}If a semistationary subset $S \subseteq [\lambda]^{\omega}$ reflects to $R \in \mathcal{P}_{\kappa}\lambda$ then there is an $R' \in \mathcal{P}_{\kappa}\lambda$ such that $S$ reflects to $R'$ and $\omega_1 \subseteq R' \cap \kappa \in \kappa$. 
 
\end{lem}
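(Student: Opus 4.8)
The plan is to enlarge $R$ to a set $R'$ whose trace on $\kappa$ is an ordinal, keeping $R' \in \mathcal{P}_{\kappa}\lambda$, and then to transfer the reflection of $S$ upward from $R$ to $R'$. Observe first that the hypothesis that $S \cap [R]^{\omega}$ is semistationary presupposes $\omega_1 \subseteq R$, since semistationary subsets are only defined over base sets containing $\omega_1$. In particular $\omega_1 \subseteq R \cap \kappa$ already holds (as $\omega_1 < \kappa$), so the only part of condition $(1)$ that can fail for $R$ is the requirement that $R \cap \kappa$ be an ordinal, and the whole content of the lemma is to arrange this.

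First I would choose the ordinal to close off with. Since $\kappa$ is regular and $|R \cap \kappa| \leq |R| < \kappa$, the set $R \cap \kappa$ is bounded below $\kappa$, so I can fix an ordinal $\gamma < \kappa$ with $(R \cap \kappa) \cup \omega_1 \subseteq \gamma$; this is possible because $\sup((R \cap \kappa) \cup \omega_1) < \kappa$. I then set $R' = R \cup \gamma$. Computing the trace, $R' \cap \kappa = (R \cap \kappa) \cup (\gamma \cap \kappa) = (R \cap \kappa) \cup \gamma = \gamma$, which is an ordinal below $\kappa$ containing $\omega_1$. Thus $\omega_1 \subseteq R' \cap \kappa = \gamma \in \kappa$, so $R'$ satisfies $(1)$. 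Moreover $|R'| \leq |R| + |\gamma| < \kappa$ by regularity of $\kappa$, so $R' \in \mathcal{P}_{\kappa}\lambda$, and plainly $R \subseteq R'$ with $\omega_1 \subseteq R \subseteq R'$.

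It remains to see that $S$ still reflects to $R'$, i.e. that $S \cap [R']^{\omega}$ is semistationary in $[R']^{\omega}$. Since $\omega_1 \subseteq R \subseteq R'$ and $S \cap [R]^{\omega}$ is semistationary in $[R]^{\omega}$ by hypothesis, Lemma~\ref{ssr:upward} gives that $S \cap [R]^{\omega}$ is semistationary in $[R']^{\omega}$. Finally $S \cap [R]^{\omega} \subseteq S \cap [R']^{\omega}$, and semistationarity is monotone under supersets: if $A \subseteq B$ then $A^{\mathbf{cl}} \subseteq B^{\mathbf{cl}}$, so stationarity of the smaller closure forces that of the larger. Hence $S \cap [R']^{\omega}$ is semistationary in $[R']^{\omega}$, which is exactly condition $(2)$ for $R'$.

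The argument is essentially bookkeeping about traces on $\kappa$; the only genuine ingredient is the upward preservation of semistationarity (Lemma~\ref{ssr:upward}), and the only point requiring a moment's care is noting that the hypothesis already delivers $\omega_1 \subseteq R$, so that Lemma~\ref{ssr:upward} applies with base set $R$. I do not expect any real obstacle beyond this verification.
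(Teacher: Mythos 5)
Your proof is correct, and it is worth noting that it is self-contained where the paper is not: the paper gives no argument for this lemma at all, merely citing Sakai, so your write-up supplies a proof rather than paralleling one. The route you take---bound $R \cap \kappa$ below $\kappa$ by regularity of $\kappa$, pad it to an ordinal $\gamma$ with $(R \cap \kappa) \cup \omega_1 \subseteq \gamma < \kappa$, set $R' = R \cup \gamma$, and then combine Lemma~\ref{ssr:upward} (applicable since $\omega_1 \subseteq R \subseteq R'$) with monotonicity of the closure operation $A \mapsto A^{\mathbf{cl}}$---is the natural one and almost surely the argument in the cited source. One point you flag deserves emphasis as genuinely load-bearing rather than bookkeeping: the observation that the hypothesis presupposes $\omega_1 \subseteq R$. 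This is forced by the paper's convention that semistationarity is only defined over base sets containing $\omega_1$, and it cannot be dispensed with. Indeed, if one naively extended the definition of $S^{\mathbf{cl}}$ to base sets $R \not\supseteq \omega_1$, the lemma would become false: for any $y \in S$, the set $R = y$ would ``reflect'' $S$, since every $x \in [R]^{\omega}$ with $y \subseteq x$ automatically satisfies $x \cap \omega_1 = y \cap \omega_1$ (as $R \cap \omega_1 = y \cap \omega_1$), so $(S \cap [R]^{\omega})^{\mathbf{cl}}$ would contain the club of supersets of $y$; the lemma would then turn $\mathrm{SSR}([\lambda]^{\omega},{<}\kappa)$ into a theorem of $\mathrm{ZFC}$, which it is not. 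So your reading is not merely permissible but the only one under which the statement is true, and given that reading, both transfer steps in your argument are sound.
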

\begin{proof}
 See~\cite{MR2191239}.
\end{proof}

\begin{lem}\label{ssr:cobounded}
 If a semistationary subset $S \subseteq [\lambda]^{\omega}$ reflects to $R \in \mathcal{P}_{\kappa}\lambda$ then $\{a \in \mathcal{P}_{\kappa} \lambda \mid S \cap [a]^{\omega}$ is semistationary$\}$ is co-bounded. 
\end{lem}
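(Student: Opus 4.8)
The plan is to exhibit $R$ itself as the bound, showing that the entire cone $\{a \in \mathcal{P}_{\kappa}\lambda \mid R \subseteq a\}$ is contained in $\{a \in \mathcal{P}_{\kappa}\lambda \mid S \cap [a]^{\omega} \text{ is semistationary}\}$; this is exactly the assertion that the latter set is co-bounded (its complement is bounded, as witnessed by $R$). First I would unwind the hypothesis that $S$ reflects to $R$: by the definition of reflection, this means precisely that $S \cap [R]^{\omega}$ is semistationary in $[R]^{\omega}$.

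Now fix an arbitrary $a \in \mathcal{P}_{\kappa}\lambda$ with $R \subseteq a$. Applying Lemma~\ref{ssr:upward} with $W = R$ and $\overline{W} = a$ to the semistationary set $S \cap [R]^{\omega}$ yields that $S \cap [R]^{\omega}$ remains semistationary when viewed inside $[a]^{\omega}$. Since $R \subseteq a$ gives $[R]^{\omega} \subseteq [a]^{\omega}$, we have $S \cap [R]^{\omega} \subseteq S \cap [a]^{\omega}$. The final step is to invoke the monotonicity of semistationarity under inclusion: if $T \subseteq T'$ then the operator $T \mapsto T^{\mathbf{cl}}$ satisfies $T^{\mathbf{cl}} \subseteq (T')^{\mathbf{cl}}$, so a superset of a semistationary set is again semistationary. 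Hence $S \cap [a]^{\omega}$ is semistationary in $[a]^{\omega}$.

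Since $a \supseteq R$ was arbitrary, this places every such $a$ in the target set, establishing co-boundedness with witness $R$. The argument is essentially a short application of Lemma~\ref{ssr:upward} combined with monotonicity, so I do not anticipate any genuine obstacle; the only point that needs to be stated with a little care is the upward closure of semistationarity under $\subseteq$, which is immediate from the definition of $S^{\mathbf{cl}}$ since enlarging $T$ can only enlarge the set of $x$ admitting a witness $y \sqsubseteq_{\omega_1} x$ with $y \in T$.
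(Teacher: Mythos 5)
Your proof is correct and is exactly the argument the paper intends: the paper's proof is the one-line remark that the lemma ``follows from Lemma~\ref{ssr:upward}'', and your write-up simply fills in the details (apply Lemma~\ref{ssr:upward} with $W=R$, $\overline{W}=a$ for any $a\supseteq R$, then use upward monotonicity of semistationarity to pass from $S\cap[R]^{\omega}$ to $S\cap[a]^{\omega}$, so the cone above $R$ witnesses co-boundedness). No gaps; nothing further is needed.
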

\begin{proof}
 This follows from Lemma~\ref{ssr:upward}. 
\end{proof}

In the proof of Theorem~\ref{maintheorem2}, we use the following theorem. 
 \begin{thm}[Sakai~\cite{MR2387938}]\label{sakai}
 $\mathrm{SSR}([\lambda]^{\omega},{<}\kappa)$ implies $\mathrm{Refl}(E_{\omega}^{\lambda},<\kappa)$. 
\end{thm}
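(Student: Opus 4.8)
The plan is to reduce $\mathrm{Refl}(E^{\lambda}_{\omega},<\kappa)$ to the single assertion that every stationary $S \subseteq E^{\lambda}_{\omega}$ reflects at some $\alpha < \lambda$ with $\omega < \mathrm{cf}(\alpha) < \kappa$, and to feed $S$ into $\mathrm{SSR}([\lambda]^{\omega},{<}\kappa)$ through the standard supremum-coding. First I would record the correspondence: for any $\alpha$ with $\mathrm{cf}(\alpha) > \omega$, the set $S \cap \alpha$ is stationary in $\alpha$ if and only if $\hat{S}_{\alpha} = \{x \in [\alpha]^{\omega} \mid \sup x \in S\}$ is stationary in $[\alpha]^{\omega}$; both directions are routine from Lemma~\ref{menas} together with the fact that, since $\mathrm{cf}(\alpha) > \omega$, no countable subset of $\alpha$ is cofinal in $\alpha$. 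Globally $\hat{S} = \{x \in [\lambda]^{\omega} \mid \sup x \in S\}$ is stationary, hence semistationary, in $[\lambda]^{\omega}$.

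Next I would apply $\mathrm{SSR}([\lambda]^{\omega},{<}\kappa)$ to $\hat{S}$, obtaining $R \in \mathcal{P}_{\kappa}\lambda$ with $\omega_{1} \subseteq R \cap \kappa \in \kappa$ such that $\hat{S} \cap [R]^{\omega}$ is semistationary. Using Lemma~\ref{ssr:cobounded} I would enlarge $R$ to some $a \supseteq R$ with $\mathrm{cf}(\sup a) = \omega_{1}$ — concretely by adjoining a continuous cofinal sequence of length $\omega_{1}$ into some ordinal below $\lambda$ above $\sup R$, which keeps $|a| < \kappa$ because $\kappa \geq \aleph_{2}$ — so that $\hat{S} \cap [a]^{\omega}$ is still semistationary. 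Writing $\alpha = \sup a$ and assuming $\alpha \notin a$, the goal becomes to show that $\hat{S}_{\alpha}$ is stationary in $[\alpha]^{\omega}$; note that by Lemma~\ref{ssr:upward} the semistationarity of $\hat{S} \cap [a]^{\omega}$ transfers upward to $[\alpha]^{\omega}$, since $a \subseteq \alpha$, so at least $\hat{S}_{\alpha} \supseteq \hat{S} \cap [a]^{\omega}$ is \emph{semistationary} in $[\alpha]^{\omega}$.

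The main obstacle is precisely the passage from semistationarity to full stationarity. The principle delivers only that $(\hat{S}_{\alpha})^{\mathbf{cl}}$ is stationary in $[\alpha]^{\omega}$, whereas the correspondence above needs $\hat{S}_{\alpha}$ itself to be stationary, and these genuinely differ: the $\sqsubseteq_{\omega_{1}}$-closure inflates thin traces of $S$. For instance, a single $\beta \in S \cap \alpha$ with $\omega_{1} \leq \beta$ already forces $(\{x \mid \sup x = \beta\})^{\mathbf{cl}} \supseteq \{y \mid y \cap \beta \text{ is cofinal in } \beta\}$, which is stationary in $[\alpha]^{\omega}$, and supremums of countable sets of cofinality $\omega$ reflect trivially. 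Thus one cannot read stationary reflection of $S$ off semistationary reflection of $\hat{S}$ by this coding alone; the difficulty is to arrange that the witnesses $z \sqsubseteq_{\omega_{1}} y$ can be taken cofinal, so that $\sup z$ is pinned into a prescribed club. This is where the argument must do real work.

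The route I would then pursue is contraposition. Assuming $S$ reflects at no $\alpha$ with $\omega_{1} \leq \mathrm{cf}(\alpha) < \kappa$, I would fix for each such $\alpha$ a club $E_{\alpha} \subseteq \alpha$ with $E_{\alpha} \cap S = \emptyset$ and amalgamate the $E_{\alpha}$ into a single semistationary $S^{\#} \subseteq [\lambda]^{\omega}$ whose defining clause builds the relevant local club structure into each condition, so that for every $R \in \mathcal{P}_{\kappa}\lambda$ the club $E_{\sup R}$ — together with control of the part of $y$ below $\omega_{1}$ and of the limit points of $y$ — produces a club in $[R]^{\omega}$ disjoint from $(S^{\#})^{\mathbf{cl}}$, witnessing that $S^{\#}$ reflects nowhere and contradicting $\mathrm{SSR}([\lambda]^{\omega},{<}\kappa)$. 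The delicate point, and the place where semistationary (rather than merely stationary) reflection is essential, is to design $S^{\#}$ so that it defeats both failure modes identified above — the closure-inflation of thin traces and the trivial reflection at points of cofinality $\omega$ — uniformly in $R$; coordinating the clubs $E_{\alpha}$ coherently enough for this is where I expect the main difficulty to lie.
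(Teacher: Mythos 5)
Your proposal stops exactly where the proof of this theorem has to begin, and you say so yourself. Everything up to the pivot is correct and well observed: the sup-coding $\hat{S}$, the equivalence (for $\mathrm{cf}(\alpha)>\omega$) between stationarity of $S\cap\alpha$ in $\alpha$ and stationarity of $\hat{S}_{\alpha}$ in $[\alpha]^{\omega}$, the use of Lemma~\ref{ssr:cobounded} to enlarge the reflecting set, and above all the diagnosis that semistationary reflection of $\hat{S}$ is nearly vacuous, since a single $\beta\in S$ above $\omega_{1}$ that is a limit point of $R$ already makes $\hat{S}\cap[R]^{\omega}$ semistationary (take $y=z\cap\beta$). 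In fact your diagnosis proves more than you use: since for \emph{any} stationary $S$ one can choose $R$ with $\omega_{1}\subseteq R\cap\kappa\in\kappa$ containing a cofinal $\omega$-sequence into some $\beta\in S$ above $\omega_{1}$, the set $\hat{S}$ reflects to such an $R$ no matter how badly $S$ fails to reflect; so $\hat{S}$ can never serve as the non-reflecting witness $S^{\#}$ in your contrapositive plan. The same computation kills the next obvious candidate, the ladder coding $\{x\mid\exists\beta\in S\,(c_{\beta}\subseteq x\subseteq\beta)\}$: a club in $[R]^{\omega}$ is $\subseteq$-unbounded, hence contains supersets of any fixed ladder $c_{\beta}\subseteq R$, and every such superset lies in the $\sqsubseteq_{\omega_{1}}$-closure.

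The genuine gap is that the construction of $S^{\#}$, together with the two verifications that (a) $S^{\#}$ is semistationary and (b) for \emph{every} admissible $R\in\mathcal{P}_{\kappa}\lambda$ there is a club of $z\in[R]^{\omega}$ admitting no $y\in S^{\#}$ with $y\sqsubseteq_{\omega_{1}}z$, is entirely absent; and this is not a routine "coordination" step but the whole mathematical content of Sakai's theorem. (Note that the paper does not reprove it either: it is quoted directly from~\cite{MR2387938}, where this construction occupies the entire argument.) Moreover, the mechanism you gesture at cannot work as described: a club in $[R]^{\omega}$ manufactured from the single club $E_{\sup R}$ (plus control of $y\cap\omega_{1}$ and of limit points) is blind to points of $S$ lying inside one gap $(\gamma,\gamma')$ of $E_{\sup R}$ --- closure conditions definable from $E_{\sup R}$ cannot prevent a subset $y\subseteq z$ with full $\omega_{1}$-part from accumulating to such a point, as one sees by running your own closure-inflation example inside the gap. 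Any successful $S^{\#}$ must therefore invoke the non-reflection clubs $E_{\alpha}$ at ordinals of every uncountable cofinality below $\kappa$ \emph{inside} the gaps as well, organized by some recursion (walk-like) scheme, and must simultaneously remain semistationary, which is exactly the tension that makes the theorem nontrivial. As it stands, your text is a correct and useful analysis of why the easy arguments fail, but it is not a proof of the statement.
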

Here, $\mathrm{Refl}(E_{\omega}^{\lambda},<\kappa)$ is the statement that, for every stationary subset $S \subseteq E_{\omega}^{\lambda}$, there is an $\alpha \in E_{<\kappa}^{\lambda}$ such that $S \cap \alpha$ is stationary in $\alpha$. 

Lastly, we conclude this subsection with the definition of Namba forcing. Namba forcing $\mathrm{Nm}(\lambda)$ is the set of all Namba trees. Namba tree is a tree $p \subseteq [\lambda]^{<\omega}$ with the following conditions:
\begin{enumerate}
 \item $p$ has a trunk $\mathrm{tr}(p)$, that is, $\mathrm{tr}(p)$ is the maximal $t \in p$ such that $\forall s \in p(s \subseteq t \lor t\subseteq s)$. 
 \item For all $s \in p$, if $s \supseteq \mathrm{tr}(p)$ then $\mathrm{Suc}(s) = \{\xi < \lambda \mid s{^\frown}\langle \xi\rangle \in p\}$ is unbounded in $\lambda$. 
\end{enumerate}
$\mathrm{Nm}(\lambda)$ is ordered by inclusion. 
Note that we wrote $s{^\frown} \langle \xi \rangle$ to denote $s \cup \{\xi\}$. But if we write $s{^\frown} \langle \xi \rangle$ then we always assume $\max{s} < \xi$. Therefore, $s{^{\frown}} \langle \xi \rangle$ end-extends $a$. We use a similar notation for finite subsets of an ordered set later.

$\mathrm{Nm}(\lambda)$ is $\omega_1$-stationary preserving and that forces $\dot{\mathrm{cf}}(\lambda) = \omega$. Therefore $\mathrm{Nm}(\lambda)$ is semiproper under $(\dagger)$. Namba forcing is a representative of semiproper posets that change uncountable cofinalities in the following sense. 

\begin{thm}[Shelah~\cite{MR1623206}]\label{shelah:namba}
 The following are equivalent:
\begin{enumerate}
 \item $\mathrm{Nm}(\lambda)$ is semiproper.
 \item There is a poset that is semiproper and forces $\dot{\mathrm{cf}}(\lambda) = \omega$. 
 \item $\Phi_{\lambda}$. 
\end{enumerate}
\end{thm}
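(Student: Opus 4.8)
The plan is to run the cycle $(1)\Rightarrow(2)\Rightarrow(1)$ as the substantive equivalence, using $(3)$ as the combinatorial bridge isolated by Shelah. The implication $(1)\Rightarrow(2)$ is immediate: a $\mathrm{Nm}(\lambda)$-generic filter determines a branch $b\in{}^{\omega}\lambda$ whose range is cofinal in $\lambda$, since every node above the trunk has unbounded successor set (for each $\eta<\lambda$ the set of conditions whose trunk reaches past $\eta$ is dense). Hence $\mathrm{Nm}(\lambda)\force\dot{\mathrm{cf}}(\lambda)=\omega$, and if in addition $\mathrm{Nm}(\lambda)$ is semiproper, then $P=\mathrm{Nm}(\lambda)$ already witnesses $(2)$.

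For $(2)\Rightarrow(1)$, fix a semiproper $P$ and a $P$-name $\dot g$ with $P\force\dot g\colon\omega\to\lambda$ increasing and cofinal. I would verify the semiproperness of $\mathrm{Nm}(\lambda)$ through the semigeneric-condition characterization: for a club of countable $M\prec\mathcal{H}_{\theta}$ with $\mathrm{Nm}(\lambda),P,\dot g\in M$ and every $p\in\mathrm{Nm}(\lambda)\cap M$, I must produce $q\leq p$ that is $(M,\mathrm{Nm}(\lambda))$-semigeneric, i.e.\ $q\force M[\dot G]\cap\omega_1=M\cap\omega_1$. The idea is to let $P$ manufacture the successor sets of $q$: choose an $(M,P)$-semigeneric $r\in P$, so that below $r$ one has $M[\dot G]\cap\omega_1=M\cap\omega_1$ while $\dot g$ is forced cofinal in $\lambda$. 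Reading off the conditions below $r$ that decide initial segments of $\dot g$ yields, in $V$, a tree of possible values of $\dot g$; after arranging unbounded splitting (the delicate point, addressed below), I would interleave a bookkeeping over the countably many maximal antichains and the countably many names for countable ordinals lying in $M$, so that along every branch of the resulting tree each such name is decided to a value below $\sup(M\cap\omega_1)$. The semigenericity of $r$ is exactly what keeps those decided values inside $M\cap\omega_1=M[\dot G]\cap\omega_1$ rather than escaping.

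The hard part will be the interplay between $V$ and the $P$-extension: the tree $q$ must be a genuine ground-model Namba condition, whereas the cofinal sequence and the identity $M[\dot G]\cap\omega_1=M\cap\omega_1$ are features of the $P$-extension. I would resolve this by never passing to an actual generic, but instead extracting from $r$ and $\dot g$ a $V$-definable tree of forced initial segments, and by showing, via a density/pressing-down argument below $r$, that the set of good successors at each node is already unbounded in $V$; the key point is that boundedness of the good successors at some node would, pulled back through $r$, contradict that $P$ forces $\dot g$ cofinal while fixing $M\cap\omega_1$. Checking that this extraction simultaneously secures unbounded splitting (the Namba requirement) and branchwise decision of every name of $M$ into $M\cap\omega_1$ (semigenericity) is the technical heart of the argument.

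Finally, I would factor the equivalence through $(3)$: Shelah's principle $\Phi_{\lambda}$ is designed to isolate precisely the existence, for club many countable $M$, of the splitting tree produced above, namely a cofinal tree on $\lambda$ that fixes $\omega_1$ relative to $M$. Under this reading, $(1)\Leftrightarrow(3)$ is essentially a reformulation of the semigeneric-condition analysis of $\mathrm{Nm}(\lambda)$, while $(3)\Rightarrow(2)$ is witnessed by $\mathrm{Nm}(\lambda)$ itself (or a tidy variant), and $(2)\Rightarrow(3)$ is exactly the extraction of the tree from $\dot g$ and $r$ described above. Thus $\Phi_{\lambda}$ functions as the model-theoretically clean midpoint separating the forcing-theoretic content of $(1)$ and $(2)$, and I expect the bulk of the genuine work to be concentrated in the unboundedness-of-good-successors step flagged in the previous paragraph.
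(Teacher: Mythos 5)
Your $(1)\Rightarrow(2)$ is correct, and the point you flag as the delicate one --- unbounded splitting of the tree extracted from $\dot g$ --- is actually the easy part: if the set of possible next values of $\mathrm{ran}(\dot g)$ below some $r'\leq r$ were bounded by some $\eta<\lambda$, then $r'$ would force $\mathrm{ran}(\dot g)\subseteq\eta$, contradicting cofinality. The genuine gap is the step where you claim ``the semigenericity of $r$ is exactly what keeps those decided values inside $M\cap\omega_1$.'' This conflates the two forcings: $r$ is $(M,P)$-semigeneric, so it controls $P$-names for countable ordinals, whereas the names you must trap below $M\cap\omega_1$ to obtain an $(M,\mathrm{Nm}(\lambda))$-semigeneric condition are $\mathrm{Nm}(\lambda)$-names. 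In the Namba fusion, at a node $s$ the relevant object is the ground-model function assigning to each successor $a$ the value of $\dot\alpha$ decided by a direct extension of $p\upharpoonright(s^{\frown}\langle a\rangle)$; the successors $a$ run unboundedly through $\lambda$, hence mostly outside $M$, and there is no mechanism by which $r$ bounds such a function on an unbounded set --- branches of your extracted tree do not generate $P$-generic filters, so $\mathrm{Nm}(\lambda)$-names cannot be translated into $P$-names below $r$. Producing exactly such bounds is the job of $\Phi_{\lambda}$, i.e.\ of Player II's winning strategy in the Galvin game, and the resulting fusion is the real content of $(3)\Rightarrow(1)$; it is not ``essentially a reformulation'' of semigenericity. (The paper carries out this fusion for the generalized forcing $\mathrm{Nm}(\kappa,\lambda,I)$ in the inverse direction of Lemma~\ref{originalnambaforcing}.)

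Your description of $(2)\Rightarrow(3)$ is also off target: no tree extraction or pressing-down is involved. The correct argument is a Skolem-hull strategy. Player II answers $F_0,\dots,F_i$ with $\xi_i=M_i\cap\omega_1$, where $M_0\subseteq\cdots\subseteq M_i\prec\mathcal{H}_\theta$ are countable and $P,\dot g,F_0,\dots,F_i\in M_i$; put $M=\bigcup_i M_i$ and $\xi=\sup_i\xi_i=M\cap\omega_1$, and pick an $(M,P)$-semigeneric $r$. Since $\dot g$ and each $F_i$ lie in $M$, each $F_i(\dot g(n))$ is forced by $r$ to lie in $M[\dot G]\cap\omega_1=\xi$, hence $r\force\mathrm{ran}(\dot g)\subseteq\bigcap_i F_i^{-1}\xi$; as $\dot g$ is forced cofinal and $\bigcap_i F_i^{-1}\xi$ is a ground-model set, it is unbounded in $\lambda$ by absoluteness, so II wins. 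This is the same Skolem-hull computation as the forward direction of Lemma~\ref{originalnambaforcing}, run for an arbitrary semiproper $P$ instead of Namba forcing. With $(2)\Rightarrow(3)$ and $(3)\Rightarrow(1)$ done properly, your trivial $(1)\Rightarrow(2)$ closes the cycle; note that the paper itself does not prove this theorem but cites Shelah, and the two nontrivial arrows are precisely the two halves of its Lemma~\ref{originalnambaforcing}.
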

For the definition of $\Phi_{\lambda}$, see Section 3. 
\subsection{Saturated ideals}

For an ideal $I$ over $Z$, we write $I^{+} = \mathcal{P}(Z) \setminus I$ for the set of $I$-positive sets. $\mathcal{P}(Z) / I$ is a poset $\langle I^{+},\subseteq \rangle$. We say that $I$ is semiproper if $\mathcal{P}(Z) / I$ is semiproper. We say that $I$ is $\lambda$-saturated if $\mathcal{P}(Z) \setminus I$ has the $\lambda$-c.c. For an ideal over $Z = \mathcal{P}_{\kappa}\lambda$, we simply say that $I$ is saturated if $I$ is $\lambda^{+}$-saturated. Note that we can see an ideal over $\kappa$ as an ideal over $\mathcal{P}_{\kappa}\kappa$. 

For a precipitous ideal $I$ over $Z$, by $\mathrm{comp}(I)$, we mean the least $\kappa$ such that $I$ is not $\kappa^{+}$-complete. Then, the critical point of the generic ultrapower mapping $\dot{j}:V \to \dot{M} \subseteq V[\dot{G}]$ is forced to be $\mathrm{comp}(I)$. We say that an ideal $I$ is exactly and uniformly $\kappa$-complete if $\mathrm{comp}(I \upharpoonright A) = \kappa$ for all $A \in I^{+}$. If $Z = \mathcal{P}_{\kappa} X$ or $Z = [X]^{\kappa}$ then every $\kappa$-complete ideal over $Z$ is exactly and uniformly $\kappa$-complete. 

\begin{lem}\label{ultrapowerbasic}Suppose that $I$ is a normal, fine, exactly and uniformly $\mu^{+}$-complete precipitous ideal over $Z \subseteq \mathcal{P}(X)$. Let $\dot{j}$ be a $\mathcal{P}(Z) / I$-name for the generic ultrapower mapping $\dot{j}:V \to \dot{M} \subseteq V[\dot{G}]$. Then the following are forced by $\mathcal{P}(Z) / I$:
\begin{enumerate}
 \item ${^{\mu^{+}}\dot{M}} \subseteq \dot{M} \cap V[\dot{G}]$.
 \item $\dot{j} ``X \in \dot{M}$. If $I$ is $|X|^{+}$-saturated then ${^{|X|}}\dot{M} \cap V[\dot{G}] \subseteq \dot{M}$.
\end{enumerate}
\end{lem}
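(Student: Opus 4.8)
The plan is to prove Lemma~\ref{ultrapowerbasic} by unwinding the definition of the generic ultrapower and using the completeness and fineness of $I$ together with the saturation hypothesis. Let $G$ be $\mathcal{P}(Z)/I$-generic over $V$, let $\dot{j}:V \to \dot{M}$ be the generic elementary embedding, and recall that $\dot{M}$ is (the transitive collapse of) the ultrapower $\mathrm{Ult}(V,G)$ computed in $V[G]$, where elements of $\dot{M}$ are represented by functions $f:Z \to V$ lying in $V$, with $[f] \in \dot{M}$ and $G$ deciding all statements of the form $[f] \in [g]$, $[f]=[g]$ via the measure-one sets in $G$. Since $I$ is precipitous, $\dot{M}$ is wellfounded and we identify it with its transitive collapse. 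The critical point of $\dot{j}$ is forced to be $\mathrm{comp}(I) = \mu^{+}$ because $I$ is exactly and uniformly $\mu^{+}$-complete.

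For part (1), the goal is to show $\dot{M}$ is closed under sequences of length $\mu^{+}$ as computed in $V[G]$. First I would fix, in $V[G]$, a sequence $\langle [f_\xi] : \xi < \mu^{+}\rangle$ of elements of $\dot{M}$ with each $f_\xi \in V$. The key point is that $\mathcal{P}(Z)/I$ is $\mu^{+}$-closed-like only through completeness: since $I$ is $\mu^{+}$-complete, any sequence of length $\mu^{+}$ of the \emph{names} can be captured. More precisely, I would argue that because $I$ is $\mu^{+}$-complete and the critical point is $\mu^{+}$, the forcing $\mathcal{P}(Z)/I$ does not add new $\mu$-sequences that affect the representing functions below the critical point, so the sequence $\langle f_\xi : \xi<\mu^{+}\rangle$, being indexed by ordinals below the critical point of $\dot{j}$, can be amalgamated into a single function $F:Z \to V$ in $V$ via $F(z)(\xi) = f_\xi(z)$, and then $[F]$ represents the desired sequence inside $\dot{M}$ using $j\restriction \mu^{+}$, which belongs to $\dot{M}$. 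The standard argument that $[F]$ equals the sequence $\langle [f_\xi]:\xi<\mu^{+}\rangle$ as computed in $\dot{M}$ then yields ${}^{\mu^{+}}\dot{M}\cap V[G]\subseteq \dot{M}$.

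For part (2), fineness of $I$ gives $j``X \in \dot M$: by fineness, for each $a \in X$ the set $\{z \in Z : a \in z\}$ is in $I^{+}$ and in fact in $G$ whenever the generic hits it, and the function $z \mapsto z$ represents $j``X$ in the ultrapower, which is an element of $\dot M$ because $X \in V$. Concretely, $[\mathrm{id}] = j``X$ follows from fineness exactly as in the ultrapower-by-fine-ultrafilter computation. For the second assertion of (2), under the $|X|^{+}$-saturation hypothesis the forcing $\mathcal{P}(Z)/I$ has the $|X|^{+}$-c.c., so any function from $X$ into $\dot M$ named in $V[G]$ is decided by an antichain of size at most $|X|$; I would then use the $|X|^{+}$-c.c. to reflect a given $|X|$-sequence of elements of $\dot M$ back to a function in $V$ (mixing over the antichain) and represent it by a single $F \in V$ as in part (1), giving ${}^{|X|}\dot M \cap V[G] \subseteq \dot M$.

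The main obstacle I expect is the closure argument, namely verifying that a $\mu^{+}$- (resp. $|X|$-) sequence of ultrapower elements living in $V[G]$ can genuinely be coded by a \emph{ground model} function, since the representing functions are only given in $V[G]$ and need not a priori come from $V$. In part (1) the resolution is that the index set $\mu^{+}$ is exactly the critical point, so $\mu^{+}$-completeness ensures the relevant choices are made in $V$; in part (2) the essential new ingredient is saturation, which via the $|X|^{+}$-c.c. lets one enumerate the possibilities by a ground-model antichain and thereby manufacture the amalgamating function $F \in V$. Getting the bookkeeping between the $V[G]$-sequence, the antichain, and the single function $F$ precisely right — and confirming that $[F]$ computes the intended sequence inside $\dot M$ by {\L}o{\'s}'s theorem for the generic ultrapower — is the delicate part; the rest is routine.
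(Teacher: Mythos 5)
Your proposal correctly identifies the crux of the whole lemma --- the representing functions $f_{\xi}$ are chosen in $V[G]$, not in $V$ --- but the way you dispose of this in part (1) is a genuine gap, not a routine verification. Exact, uniform $\mu^{+}$-completeness is a property of the ideal, not of the quotient: it does not make $\mathcal{P}(Z)/I$ $\mu^{+}$-distributive (this forcing in general adds new $\omega$-sequences of ordinals), and in any case the object you need to pull back into $V$ is a $\mu^{+}$-sequence, not a $\mu$-sequence, so even ``no new $\mu$-sequences'' would not suffice. Worse, no argument can close this gap from precipitousness alone, because the statement under your reading, $({}^{\mu^{+}}\dot{M})\cap V[\dot{G}]\subseteq\dot{M}$, is false at this level of generality. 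Standard example: assume $\mathrm{GCH}$, let $\kappa$ be measurable with normal measure $U$ and ultrapower $j_{U}:V\to M_{U}$, force with $\mathrm{Coll}(\omega,{<}\kappa)$, and let $I$ be the ideal generated by the dual of $U$ in $V[G]$. Then $I$ is a normal, fine, exactly and uniformly $\omega_{1}$-complete precipitous ideal on $\kappa=\omega_{1}^{V[G]}$, but $\mathcal{P}(\kappa)/I$ is forcing-equivalent to $\mathrm{Coll}(\omega,[\kappa,j_{U}(\kappa)))$, so it collapses $\omega_{1}^{V[G]}$ to $\omega$; the generic ultrapower is $N=M_{U}[G\ast H]$, and since $G\ast H$ is $V$-generic for a poset lying in $M_{U}$, every set of ordinals in $N\cap V$ already lies in $M_{U}$. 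Taking $A\subseteq\kappa^{+}$ with $A\in V\setminus M_{U}$ (a code for $U$ via a bijection belonging to $M_{U}$), the set $A$ becomes a countable set of ordinals in the extension, all of whose members are in $N$, yet $A\notin N$. So $N$ is not closed even under $\omega$-sequences of the extension, let alone $\mu^{+}$-sequences; this is exactly why presaturation and saturation were introduced.

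What is provable from the stated hypotheses --- and what Section 2 of \cite{MR2768692}, which is the paper's entire proof of this lemma, actually supplies --- is closure under sequences whose representatives already form a sequence in $V$: if $\langle f_{\xi}\mid\xi<\mu^{+}\rangle\in V$, then $\langle [f_{\xi}]_{G}\mid\xi<\mu^{+}\rangle\in\dot{M}$; your amalgamation $F(z)(\xi)=f_{\xi}(z)$ together with $j\upharpoonright\mu^{+}=\mathrm{id}$ proves precisely this, and closure under sequences lying in $V[\dot{G}]$ is what the saturation hypothesis in (2) is for. Two corrections to (2) itself. First, $[\mathrm{id}]=j``X$ is not ``exactly as in the ultrapower-by-fine-ultrafilter computation'': fineness gives only $j``X\subseteq[\mathrm{id}]$, and for a merely fine ultrafilter the inclusion is strict (this is the difference between strong compactness and supercompactness); the reverse inclusion is a pressing-down argument, and it is exactly where normality is used. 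Second, in the saturated case your outline is the right one, but the ``mixing'' is not Boolean mixing: to turn the $\leq|X|$-many decisions $S\Vdash\dot{s}(\xi)=[f_{\xi,S}]$, with $S$ ranging over a maximal antichain, into a single function in $V$, one must discard $|X|$-many $I$-null overlaps, which exceeds the completeness $\mu^{+}$; this is done with a diagonal union, i.e.\ with normality and fineness once more, and at the end one needs $j\upharpoonright|X|\in\dot{M}$, which comes from $j``X\in\dot{M}$. With these repairs part (2) is fine; as it stands, part (1) is not proved.
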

\begin{proof}
 See~\cite[Section 2]{MR2768692}
\end{proof}

For a later purpose, we see the cofinalities in the generic ultrapower. Let us introduce Shelah's famous result. 
\begin{lem}[Shelah]\label{shelah}
 Suppose that $V\subseteq W$ are inner models. Let $\lambda$ be a regular cardinal in $V$. If $(\lambda^{+})^{V}$ is a cardinal in $W$ then $\mathrm{cf}^{W}(\lambda) = \mathrm{cf}^{W}(|\lambda|)$. 
\end{lem}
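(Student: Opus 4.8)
The plan is to reduce to the nontrivial collapse situation and then argue by contraposition, showing that a mismatch of the two cofinalities forces $(\lambda^{+})^{V}$ to be collapsed in $W$.

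First I would dispose of the trivial case. Put $\kappa=|\lambda|^{W}$; if $\kappa=\lambda$ then $\lambda$ is a cardinal in $W$ and the conclusion is immediate, so assume $\kappa<\lambda$. The first genuine step is to locate $(\lambda^{+})^{V}$: I claim $(\lambda^{+})^{V}=(\kappa^{+})^{W}$. Indeed, suppose $\nu$ is a $W$-cardinal with $\kappa<\nu<(\lambda^{+})^{V}$. If $\nu\le\lambda$ then $|\nu|^{W}\le|\lambda|^{W}=\kappa$, contradicting $\nu>\kappa$; if $\lambda<\nu<(\lambda^{+})^{V}$ then $|\nu|^{V}=\lambda$, whence (via a $V$-bijection $\nu\leftrightarrow\lambda$, which lies in $W$) $|\nu|^{W}=|\lambda|^{W}=\kappa<\nu$, again a contradiction. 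Thus there is no $W$-cardinal strictly between $\kappa$ and the $W$-cardinal $(\lambda^{+})^{V}$, so $(\lambda^{+})^{V}=(\kappa^{+})^{W}$.

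Next, the core. Set $\theta=\mathrm{cf}^{W}(\lambda)$ and $\rho=\mathrm{cf}^{W}(\kappa)$; both are regular in $W$ and at most $\kappa$, and I want $\theta=\rho$. The plan is to prove the contrapositive: if $\theta\ne\rho$ then one can build, in $W$, a surjection from $\kappa$ onto $(\lambda^{+})^{V}=(\kappa^{+})^{W}$, contradicting that the latter is a $W$-cardinal. The raw material is a family, fixed in $V$, of bijections $e_{\alpha}\colon\lambda\to\alpha$ for $\lambda\le\alpha<(\lambda^{+})^{V}$; here the $V$-regularity of $\lambda$ enters in the form that for every $\beta<\lambda$ the image $e_{\alpha}[\beta]$ is a subset of $\alpha$ of $V$-cardinality $<\lambda$, so that $\alpha=\bigcup_{\beta<\lambda}e_{\alpha}[\beta]$ is an increasing union of small pieces. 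One inequality is comparatively cheap: choosing in $W$ a cofinal increasing $\langle\beta_{\zeta}:\zeta<\theta\rangle$ in $\lambda$ and reading off the $W$-cardinalities $|\beta_{\zeta}|^{W}$ yields a cofinal sequence in $\kappa$ of length $\theta$ whenever these values are cofinal in $\kappa$, giving $\rho\le\theta$ in that configuration.

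The heart of the matter, and the step I expect to be the main obstacle, is the collapsing construction: assuming the cofinalities differ, I would combine the $V$-bijections $e_{\alpha}$ with cofinal sequences of lengths $\theta$ in $\lambda$ and $\rho$ in $\kappa$ so as to code each $\alpha<(\lambda^{+})^{V}$ by an object lying in a set of $W$-cardinality $\kappa$, thereby manufacturing the forbidden surjection. This requires a case analysis according to which of $\theta,\rho$ is larger and according to whether $\lambda$ is a successor or a limit cardinal in $V$ (the latter dictating whether the $V$-cardinalities $|\beta_{\zeta}|^{V}$ of a cofinal sequence in $\lambda$ themselves approach $\lambda$). It is precisely here that the preservation of $(\lambda^{+})^{V}$ as a $W$-cardinal is indispensable: it is the only hypothesis ruling out the \emph{Namba-type} behaviour in which $\mathrm{cf}^{W}(\lambda)$ drops strictly below $\mathrm{cf}^{W}(\kappa)$. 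Once the surjection is produced the contradiction is immediate and the proof closes.
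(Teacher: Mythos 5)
Your reduction steps are fine: the disposal of the trivial case $|\lambda|^{W}=\lambda$, the observation that no $W$-cardinal can lie strictly between $\kappa=|\lambda|^{W}$ and $(\lambda^{+})^{V}$ (so that $(\lambda^{+})^{V}=(\kappa^{+})^{W}$, a regular cardinal of $W$), and, modulo the easily repaired ``bounded configuration'' (if $\sup_{\zeta}|\beta_{\zeta}|^{W}<\kappa$ then $\kappa\le\theta\le\kappa$, so $\theta=\kappa$ and $\rho\le\theta$ anyway), the inequality $\mathrm{cf}^{W}(\kappa)\le\mathrm{cf}^{W}(\lambda)$. Bear in mind that the paper itself does not prove this lemma: it cites Theorem 4.73 of the Handbook of Set Theory chapter it references. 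So your proposal must stand entirely on its own, and it does not.

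The genuine gap is exactly where you say you ``expect the main obstacle'': the converse inequality $\mathrm{cf}^{W}(\lambda)\le\mathrm{cf}^{W}(\kappa)$, i.e., the construction, from a mismatch of cofinalities, of a surjection in $W$ from $\kappa$ onto $(\kappa^{+})^{W}$. That construction is the entire mathematical content of Shelah's theorem, and your text offers only a statement of intent --- ``combine the $V$-bijections $e_{\alpha}$ with cofinal sequences \dots so as to code each $\alpha$ by an object lying in a set of $W$-cardinality $\kappa$'' --- with no definition of the coding and no argument that it is injective or yields the surjection. The naive candidates do not work off the shelf: sending $\alpha$ to $\langle\mathrm{ot}(e_{\alpha}[\beta_{\zeta}]):\zeta<\theta\rangle$ lands in ${}^{\theta}\lambda$, whose $W$-cardinality can be far larger than $\kappa$, and there is no injectivity or monotonicity for free (comparisons such as $e_{\alpha}[\beta]\subseteq e_{\alpha'}[\beta]$ hold only on a $V$-club of $\beta$'s depending on the pair $\alpha<\alpha'$, which a $W$-cofinal sequence of length $\theta$ need not meet); sup-based codings lose injectivity altogether. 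Bridging this requires the key idea of Shelah's actual proof, which is not routine and cannot be reconstructed from what you wrote. In short: the skeleton and the identification $(\lambda^{+})^{V}=(\kappa^{+})^{W}$ are correct, and you correctly locate where preservation of $(\lambda^{+})^{V}$ must enter, but the core argument of the theorem is absent.
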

\begin{proof}
 See~\cite[Theorem 4.73]{MR2768694}
\end{proof}
We will use Lemma~\ref{ultrapowercofinality} in Section 5. An example of use is computations of cofinalities. For a given saturated ideal over $\mu^{+}$, if $\mu$ is singular with the countable cofinality then $I$ forces $\dot{\mathrm{cf}}(\mu^{+}) = \omega$ by Lemma~\ref{ultrapowercofinality}. 
\begin{lem}\label{ultrapowercofinality}
 Suppose that $I$ is a normal, fine, exactly and uniformly $\mu^{+}$-complete $\lambda^{+}$-saturated ideal over $Z\subseteq \mathcal{P}(X)$. Suppose that $\lambda = |X|$ is a regular cardinal. 
Then the following holds:
\begin{enumerate}
 \item If $Z \subseteq \mathcal{P}_{\mu^{+}}(X)$ then $\mathcal{P}(Z) / I$ forces that $\dot{\mathrm{cf}}(\lambda) = \dot{\mathrm{cf}}(\mu)$. 
 \item If $Z \subseteq [X]^{\mu^{+}}$ and $\lambda$ is a successor cardinal then $\mathcal{P}(Z) / I$ forces that $\dot{\mathrm{cf}}(\lambda^{-}) = \dot{\mathrm{cf}}(\mu)$. 

\end{enumerate}
\end{lem}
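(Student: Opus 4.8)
The plan is to force with $\mathcal{P}(Z)/I$, pass to the generic ultrapower $\dot{j}\colon V \to \dot{M} \subseteq V[\dot{G}]$, read off the $V[\dot{G}]$-cardinality of the ordinal in question from inside $\dot{M}$, and then feed this into Shelah's Lemma~\ref{shelah}. Throughout I write $\mathrm{comp}(I) = \mu^{+}$, so that $\mathrm{crit}(\dot{j}) = \mu^{+}$ and $\dot{j}(\xi) = \xi$ for every $\xi < \mu^{+}$; in particular $\dot{j}(\mu) = \mu$, whence $\dot{M}\models$ ``$\mu$ is a cardinal''. By Lemma~\ref{ultrapowerbasic} the model $\dot{M}$ is closed under $\mu^{+}$-sequences, and since $\lambda = |X|$ and $I$ is $\lambda^{+}$-saturated, even under $\lambda$-sequences. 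Two consequences are used repeatedly: first, $\mu$ remains a cardinal in $V[\dot{G}]$, since a collapse of $\mu$ would be a sequence of length $<\mu$ lying in $\dot{M}$, contradicting $\dot{M}\models$``$\mu$ is a cardinal''; second, $\dot{M}$ and $V[\dot{G}]$ agree on cardinals and cofinalities in the interval $[\omega,\lambda]$, so cofinalities that are $\le \lambda$ may be computed in either model. Finally, the $\lambda^{+}$-c.c.\ coming from saturation keeps $(\lambda^{+})^{V}$ a cardinal in $V[\dot{G}]$.

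The computational heart is the identity function $\mathrm{id}\colon z \mapsto z$ on $Z$. Fineness gives $\dot{j}``X \subseteq [\mathrm{id}]$ and normality (regressive functions are constant on positive sets) gives the reverse inclusion, so $[\mathrm{id}] = \dot{j}``X$; identifying $X$ with $\lambda$ by a bijection in $V$, the set $\dot{j}``\lambda$ has order type $\lambda$, an absolute fact. By {\L}o\'s, $|[\mathrm{id}]|^{\dot{M}} = [\,z \mapsto |z|^{V}\,]$. In case (1) every $z$ satisfies $|z| \le \mu$, while fineness together with $\mu^{+}$-completeness forces $|z| = \mu$ for $I$-almost every $z$; hence $|[\mathrm{id}]|^{\dot{M}} = \dot{j}(\mu) = \mu$, and as $\dot{j}``\lambda$ has order type $\lambda$ we obtain $\dot{M} \models |\lambda| = \mu$. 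In case (2) every $z$ has $|z| = \mu^{+}$, so the same computation yields $|[\mathrm{id}]|^{\dot{M}} = \dot{j}(\mu^{+}) = (\mu^{+})^{\dot{M}}$.

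For part (1) this already suffices: from $\dot{M} \models |\lambda| = \mu$ we get $|\lambda|^{V[\dot{G}]} \le \mu$, and since $\mu$ is preserved and $\lambda > \mu$ we conclude $|\lambda|^{V[\dot{G}]} = \mu$. As $\lambda$ is regular in $V$ and $(\lambda^{+})^{V}$ is a cardinal in $V[\dot{G}]$, Shelah's Lemma~\ref{shelah} applied to $V \subseteq V[\dot{G}]$ gives $\mathrm{cf}^{V[\dot{G}]}(\lambda) = \mathrm{cf}^{V[\dot{G}]}(|\lambda|^{V[\dot{G}]}) = \mathrm{cf}^{V[\dot{G}]}(\mu)$. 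For part (2) the same scheme is applied one cardinal lower. The aim is to show that $I$ collapses $\lambda = (\lambda^{-})^{+}$ down to $\mu^{+}$ in the precise sense that $\lambda$ is preserved, i.e.\ $\dot{j}(\mu^{+}) = \lambda$ so that $\lambda$ becomes the new $\mu^{+}$, while $\lambda^{-}$ acquires cardinality $\mu$ in $V[\dot{G}]$; granting this, $\lambda^{-}$ is regular in $V$, its successor $\lambda$ survives in $V[\dot{G}]$, and Shelah's Lemma~\ref{shelah} yields $\mathrm{cf}^{V[\dot{G}]}(\lambda^{-}) = \mathrm{cf}^{V[\dot{G}]}(\mu)$. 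The collapse of $\lambda^{-}$ is to be extracted from $\dot{j}``\lambda^{-} = [\mathrm{id}] \cap \dot{j}(\lambda^{-})$, whose $\dot{M}$-cardinality is $[\,z \mapsto |z \cap \lambda^{-}|^{V}\,]$, so it is the behaviour of $z \cap \lambda^{-}$ for $I$-almost every $z$ that governs the outcome.

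The main obstacle is exactly this last point: one must pin down the collapsing pattern rather than merely bound it. The computation $|[\mathrm{id}]|^{\dot{M}} = \dot{j}(\mu^{+})$ only locates $\lambda$ in the interval $[(\mu^{+})^{\dot{M}}, (\mu^{++})^{\dot{M}})$ of $\dot{M}$, and by itself does not decide whether $\lambda^{-}$ falls to $\mu$ or only to $(\mu^{+})^{\dot{M}}$. Converting the bound into the two clean statements ``$\dot{j}(\mu^{+}) = \lambda$'' and ``$|z \cap \lambda^{-}|^{V} \le \mu$ for $I$-almost every $z$'' is where the real work lies, and here the normality and fineness of $I$, together with the $\lambda^{+}$-saturation (which forces $\dot{M}$ to be closed under $\lambda$-sequences and so to compute cardinals correctly up to $\lambda$), must be used in full. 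Once the target ordinal is known to have cardinality exactly $\mu$ in $V[\dot{G}]$ with its relevant successor preserved, the cofinality computation is a direct appeal to Shelah's Lemma~\ref{shelah}; combining this with the preservation of $\mathrm{cf}(\mu)$ from $V$ to $V[\dot{G}]$ then closes both parts.
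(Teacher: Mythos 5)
Your proof of part (1) is correct and is essentially the paper's argument: normality and fineness give $[\mathrm{id}] = j``X$, {\L}o\'s's theorem bounds its $M$-cardinality by $j(\mu)=\mu$, absoluteness of order types converts this into $|\lambda|^{V[G]} = \mu$, and Lemma~\ref{shelah} finishes since the $\lambda^{+}$-c.c.\ keeps $(\lambda^{+})^{V}$ a cardinal.

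Part (2), however, has a genuine gap, and you flag it yourself: you reduce everything to the claims ``$j(\mu^{+}) = \lambda$'' and ``$|z \cap \lambda^{-}| \le \mu$ for $I$-almost every $z$'', declare this ``where the real work lies'', and never prove either. The second claim is not a usable route at all: it is a consequence of the conclusion (apply {\L}o\'s to $|j``\lambda^{-}|^{M}$ after the fact), and nothing in normality, fineness, or completeness yields it directly. The step you are missing is short and is the entire content of the paper's proof: by {\L}o\'s and normality, $M \models |[\mathrm{id}]| = j(\mu^{+})$, and $[\mathrm{id}] = j``X$ has order type $|X|$, a computation absolute between the transitive models $M$ and $V[G]$; since $\mathcal{P}(Z)/I$ has the $\lambda^{+}$-c.c., the cardinal $|X|$ remains a cardinal of $V[G]$, hence of the transitive submodel $M$ (being a cardinal is downward absolute, so no closure of $M$ is needed for this direction); therefore $|X| = j(\mu^{+})$ on the nose. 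Since $M$ thinks $j(\mu^{+})$ is the cardinal successor of $j(\mu)=\mu$, every ordinal strictly between $\mu$ and $|X|$ --- in particular the cardinal predecessor of $|X|$ --- has $M$-cardinality, hence $V[G]$-cardinality, exactly $\mu$, and then Lemma~\ref{shelah} applies as in part (1). Note that this argument forces the reading $|X| = \lambda^{+}$ in case (2), as in Lemma~\ref{sat:preservation} and in the application to Theorem~\ref{maintheorem4} (an ideal on $[\lambda^{+}]^{\mu^{+}}$ that is $\lambda^{+}$-saturated); this is how the paper's proof reads it when it writes $\lambda^{+} = \mathrm{ot}(j``\lambda^{+}) = j(\mu^{+})$. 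Under your literal reading $|X| = \lambda$, the $\lambda^{+}$-c.c.\ does not preserve $\lambda$, the preserved cardinal $\lambda^{+}$ is not the order type of $j``X$, and the pinning-down argument cannot even start --- which is precisely where your attempt stalls.
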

\begin{proof}

 By (2) of Lemma~\ref{ultrapowerbasic} and the assumption, $\lambda^{+} = \mathrm{ot}(\dot{j} ``\lambda^{+}) = \dot{j}(\mu^{+})$ in the extension. Therefore $|\lambda^{+}| = \mu$. Note that $\lambda^{+}$ is a cardinal by the $\lambda^{+}$-saturation of $I$. So Lemma~\ref{shelah} shows the required result. \end{proof}

In this paper, we often consider an ideal in some extension. We introduce the preservation theorem of saturated ideals. Foreman proved the conclusion of Lemma~\ref{sat:preservation} in the case of $Z = \mu^{+}$ in~\cite{MR730584} at first but the same thing holds for a loud class. 
\begin{lem}[Foreman~\cite{MR730584}]\label{sat:preservation}
 Suppose that $I$ is a normal, fine, exactly and uniformly $\mu^{+}$-complete $\lambda^{+}$-saturated ideal over $Z\subseteq \mathcal{P}(X)$. Suppose one of the following.
\begin{enumerate}
 \item $Z \subseteq \mathcal{P}_{\mu^{+}}(X)$ and $|X| = \lambda$. 
 \item $Z \subseteq [X]^{\mu^{+}}$ and $|X| = \lambda^{+}$. 
\end{enumerate}
\end{lem}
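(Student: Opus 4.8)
The final statement is Foreman's preservation theorem, whose conclusion is truncated in the excerpt; I read it as the assertion that every $\mu$-centered forcing $\mathbb{P}$ (in particular Prikry forcing and Woodin's modification, which are recorded above as $\mu$-centered) forces the ideal $\bar{I}=\{A : \exists B \in I\,(A\subseteq B)\}$ computed in $W:=V^{\mathbb{P}}$ to remain a normal, fine, $\mu^{+}$-complete, $\lambda^{+}$-saturated ideal over $Z$. First I would record the cheap structural facts: a $\mu$-centered poset meets every antichain in at most one point per centered piece, hence is $\mu^{+}$-cc; since $\mu<\lambda$ it is therefore $\lambda^{+}$-cc and preserves all cardinals and cofinalities $\geq\mu^{+}$, so $\lambda$ and $\lambda^{+}$ remain (regular) cardinals in $W$. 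This is precisely why the two case hypotheses on $|X|$ are imposed, and it makes ``$\lambda^{+}$-saturation'' a meaningful statement in $W$. Normality and fineness of $\bar{I}$ transfer essentially formally, so the whole weight of the argument falls on saturation together with the one genuinely nontrivial regularity point, the $\mu^{+}$-completeness of $\bar{I}$ in $W$.

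The heart of the matter is a name-reduction lemma: for every condition $p$ and every $\mathbb{P}$-name $\dot{A}$ with $p\force\dot{A}\in\bar{I}^{+}$, there are $q\leq p$ and a ground-model set $B\in I^{+}$ with $q\force\check{B}\setminus\dot{A}\in\bar{I}$; that is, below $q$ the $V$-set $B$ is a positive \emph{lower} approximation to $\dot{A}$ modulo $\bar{I}$. It is this lower direction, rather than the trivial upper bound $\dot{A}\subseteq\{z : \exists r\leq p,\, r\force\check{z}\in\dot{A}\}$, that will control intersections later, and it is exactly here that centering and $\mu^{+}$-completeness must be combined: writing $\mathbb{P}=\bigcup_{\alpha<\mu}\mathbb{P}_{\alpha}$ with each $\mathbb{P}_{\alpha}$ centered, I would run a density/fusion argument along the $\mu$ centered pieces, using $\mu^{+}$-completeness of $I$ to amalgamate the $\leq\mu$ many partial decisions into a single positive $V$-set and the Prikry property to guarantee that no genuinely new $\mu$-branch destroys positivity.

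With the lemma in hand I would argue by contradiction. Suppose $p_{0}\force\langle\dot{A}_{\xi} : \xi<\lambda^{+}\rangle$ is an antichain in $\mathcal{P}(Z)/\bar{I}$. For each $\xi$ pick $q_{\xi}\leq p_{0}$, an index $\alpha(\xi)<\mu$ with $q_{\xi}\in\mathbb{P}_{\alpha(\xi)}$, and a lower trace $B_{\xi}\in I^{+}$ as above, so $q_{\xi}\force\check{B}_{\xi}\setminus\dot{A}_{\xi}\in\bar{I}$. Since $\mu<\lambda^{+}$, pigeonhole yields a fixed $\alpha^{*}$ and a set $S\in[\lambda^{+}]^{\lambda^{+}}$ with $\alpha(\xi)=\alpha^{*}$ for all $\xi\in S$; the conditions $q_{\xi}\ (\xi\in S)$ then lie in the single centered piece $\mathbb{P}_{\alpha^{*}}$ and are pairwise compatible. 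For $\xi\neq\eta$ in $S$, a common extension $r$ of $q_{\xi},q_{\eta}$ forces $\dot{A}_{\xi}\cap\dot{A}_{\eta}\in\bar{I}$ as well as $\check{B}_{\xi}\setminus\dot{A}_{\xi},\check{B}_{\eta}\setminus\dot{A}_{\eta}\in\bar{I}$, whence $\check{B}_{\xi}\cap\check{B}_{\eta}\in\bar{I}$. As $B_{\xi}\cap B_{\eta}\in V$ and $\bar{I}\cap V=I$ (immediate from the definition of $\bar{I}$), this gives $B_{\xi}\cap B_{\eta}\in I$. Thus $\langle B_{\xi} : \xi\in S\rangle$ is a $\lambda^{+}$-antichain in $\mathcal{P}(Z)/I$ in $V$, contradicting the $\lambda^{+}$-saturation of $I$.

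The main obstacle is the name-reduction lemma together with the $\mu^{+}$-completeness of $\bar{I}$ in $W$, and these are in fact the same difficulty: $\mathbb{P}$ adds new subsets of $Z$ and, in the Prikry case, makes $\mu$ singular, so one must verify that a union of $\leq\mu$ many $\bar{I}$-null sets indexed by a \emph{new} $\mu$-sequence is still $\bar{I}$-null, and dually that a positive lower trace survives the new $\mu$-branches. This is exactly where the Prikry property, no new bounded subsets of $\mu$ and a fusion argument reducing new $\mu$-sequences to ground-model data, does the work; it is also the reason the theorem is stated for Prikry-type forcings rather than for arbitrary $\mu^{+}$-cc posets.
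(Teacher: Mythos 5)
Your reading of the truncated conclusion is correct, and the outer skeleton of your argument (pigeonhole on centered pieces, pushing pairwise intersections into $I$ via $\bar{I}\cap V=I$, then contradicting saturation in $V$) is the right shape for a direct proof. But the step you yourself identify as the heart of the matter, the ``name-reduction lemma'' --- whenever $p\force\dot{A}\in\bar{I}^{+}$ there are $q\leq p$ and $B\in I^{+}$ with $q\force\check{B}\setminus\dot{A}\in\bar{I}$ --- is \emph{false} for general $\mu$-centered posets, so the proof collapses exactly there. Counterexample: work in a model with a normal, fine, $\omega_{1}$-complete, $\omega_{2}$-saturated ideal $I$ over $Z=\omega_{1}$ (so $\mu=\omega$, $\lambda=\omega_{1}$, case (1)), and let $P=\mathrm{Add}(\omega,\omega_{1})$, which is $\sigma$-centered because $\omega_{1}\leq 2^{\aleph_{0}}$. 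Let $c_{\alpha}$ denote the $\alpha$-th Cohen real and $\dot{A}=\{\alpha<\omega_{1} : c_{\alpha}(0)=1\}$. Every condition forces $\dot{A}\in\bar{I}^{+}$: for each $C\in I$ the conditions putting some $\alpha\notin C$ into $\dot{A}$ are dense, since any $q$ touches only finitely many columns while $\omega_{1}\setminus C$ is uncountable. On the other hand, if $q\force\check{B}\setminus\dot{A}\in\bar{I}$, then (mixing the countably many decided witnesses along a maximal antichain below $q$ and using $\omega_{1}$-completeness of $I$) there is a single $C\in I$ with $q\force\check{B}\setminus\check{C}\subseteq\dot{A}$; but $q\force\check{\alpha}\in\dot{A}$ forces $q$ to already contain $(\alpha,0)\mapsto 1$, so $B\setminus C$ would be finite, contradicting $B\setminus C\in I^{+}$. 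So no $q$ and no $B\in I^{+}$ satisfy your conclusion, for \emph{any} positive $B$. Note also that your appeal to the Prikry property is out of bounds: the lemma quantifies over all $\mu$-centered posets (that is the statement being proved --- it is not specially about Prikry-type forcings), and the Prikry property is not among the hypotheses.

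The repair is to use upper traces relativized to the centered pieces --- precisely the object you dismissed as ``trivial''; what is trivial is the trace over all of $P$, not the piecewise one. Write $P=\bigcup_{\alpha<\mu}P_{\alpha}$ with each $P_{\alpha}$ centered, and for $p_{0}$ forcing $\langle\dot{A}_{\xi} : \xi<\lambda^{+}\rangle$ to be an antichain set $B_{\xi}^{\alpha}=\{z\in Z : \exists q\in P_{\alpha}\,(q\leq p_{0}\wedge q\force\check{z}\in\dot{A}_{\xi})\}$. Then $p_{0}\force\dot{A}_{\xi}\subseteq\bigcup_{\alpha<\mu}\check{B}_{\xi}^{\alpha}$, so by $\mu^{+}$-completeness of $I$ some $B_{\xi}^{\alpha(\xi)}\in I^{+}$; pigeonhole gives a fixed $\alpha^{*}$ and $S\in[\lambda^{+}]^{\lambda^{+}}$. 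For $\xi\neq\eta$ in $S$, first uniformize the witness to $\dot{A}_{\xi}\cap\dot{A}_{\eta}\in\bar{I}$: by $\mu^{+}$-cc and $\mu^{+}$-completeness there is a single $C_{\xi\eta}\in I$ with $p_{0}\force\dot{A}_{\xi}\cap\dot{A}_{\eta}\subseteq\check{C}_{\xi\eta}$. If $z\in B_{\xi}^{\alpha^{*}}\cap B_{\eta}^{\alpha^{*}}$, the two witnessing conditions lie in the centered set $P_{\alpha^{*}}$, so a common extension forces $\check{z}\in\dot{A}_{\xi}\cap\dot{A}_{\eta}\subseteq\check{C}_{\xi\eta}$, whence $z\in C_{\xi\eta}$; thus $\{B_{\xi}^{\alpha^{*}} : \xi\in S\}$ is a $\lambda^{+}$-sized antichain in $\mathcal{P}(Z)/I$ in $V$, contradicting saturation. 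The same cc-plus-completeness uniformization (no fusion, no Prikry property) gives $\mu^{+}$-completeness and normality of $\bar{I}$, which are not quite ``formal'' as you claim but are routine. Finally, for comparison: the paper does none of this; it obtains the lemma as an instance of Foreman's duality theorem, so a correct direct argument along the lines above is a genuinely different (and more elementary) route to the same statement.
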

Then, for every $\mu$-centered poset $P$ forces $\overline{I}$ is a normal, fine, exactly and uniformly $\mu^{+}$-complete $\lambda^{+}$-saturated ideal over $Z$. $\overline{I}$ is $P$-name for the ideal generated by $I$. 
\begin{proof}
 This follows from Foreman's duality theorem~\cite{MR3038554}. 
\end{proof}

Lastly, we recall forcing projections for Proposition~\ref{semipropersaturated}. For posets $P$ and $Q$, a projection $\pi:Q \to P$ is an order-preserving mapping with the property that $q \leq_P \pi(p)$ implies $\exists r \leq_Q p(\pi(r) \leq_P q)$ and $\pi(1_Q) = 1_{P}$.  Whenever a projection $\pi:Q \to P$ is given, for every dense $D$ in $P$, $\pi^{-1}D$ is also dense in $Q$. It follows that $Q \force \pi``\dot{H}$ generates a $(V,P)$-generic filter, where $\dot{H}$ is the canonical name of $(V,Q)$-generic filter. The quotient forcing is defined by $P \force Q / \dot{G} = \{q \in Q\mid \pi(q) \in \dot{G}\}$, ordered by $\leq_P$. Then $P \ast Q /\dot{G}$ is equivalent with $Q$ in the sense of Boolean completion. 

We say that a projection $\pi:Q \to P$ between complete Boolean algebras is continuous if $\prod \pi ``X = \pi(\prod X)$ for all $X \subseteq Q$ with $\prod X \not= 0$. The continuity of projection is useful when we try to analyze quotient forcing $Q / \dot{G}$. The following lemma will be used in a proof of Proposition~\ref{semipropersaturated}. 
\begin{lem}\label{continuous:closed}
 If $\pi:Q \to P$ is a continuous projection between complete Boolean algebras and $Q$ is $\kappa$-closed then $P$ forces $Q / \dot{G}$ is $\kappa$-closed. 
\end{lem}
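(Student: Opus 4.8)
The plan is to work with a $P$-name for an arbitrary descending sequence in the quotient and to produce a lower bound by a mixing argument, using $\kappa$-closure of $Q$ to keep the relevant meet nonzero and continuity of $\pi$ to keep that meet inside $Q/\dot G$. Fix $\gamma < \kappa$ and suppose $p_0 \in P$ forces that $\langle \dot q_\alpha : \alpha < \gamma\rangle$ is $\leq_Q$-descending with each $\dot q_\alpha \in Q/\dot G$. For $\alpha < \gamma$ and $q \in Q$ I would set $a^\alpha_q = \|\dot q_\alpha = \check q\| \wedge p_0 \in P$; for each fixed $\alpha$ the nonzero $a^\alpha_q$ form a partition of $p_0$. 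The first observation to record is that $a^\alpha_q \leq_P \pi(q)$ whenever $a^\alpha_q \neq 0$: indeed $a^\alpha_q$ forces $\dot q_\alpha = \check q$ and $\pi(\dot q_\alpha) \in \dot G$, hence forces $\check{\pi(q)} \in \dot G$, which for the ground-model condition $\pi(q)$ means precisely $a^\alpha_q \leq_P \pi(q)$. Call this $(\star)$.

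Next I would take the common refinement of the $\gamma$-many partitions. For $f \in {}^{\gamma}Q$ put $b_f = \prod_{\alpha<\gamma} a^\alpha_{f(\alpha)}$; the nonzero $b_f$ again partition $p_0$, and $b_f$ forces $\dot q_\alpha = \check{f(\alpha)}$ for every $\alpha$, so the forced descending-ness gives that $\langle f(\alpha) : \alpha<\gamma\rangle$ is $\leq_Q$-descending. Since $Q$ is $\kappa$-closed and $\gamma<\kappa$, the meet $q_f^{*} = \prod_{\alpha<\gamma} f(\alpha)$ is nonzero and is a lower bound of the $f(\alpha)$ in $Q$. Mixing these values over the partition $\{b_f\}$ yields a $P$-name $\dot q^{*}$ with $b_f \force \dot q^{*} = \check{q_f^{*}}$ for each nonzero $b_f$. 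By construction $p_0 \force \dot q^{*} \neq 0$ and $p_0 \force \dot q^{*} \leq_Q \dot q_\alpha$ for all $\alpha$, so $\dot q^{*}$ names a lower bound of the sequence in $Q$.

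It remains to check $p_0 \force \dot q^{*} \in Q/\dot G$, that is $p_0 \force \pi(\dot q^{*}) \in \dot G$, and this is the step where continuity is essential and which I expect to be the only real obstacle. Fix a nonzero $b_f$. From $(\star)$ and $b_f \leq_P a^\alpha_{f(\alpha)}$ we get $b_f \leq_P \pi(f(\alpha))$ for every $\alpha$, so $b_f \leq_P \prod_{\alpha<\gamma}\pi(f(\alpha))$. Because $q_f^{*} = \prod_{\alpha<\gamma} f(\alpha) \neq 0$, continuity of $\pi$ gives $\pi(q_f^{*}) = \prod_{\alpha<\gamma}\pi(f(\alpha))$, whence $b_f \leq_P \pi(q_f^{*})$ and therefore $b_f \force \pi(\dot q^{*}) = \check{\pi(q_f^{*})} \in \dot G$. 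As the nonzero $b_f$ exhaust $p_0$, we conclude $p_0 \force \pi(\dot q^{*}) \in \dot G$, so $\dot q^{*}$ names a lower bound of $\langle \dot q_\alpha\rangle$ in $Q/\dot G$. Since $p_0$ and the sequence were arbitrary, $P$ forces $Q/\dot G$ to be $\kappa$-closed. The two hypotheses enter cleanly and separately: $\kappa$-closure of $Q$ guarantees $q^*_f \neq 0$, without which $\dot q^{*}$ could be trivial, and continuity is exactly what licenses pushing $\pi$ through the infinite meet; dropping either hypothesis breaks the argument.
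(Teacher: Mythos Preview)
Your argument has a real gap at the step ``the nonzero $b_f$ again partition $p_0$''. In an arbitrary complete Boolean algebra the common refinement of $\gamma$-many maximal antichains need not be maximal when $\gamma$ is infinite; equivalently, in a $P$-generic extension the sequence $\langle \dot q_\alpha : \alpha < \gamma\rangle$ evaluates to a $\gamma$-sequence of ground-model elements of $Q$, but that sequence itself might be new, so no single $f \in ({}^\gamma Q)^V$ need capture it. If some $p' \leq p_0$ is incompatible with every $b_f$, your mixed name $\dot q^{*}$ is unconstrained below $p'$ and you have not exhibited a lower bound there. The fix is to note first that $P$ is $\kappa$-distributive: every $P$-extension sits inside a $Q$-extension via the projection, and $Q$ is $\kappa$-closed hence $\kappa$-distributive, so $P$ adds no new ${<}\kappa$-sequences of ground-model sets. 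Once you record this, your refinement really is maximal below $p_0$ and the remainder of your argument (including the use of $(\star)$ and of continuity) is correct.

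The paper's proof makes this dependence explicit from the outset. It first observes that $P$ inherits enough closure from $Q$ (stated there as ``$P$ has a $\kappa$-closed dense subset''), and then below the given $p$ builds a single descending chain $p_0 \geq p_1 \geq \cdots$ with each $p_i$ deciding $\dot q_i = q_i$ and $\prod_i p_i \neq 0$; continuity then gives $\prod_i p_i \leq \prod_i \pi(q_i) = \pi\bigl(\prod_i q_i\bigr)$, so $\prod_i p_i$ forces $\prod_i q_i \in Q/\dot G$. Both routes ultimately rest on the same distributivity of $P$; the paper isolates it as a preliminary observation, whereas in your write-up it is hidden inside the unjustified claim about the $b_f$.
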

\begin{proof}
 Note that $P$ has a $\kappa$-closed dense subset. Let $p \force \{\dot{q}_{i} \mid i < \nu\} \in [Q / \dot{G}]^{<\kappa}$ be a descending sequence. By induction, let us construct $p_{i}$ and $q_{i}$ such that 
\begin{enumerate}
 \item $p \geq p_{0} \geq p_1 \geq \cdots $
 \item $\prod_{i}p_{i} \not = 0$.
 \item $p_{i} \force \dot{q}_{i} = q_{i}$. 
\end{enumerate}
We note that $p_{i} \leq \pi(q_{i})$ for each $i$ by (3). Since $Q$ is $\kappa$-closed, $\prod_{i}q_{i} \in Q$. Then the continuity of $\pi$ shows
\begin{equation*}
 \textstyle\prod_{i}p_{i} \leq \prod_{i}\pi(q_{i}) = \pi(\prod_{i}q_{i}).
\end{equation*}
By the definition of $Q / \dot{G}$, $\prod_{i}p_{i}$ forces that $\prod_i q_{i} \in Q / \dot{G}$ and this is a lower bound of $\{\dot{q}_{i} \mid i < \nu\}$, as desired.
\end{proof}

\section{Properties of $\mathrm{Nm}(\kappa,\lambda,I)$}
From here, throughout this paper, we fix regular cardinals $\omega_2 \leq \kappa \leq \lambda$. In this section, for a fine ideal $I$ over $\mathcal{P}_\kappa\lambda$, we introduce $\mathrm{Nm}(\kappa,\lambda,I)$ and study this. Most things in this section are analogies of original $\mathrm{Nm}(\lambda)$. An importance is that $\mathrm{Nm}(\kappa,\lambda,I)$ is $\omega_1$-stationary preserving, this is shown in Lemma~\ref{namba:sttpreserving}. Our proof of this is essentially due to Shelah (See Lemma~\ref{namba:clubpullback}).

For a fine ideal $I$ over $\mathcal{P}_{\kappa}\lambda$, a ($I$-)Namba tree $p$ is a set $p\subseteq [\mathcal{P}_{\kappa}\lambda]^{<\omega}$ with the following conditions:
\begin{enumerate}
 \item $p$ is a tree. That is, each $s \in p$ is $\subseteq$-increasing and $p$ closed under the initial segment. 
 \item $p$ has a trunk $\mathrm{tr}(p)$. 
 \item For each $s \in p$, if $s \sqsupseteq \mathrm{tr}(p)$ then $\mathrm{Suc}(s) = \{a \in \mathcal{P}_{\kappa}\lambda \mid s{^{\frown}}\langle a\rangle\} \in I^{+}$. 
\end{enumerate} 
Let $\mathrm{Nm}(\kappa,\lambda,I)$ be the set of all $I$-Namba trees. $\mathrm{Nm}(\kappa,\lambda,I)$ is ordered by inclusion. we denote $\mathrm{Nm}(\kappa,\lambda,J_{\kappa\lambda}^{bd})$ by $\mathrm{Nm}(\kappa,\lambda)$. $J^{bd}_{\kappa\lambda}$ is the bounded ideal over $\mathcal{P}_{\kappa}\lambda$. 

For $q \in \mathrm{Nm}(\kappa,\lambda,I)$ and $n < \omega$, we write $\mathrm{Lev}_{q}(n) = q \cap [\mathcal{P}_{\kappa}\lambda]^{n}$. This set is the $n$-th levels of $q$. For $p,q \in \mathrm{Nm}(\kappa,\lambda,I)$, by $q\leq^{\ast} p$, we mean $q \leq p$ and $\mathrm{tr}(p) = \mathrm{tr}(q)$. 

\begin{lem}
 If $I$ is a fine ideal over $\mathcal{P}_{\kappa}\lambda$ then, for all $n < \omega$, $D_{n} = \{p \in \mathrm{Nm}(\kappa,\lambda,I) \mid |\mathrm{tr}(p)| \geq n\}$ is a dense subset of $\mathrm{Nm}(\kappa,\lambda,I)$. 
\end{lem}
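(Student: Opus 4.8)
The plan is to prove each $D_n$ is dense by showing that any condition can be extended (or is already) in $D_n$, via a straightforward induction on $n$. Fix $n < \omega$ and an arbitrary $p \in \mathrm{Nm}(\kappa,\lambda,I)$. If $|\mathrm{tr}(p)| \geq n$ already, then $p \in D_n$ and there is nothing to do, so I may assume $|\mathrm{tr}(p)| = m < n$. The goal is to produce some $q \leq p$ with $|\mathrm{tr}(q)| \geq n$; in fact it suffices to lengthen the trunk by one and iterate $n - m$ times.

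First I would handle the single-step extension. Let $t = \mathrm{tr}(p)$. By condition (3) in the definition of an $I$-Namba tree, since $t \sqsupseteq \mathrm{tr}(p)$, the successor set $\mathrm{Suc}(t) = \{a \in \mathcal{P}_{\kappa}\lambda \mid t{^{\frown}}\langle a\rangle \in p\}$ lies in $I^{+}$; in particular it is nonempty, so I may pick some $a \in \mathrm{Suc}(t)$. Then I set $q = \{s \in p \mid s \subseteq t{^{\frown}}\langle a\rangle \lor s \sqsupseteq t{^{\frown}}\langle a\rangle\}$, i.e. I prune $p$ to the subtree sitting above the chosen immediate successor node. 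I would verify that $q$ is again an $I$-Namba tree: it is a tree (downward closure and $\subseteq$-increasingness are inherited from $p$), its trunk is exactly $t{^{\frown}}\langle a\rangle$ (every node of $q$ is comparable with it, and it is the maximal such node), and the splitting condition (3) holds because for every $s \sqsupseteq t{^{\frown}}\langle a\rangle$ we have $s \sqsupseteq \mathrm{tr}(p)$, so $\mathrm{Suc}^{q}(s) = \mathrm{Suc}^{p}(s) \in I^{+}$ — pruning above a single node does not disturb the successor sets of nodes lying above that node. Thus $q \in \mathrm{Nm}(\kappa,\lambda,I)$, $q \leq p$, and $|\mathrm{tr}(q)| = m + 1$.

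Iterating this construction $n - m$ times produces a decreasing chain $p \geq q_1 \geq \cdots \geq q_{n-m}$ with $|\mathrm{tr}(q_{n-m})| = n \geq n$, so $q_{n-m} \in D_n$ and $D_n$ is dense. I do not expect a genuine obstacle here: the only point requiring any care is confirming that the fineness of $I$ (which is what guarantees $I^{+}$ is closed under the relevant operations and that bounded sets are null) is not actually needed for mere \emph{nonemptiness} of $\mathrm{Suc}(t)$ — that already follows from $\mathrm{Suc}(t) \in I^{+}$, hence $\mathrm{Suc}(t) \neq \emptyset$, since $\emptyset \in I$ for any ideal. The role of $I$ being a fine ideal is simply to make the defining clause (3) meaningful and to ensure $\mathrm{Nm}(\kappa,\lambda,I)$ is nonempty in the first place. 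The mildly delicate step, if anything, is writing the pruned tree $q$ cleanly and checking its trunk is as claimed; the splitting condition then transfers automatically.
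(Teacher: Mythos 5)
Your proof is correct: the paper leaves this lemma as ``Easy,'' and your argument---pick $a \in \mathrm{Suc}(\mathrm{tr}(p)) \in I^{+}$ (nonempty since $\emptyset \in I$), prune to the subtree above $\mathrm{tr}(p){}^{\frown}\langle a\rangle$, and iterate finitely often---is exactly the intended one, coinciding with the paper's own restriction operation $p \upharpoonright (\mathrm{tr}(p){}^{\frown}\langle a\rangle)$ from Lemma~3.5 (\texttt{namba:interpolant}). Your observation that only properness of the ideal (not fineness) is needed for nonemptiness of the successor sets is also accurate.
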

\begin{proof}
 Easy. 
\end{proof}

$\mathrm{Nm}(\kappa,\lambda,I)$ changes the cofinalities of regular cardinals lying between $\kappa$ and $\lambda$. 
\begin{lem}\label{namba:changecofinality}Suppose that $I$ is a fine ideal over $\mathcal{P}_{\kappa}\lambda$. 
 For all $\delta \in [\kappa,\lambda]\cap \mathrm{Reg}$, $\mathrm{Nm}(\kappa,\lambda,I)$ forces $\mathrm{cf}(\delta) = \omega$.
\end{lem}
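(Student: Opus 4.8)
The plan is to read a cofinal $\omega$-sequence of $\delta$ directly off the generic branch, using fineness of $I$ to push the branch past every $\gamma<\delta$.

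First I would fix $\delta\in[\kappa,\lambda]\cap\mathrm{Reg}$ and a generic $G$. As in the classical case, the trunks $\{\mathrm{tr}(p)\mid p\in G\}$ form an $\sqsubseteq$-chain: if $q\leq p$ then $\mathrm{tr}(p)\sqsubseteq\mathrm{tr}(q)$, since every node of $q$ is a node of $p$ and hence comparable with $\mathrm{tr}(p)$, while every node strictly below $\mathrm{tr}(p)$ has a unique successor in $p$ whereas each node $\sqsupseteq\mathrm{tr}(q)$ branches into $I^{+}$-many successors in $q\subseteq p$. Together with the density of the $D_{n}$'s, this yields a branch $b=\bigcup_{p\in G}\mathrm{tr}(p)=\langle a_{n}\mid n<\omega\rangle$, a $\subseteq$-increasing $\omega$-sequence of members of $\mathcal{P}_{\kappa}\lambda\cap V$.

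The key step is a density argument exploiting fineness. For $\gamma<\delta$ set $D_{\gamma}=\{p\mid \gamma\in a \text{ for some } a\in\mathrm{tr}(p)\}$. Given $p$, fineness of $I$ gives $\{a\in\mathcal{P}_{\kappa}\lambda\mid \gamma\notin a\}\in I$, so $\mathrm{Suc}(\mathrm{tr}(p))\cap\{a\mid \gamma\in a\}$ lies in $I^{+}$ (being the $I^{+}$-set $\mathrm{Suc}(\mathrm{tr}(p))$ minus an $I$-set), in particular it is nonempty; choosing such an $a$ and passing to the subtree of $p$ with trunk $\mathrm{tr}(p){^\frown}\langle a\rangle$ produces a condition in $D_{\gamma}$ below $p$. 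Hence each $D_{\gamma}$ is dense, so in $V[G]$ every $\gamma<\delta$ belongs to some $a_{n}$.

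Finally I would assemble the cofinal sequence. Since each $a_{n}\in V$ with $|a_{n}|<\kappa\leq\delta$ and $\delta$ is regular in $V$, the set $a_{n}\cap\delta$ is bounded in $\delta$, say with supremum $\gamma_{n}<\delta$; and by the previous paragraph $\bigcup_{n}(a_{n}\cap\delta)=\delta$, so $\langle\gamma_{n}\mid n<\omega\rangle$ is cofinal in $\delta$. As $\delta$ is a limit ordinal, this forces $\mathrm{cf}(\delta)=\omega$. The main obstacle is the density of $D_{\gamma}$, and that is exactly where fineness of $I$ is used: without it $\mathrm{Suc}(\mathrm{tr}(p))$ need not meet $\{a\mid \gamma\in a\}$, and the branch could fail to exhaust $\delta$.
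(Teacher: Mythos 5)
Your proof is correct and follows essentially the same route as the paper: extract the $\subseteq$-increasing $\omega$-sequence of trunks from the generic, use fineness of $I$ in a density argument to show the sequence exhausts $\delta$ (the paper states this as "easy to see" for $\lambda$ itself), and then use that each element is a $V$-set of size $<\kappa\leq\delta=\mathrm{cf}(\delta)$ to get bounded suprema converging to $\delta$. Your write-up merely fills in the density and trunk-chain details that the paper leaves implicit.
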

\begin{proof}
 Consider a $\mathrm{Nm}(\kappa,\lambda,I)$-name $\dot{g}$ for the set $\bigcup\{\mathrm{tr}(p) \mid p \in \dot{G}\}$. $\dot{g}$ is forced to be a countable $\subseteq$-increasing sequence. Let $\dot{g}_n$ be $\mathrm{Nm}(\kappa,\lambda,I)$-name for the $n$-th element of $\dot{g}$. It is easy to see
\begin{enumerate}
 \item $\force \bigcup_{n}\dot{g}_{n} = \lambda$ and 
 \item $p \force \{\dot{g}_{0},...,\dot{g}_{n-1}\} = \mathrm{tr}(p)$. Here, $n = |\mathrm{tr}(p)|$. 
\end{enumerate}
For each $\delta \in [\kappa,\lambda] \cap \mathrm{Reg}$, by (1), $\force \bigcup_{n} \dot{g}_{n} \cap \delta = \delta$. By (1), $\force \dot{g}_{n} \cap \delta \in (\mathcal{P}_{\kappa}\delta)^{V}$ for each $n < \omega$. Since $\mathrm{cf}(\delta) = \delta \geq \kappa$, $\dot{\xi}_{n} = \sup \dot{g}_{n} \cap \delta$ is forced to be $ < \delta$. Of course, $\force \sup_{n}\xi_n = \delta$, as desired. 
\end{proof}

We need a fusion sequence argument sometime. 
\begin{lem}\label{namba:fusion}Suppose that $I$ is a fine ideal over $\mathcal{P}_{\kappa}\lambda$. For a $\leq$-descending sequence $\langle q_{n} \mid n < \omega \rangle$, suppose that there is a increasing $l:\omega \to \omega$ such that 
\begin{itemize}
 \item $\mathrm{tr}(q_{n})= \mathrm{tr(q_{n+1})}$ for every $n < \omega$. 
 \item For every $n < \omega$, $\mathrm{Lev}_{q_{n}}(l(n)) = \mathrm{Lev}_{q_{n+1}}(l(n))$ for every $n < \omega$. 
\end{itemize}
 Then, $\prod_{n} q_{n} \in \mathrm{Nm}(\kappa,\lambda,I)$. 
\end{lem}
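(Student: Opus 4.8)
The plan is to show that $q := \prod_{n} q_{n} = \bigcap_{n} q_{n}$ satisfies the three defining clauses of an $I$-Namba tree, the only delicate point being the $I$-positivity of successor sets. Write $t := \mathrm{tr}(q_{0})$ for the common trunk, so $\mathrm{tr}(q_{n}) = t$ for all $n$ by the first hypothesis. The key reduction I will aim for is \emph{level stabilization}: for every $m < \omega$ the sequence $\langle \mathrm{Lev}_{q_{n}}(m) \mid n < \omega\rangle$ is eventually constant, and its eventual value is exactly $\mathrm{Lev}_{q}(m)$. Once this is in hand, each level of $q$ coincides with the corresponding level of some $q_{n}$, and the Namba-tree clauses transfer immediately.

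The first step is a propagation claim: if $p \leq p'$ are $I$-Namba trees with $\mathrm{tr}(p) = \mathrm{tr}(p')$ and $\mathrm{Lev}_{p}(l) = \mathrm{Lev}_{p'}(l)$ for some $l$, then $\mathrm{Lev}_{p}(m) = \mathrm{Lev}_{p'}(m)$ for all $m \leq l$. Since $p \subseteq p'$ the inclusion $\mathrm{Lev}_{p}(m) \subseteq \mathrm{Lev}_{p'}(m)$ is free. For the reverse, given $u \in \mathrm{Lev}_{p'}(m)$ with $m \leq l$, I extend $u$ inside $p'$ to a node $s$ of length $l$; this is possible because in a Namba tree the successor sets along and above the trunk are $I$-positive, hence nonempty, so every node lies on a branch and extends to arbitrary length. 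Then $s \in \mathrm{Lev}_{p'}(l) = \mathrm{Lev}_{p}(l)$, and since $p$ is closed under initial segments, $u \in p$, i.e. $u \in \mathrm{Lev}_{p}(m)$. Applying this with $p = q_{n+1} \leq q_{n} = p'$ and $l = l(n)$ gives $\mathrm{Lev}_{q_{n+1}}(m) = \mathrm{Lev}_{q_{n}}(m)$ for all $m \leq l(n)$.

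Next I chain these equalities using that $l$ is increasing, hence unbounded. Given $m$, fix $n$ with $l(n) \geq m$; then for every $n' \geq n$ we have $l(n') \geq l(n) \geq m$, so the previous step yields $\mathrm{Lev}_{q_{n'+1}}(m) = \mathrm{Lev}_{q_{n'}}(m)$. Thus the levels stabilize from $n$ onward, and since $\mathrm{Lev}_{q}(m) = \bigcap_{n'} \mathrm{Lev}_{q_{n'}}(m)$ with $\mathrm{Lev}_{q_{n'}}(m) \supseteq \mathrm{Lev}_{q_{n}}(m)$ for $n' < n$, I conclude $\mathrm{Lev}_{q}(m) = \mathrm{Lev}_{q_{n}}(m)$. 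In particular every level of $q$ is nonempty and agrees with a genuine Namba tree, so $q$ is a nonempty tree, closed under initial segments, that is linear below $t$.

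Finally, for the positivity clause take $s \in q$ with $s \sqsupseteq t$ and $|s| = k$, and choose $n$ with $l(n) \geq k+1$. By stabilization $\mathrm{Lev}_{q}(k+1) = \mathrm{Lev}_{q_{n}}(k+1)$, so $s^{\frown}\langle a\rangle \in q$ iff $s^{\frown}\langle a\rangle \in q_{n}$, whence $\mathrm{Suc}_{q}(s) = \mathrm{Suc}_{q_{n}}(s) \in I^{+}$ because $q_{n}$ is a Namba tree and $s \sqsupseteq \mathrm{tr}(q_{n})$. Taking $s = t$ shows $q$ branches $I$-positively at $t$, confirming $t = \mathrm{tr}(q)$, and the general case gives clause (3); thus $q \in \mathrm{Nm}(\kappa,\lambda,I)$. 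I expect the propagation claim of the second paragraph to be the main obstacle, as it is the only place where the Namba-tree structure (the nonemptiness of successor sets, guaranteeing branch extension) is used rather than mere tree intersection; everything afterward is bookkeeping with the stabilized levels.
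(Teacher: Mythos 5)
Your proof is correct and follows essentially the same route as the paper's: take $q=\bigcap_n q_n$ and use the increasing function $l$ to show each level of $q$ equals the corresponding level of some $q_n$, whence the $I$-positivity of successor sets transfers. Your propagation claim and the stabilization argument simply spell out the details that the paper's one-line proof asserts without justification.
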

\begin{proof}
 Let $q = \bigcap_{n} q_{n}$ then $q$ is a tree with trunk $\mathrm{tr}(q_{0})$. By the assumption, for every $n < \omega$, let $m$ be a natural number such that $n < l(m)$ then $\mathrm{Lev}_{q}(n) = \mathrm{Lev}_{q_{m}}(n)$. Therefore $q$ is $I$-Namba tree, as desired. 
\end{proof}

We often deal with an $I$-Namba tree that has a form of $p \upharpoonright s$. Lemma~\ref{namba:interpolant} describes what this tree is and its properties.
\begin{lem}\label{namba:interpolant}
Suppose that $I$ is a fine ideal over $\mathcal{P}_{\kappa}\lambda$. For $p \in \mathrm{Nm}(\kappa,\lambda,I)$ and $s \in p$, $p \upharpoonright s = \{t \in p \mid s \sqsubseteq t \lor t \sqsubseteq s\} \in \mathrm{Nm}(\kappa,\lambda,I)$ is an $I$-Namba tree such that 
\begin{enumerate}
 \item $\mathrm{tr}(p\upharpoonright s) = s$.
 \item For every $q \leq p$, if $s \sqsubseteq \mathrm{tr}(q)$ then $q \leq p\upharpoonright s \leq p$.
\end{enumerate}
\end{lem}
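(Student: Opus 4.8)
The plan is to read $p \upharpoonright s$ as the union of the branch below $s$ with the full subtree of $p$ sitting above $s$, and to check the three assertions straight from the definition of an $I$-Namba tree. Before the main argument I would isolate the relevant case. If $s \sqsubsetneq \mathrm{tr}(p)$, then every $t \in p$ is comparable with $\mathrm{tr}(p)$ and hence with $s$, so in fact $p \upharpoonright s = p$; the substantive content --- and the only way $p \upharpoonright s$ is actually used later --- is the case $s \sqsupseteq \mathrm{tr}(p)$, under which I would run the argument.

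First I would check tree-hood. Every node of $p \upharpoonright s$ is $\subseteq$-increasing because it already lies in $p$; for closure under initial segments, given $t \in p \upharpoonright s$ and $t' \sqsubseteq t$, one has $t' \in p$ since $p$ is a tree, and splitting on whether $t \sqsubseteq s$ or $s \sqsubseteq t$ shows that $t'$ is again comparable with $s$ (in the second case because $s$ and $t'$ are both initial segments of $t$, hence comparable). The heart of the proof is locating the trunk. Since $s \in p \upharpoonright s$ and every node of $p \upharpoonright s$ is comparable with $s$ by construction, $s$ is comparable with all nodes; the issue is its \emph{maximality}. Here I would invoke fineness of $I$: because $s \sqsupseteq \mathrm{tr}(p)$, the successor set of $s$ in $p \upharpoonright s$ is exactly $\mathrm{Suc}_{p}(s) \in I^{+}$, and since $I$ is fine every $I$-positive set is infinite (fineness puts each singleton, hence each finite set, into $I$). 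Thus $s$ has at least two incomparable immediate successors in $p \upharpoonright s$, so no proper end-extension of $s$ can be comparable with both branches, and $\mathrm{tr}(p \upharpoonright s) = s$ follows.

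The remaining clauses are routine once the trunk is pinned down. For the Namba branching condition, any $t \in p \upharpoonright s$ with $t \sqsupseteq s$ satisfies $t \sqsupseteq \mathrm{tr}(p)$, so its successor set in $p \upharpoonright s$ equals $\mathrm{Suc}_{p}(t) \in I^{+}$, giving clause (3) of the definition. For assertion (2), $p \upharpoonright s \subseteq p$ yields $p \upharpoonright s \leq p$ immediately, while for $q \leq p$ with $s \sqsubseteq \mathrm{tr}(q)$ I would show $q \subseteq p \upharpoonright s$: each $t \in q$ lies in $p$ and is comparable with $\mathrm{tr}(q)$, and in either case --- $\mathrm{tr}(q) \sqsubseteq t$, which gives $s \sqsubseteq t$, or $t \sqsubseteq \mathrm{tr}(q)$, which makes $t$ and $s$ comparable initial segments of $\mathrm{tr}(q)$ --- we land in $p \upharpoonright s$.

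I expect the only genuinely non-bookkeeping step to be the maximality of $s$ as the trunk: that is precisely where fineness of $I$ is used, to force the $I$-positive successor set to branch into at least two incomparable nodes. Everything else is elementary manipulation of end-extension together with the initial-segment closure of trees.
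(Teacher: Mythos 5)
Your proof is correct, and it supplies exactly the routine verification that the paper itself waves off (its proof of this lemma is just ``Easy''). Your two substantive observations are also on point: clause (1) is literally false when $s$ is a proper initial segment of $\mathrm{tr}(p)$ (then $p \upharpoonright s = p$ and the trunk is still $\mathrm{tr}(p)$), so the statement must be read with the implicit restriction $s \sqsupseteq \mathrm{tr}(p)$ under which the paper always applies it; and fineness of $I$ is indeed what puts every finite set into $I$, making $\mathrm{Suc}_{p}(s)$ infinite and yielding the two incomparable successors needed for the maximality of $s$ as the trunk.
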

\begin{proof}
 Easy. 
\end{proof}

We fix a $\mathrm{Nm}(\kappa,\lambda,I)$-name for a ordinal $\dot{\alpha}$ such that $\force \dot{\alpha} \in w$ for some set $|w| < \kappa$. For an $I$-Namba tree $p$, we say $p$ is $\dot{\alpha}$-bad, if there is no $q \leq^{\ast} p$ that decides the value of $\dot{\alpha}$. 

\begin{lem}\label{namba:badlemma}Suppose that $I$ is a fine ideal over $\mathcal{P}_{\kappa}\lambda$. For every $\mathrm{Nm}(\kappa,\lambda,I)$-name for an ordinal $\dot{\alpha}$ and a set $|w| < \kappa$ with $\force \dot{\alpha} \in w$, if $p \in \mathrm{Nm}(\kappa,\lambda,I)$ is $\dot{\alpha}$-bad then $\{a \in \mathcal{P}_{\kappa}\lambda \mid p\upharpoonright (\mathrm{tr}(p){^\frown}\langle a\rangle)$ is $\dot{\alpha}$-bad $\} \in I^{+}$. 
\end{lem}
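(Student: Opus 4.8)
The plan is to argue by contradiction, using the standing assumption that the fine ideal $I$ is $\kappa$-complete. Write $t = \mathrm{tr}(p)$ and put
\[
 A = \{a \in \mathcal{P}_{\kappa}\lambda \mid p\upharpoonright(t{^\frown}\langle a\rangle)\text{ is }\dot{\alpha}\text{-bad}\},
\]
noting that only $a \in \mathrm{Suc}(t)$ are relevant, since otherwise $p\upharpoonright(t{^\frown}\langle a\rangle)$ is not even an $I$-Namba tree. Suppose toward a contradiction that $A \in I$. Since $p$ is an $I$-Namba tree, $\mathrm{Suc}(t) \in I^{+}$, and because $I$ is an ideal the set $S := \mathrm{Suc}(t)\setminus A$ is again in $I^{+}$. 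For each $a \in S$ the tree $p\upharpoonright(t{^\frown}\langle a\rangle)$ is not $\dot{\alpha}$-bad, so by the definition of badness I may fix $q_{a} \leq^{\ast} p\upharpoonright(t{^\frown}\langle a\rangle)$ deciding $\dot{\alpha}$, say $q_{a} \Vdash \dot{\alpha} = \beta_{a}$ with $\beta_{a} \in w$; by Lemma~\ref{namba:interpolant} each $q_{a}$ has trunk exactly $t{^\frown}\langle a\rangle$.

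Next I would apply $\kappa$-completeness. The assignment $a \mapsto \beta_{a}$ colours the $I$-positive set $S$ with $|w| < \kappa$ many colours, so since $I$ is $\kappa$-complete (hence $|w|^{+}$-complete) there is a single value $\beta \in w$ for which
\[
 A_{\beta} := \{a \in S \mid \beta_{a} = \beta\} \in I^{+}.
\]
Now I amalgamate the chosen conditions over $A_{\beta}$. Form
\[
 q = \{s \in p \mid s \sqsubseteq t\} \cup \textstyle\bigcup_{a \in A_{\beta}} q_{a},
\]
that is, the tree with trunk $t$, with $\mathrm{Suc}_{q}(t) = A_{\beta}$, and with $q\upharpoonright(t{^\frown}\langle a\rangle) = q_{a}$ for each $a \in A_{\beta}$. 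I would then check that $q$ is a genuine $I$-Namba tree with $\mathrm{tr}(q) = t$: below $t$ it is a single stick, at $t$ it branches into the $I$-positive set $A_{\beta}$, and above each $t{^\frown}\langle a\rangle$ it coincides with the $I$-Namba tree $q_{a}$, all of whose successor sets are in $I^{+}$. Hence $q \leq^{\ast} p$.

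Finally I claim $q \Vdash \dot{\alpha} = \beta$, which contradicts the $\dot{\alpha}$-badness of $p$ and finishes the proof. Given any $r \leq q$, its trunk end-extends $t$, and since $\mathrm{Suc}_{r}(t) \subseteq A_{\beta}$ is $I$-positive, hence nonempty, I can pass to some $r' \leq r$ whose trunk end-extends $t{^\frown}\langle a\rangle$ for an $a \in A_{\beta}$; then $r' \leq q\upharpoonright(t{^\frown}\langle a\rangle) = q_{a}$ by Lemma~\ref{namba:interpolant}(2), so $r' \Vdash \dot{\alpha} = \beta$. Thus no extension of $q$ forces a value $\neq \beta$, so $q \leq^{\ast} p$ decides $\dot{\alpha}$, contradicting the assumption that $p$ is $\dot{\alpha}$-bad. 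I expect the main obstacle to be the amalgamation step: verifying that gluing the various $q_{a}$ over an $I$-positive set of successors yields a legitimate $I$-Namba tree and that the glued tree forces a \emph{uniform} value of $\dot{\alpha}$; the essential ideal-theoretic input making this possible is the $\kappa$-completeness of $I$ together with $|w| < \kappa$.
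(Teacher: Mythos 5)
Your proof is correct and follows essentially the same route as the paper's: the paper argues by contraposition, chooses for each non-bad successor $a$ a direct extension $p_a \leq^{\ast} p\upharpoonright(\mathrm{tr}(p){^\frown}\langle a\rangle)$ deciding $\dot{\alpha}$, uses $\kappa$-completeness of $I$ together with $|w|<\kappa$ to stabilize the decided value $\beta$ on an $I$-positive set of successors, and takes the union of the corresponding $p_a$'s to get a $\leq^{\ast}$-extension of $p$ forcing $\dot{\alpha}=\beta$. Your contradiction framing is logically the same, and your additional verifications (that the amalgamated tree is an $I$-Namba tree with trunk $\mathrm{tr}(p)$, and the density argument showing it forces the uniform value) simply spell out steps the paper leaves implicit.
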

\begin{proof}
 We show the contraposition. Since $A = \{a \in \mathcal{P}_{\kappa}\lambda \mid p\upharpoonright (\mathrm{tr}(p){^\frown}\langle a\rangle)$ is not $\dot{\alpha}$-bad $\}\in I^{\ast}$, for each $a \in A$, there are $p_{a} \leq^{\ast} p\upharpoonright (\mathrm{tr}(p){^\frown}\langle a\rangle)$ and $\alpha_{a}$ such that $p_{a} \force \dot{\alpha} = \alpha_{a}$. Since $I$ is $\kappa$-complete, there is a $\beta \in w$ such that $\{a \in A \mid \alpha_{a} = \beta\}\in I^{+}$. Consider a tree $q = \bigcup\{p_{a} \mid a \in A \land \alpha_{a} = \beta\}$. Then $q \leq^{\ast} p$ forces $\dot{\alpha} = \beta$, as desired. \end{proof}

\begin{lem}\label{namba:nobad}If $I$ is a fine ideal over $\mathcal{P}_{\kappa}\lambda$ then, for every $\mathrm{Nm}(\kappa,\lambda,I)$-name for an ordinal $\dot{\alpha}$ and a set $|w| < \kappa$ with $\force \dot{\alpha} \in w$, there is no $\dot{\alpha}$-bad condition. 
\end{lem}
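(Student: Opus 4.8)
The plan is to argue by contradiction via a fusion: assuming some $p \in \mathrm{Nm}(\kappa,\lambda,I)$ is $\dot{\alpha}$-bad, I will prune $p$ to a single condition $q \leq p$ every node of which (above the trunk) yields a $\dot{\alpha}$-bad tree, and then contradict this using a generic evaluation of $\dot{\alpha}$. The first thing I would record is a monotonicity property of badness under $\leq^{\ast}$: if $p$ is $\dot{\alpha}$-bad and $p' \leq^{\ast} p$, then $p'$ is $\dot{\alpha}$-bad as well. Indeed, since $\mathrm{tr}(p')=\mathrm{tr}(p)$, any $q \leq^{\ast} p'$ also satisfies $q \leq^{\ast} p$ by transitivity, so an extension of $p'$ deciding $\dot{\alpha}$ would contradict the badness of $p$. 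This makes the prunings in the fusion harmless to badness.

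The core step is a fusion argument combining Lemma~\ref{namba:badlemma} and Lemma~\ref{namba:fusion}. Set $p_0 = p$, $t_0 = \mathrm{tr}(p)$, and $n_0 = |t_0|$. At stage $n+1$, for each node $s$ at level $n_0+n$ of $p_n$ for which $p_n \upharpoonright s$ is $\dot{\alpha}$-bad, Lemma~\ref{namba:badlemma} gives that $A_s = \{a \in \mathcal{P}_{\kappa}\lambda \mid p_n\upharpoonright(s{^{\frown}}\langle a\rangle) \text{ is } \dot{\alpha}\text{-bad}\} \in I^{+}$, and I prune the successors of $s$ down to $A_s$. This keeps every node above $t_0$ branching $I$-positively, leaves all levels $\leq n_0+n$ untouched, and preserves the trunk $t_0$. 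Hence, with $l(n)=n_0+n$, Lemma~\ref{namba:fusion} yields $q = \bigcap_n p_n \in \mathrm{Nm}(\kappa,\lambda,I)$. By induction on the levels, starting from the fact that $q \leq^{\ast} p$ is itself $\dot{\alpha}$-bad, together with the monotonicity above, every node $t \in q$ with $t \sqsupseteq t_0$ satisfies: $q\upharpoonright t$ is $\dot{\alpha}$-bad.

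Finally I would extract the contradiction. Since $q \force \dot{\alpha} \in w$, choose a generic $G \ni q$ with $\dot{\alpha}[G] = \beta \in w$, and pick $r \in G$ with $r \leq q$ and $r \force \dot{\alpha} = \beta$. As $r \leq q$, its trunk end-extends the trunk of $q$ (the part of $q$ strictly below $t_0$ is a single chain, so a branching node of $r$ cannot lie there), so $\mathrm{tr}(r)$ is a node of $q$ with $\mathrm{tr}(r) \sqsupseteq t_0$; by the previous paragraph $q\upharpoonright\mathrm{tr}(r)$ is $\dot{\alpha}$-bad. On the other hand, Lemma~\ref{namba:interpolant} applied to $q$ and $s = \mathrm{tr}(r)$ gives $r \leq q\upharpoonright\mathrm{tr}(r) \leq q$, and since $\mathrm{tr}(q\upharpoonright\mathrm{tr}(r)) = \mathrm{tr}(r) = \mathrm{tr}(r)$ we actually have $r \leq^{\ast} q\upharpoonright\mathrm{tr}(r)$. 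But $r$ decides $\dot{\alpha}$, so $q\upharpoonright\mathrm{tr}(r)$ is \emph{not} $\dot{\alpha}$-bad, a contradiction.

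The step I expect to require the most care is the coherence of the fusion, because $\dot{\alpha}$-badness is computed relative to the evolving tree rather than the fixed tree $p$; the enabling observation is precisely the downward preservation of badness under $\leq^{\ast}$, which guarantees that pruning never destroys badness of the surviving interpolants and lets the inductive hypothesis propagate up the levels. The remaining ingredients—verifying $\mathrm{tr}(r) \sqsupseteq \mathrm{tr}(q)$ for $r \leq q$ and that $r \leq^{\ast} q\upharpoonright\mathrm{tr}(r)$—are routine consequences of the tree structure and Lemma~\ref{namba:interpolant}.
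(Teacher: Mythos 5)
Your proof is correct and takes essentially the same route as the paper's: a fusion argument combining Lemmas~\ref{namba:badlemma} and~\ref{namba:fusion} to produce $q \leq^{\ast} p$ all of whose nodes (above the trunk) give $\dot{\alpha}$-bad restrictions, after which no extension of $q$ can decide $\dot{\alpha}$, contradicting the density of deciding conditions. The extra details you supply --- the monotonicity of badness under $\leq^{\ast}$ and the verification that $\mathrm{tr}(r) \sqsupseteq \mathrm{tr}(q)$ for $r \leq q$ --- are exactly what the paper's terse proof leaves implicit.
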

\begin{proof}

 Suppose otherwise. Let $q \in P$ be an $\dot{\alpha}$- bad condition. By Lemmas~\ref{namba:fusion} and ~\ref{namba:badlemma}, We can define $q \leq^{\ast} p$ such that, for all $s \in q$, $\mathrm{Suc}(s) = \{a \in \mathcal{P}_{\kappa} \lambda \mid q \upharpoonright (s^\frown\langle a \rangle)$ is $\dot{\alpha}$-bad $\}$. Then every extension of $q$ does not decide the value of $\dot{\alpha}$. This is a contradiction. 
\end{proof}
To prove Lemmas~\ref{namba:sttpreserving} and~\ref{namba:sttpreserving2}, we need the following lemma. 
\begin{lem}\label{namba:clubpullback}Suppose that $I$ is a fine ideal over $\mathcal{P}_{\kappa}\lambda$. 
 For a set $a$, if $[a]^{\omega}$ has a club subset $D$ of size $< \kappa$ then, for every $p \in \mathrm{Nm}(\kappa,\lambda,I)$, if $p \force \dot{C}\subseteq [a]^{\omega}$ is a club then $\{z \in [a]^{\omega} \mid \exists q \leq p (q \force z \in \dot{C})\}$ contains a club. 
\end{lem}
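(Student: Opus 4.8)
The plan is to produce a single ground-model club $E\subseteq[a]^{\omega}$ all of whose members are forced into $\dot C$ by some extension of $p$. First I would record the reduction packed into the hypothesis: since $D$ is cofinal in $[a]^{\omega}$ we have $\bigcup D=a$, whence $|a|\le|D|\cdot\aleph_{0}<\kappa$; so $a$, $D$ and $[a]^{<\omega}$ all have size ${<}\kappa$, which is exactly the regime in which Lemma~\ref{namba:nobad} applies (its hypothesis is that the name ranges in a set $w$ with $|w|<\kappa$). I would also use throughout that $\dot C$ is forced to be \emph{closed}: if $\langle w_{n}\mid n<\omega\rangle$ is $\subseteq$-increasing with each $w_{n}\in\dot C$, then $\bigcup_{n}w_{n}\in\dot C$. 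This is what lets me aim only at capturing, for a prospective $z$, an increasing $\omega$-chain of $\dot C$-members whose union is $z$, rather than forcing any single new member of $\dot C$ into the ground model.

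The heart is a capture step feeding a fusion. Fix $r\le p$ and a ground-model $x\in[a]^{\omega}$. Since $\dot C$ is forced unbounded, $r\force\exists w\in\dot C\,(x\subseteq w)$; because the relevant approximation datum (which element of the small club $D$ is being approached, and at what finite stage) ranges in a set of size ${<}\kappa$, I would apply Lemma~\ref{namba:nobad} to decide that datum by a trunk-preserving extension $r'\le^{*}r$, thereby pinning down a ground-model $z\in[a]^{\omega}$ with $x\subseteq z$ and $r'\force z\in\dot C$. This defines, in $V$, a capture function $G\colon[a]^{<\omega}\to[a]^{\omega}$ with $e\subseteq G(e)$ together with an attached condition. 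For a closure point $z$ of $G$ the set $[z]^{<\omega}$ is countable, so stabilising levels and invoking Lemma~\ref{namba:fusion} produces a single $q_{z}\le p$ simultaneously forcing $G(e)\in\dot C$ for all $e\in[z]^{<\omega}$; since $z=\bigcup\{G(e)\mid e\in[z]^{<\omega}\}$ is then an increasing union of $\dot C$-members, closedness of $\dot C$ gives $q_{z}\force z\in\dot C$. As in the proof of Lemma~\ref{namba:badlemma}, the $\kappa$-completeness of $I$ is used to amalgamate the per-node trunk-preserving choices into genuinely $I^{+}$ successor sets, so that every object built is an honest $I$-Namba tree, and Lemma~\ref{namba:interpolant} lets me pass freely between $q_{z}$ and its restrictions $q_{z}\upharpoonright s$ below the finite nodes approximating $z$.

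Finally I would let $E$ be the set of closure points of $G$; by Lemma~\ref{menas} (together with the standard fact that the closure points of a function on $[a]^{<\omega}$ form a club) $E$ is a club in $[a]^{\omega}$, and by construction $E\subseteq\{z\in[a]^{\omega}\mid\exists q\le p\,(q\force z\in\dot C)\}$, as required. The \textbf{main obstacle} is exactly the capture step: converting a name $\dot w\in\dot C$ lying above $x$ into an honest ground-model $z\supseteq x$ with an accompanying condition forcing $z\in\dot C$. This is where the smallness of $D$ is indispensable, since it is what places the approximation data in a set of size ${<}\kappa$ so that Lemma~\ref{namba:nobad} yields a trunk-preserving decision, and where closedness of $\dot C$ must be played against the fusion so that one never has to capture a single new member of $\dot C$ outright. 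Carrying this out uniformly — node by node along the fusion, while preserving $I$-positivity of all successor sets — is the Shelah-style bookkeeping the paper alludes to.
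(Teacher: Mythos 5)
Your framing steps are fine (that $|a|<\kappa$, the use of closedness of $\dot C$, Menas, closure points of a function), but the capture step --- the single place you yourself locate the real work --- is not a consequence of Lemma~\ref{namba:nobad}, and as justified it is circular. Lemma~\ref{namba:nobad} decides \emph{one} name for a member of a fixed ground-model set of size ${<}\kappa$; the statement you want, that some $r'\le^{*}r$ forces a fixed ground-model $z\supseteq x$ into $\dot C$, is not a datum of this form. Worse, your phrase ``which element of the small club $D$ is being approached'' presupposes that the member $w\in\dot C$ covering $x$ is itself covered by a member of $D$, i.e.\ that every countable subset of $a$ in the extension is covered by a ground-model one. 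That covering property is not available at this point: in this paper it is a \emph{consequence} of the present lemma (via Lemma~\ref{namba:sttpreserving} and Proposition~\ref{namba:sttpreserving2}), and a priori it can fail --- if $\mathrm{Nm}(\kappa,\lambda,I)$ made some $\delta<\kappa$ with $\mathrm{cf}^{V}(\delta)=\omega_{1}$ have countable cofinality, the club of those $x\in[\delta]^{\omega}$ cofinal in $\delta$ would contain no ground-model set at all, and no capture would be possible. So your capture step is essentially equivalent to the lemma being proved, not an ingredient obtainable beforehand from Lemma~\ref{namba:nobad}.

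Even granting some form of capture, the fusion does not go through as you describe. Successive applications of a capture step produce a $\le^{*}$-descending $\omega$-chain, and such chains have no lower bound in general; Lemma~\ref{namba:fusion} requires the levels to stabilize, and once you stabilize levels the capture must be carried out separately below each node of the current level, of which there are $I$-positively many (hence at least $\kappa$-many). The captured sets therefore become node-dependent: there is no single function $G\in V$, only a family $G_{s}$ indexed by nodes, and along different branches of the fused tree \emph{different} sets get forced into $\dot C$. A fixed ground-model $z$ must absorb all of them simultaneously; the $\kappa$-completeness of $I$ together with $|D|<\kappa$ lets you pigeonhole across the successors of one node, but not across the nodes of a whole level. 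Resolving exactly this incoherence is what the paper's proof supplies and your proposal omits: for each $z$ it introduces an open game $G_{z}$ in which Player I's moves ($X_{i}\in I$ and an enumeration $\xi_{i}$ of $z$) range over all possible branch choices, the rule $q_{i}\force\xi_{i}\in y_{i}\in\dot C$ together with the winning condition $\bigcup_{i}y_{i}\subseteq z$ forces every branch's captured chain to have union exactly $z$, and the set of $z$ for which Player II has a winning strategy is shown to contain a club by determinacy of open games plus a stationarity/elementary-submodel argument (this is also where $|D|<\kappa$ is used, to amalgamate Player I's strategies over all $z\in T$). Some such uniformization over branches --- the game-theoretic device, not closure points of a $V$-function --- is the missing idea, and without it the proposal does not prove the lemma.
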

\begin{proof}
 For each $z \in D$, let us define a two player game $G_{z}$ of length $\omega$ as follows:
 \begin{center}
  \begin{tabular}{c||c|c|c|c|c|c}
   Player I & $X_{0}$, $\xi_0$ & & $\cdots$ & $X_{i},\xi_i$ & & $\cdots$ \\ \hline 
   Player II & & $a_0,y_0$, $q_0$ & $\cdots$ & & $a_i, y_i,q_i$ & $\cdots$
  \end{tabular}
 \end{center}
Players I and II must choose to satisfy the following:
\begin{itemize}
 \item $X_i \in I$. 
 \item $\xi_i \in z$, $y_i \in D$, and $y_i\subseteq y_{i+1}$.
 \item $p \geq q_0 \geq q_1 \geq \cdots$. 
 \item $a_i\in \mathrm{Suc}_{q_i}(\mathrm{tr}(q_i)) \setminus X_i$.
 \item $\mathrm{tr}(q_i) = |\mathrm{tr}(p)| + i$ and $\mathrm{max}(\mathrm{tr}(q_i)) = a_i$. 
 \item $q_i \force \xi_i \in y_i \in \dot{C}$.
\end{itemize}
Note that Player II can choose $q_i$ anytime by Lemma~\ref{namba:nobad}. 
Player II wins if $\bigcup_{i}y_i \subseteq z$. It is easy to see that $G_{z}$ is an open game, and thus, $G_z$ is determined.

First, we claim that $C_0 = \{z \in [a]^{\omega} \mid $ Player II has a winning strategy in $G_z\}$ contains a club subset. Suppose otherwise. Then, there is a stationary subset $T \subseteq D$ such that, for every $z \in T$, Player I has a winning strategy in $G_z$ by the determinacy of games.
 Let $\langle \mathrm{ws}_{z} \mid z \in T\rangle$ be I's winning strategies. Let $\theta$ be a sufficiently large regular cardinal. Consider an elementary substructure $M \prec\mathcal{H}_{\Psi}$ such that 
\begin{enumerate}
 \item $M$ is countable.
 \item $\kappa,\lambda,I, p, \dot{C}, D,\langle \mathrm{ws}_{z} \mid z \in T\rangle \in M$.
 \item $M \cap a = \overline{z} \in T$. 
\end{enumerate}
To prove the contradiction, let us find a sequence of Player II's move $\langle a_i,y_i,q_i \mid i < \omega \rangle$ in which $\langle a_{i},y_i,q_i\rangle \in M$ and this is a regal move after Player I taked $\mathrm{ws}_{\overline{z}}(a_j,y_j,q_j \mid j < i)$. In this play, Player II wins while Player I uses her winning strategy, as we see later. 

Suppose that $\langle a_j,y_j,q_j \mid j < i \rangle \in M$ has been obtained. For each $z \in T$, let $\langle X_i^z, x_i^z \rangle = \mathrm{ws}_{z}(\langle a_j,y_j,q_j \mid j < i \rangle)$ if $\langle a_j,y_j,q_j \mid j < i \rangle$ is a partial play of $G_{z}$ along $\mathrm{ws}_z$. Note that $\langle X_{i}^z ,x_{i}^z \mid z \in T \rangle \in M$ by the induction hypothesis and $\langle \mathrm{ws}_{z} \mid z \in T \rangle \in M$. By $|D| < \kappa$,  $X = \bigcup_{z \in T}X_i^z \in I$. So, in $M$, we can pick $a_i \in \mathrm{Suc}_{q_{i-1}}(\mathrm{tr}(q_{i-1})) \setminus X \subseteq \mathrm{Suc}_{q_{i-1}}(\mathrm{tr}(q_{i-1})) \setminus X_{i}^{\overline{z}}$. Then $q_{i-1} \upharpoonright \mathrm{tr}(q_{i-1}){^{\frown}}\langle a_i \rangle \in M$. By the rule of $G_{\overline{z}}$, $\xi_i^{\overline{z}} \in \overline{z} = M \cap a$. By Lemma~\ref{namba:nobad}, we can choose $q_{i}$ and $y_{i}$ such that $q_{i} \force \xi_i^{\overline{z}} \in y_{i} \in \dot{G}$ and $y_{i-1} \subseteq y_{i}$. The induction is completed. 

Player I used a winning strategy $\mathrm{ws}_{\overline{z}}$ in this play but Player II wins. Indeed, $\bigcup_{i}y_i \subseteq M \cap a = \overline{z}$ by each $y_i$ is in $M$. This is a contradiction. 

 Fix $z \in C_0$. Lastly, we claim that there is a $q \leq p$ that forces $z \in \dot{C}$. Let $\{\xi_0,\xi_1,...,\}$ enumerates $z$.
By induction, let us define a descending sequence $q_0 \geq^{\ast} q_{1} \geq^\ast \cdots$ with the following conditions:
\begin{enumerate}
 \item $q_0 = p$.
 \item $\mathrm{Lev}_{j}(q_i) = \mathrm{Lev}_{j}(q_{i+1})$ for all $j< |\mathrm{tr}(p)| + i$.
 \item For every $s \in \mathrm{Lev}_{|\mathrm{tr}(p)| + i}(q_{i})$, if we write $s \setminus \mathrm{tr}(p) = \{a_0^s,...,a_{i}^s\}$ then there is a $y_i^s$ such that $\langle a^s_i,y^s_i,q^s_i \rangle = \mathrm{ws}(\langle X^s_j,\xi_j \mid j \leq i)$ for some $X^s_0,...,X^{s}_{i} \in I$. 
\end{enumerate}
 Suppose that $q_i$ has been defined. Let us define $q_{i+1}$. For each $s \in \mathrm{Lev}_{|\mathrm{tr}(p)| + i}(q_{i})$, first we consider $q' = q_{i} \upharpoonright s$. Let $X_s$ be the set of all $a \in \mathrm{Suc}_{q'}(s)$ such that $a$ appears in $\mathrm{ws}(\langle X^s_j,\xi_j \mid j < i\rangle{^{\frown}}\langle X,\xi_i\rangle))$ for some $X \in I$. $X_{s}$ need to be an $I$-positive set, that is $X_s \in I^{+}$.

Suppose otherwise, let $\langle a,y,q\rangle = \mathrm{ws}(\langle X^s_j,\xi_j \mid j < i\rangle{^{\frown}}\langle X_s,\xi_i\rangle)$. By the definition of $X_s$, $a \in X_s$. On the other hand, by the rule of $G_z$, $a \not\in X_s$. This is a contradiction. So $X_s \in I^{+}$. For each $a \in X_s$, there are $y_a$ and $q_a$ such that $\langle a,y_a,q_a\rangle = \mathrm{ws}(\langle X^s_j,\xi_j \mid j < i\rangle{^{\frown}}\langle X,\xi_i\rangle)$ for some $X \in I$. Define $q_{i+1}$ by 
\begin{center}
 $\bigcup\{q_{a} \mid s \in \mathrm{Lev}_{|\mathrm{tr}(p)| + i}(q_i)$ and $a \in X_s\}$
\end{center}
Note that $\mathrm{tr}(q_{a}) = \mathrm{tr}(q){^{\frown}} s {^{\frown}} \langle a\rangle$ for each $a \in X_{s}$, $\mathrm{Suc}_{q_{i+1}}(s) = X_{s}$ and $\mathrm{Lev}_{q_{i+1}}(|\mathrm{tr}(p)| + i)=\mathrm{Lev}_{q_{i}}(|\mathrm{tr}(q)| + i)$.

By the definition of $\{q_{i} \mid i < \omega \}$ and Lemma~\ref{namba:fusion}, a lower bound $q = \prod_{i}q_{i} \leq p$ exists. Let $\dot{y}_{i}$ be a $\mathrm{Nm}(\kappa,\lambda,I)$-name such that 
\begin{center}
 $q \force \dot{y}_{i} = y_{i}^{s}$ if and only if $s \sqsubseteq \dot{g}$.
\end{center}
This is well-defined. It is easy to see that $q \force \xi_i \in \dot{y}_{i} \subseteq \dot{y}_{i+1} \in \dot{C}$ for each $i$. Then $q \force \bigcup_{i}{\dot{y}_i} = z \in \dot{C}$, as desired. 
\end{proof}

\begin{lem}\label{namba:sttpreserving}Suppose that $I$ is a fine ideal over $\mathcal{P}_{\kappa}\lambda$.  
 If $S$ is semistationary subset of $[a]^{\omega}$. If $[a]^{\omega}$ has a club subset $D$ of size $<\kappa$ then $\mathrm{Nm}(\kappa,\lambda,I)$ preserves the semistationarity of $S$. In particular, $\mathrm{Nm}(\kappa,\lambda,I)$ is $\omega_1$-stationary preserving.
\end{lem}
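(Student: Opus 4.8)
The plan is to reduce everything to the club-catching Lemma~\ref{namba:clubpullback} and then intersect with the ground-model copy of $S^{\mathbf{cl}}$. Recall that to show $\mathrm{Nm}(\kappa,\lambda,I)$ preserves the semistationarity of $S$, it suffices to prove that for every condition $p$ and every name $\dot{C}$ with $p \force \dot{C} \subseteq [a]^{\omega}$ is a club, there is $q \leq p$ forcing $S^{\mathbf{cl}} \cap \dot{C} \neq \emptyset$; since this holds below every $p$, a density argument over all such $\dot{C}$ yields $\force S^{\mathbf{cl}}$ is stationary, which is exactly preservation of the semistationarity of $S$.

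First I would fix $p$ and $\dot{C}$ as above. Since $[a]^{\omega}$ carries a club of size $<\kappa$, Lemma~\ref{namba:clubpullback} applies, so the set $E = \{z \in [a]^{\omega} \mid \exists q \leq p(q \force z \in \dot{C})\}$ contains a club of $[a]^{\omega}$ in $V$. As $S$ is semistationary, $S^{\mathbf{cl}}$ is stationary in $[a]^{\omega}$ in $V$, so I may choose $x \in S^{\mathbf{cl}} \cap E$. Unfolding the definitions, there are $y \in S$ with $y \sqsubseteq_{\omega_1} x$ and $q \leq p$ with $q \force x \in \dot{C}$. Provided $\omega_1$ is preserved, the relation $y \sqsubseteq_{\omega_1} x$ is absolute between $V$ and $V[\dot{G}]$, so the same $y$ witnesses $x \in S^{\mathbf{cl}}$ in the extension; hence $q \force x \in S^{\mathbf{cl}} \cap \dot{C}$, as required.

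It remains to secure the preservation of $\omega_1$, which is also the content of the final ``in particular'' clause. For this I would specialize the argument to $a = \omega_1$ and $S = T'$, where $T \subseteq \omega_1$ is stationary and $T'$ is the set of its infinite members, each regarded as an element of $[\omega_1]^{\omega}$. The point is that $[\omega_1]^{\omega}$ has the club of countable limit ordinals, of size $\aleph_1 < \kappa$, and that for an ordinal $\delta$ the relation $\delta \sqsubseteq_{\omega_1} x$ reduces to $\delta = x$; consequently $(T')^{\mathbf{cl}} = T'$, and whenever the caught point $x$ lies in $S$ itself the conclusion $q \force x \in S^{\mathbf{cl}}$ holds \emph{unconditionally}, with $x$ serving as its own witness. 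This self-contained instance shows that $(T')^{\mathbf{cl}}$ stays stationary; if $\omega_1^{V}$ were collapsed, then in the extension $\omega_1^{V}$ would be countable and the singleton $\{\omega_1^{V}\}$ would be a club in $[\omega_1^{V}]^{\omega}$ disjoint from $(T')^{\mathbf{cl}}$, a contradiction. Thus $\omega_1$ is preserved and $T$ remains stationary, giving $\omega_1$-stationary preservation; feeding this back justifies the absoluteness step in the general case above.

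The genuinely hard work, namely the fusion-and-game construction producing the caught conditions, is already carried out in Lemma~\ref{namba:clubpullback}, so the main obstacle here is the bookkeeping around $\omega_1$: one must run the $a = \omega_1$ instance first, where the witness is reflexive and no preservation is presumed, in order to break the apparent circularity, and only then invoke absoluteness of $\sqsubseteq_{\omega_1}$ in the general case.
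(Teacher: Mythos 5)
Your proof is correct and follows essentially the same route as the paper: both reduce the statement to the club-catching Lemma~\ref{namba:clubpullback} and then pick a point of $S^{\mathbf{cl}}$ inside the club of caught points. The only difference is one of exposition: you make explicit the density framing, the absoluteness of $\sqsubseteq_{\omega_1}$, and the bootstrap through the $a=\omega_1$ case (where the witness is reflexive) to secure preservation of $\omega_1$ first, all of which the paper's two-line proof leaves implicit.
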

\begin{proof}
 Let $p \in \mathrm{Nm}(\kappa,\lambda,I)$ and $\dot{C}$ be such that $p \force \dot{C} \subseteq [a]^{\omega}$ is a club. By Lemma~\ref{namba:clubpullback}, we can take $q \leq p$ and $y \in S$ such that $y\sqsubseteq_{\omega_1} z$ and $q \force z \in \dot{C}$, as desired. 
\end{proof}

\begin{prop}\label{namba:sttpreserving2}
 If $S$ is a stationary subset of $[a]^{\omega}$. If $[a]^{\omega}$ has a club subset $D$ of size $<\kappa$ then $\mathrm{Nm}(\kappa,\lambda,I)$ preserves the stationarity of $S$. In particular, if $\mathrm{cf}(\nu)^{\omega} < \kappa$ and $\mathrm{cf}(\mu) > \omega$ then $\mathrm{Nm}(\kappa,\lambda,I)$ forces ${\mathrm{cf}}(\nu) > \omega$. 
\end{prop}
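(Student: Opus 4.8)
The plan is to run the stationarity-preservation argument in exact parallel with Lemma~\ref{namba:sttpreserving}, but with the $\sqsubseteq_{\omega_1}$-bookkeeping stripped away: for plain stationarity I only need to hit a club rather than a semistationary closure, so the proof is actually shorter. First I would fix $p \in \mathrm{Nm}(\kappa,\lambda,I)$ and a name $\dot{C}$ with $p \force \dot{C} \subseteq [a]^{\omega}$ a club, and apply Lemma~\ref{namba:clubpullback} to the pair $(p,\dot{C})$. That lemma hands me a club $E \subseteq [a]^{\omega}$ contained in $\{z \mid \exists q \leq p\,(q \force z \in \dot{C})\}$. Since $S$ is stationary in $[a]^{\omega}$, I can pick $z \in S \cap E$ together with a witness $q \leq p$ forcing $z \in \dot{C}$; then $q \force z \in S \cap \dot{C}$. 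As $p$ and $\dot{C}$ were arbitrary, this shows $S \cap \dot{C} \neq \emptyset$ is forced densely below every condition, i.e.\ $S$ remains stationary. The hypothesis that $[a]^{\omega}$ carries a club of size $<\kappa$ is used only to invoke Lemma~\ref{namba:clubpullback}, so no genuinely new obstacle arises here; the entire combinatorial weight sits inside that lemma, which I am permitted to assume.

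For the ``in particular'' clause I would first reduce to a regular cardinal. Writing $\rho = \mathrm{cf}(\nu)$, which is regular, uncountable, and satisfies $\rho^{\omega} < \kappa$ by hypothesis, I observe that $\mathrm{cf}(\nu) = \omega$ holds in a generic extension iff $\mathrm{cf}(\rho) = \omega$ does: a cofinal $\omega$-sequence through $\nu$ pulls back along a ground-model increasing cofinal map $\rho \to \nu$ to a cofinal $\omega$-sequence through $\rho$, and conversely a cofinal $\omega$-sequence through $\rho$ composes with that map to one through $\nu$. So it suffices to show $\mathrm{Nm}(\kappa,\lambda,I)$ forces $\mathrm{cf}(\rho) > \omega$.

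Then I would apply the main part with $a = \rho$ and $S = ([\rho]^{\omega})^{V}$, which is trivially stationary and whose ambient space $[\rho]^{\omega}$ has a club of size $\rho^{\omega} < \kappa$, namely itself; hence $S$ stays stationary in the extension $V[G]$. Now suppose toward a contradiction that in $V[G]$ we have $\mathrm{cf}(\rho) = \omega$, witnessed by a cofinal $f : \omega \to \rho$. The set $C = \{x \in ([\rho]^{\omega})^{V[G]} \mid \mathrm{ran}(f) \subseteq x\}$ is a club of $([\rho]^{\omega})^{V[G]}$, and every $x \in C$ satisfies $\sup x = \rho$ because $\mathrm{ran}(f)$ is cofinal in $\rho$. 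On the other hand, every member of $S = ([\rho]^{\omega})^{V}$ is a ground-model countable subset of $\rho$, hence bounded below $\rho$, since $\rho$ is regular uncountable in $V$ and suprema of sets of ordinals are absolute. Therefore $S \cap C = \emptyset$, contradicting the preserved stationarity of $S$ in $V[G]$. The one point needing care --- and the only place the argument is more than bookkeeping --- is this last incompatibility: one must notice that the generic cofinal sequence produces a club all of whose members are \emph{unbounded} in $\rho$, which is exactly what a ground-model stationary set of bounded sets can never meet.
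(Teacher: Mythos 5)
Your proof is correct and follows essentially the same route as the paper: the stationarity preservation is exactly the application of Lemma~\ref{namba:clubpullback} that the paper invokes, and your cofinality argument (reducing to $\rho=\mathrm{cf}(\nu)$, preserving the stationarity of $([\rho]^{\omega})^{V}$, and noting that ground-model countable subsets of $\rho$ are bounded while any club built from a new cofinal $\omega$-sequence consists of unbounded sets) is just an explicit rendering of the paper's covering argument with $X=\{\nu_\xi \mid \xi<\mathrm{cf}(\nu)\}$.
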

\begin{proof}
 For the preservation of stationary subsets, Lemma~\ref{namba:clubpullback} works as well as the proof of Lemma~\ref{namba:sttpreserving}. We only check about uncountable cofinalities. Let $X = \{\nu_\xi\mid \xi < \mathrm{cf}(\nu) \}$ be a cofinal subset of $\nu$. Then $[X]^{\omega}$ has a club subset of size $< \kappa$. Therefore, $[X]^{\omega}$ is forced to be a stationary subset. So any countable subset of $X$ in the extension is covered by some element of $([X]^{\omega})^{V}$, as desired.
\end{proof}

In the proof of Theorem~\ref{maintheorem2}, we will use the following lemma:
\begin{lem}\label{namba:semiproperchar}
 The following are equivalent:
\begin{enumerate}
 \item $\mathrm{Nm}(\kappa,\kappa)$ is semiproper. 
 \item $\mathrm{Nm}(\kappa)$ is semiproper.
\end{enumerate}
\end{lem}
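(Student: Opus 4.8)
The plan is to treat the two implications separately, using Shelah's characterisation (Theorem~\ref{shelah:namba}) to pin the content of the lemma down to the single assertion ``there is a semiproper poset forcing $\dot{\mathrm{cf}}(\kappa)=\omega$.''

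For $(1)\Rightarrow(2)$ I would argue directly. By Lemma~\ref{namba:changecofinality} (applied with $\lambda=\kappa$ and $\delta=\kappa$), $\mathrm{Nm}(\kappa,\kappa)$ forces $\dot{\mathrm{cf}}(\kappa)=\omega$; if in addition $\mathrm{Nm}(\kappa,\kappa)$ is semiproper, then it is a semiproper poset collapsing the cofinality of $\kappa$ to $\omega$. This is exactly clause (2) of Theorem~\ref{shelah:namba} with $\lambda=\kappa$, so clause (1) of that theorem yields that $\mathrm{Nm}(\kappa)$ is semiproper. This direction is essentially immediate and I do not expect any difficulty.

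For $(2)\Rightarrow(1)$, which I expect to be the substantive direction, my first attempt would be to exhibit a projection $\rho\colon\mathrm{Nm}(\kappa,\kappa)\to\mathrm{Nm}(\kappa)$. On the dense set of \emph{slim} trees---those $p$ in which, along every branch, $\langle\sup a_i\rangle$ is strictly increasing, the map $a\mapsto\sup a$ is injective on each $\mathrm{Suc}_p(s)$ with cofinal range, and every level has size $\le\kappa$ so that the successor--suprema at distinct nodes of a common level can be taken pairwise disjoint---the $\sup$-compression $p\mapsto\{\langle\sup a_0,\dots,\sup a_{n-1}\rangle : \langle a_0,\dots,a_{n-1}\rangle\in p\}$ is a tree isomorphism onto a Namba tree on $\kappa$, because a $J^{bd}_{\kappa\kappa}$-positive successor set is exactly one whose union is unbounded, i.e.\ one whose suprema are cofinal in $\kappa$. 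One then checks $\rho$ is order preserving and has the lifting property by pulling a refinement $r\le\rho(p)$ back along the isomorphism. This realises $\mathrm{Nm}(\kappa,\kappa)$ as $\mathrm{Nm}(\kappa)\ast\dot Q$ with $\dot Q=\mathrm{Nm}(\kappa,\kappa)/\dot G$, and by the semiproper two-step iteration theorem it then suffices to show $\force_{\mathrm{Nm}(\kappa)}\dot Q$ is semiproper; combined with the hypothesis that $\mathrm{Nm}(\kappa)$ is semiproper this gives $(1)$.

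The hard part will be the semiproperness of the quotient $\dot Q$ in $V^{\mathrm{Nm}(\kappa)}$. Once $G$ fixes the cofinal sequence of suprema $\langle\gamma_n\rangle$, the forcing $\dot Q$ chooses an actual $\subseteq$-increasing branch $\langle a_n\rangle$ with $\sup a_n=\gamma_n$; the delicate point is that this branch still leaves every countable $N\prec\mathcal H_\theta$ \emph{vertically} (its suprema remain cofinal in $\kappa$) while $N\cap\omega_1$ must be preserved---precisely the phenomenon that makes Namba semiproperness nontrivial. I would establish semiproperness of $\dot Q$ by a fusion along the branch, using Lemma~\ref{namba:nobad} to decide each $\dot Q$-name for a countable ordinal in $N$ by passing to a subtree and Lemma~\ref{namba:fusion} to amalgamate the construction, with the requisite semigeneric behaviour supplied by the semiproperness of $\mathrm{Nm}(\kappa)$ (equivalently $\Phi_\kappa$). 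An alternative, should the quotient analysis prove unwieldy, is the route parallel to Shelah's proof of $(2)\Rightarrow(1)$ in Theorem~\ref{shelah:namba}: prove the exact $\mathrm{Nm}(\kappa,\kappa)$-analogue---that the existence of \emph{any} semiproper poset forcing $\dot{\mathrm{cf}}(\kappa)=\omega$ already forces $\mathrm{Nm}(\kappa,\kappa)$ to be semiproper---and apply it with $\mathrm{Nm}(\kappa)$ itself as the witness; here the same fusion together with the club-pullback mechanism of Lemma~\ref{namba:clubpullback} does the work, and this step, threading the witness's semigeneric conditions through the Namba fusion, is the one I expect to carry the real weight.
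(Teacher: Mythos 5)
Your $(1)\Rightarrow(2)$ direction is correct and is exactly the paper's argument (its Lemma~\ref{scc:char1}): by Lemma~\ref{namba:changecofinality}, $\mathrm{Nm}(\kappa,\kappa)$ forces $\mathrm{cf}(\kappa)=\omega$, so its semiproperness realizes clause (2) of Theorem~\ref{shelah:namba} and hence gives semiproperness of $\mathrm{Nm}(\kappa)$. The genuine gap is in $(2)\Rightarrow(1)$. Your main route --- the sup-compression projection $\rho\colon\mathrm{Nm}(\kappa,\kappa)\to\mathrm{Nm}(\kappa)$, the decomposition $\mathrm{Nm}(\kappa,\kappa)\simeq\mathrm{Nm}(\kappa)\ast\dot Q$, and the two-step iteration theorem --- requires proving $\force_{\mathrm{Nm}(\kappa)}$ ``$\dot Q$ is semiproper'', and this you never do. The tools you cite cannot do it: Lemmas~\ref{namba:fusion} and~\ref{namba:nobad} are about $\mathrm{Nm}(\kappa,\lambda,I)$ for a fine, $\kappa$-complete ideal $I$ in $V$ (the proof of Lemma~\ref{namba:badlemma}, on which Lemma~\ref{namba:nobad} rests, explicitly uses $\kappa$-completeness), whereas $\dot Q$ is evaluated in $V[G]$, where $\kappa$ has cofinality $\omega$ and conditions are further constrained by compatibility with $G$; the quotient is not an $I$-Namba forcing for any ideal there. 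Saying the requisite semigeneric behaviour is ``supplied by the semiproperness of $\mathrm{Nm}(\kappa)$'' is circular at exactly the point where the work lies. Note also that this route demands strictly more than the target statement (semiproperness of the iterand $\dot Q$, which may be harder to establish than the lemma itself, or even fail), so it is not a safe reduction.

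What is missing is the paper's key idea: a game-theoretic characterization. The paper proves (Lemma~\ref{originalnambaforcing}) that $\mathrm{Nm}(\kappa,\lambda,I)$ is semiproper if and only if $\Phi_{\kappa,\lambda,I}$ holds, i.e.\ Player II wins the Galvin game on every $I$-positive set --- this is where the fusion argument you allude to actually lives --- and then transfers strategies (Lemma~\ref{scc:char2}): from a winning strategy witnessing $\Phi_\kappa$, which by Theorem~\ref{shelah:namba} is equivalent to your hypothesis (2), one gets $\Phi_{\kappa,\kappa}$ by choosing $\{a_\xi\mid\xi<\kappa\}\subseteq A$ with $\mathrm{ot}(a_\xi)$ increasing and cofinal in $\kappa$, pulling each $F\colon A\to\omega_1$ back to $\overline F\colon\kappa\to\omega_1$ defined by $\overline F(\xi)=F(a_\xi)$, and playing II's strategy for $\Game(\kappa,J^{\mathrm{bd}}_{\kappa})$ against the $\overline F$'s; unboundedness of the resulting set of $\xi$'s gives $J^{\mathrm{bd}}_{\kappa\kappa}$-positivity of the corresponding set of $a_\xi$'s. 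Your fallback suggestion (any semiproper poset forcing $\mathrm{cf}(\kappa)=\omega$ makes $\mathrm{Nm}(\kappa,\kappa)$ semiproper) is the right statement and is in effect what this chain proves, but in your write-up it remains an assertion; moreover Lemma~\ref{namba:clubpullback}, which you name as its mechanism, is the $\omega_1$-stationary-preservation tool, not the one that produces semigeneric conditions.
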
 

To show Lemma~\ref{namba:semiproperchar}, we need to characterize the semiproperness of $\mathrm{Nm}(\kappa,\lambda,I)$ in terms of game theory. 
Let us introduce the principle $\Phi_{\kappa,\lambda,I}$. $\Phi_{\kappa,\lambda,I}$ is the statement that the player II has a winning strategy for the Galvin game $\Game(I,A)$ for all $A \in I^{+}$. $\Game(I,A)$ is a game of length $\omega$ with two players as follows:
\begin{center}
 \begin{tabular}{c||c|c|c|c|c|c}
  Player I & $F_{0}:A \to \omega_1$ & & $\cdots$ & $F_{i}:A \to \omega_1$ & & $\cdots$ \\ \hline
  Player I &  & $\xi_0 < \omega_1$ & $\cdots$ &  & $\xi_i < \omega_1$ & $\cdots$ 
 \end{tabular}
\end{center}
 Let $\xi = \sup_{n}\xi_{n}$. II wins if $\bigcap_{n < \omega} F_{n}^{-1}\xi \in I^{+}$. We write $\Phi_{\kappa,\lambda}$ for $\Phi_{\kappa,\lambda,J_{\kappa\lambda}^\mathrm{bd}}$. Note that, the Galvin game can be defined for any ideal $I$ over any set $Z$. $\Phi_{\kappa}$ is the statement that Player II has a winning strategy for $\Game(\mu^{+},J_{\kappa}^{\mathrm{bd}})$. $\Phi_{\kappa}$ is equivalent with the semiproperness of $\mathrm{Nm}(\kappa)$ as we saw in Theorem~\ref{shelah:namba}. Lemma~\ref{originalnambaforcing} is an analog of this. Lemma~\ref{namba:semiproperchar} follows by $\Phi_{\kappa,\kappa}\leftrightarrow \Phi_{\kappa}$ (See Lemmas~\ref{scc:char1} and~\ref{scc:char2}). 

Note that $\Phi_{\kappa}$ is a game theoretical variation of Chang's conjecture. For a detail, we refer to ~\cite[Theorem 2.5 of Section XII]{MR1623206}. We see $\Phi_{\kappa,\lambda}$ as a variation of Chang's conjectures. 

\begin{lem}\label{originalnambaforcing}Suppose that $I$ is a fine ideal over $\mathcal{P}_{\kappa}\lambda$. The following are equivalent.
\begin{enumerate}
 \item $\mathrm{Nm}(\kappa,\lambda,I)$ is semiproper. 
 \item $\Phi_{\kappa,\lambda,I}$ holds.
\end{enumerate}
\end{lem}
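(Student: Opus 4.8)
The plan is to prove the equivalence of the semiproperness of $\mathrm{Nm}(\kappa,\lambda,I)$ with the principle $\Phi_{\kappa,\lambda,I}$ by relating a play of the Galvin game $\Game(I,A)$ to the process of building an $(M,\mathrm{Nm}(\kappa,\lambda,I))$-semigeneric condition along a fusion sequence. The two directions will be handled separately, and both will run through an elementary submodel $M \prec \mathcal{H}_{\theta}$ countable with $\kappa,\lambda,I,p \in M$ and with a name for an ordinal below $\omega_1$ in the relevant role.

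For the direction $(2) \to (1)$, I would fix $p \in \mathrm{Nm}(\kappa,\lambda,I)$ and a countable $M \ni p, \kappa,\lambda,I$, and build a fusion sequence $p = q_0 \geq^{\ast} q_1 \geq^{\ast} \cdots$ whose lower bound $q = \prod_n q_n$ (which exists by Lemma~\ref{namba:fusion}) is $(M,\mathrm{Nm}(\kappa,\lambda,I))$-semigeneric. The key point is that to keep $M[\dot{G}] \cap \omega_1 = M \cap \omega_1$, it suffices to handle, along a bookkeeping of the countably many $\mathrm{Nm}(\kappa,\lambda,I)$-names for countable ordinals lying in $M$, the requirement that each such name is forced into $M \cap \omega_1$. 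At a node $s$ extending $\mathrm{tr}(p)$, the relevant name together with the $I$-positive successor set $\mathrm{Suc}(s)$ defines a coloring $F\colon \mathrm{Suc}(s) \to \omega_1$, and the winning strategy for Player II in $\Game(I,\mathrm{Suc}(s))$ against this coloring produces an ordinal $\xi < \omega_1$ (which, by elementarity, lies in $M \cap \omega_1$) and an $I$-positive refinement of the successors on which the name is bounded by $\xi$. Thinning each node to the $I$-positive set $\bigcap_n F_n^{-1}\xi$ supplied by the game keeps every successor set $I$-positive, so the fusion stays inside $\mathrm{Nm}(\kappa,\lambda,I)$, and the lower bound forces every name in $M$ below $M \cap \omega_1$.

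For $(1) \to (2)$, I would argue contrapositively: assuming $\Phi_{\kappa,\lambda,I}$ fails, fix $A \in I^{+}$ for which Player I has a winning strategy in $\Game(I,A)$, and use this strategy to construct a condition $p$ (with $\mathrm{Suc}(\mathrm{tr}(p)) \subseteq A$) and a name for a countable ordinal that no extension can force into any fixed countable $M$, thereby violating semigenericity. Concretely, Player I's coloring moves $F_i$ get encoded as a name $\dot{\xi}$ for an ordinal, built so that deciding $\dot{\xi}$ along a branch forces a large supremum of the colors; Player I's winning condition $\bigcap_n F_n^{-1}\xi \in I$ is exactly what forbids the existence of an $I$-positive set of successors on which $\dot{\xi}$ stays below any given countable $\xi$, which is the failure of semigenericity.

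The main obstacle is the bookkeeping in the $(2)\to(1)$ direction: the Galvin game is a single $\omega$-length game producing one ordinal $\xi$, whereas semigenericity must simultaneously control \emph{all} countably many names for countable ordinals in $M$, at \emph{every} node of the tree, while maintaining $I$-positivity of successor sets through the fusion. I expect to resolve this by interleaving the countably many names via a fixed enumeration in $M$ so that at level $n$ of the fusion I address the $n$-th name at all currently active nodes at once, using $\kappa$-completeness of $I$ (equivalently, that a union of $<\kappa$ many $I$-small sets is $I$-small, as in Lemma~\ref{namba:clubpullback}) to amalgamate the finitely-many-per-node thinnings into a single $I$-positive refinement. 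Verifying that the resulting diagonal strategy genuinely wins each instance of $\Game(I,A)$ against Player I's coloring — i.e., that the ordinals $\xi$ extracted stay below $M \cap \omega_1$ by elementarity — is where the argument must be carried out carefully.
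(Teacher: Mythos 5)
Your direction $(2)\to(1)$ is essentially the paper's own argument: a fusion sequence in which, at each node $s$, the names in $M$ for countable ordinals induce colorings of $\mathrm{Suc}(s)$ (via direct extensions deciding the name, which exist by Lemma~\ref{namba:nobad}), and Player II's winning strategy for $\Game(I,\mathrm{Suc}(s))$ --- which, together with each finite partial play, lies in $M$, so its responses lie in $M\cap\omega_1$ --- supplies the $I$-positive sets that survive the fusion (Lemma~\ref{namba:fusion}). One caution there: the Galvin game guarantees positivity only of the \emph{final} intersection $\bigcap_n F_n^{-1}(\sup_m\xi_m)$, not of any finite-stage refinement, so the colorings must remain defined on the original successor sets and the successor sets should be cut down once, at the end of the $\omega$-stage construction, rather than "thinned at level $n$" as you phrase it; this is repairable and is the same bookkeeping the paper carries out.

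The direction $(1)\to(2)$, however, has a genuine gap: you pass from "$\Phi_{\kappa,\lambda,I}$ fails" to "Player I has a winning strategy in $\Game(I,A)$ for some $A\in I^{+}$". The failure of $\Phi_{\kappa,\lambda,I}$ says only that Player II has \emph{no} winning strategy; to conclude that Player I has one you need determinacy of the Galvin game, which you neither prove nor may assume. The payoff condition $\bigcap_{n}F_{n}^{-1}(\sup_{m}\xi_{m})\in I^{+}$ is evaluated only on a complete run --- no finite position ever decides the winner --- so the game is neither open nor closed and Gale--Stewart does not apply, nor is the payoff Borel on the tree of plays in any evident sense, so Martin's theorem is unavailable; ideal games of this kind are typically undetermined (compare the precipitousness game of Galvin--Jech--Magidor). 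The paper avoids determinacy entirely by proving the implication directly: given semiproperness, fix a club $C$ of countable $M\prec\mathcal{H}_{\theta}$ admitting semigeneric extensions, and let Player II answer a partial play $\langle F_0,\dots,F_i\rangle$ with $\xi_i=\mathrm{Sk}_{\mathcal{A}}(\{p_A\}\cup\{F_0,\dots,F_i\})\cap\omega_1$. Then $M=\bigcup_i M_i\in C$, and any $(M,\mathrm{Nm}(\kappa,\lambda,I))$-semigeneric $q\le p_A$ forces $F_i(\dot g_n)\le\sup F_i``\dot g<M[\dot G]\cap\omega_1=M\cap\omega_1=\sup_i\xi_i$, whence $\mathrm{Suc}_q(\mathrm{tr}(q))\subseteq\bigcap_i F_i^{-1}(\sup_i\xi_i)$ is $I$-positive and Player II wins. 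Your closing observation that a winning strategy for Player I would defeat semigenericity is correct, but it only yields "I has a winning strategy $\Rightarrow$ not semiproper", which is strictly weaker than what is needed; to prove the lemma you must construct II's strategy from semiproperness as above, and the contrapositive detour through Player I cannot be made to work as stated.
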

\begin{proof}
 First, we show the forward direction. Let $C \subseteq [\mathcal{H}_{\theta}]^{\omega}$ be a club such that, for every $M \in C$, 
\begin{itemize}
 \item $M \prec \mathcal{H}_{\theta}$,
 \item $\kappa,\lambda,I \in M$, and,
 \item for every $p \in M \cap \mathrm{Nm}(\kappa,\lambda,I)$, there is a $q \leq p$ that forces $M[\dot{G}] \cap \omega_1 = M \cap \omega_1$. 
\end{itemize}

Consider an expansion $\mathcal{A} = \langle \mathcal{H}_{\theta},\in,C,\kappa,\lambda,I\rangle$. Let us describe a winning strategy for Player II in $\Game(A,I)$. 

  When a function $F:A \to \omega_1$ is given, let $\dot{\alpha}_F$ be a $\mathrm{Nm}(\kappa,\lambda,I)$-name for $\sup F ``\dot{g}$. Let $p_{A}$ be a condition such that $\mathrm{tr}(p_{A}) = \emptyset$ and $\mathrm{Suc}_{p_A}(s) \subseteq A$  for all $s \in p_{A}$. Note that $p_{A} \force \dot{g} \subseteq A$. By Lemma~\ref{namba:sttpreserving}, $p_{A} \force \dot{\alpha}_{F} < \omega_1$. 

 Suppose that Player I played $\langle F_{0},...,F_{i} \rangle$. Let $M_{i}$ be a Skolem hull $\mathrm{Sk}_{\mathcal{A}}(\{p_{A}\}\cup\{F_0,...,F_i\})$. Player II choose $\xi_i = M_i \cap \omega_1$. Then Player II wins. Note that $M = \bigcup_{i}M_i\in C$ by the definition of $M_i$'s. Let $\xi = \sup_{i}\xi_i$. We have $\xi = M \cap \omega_1$. By the choice of $C$, we can take a $q \leq p_{A}$ that forces $\xi = M \cap \omega_1 = M[\dot{G}] \cap \omega_1$. 

 Let $n = |\mathrm{tr}(q)|$ and let $\dot{g}_{n}$ be the $\mathrm{Nm}(\kappa,\lambda,I)$-name for the $n$-th element of $\dot{g}$. We have 
\begin{center}
 $\force F_i(\dot{g}_n) \leq \dot{\alpha}_{F_i} \in M[\dot{G}] \cap \omega_1 = \xi$.
\end{center} 
For every $a \in \mathrm{Suc}_{q}(\mathrm{tr}(q))$, $q\upharpoonright (\mathrm{tr}(q){^\frown}\langle a\rangle)$ forces $\dot{g}_{n} = a$, and thus, $F_i(a)\leq \dot{\alpha}_{F_{i}} < \xi$. So $\mathrm{Suc}_{q}(\mathrm{tr}(q)) \subseteq \bigcap_{i} F^{-1}_{i} \xi$ is $I$-positive, as desired.

 Let us show the inverse direction. We fix a sequence of winning strategies $\langle \mathrm{ws}_{A} \mid A \in I^{+} \rangle$. For a countable $M \prec \mathcal{H}_{\theta}$ with $\langle \mathrm{ws}_{A} \mid A \in I^{+} \rangle \in M$. For $p \in P$, let us find $q \leq^{\ast} p$ which forces $M \cap \omega_1 = M[\dot{G}] \cap \omega_1$. Let $\dot{\alpha}_0, \dot{\alpha}_1,...$ be an enumeration of $\mathrm{Nm}(\kappa,\lambda,I)$-names for a countable ordinal belonging to $M$. 

 We put $t = \mathrm{tr}(p)$. 
 Let $F^{t}_{0}:\mathrm{Suc}(t) \to \omega_1$ be a function such that:
\begin{itemize}
 \item $p\upharpoonright (t^{\frown}\langle a\rangle)$ has a direct extension $p^{0}_{a}$ that forces $\dot{\alpha}_0 = F^{t}_{0}(a)$. 
\end{itemize}
 Let $\xi_0$ be II's first play using $\mathrm{ws}_{\mathrm{Suc}(t)}$ and $A^{t}_0 = ({F^{t}_{0}})^{-1}\{\xi_{0}\}$. Define $p_0 = \bigcup_{a \in A^{0}}p_{a}^{0}$. 

 Next, let us define $p_1$. First, for each $s\in \mathrm{Lev}_{|t|+1}(p_0)$, let $F^{s}_0:\mathrm{Suc}(s) \to \omega_1$ be a function such that

 \begin{itemize}
 \item $p_0\upharpoonright (t^{\frown}\langle a\rangle)$ has a direct extension $p^{s}_{a}$ that forces $\dot{\alpha}_1 = F^{t}_{0}(a)$. 
\end{itemize}
 Let $\xi_0^s$ be II's first play using $\mathrm{ws}_{\mathrm{Suc}(s)}$ and $A^s_{0} = (F^{s}_{0})^{-1}\{\xi_{0}^{s}\}$. Let us define $F^{t}_{1}:\mathrm{Suc}(t) \to \omega_1$ by 
\begin{itemize}
 \item If $a \in A_{0}^{t}$ then $F^{t}_{1}(a) = \xi_{0}^{t^{\frown}\langle a\rangle}$. 
 \item If $a \not\in A_{0}^{t}$ then $F^{t}_{1}(a) = 0$. 
\end{itemize}
 Let $\xi_1^{t}$ be II's second play using $\mathrm{ws}_{\mathrm{Suc}(s)}$ and $A^{t}_{1} = (F^{t}_{1})^{-1}\{\xi_{1}^{t}\}$. Define $p_1 = \bigcup_{a \in A^{t}_1} \bigcup_{b \in A_{0}^{t^\frown\langle a\rangle}}p_{t^\frown\langle a\rangle}^{b}$. Of course, $p_1 \leq^{\ast} p_0$. 

Similarly, we continue this process for all $\dot{\alpha}_n$. Then $q = \bigcap_n p_{n}$ is a tree such that, for all $s \in q$, if $s \sqsupseteq t$ then $\mathrm{Suq}(s) = \bigcap_{n}A_{n}^{s}$. Each of the components $A_n^s$ was a response by II using her winning strategy. Therefore $\bigcap_{n}A_{n}^{s} \in I^{+}$. So $q \in \mathrm{Nm}(\kappa,\lambda,I)$ and that forces $\dot{\alpha}_{n}$ is bounded by $\xi_{n}^{t}$ and it is in $M$. In particular, $q \force M \cap \omega_1 = M[\dot{G}] \cap \omega_1$, as desired.
\end{proof}

\begin{lem}\label{scc:char1}Suppose that $I$ is a fine ideal over $\mathcal{P}_{\kappa}\lambda$.
 If $\mathrm{Nm}(\kappa,\lambda,I)$ is semiproper then $\mathrm{Nm}(\delta)$ is semiproper for all $\delta \in [\kappa,\lambda] \cap \mathrm{Reg}$. 
\end{lem}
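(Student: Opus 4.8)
The plan is to avoid any direct manipulation of Galvin games and instead exploit Shelah's characterization of the semiproperness of $\mathrm{Nm}(\delta)$ in terms of the mere \emph{existence} of a semiproper poset that singularizes $\delta$ to $\omega$ (Theorem~\ref{shelah:namba}, (2)$\to$(1)). The point is that the hypothesis hands us exactly such a poset, namely $\mathrm{Nm}(\kappa,\lambda,I)$ itself, so the lemma reduces to bookkeeping that has already been done in Lemma~\ref{namba:changecofinality}.

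First I would fix $\delta \in [\kappa,\lambda]\cap\mathrm{Reg}$ and note that, since $\delta \geq \kappa \geq \omega_2$, the cardinal $\delta$ lies in the range where $\mathrm{Nm}(\delta)$ is defined and Theorem~\ref{shelah:namba} applies. Next I would invoke Lemma~\ref{namba:changecofinality}: because $I$ is a fine ideal over $\mathcal{P}_\kappa\lambda$ and $\delta$ is regular with $\kappa \leq \delta \leq \lambda$, the poset $\mathrm{Nm}(\kappa,\lambda,I)$ forces $\dot{\mathrm{cf}}(\delta)=\omega$. By the standing assumption, $\mathrm{Nm}(\kappa,\lambda,I)$ is semiproper. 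Hence $\mathrm{Nm}(\kappa,\lambda,I)$ witnesses clause (2) of Theorem~\ref{shelah:namba} for the cardinal $\delta$: it is a semiproper poset forcing $\dot{\mathrm{cf}}(\delta)=\omega$. Applying the implication (2)$\to$(1) of that theorem to $\delta$ yields that $\mathrm{Nm}(\delta)$ is semiproper, which is the desired conclusion. Since $\delta$ was an arbitrary regular cardinal in $[\kappa,\lambda]$, this finishes the argument.

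I expect essentially no obstacle here: the content has been front-loaded into Lemma~\ref{namba:changecofinality} and Theorem~\ref{shelah:namba}, and the only things to check are the cardinal-arithmetic side conditions (regularity of $\delta$ and $\delta \geq \aleph_2$), which are immediate from $\delta \in [\kappa,\lambda]\cap\mathrm{Reg}$ and $\kappa \geq \omega_2$. For completeness I might remark on the alternative, more hands-on route: one can try to transport a winning Player~II strategy for $\Game(I,A)$ directly to $\Game(J_\delta^{bd},B)$ along the projection $a \mapsto \sup(a\cap\delta)$, which is well-defined into $\delta$ since $|a|<\kappa\leq\delta$ and $\delta$ is regular, by pulling a function $G\colon B\to\omega_1$ back to $a \mapsto G(\sup(a\cap\delta))$ on a suitable $A\in I^{+}$. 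The delicate point there would be verifying that the relevant preimage sets are $I$-positive and that II's responses descend to unbounded-in-$\delta$ winning sets; this is exactly the bookkeeping that the cofinality route lets us bypass, so I would present the cofinality argument as the proof.
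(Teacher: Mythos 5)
Your proof is correct and is essentially identical to the paper's own argument: both invoke Lemma~\ref{namba:changecofinality} to see that $\mathrm{Nm}(\kappa,\lambda,I)$ is a semiproper poset forcing $\dot{\mathrm{cf}}(\delta)=\omega$, then apply the implication (2)$\to$(1) of Theorem~\ref{shelah:namba} to conclude that $\mathrm{Nm}(\delta)$ is semiproper. The side remarks on checking $\delta\geq\aleph_2$ and the alternative Galvin-game route are fine but not needed.
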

\begin{proof}
 By the assumption and Lemma~\ref{namba:changecofinality}, $\mathrm{Nm}(\kappa,\lambda,I)$ is a semiproper poset that changes the cofinality of $\delta$ to $\omega$. By Theorem~\ref{shelah:namba}, $\mathrm{Nm}(\delta)$ is semiproper.
\end{proof}

\begin{lem}\label{scc:char2}
 $\Phi_{\kappa}$ implies $\Phi_{\kappa,\kappa}$. 
\end{lem}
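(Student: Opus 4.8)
The plan is to manufacture, from the single winning strategy for Player~II in the ordinal game supplied by $\Phi_\kappa$, a winning strategy for Player~II in $\Game(J_{\kappa\kappa}^{\mathrm{bd}},A)$ for every $A\in(J_{\kappa\kappa}^{\mathrm{bd}})^{+}$, i.e.\ every cofinal $A\subseteq\mathcal{P}_\kappa\kappa$. The one structural fact that makes this reduction go through is that, in contrast with a general $\mathcal{P}_\kappa\lambda$, the space $\mathcal{P}_\kappa\kappa$ carries a $\subseteq$-cofinal chain of order type $\kappa$: since $\kappa$ is regular, every $x\in\mathcal{P}_\kappa\kappa$ has $\sup x<\kappa$, so the ordinals $\gamma<\kappa$ (each identified with its set of predecessors) are cofinal in $(\mathcal{P}_\kappa\kappa,\subseteq)$. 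I will use this chain to transport the $\mathcal{P}_\kappa\kappa$-game down to the ordinal game.

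First I would fix a winning strategy $\sigma$ for Player~II in the Galvin game $\Game(J_\kappa^{\mathrm{bd}},\kappa)$ over the ordinals below $\kappa$, which exists by $\Phi_\kappa$; here Player~I plays functions $F_i\colon\kappa\to\omega_1$. Given a cofinal $A\subseteq\mathcal{P}_\kappa\kappa$, I would use cofinality of $A$ to choose, for each $\gamma<\kappa$, some $a_\gamma\in A$ with $\gamma\subseteq a_\gamma$ (legitimate since $\gamma\in\mathcal{P}_\kappa\kappa$). The crucial property of the assignment $\gamma\mapsto a_\gamma$ is that for \emph{every} unbounded $U\subseteq\kappa$ the family $\{a_\gamma:\gamma\in U\}$ is cofinal in $\mathcal{P}_\kappa\kappa$: given $x\in\mathcal{P}_\kappa\kappa$, set $\beta=\sup x+1<\kappa$ and pick $\gamma\in U$ with $\gamma\geq\beta$, whence $x\subseteq\beta\subseteq\gamma\subseteq a_\gamma$.

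Next I would define Player~II's strategy in $\Game(J_{\kappa\kappa}^{\mathrm{bd}},A)$. When Player~I plays $F_i\colon A\to\omega_1$, form $G_i\colon\kappa\to\omega_1$ by $G_i(\gamma)=F_i(a_\gamma)$, feed $G_0,\dots,G_i$ to $\sigma$, and have Player~II answer with $\xi_i=\sigma(G_0,\dots,G_i)$; this is a legal strategy because $\xi_i$ depends only on $F_0,\dots,F_i$. Writing $\xi=\sup_n\xi_n$, the fact that $\sigma$ wins the ordinal game gives that $U=\bigcap_nG_n^{-1}\xi=\{\gamma<\kappa:\forall n\,F_n(a_\gamma)<\xi\}$ is unbounded in $\kappa$. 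By the previous paragraph $\{a_\gamma:\gamma\in U\}$ is cofinal, and it is contained in $\bigcap_nF_n^{-1}\xi$ (for $\gamma\in U$ we have $F_n(a_\gamma)=G_n(\gamma)<\xi$ for all $n$). Hence $\bigcap_nF_n^{-1}\xi\in(J_{\kappa\kappa}^{\mathrm{bd}})^{+}$ and Player~II has won; as $A$ was arbitrary, this yields $\Phi_{\kappa,\kappa}$.

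The only genuinely nontrivial step is the first one: recognizing that the ordinals form a $\subseteq$-cofinal chain in $\mathcal{P}_\kappa\kappa$ and exploiting it to choose the $a_\gamma$ so that unbounded winning sets of the ordinal game pull back to cofinal winning sets of the $\mathcal{P}_\kappa\kappa$-game. Everything else — that each $G_i$ is well defined, that the induced strategy is legal, and the closing positivity computation — is routine. I would also remark that this argument is special to the case $\lambda=\kappa$ and does \emph{not} extend to $\Phi_{\kappa,\lambda}$ for $\lambda>\kappa$, where no cofinal chain of order type $\kappa$ is available.
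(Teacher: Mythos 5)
Your proof is correct and takes essentially the same approach as the paper: both reduce $\Game(J^{\mathrm{bd}}_{\kappa\kappa},A)$ to the ordinal game supplied by $\Phi_{\kappa}$ by fixing a family $\{a_{\gamma} \mid \gamma < \kappa\} \subseteq A$ and composing Player I's moves with $\gamma \mapsto a_{\gamma}$. The only difference is the choice of that family --- the paper takes the $a_{\gamma}$ with increasing, unbounded order types, whereas your condition $\gamma \subseteq a_{\gamma}$ is the slightly more careful one, since it is exactly what guarantees that the image of an unbounded set of indices is cofinal in $(\mathcal{P}_{\kappa}\kappa,\subseteq)$, i.e., $J^{\mathrm{bd}}_{\kappa\kappa}$-positive.
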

\begin{proof}
 Let $\mathrm{ws}'$ be a II's winning strategy of $\Game(\kappa,J_{\kappa}^{\mathrm{bd}})$. Since $A$ has a subset $\{a_{\xi} \mid \xi < \kappa\} \subseteq A$ such that $\xi < \zeta \to \mathrm{ot}(a_{\xi}) < \mathrm{ot}(a_{\zeta})$ and $\sup_{\xi}\mathrm{ot}(a_{\xi}) = \kappa$. Define an ordinal $\alpha_{\xi} = \mathrm{ot}(a_{\xi})$. For $F:A \to \omega_1$, define $\overline{F}:\kappa \to \omega_1$ by $\overline{F}(\xi) = F(a_{\xi})$. Define II's strategy $\mathrm{ws}$ for $\Game(A,J_{\kappa\lambda}^{\mathrm{bd}})$ by $\mathrm{ws}(F_0,...,F_n) = \mathrm{ws}'(\overline{F}_{0},...,\overline{F}_{n})$. It is easy to see that $\mathrm{ws}$ is a winning strategy. 
\end{proof}

The rest of this section is not related to the main theorems, but we introduce these. The consistency of $\Phi_{\kappa,\lambda,I}$ was shown in \cite{MR0485391}. Recall that $P$ is $\omega+1$-strategically closed if Player II has a winning strategy for the game in which Player I and II alternatively choose $p^{n}$ and $p^{n+1}$ such that $p_0 \geq p_1 \geq \cdots \geq p_{n} \geq p_{n+1} \geq \cdots$. Player II wins if $\prod_{n}p_{n} \not= 0$. The following lemma is essentially due to Galvin--Jech--Magidor~\cite{MR0485391}. 
\begin{lem}\label{strclosed}
 If $I$ is a fine ideal over $\mathcal{P}_{\kappa}\lambda$ such that $\mathcal{P}(\mathcal{P}_{\kappa}\lambda)) / I$ is $\omega + 1$-strategically closed then $\Phi_{\kappa,\lambda,I}$ holds
\end{lem}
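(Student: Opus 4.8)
The plan is to build, for each $A \in I^{+}$, a winning strategy for Player II in the Galvin game $\Game(I,A)$ out of a fixed winning strategy $\tau$ for the second player in the game witnessing $\omega+1$-strategic closure of $\mathcal{P}(\mathcal{P}_{\kappa}\lambda)/I = \langle I^{+},\subseteq\rangle$. The guiding idea is to run the two length-$\omega$ games side by side: each functional move $F_i \colon A \to \omega_1$ of Player I in $\Game(I,A)$ is converted into a condition of the quotient forcing, $\tau$ replies with a smaller $I$-positive set, and from that reply Player II reads off the countable ordinal $\xi_i$ she is to play.

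First I would isolate the boundedness step, which is where the completeness of $I$ enters. Since $I$ is a fine ideal over $\mathcal{P}_{\kappa}\lambda$ it is $\kappa$-complete, and $\kappa \geq \omega_2 > \omega_1$. Hence for any $B \in I^{+}$ and any $F \colon A \to \omega_1$, the decomposition $B = \bigcup_{\eta<\omega_1}\{a \in B : F(a)=\eta\}$ is a union of $\omega_1 < \kappa$ many sets, so by $\kappa$-completeness some fibre is $I$-positive; consequently there is a countable ordinal $\xi$ with $\{a \in B : F(a) < \xi\} \in I^{+}$.

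Now I would describe the coupled strategy. Put $W_{-1}=A$. At round $i$, when Player I plays $F_i$, use the boundedness step to pick $\xi_i < \omega_1$ with $V_i := \{a \in W_{i-1} : F_i(a) < \xi_i\} \in I^{+}$, and let Player II answer $\xi_i$ in $\Game(I,A)$. Regarding $V_i \subseteq W_{i-1}$ as the $i$-th move of the first player in the closure game, let $W_i := \tau(V_0,W_0,\dots,V_i) \subseteq V_i$ be the response dictated by $\tau$. This produces a legitimate play $A \supseteq V_0 \supseteq W_0 \supseteq V_1 \supseteq W_1 \supseteq \cdots$ of the closure game in which the second player follows $\tau$; since $\tau$ is winning and $I$ is $\omega_1$-complete (so that the infimum of the descending $\omega$-chain is represented by the intersection), $\bigcap_i W_i = \bigcap_i V_i \in I^{+}$.

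Finally I would check the Galvin winning condition. As a countable supremum of countable ordinals, $\xi := \sup_i \xi_i < \omega_1$. For each $i$ we have $\bigcap_j V_j \subseteq V_i \subseteq \{a : F_i(a) < \xi_i\} \subseteq F_i^{-1}\xi$, so $\bigcap_i V_i \subseteq \bigcap_i F_i^{-1}\xi$; since the left-hand side is $I$-positive, so is the right, and Player II wins $\Game(I,A)$. As $A \in I^{+}$ was arbitrary, $\Phi_{\kappa,\lambda,I}$ holds. The only nonroutine point—the main obstacle—is the bookkeeping that couples the two games: one must feed the sets $V_i$ (cut down relative to the previous $\tau$-reply $W_{i-1}$, not the raw level sets $\{a : F_i(a)<\xi_i\}$) into $\tau$, so that the closure strategy governs a single descending chain whose intersection is simultaneously forced to stay $I$-positive and to sit inside every Galvin layer $F_i^{-1}\xi$.
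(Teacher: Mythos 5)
Your proposal is correct and follows essentially the same route as the paper's proof: both couple the Galvin game to the strategic-closure game by using the completeness of $I$ to pick $\xi_i$ so that a cut-down level set of $F_i$ stays $I$-positive, feed that set to the closure strategy, and conclude from the strategy's winning condition that the intersection is $I$-positive and sits inside every $F_i^{-1}\xi$. The only cosmetic difference is that the paper cuts down to exact fibres $F_i^{-1}\{\xi_i\}$ while you use the sub-level sets $\{a : F_i(a)<\xi_i\}$; both work identically.
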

\begin{proof}
 Let $\mathrm{ws}'$ be II's winning strategy witnessing $\omega+1$-strategically closedness. We let to describe II's winning strategy $\mathrm{ws}$ for $\Game(A,I)$ for all $A \in I^{+}$. 

 When Player I chooses $F_i:A \to \omega_1$, since $I$ is $\omega_1$-complete, we define $A_i \in I^{+}$ and $\xi_{i}$ such that $F^{-1}_i\{\xi_i\} \cap \bigcap_{j<i}A_j \in I^{+}$. Let $A_i = \mathrm{ws}'( F^{-1}_{k}\{\xi_k\} \cap \bigcap_{j<k}A_j \mid k \leq i)$. Define $\mathrm{ws}(F_0,...,F_{i}) = \xi_i$. 

Then Player II wins. In fact, $\bigcap_{i}A_{i} \subseteq F^{-1} (\sup_{i}\xi_i)$ is in $I^{+}$ since $A_{i}$ is taken by the II's strategy $\mathrm{ws}'$, as desired.
\end{proof}
Galvin--Jech--Magidor proved that $\aleph_2$ carries an ideal $I$ in which $\mathcal{P}(\aleph_2) / I$ is $\omega+1$-strategically closed if a measurable cardinal is collapsed to $\aleph_2$. 

\section{Proof of Theorems~\ref{maintheorem} and~\ref{maintheorem2}}
The first half of this section is devoted to Theorem~\ref{maintheorem}. The rest is about Theorem~\ref{maintheorem2}. 

\begin{lem}\label{main1:lem1}
 Suppose that a semistationary subset $S \subseteq [\lambda]^{\omega}$ does not reflect to any $R \in \mathcal{P}_{\kappa}\lambda$. Then $\mathrm{Nm}(\kappa,\lambda,I)$ destroys the semistationarity of $S$. 
\end{lem}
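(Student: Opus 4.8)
The plan is to prove the statement directly by exhibiting, in the generic extension, a club in $[\lambda]^{\omega}$ disjoint from $S^{\mathbf{cl}}$; producing such a club is exactly the assertion that $S^{\mathbf{cl}}$ is nonstationary, i.e.\ that $S$ has ceased to be semistationary. First I would fix the data coming from non-reflection. Working in $V$, for each $R\in\mathcal{P}_{\kappa}\lambda$ with $\omega_1\subseteq R$ the hypothesis says that $S\cap[R]^{\omega}$ is not semistationary in $[R]^{\omega}$, so I may fix a club $C_{R}\subseteq[R]^{\omega}$ with the property that no $x\in C_{R}$ admits a $y\in S\cap[R]^{\omega}$ with $y\sqsubseteq_{\omega_1}x$.

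The second ingredient I would isolate is a \emph{capturing} fact about the generic object. Let $\langle\dot{g}_{n}\mid n<\omega\rangle$ be the generic $\sqsubseteq$-increasing sequence of Lemma~\ref{namba:changecofinality}, with $\force\bigcup_{n}\dot{g}_{n}=\lambda$. I claim that for every ground-model $z\in\mathcal{P}_{\kappa}\lambda$ it is forced that $z\subseteq\dot{g}_{n}$ for some $n$. This is where fineness of $I$ enters: since $\{a\in\mathcal{P}_{\kappa}\lambda\mid z\subseteq a\}\in I^{\ast}$, its intersection with any successor set $\mathrm{Suc}(s)\in I^{+}$ is again in $I^{+}$, so below any condition it is dense to extend the trunk through a node containing $z$. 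By $\kappa$-completeness the same reasoning lets me assume $\omega_1\subseteq\dot{g}_{n}$ for all $n$.

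In $V[G]$ I set $R_{n}=\dot{g}_{n}[G]$ and define $C^{*}=\{x\in[\lambda]^{\omega}\mid x\cap\omega_1\notin\omega\text{ and }\forall n\,(x\cap R_{n}\in C_{R_{n}})\}$; note $x\cap R_n\supseteq x\cap\omega_1$ is then infinite, so $x\cap R_n\in[R_n]^{\omega}$. I would check that $C^{*}$ is a club: it is closed because each $C_{R_{n}}$ is closed under countable increasing unions and $(\bigcup_{k}x_{k})\cap R_{n}=\bigcup_{k}(x_{k}\cap R_{n})$, and it is unbounded by a diagonal closure meeting the countably many club requirements $x\cap R_{n}\in C_{R_{n}}$ one at a time. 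To see $C^{*}\cap S^{\mathbf{cl}}=\emptyset$, suppose $x\in C^{*}$ and $y\in S$ with $y\sqsubseteq_{\omega_1}x$. Since $y$ is a ground-model element of $\mathcal{P}_{\kappa}\lambda$, the capturing fact gives an $n$ with $y\subseteq R_{n}$; hence $y\in S\cap[R_{n}]^{\omega}$ and $y\subseteq x\cap R_{n}$, while $y\cap\omega_1=x\cap\omega_1=(x\cap R_{n})\cap\omega_1$ because $\omega_1\subseteq R_{n}$. Thus $y\sqsubseteq_{\omega_1}x\cap R_{n}$, placing $x\cap R_{n}$ in the semistationary closure of $S\cap[R_{n}]^{\omega}$ inside $[R_{n}]^{\omega}$ and contradicting $x\cap R_{n}\in C_{R_{n}}$. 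Therefore $S^{\mathbf{cl}}$ is nonstationary in $V[G]$.

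The step I expect to be the main obstacle is precisely the capturing fact. A priori a ground-model countable $y\subseteq x$ could be spread across infinitely many $R_{n}$, and this is exactly the scenario that would break the argument. The resolution is that, although the entire family of ground-model countable sets is far too large to be swept up by a single $\omega$-chain, each \emph{fixed} $y\in V$ is caught by a density argument, and fineness of $I$ is what makes ``passing through a node above $y$'' a dense requirement. Once this is settled, the verification that $C^{*}$ is a club and the final contradiction are routine.
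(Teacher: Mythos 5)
Your overall skeleton matches the paper's proof: fix per-$R$ non-reflection witnesses in $V$, prove the capturing fact that every ground-model element of $\mathcal{P}_{\kappa}\lambda$ is swallowed by some $\dot{g}_{n}$ (your density argument via fineness and $\kappa$-completeness is correct, and the paper uses the same fact when it arranges $y \subseteq a$ for some $a \in \mathrm{tr}(p)$), and assemble the witnesses along the generic sequence. The genuine gap is the step you call routine: that $C^{*}$ is a club in $V[G]$. Your closure argument applies the $V$-fact ``$C_{R_{n}}$ is closed under countable increasing unions'' inside $V[G]$, but a ground-model club of $[R_{n}]^{\omega}$ need not remain closed in the extension: $\mathrm{Nm}(\kappa,\lambda,I)$ adds new $\omega$-sequences (it forces $\mathrm{cf}(\delta)=\omega$ for every regular $\delta \in [\kappa,\lambda]$), and a new increasing $\omega$-chain of elements of $C_{R_{n}}$ can have a union that is not even in $V$, hence certainly not in $C_{R_{n}}$; so a chain from $C^{*}$ can escape $C^{*}$. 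Unboundedness has a parallel defect: since $x \cap R_{n} \in C_{R_{n}}$ forces the trace $x \cap R_{n}$ to be a ground-model set, putting a member of $C^{*}$ above an arbitrary new countable $z$ requires every new countable subset of $R_{n}$ to be covered by a ground-model one, and this covering can fail without a hypothesis like $|R_{n}|^{\omega}<\kappa$ --- a hypothesis Lemma~\ref{main1:lem1} does not have (the cardinal arithmetic in Theorem~\ref{maintheorem} is used only for the converse direction, via Lemma~\ref{namba:sttpreserving}).

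The paper evades both problems by presenting the non-reflection data functionally: for each $a$ it fixes $F_{a}:[a]^{<\omega} \to a$ such that no $x \in S^{\mathbf{cl}} \cap [a]^{\omega}$ is closed under $F_{a}$, and amalgamates the $F_{\dot{g}_{j}}$'s into a single name $\dot{F}:[\lambda]^{<\omega}\to\lambda$ using a pairing bijection $\varphi$. The set of points closed under a function is a club in \emph{any} model, so no closure or covering issue arises. The price is that for a closed $x$ in the extension the trace $x \cap a$ may itself be a new set, so one cannot contradict the $V$-choice of $F_{a}$ with it directly; the paper therefore passes to the ground-model set $\mathrm{Hull}(y,F_{a}) \subseteq x \cap a$, which is countable, closed under $F_{a}$, lies in $V$, and satisfies $y \sqsubseteq_{\omega_1} \mathrm{Hull}(y,F_{a})$ --- that ground-model witness is what contradicts the choice of $F_{a}$. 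If you replace your clubs $C_{R}$ by the closure points of functions $f_{R}$ and insert this hull step into your disjointness argument, your proof becomes essentially the paper's.
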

\begin{proof}
 First, we fix a bijection $\varphi:{^{<\omega}}\lambda \to \lambda$. For each $a \in \mathcal{P}_{\kappa}\lambda$, by the assumption, there is a function $F_{a}:[a]^{<\omega} \to a$ such that there is no $x \in {S}^{\mathbf{cl}} \cap[a]^{<\omega}$ which closed under $F_{a}$.

Let $\dot{g}$ be a ${\mathrm{Nm}(\kappa,\lambda)}$-name for $\bigcup\{\mathrm{tr}(p) \mid p \in \dot{G}\}$. Let $\dot{g}_{i}$ be the $i$-th element of $\dot{g}$. Again, we list the properties of $\dot{g}_i$. 
\begin{enumerate}
 \item $\force \bigcup \dot{g} = \lambda$
 \item $p \force\mathrm{tr}(p) \sqsubseteq \dot{g}$. 
 \item $\dot{g}_{i} \subseteq \dot{g}_{i+1}$. 
\end{enumerate}
We may assume that $\force \omega_1 \subseteq \dot{g}_0$ by shrinking $\mathrm{Suc}_{p}(\emptyset)$. 

 Define a $\mathrm{Nm}(\kappa,\lambda)$-name for a function $\dot{F}:[\lambda]^{<\omega} \to \lambda$ by 
\begin{center}
 $\dot{F}(\xi_{0},...,\xi_{n}) = 
 \begin{cases}
  \varphi(F_{\dot{g}_0}(\xi_0,...,\xi_i),...,F_{\dot{g}_{j}}(\xi_0,...,\xi_i)) & n = 2^{i}3^{j}\\
				  0 & \text{ o.w. }
 \end{cases}$.
\end{center}
Here, $F_{\dot{g}_{k}}(\xi_{0},...,\xi_{i})$ denotes $F_{\dot{g}_{k}}(\{\xi_{0},...,\xi_{i}\} \cap \dot{g}_{k})$ for each $k \leq j$. 

Suppose that $\force \dot{F}$ closed under some $\dot{x} \in {S}^{\mathbf{cl}}$. By Lemma~\ref{menas}, we may assume that $\dot{x}$ can decode $\varphi$. That is, if $\force \varphi(\alpha_0,...,\alpha_n) \in \dot{x}$ then $\force \alpha_{i} \in \dot{x}$. 

 By the choice of $\dot{x}$, we can choose $p \in T$ and $y \in {S}$ such that $p \force y \sqsubseteq_{\omega_1} \dot{x}$. We may assume that $y \subseteq a$ for some $a \in \mathrm{tr}(p)$. Then $p \force |\dot{x} \cap a| = \omega$. We claim that $\dot{x}$ closed under $F_{a}$. 
For every $\xi_{0},...,\xi_{n} \in \dot{x}$, we can choose an expansion $\zeta_{0},...,\zeta_{2^{n}3^{i}-n-1} \in x \setminus (\xi_{n}+1)$ by $p \force |\dot{x} \cap a| = \omega$. $p$ forces 
\begin{center}
 $\dot{F}(\xi_0,...,\xi_{n},\zeta_0,...,\zeta_{2^{n}3^{i}-n-1}) = \varphi(F_{a_0}(\xi_0,...,\xi_{n}),...,F_{a_i}(\xi_0,...,\xi_{n})) \in \dot{x}$. 
\end{center}
And thus, $F_i(\xi_0,...,\xi_{n}) \in x$. Therefore it is forced by $p$ that $\dot{x} \in {S}^{\mathbf{cl}}\cap [a_{i}]^{\omega}$ closed under $F_{a_{i}}$. Let $\mathrm{Hull}(y,F_a)$ denote the least set that contains $y$ as a subset and closed under $F_a$. We have
\begin{center}
 $p \force \mathrm{Hull}(y,F_a) \subseteq \dot{x} \cap a$.
\end{center}
Since $p \force  y \cap \omega_1 = \dot{x} \cap \omega_1$, we also have 
\begin{center}
 $p \force \mathrm{Hull}(y,F_a) \cap \omega_1 = y \cap \omega_1$. 
\end{center}
In particular, $\mathrm{Hull}(y,F_a) \in {S}^{\mathbf{cl}}\cap [a]^{\omega}$ closed under $F_a$. This is contradicting to the choice of $F_{a}$.
 So, the semistationarity of $S$ is destroyed by $p$, as desired.
\end{proof}

\begin{lem}\label{main1:lem2}
 If a semistationary subset $S \subseteq [\lambda]^{\omega}$ reflects to some $R \in \mathcal{P}_{\kappa}\lambda$ then $\mathrm{Nm}(\kappa,\lambda,I)$ preserves the semistationarity of $S$. 
\end{lem}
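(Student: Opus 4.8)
The plan is to reduce the preservation of semistationarity of $S$ in $[\lambda]^{\omega}$ to the preservation of semistationarity of the reflected piece $S \cap [R]^{\omega}$ inside $[R]^{\omega}$, and then to invoke Lemma~\ref{namba:sttpreserving}, whose hypotheses are satisfied precisely because $R$ is small. First I would normalize the witness $R$: by the normalization lemma stated just before Lemma~\ref{ssr:cobounded}, I may assume $\omega_1 \subseteq R \cap \kappa \in \kappa$, so in particular $\omega_1 \subseteq R$ and the relation $\sqsubseteq_{\omega_1}$ together with semistationarity over $R$ make sense. Write $S' = S \cap [R]^{\omega}$; by the definition of ``$S$ reflects to $R$'' the set $S'$ is semistationary in $[R]^{\omega}$.

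Next I would verify that $[R]^{\omega}$ carries a club subset of size ${<}\kappa$, which is exactly the hypothesis that Lemma~\ref{namba:sttpreserving} requires for $a = R$. Since $R \in \mathcal{P}_{\kappa}\lambda$ we have $|R| < \kappa$, and $|R| \geq \aleph_1$ because $\omega_1 \subseteq R$. If $|R| = \aleph_1$, then fixing an enumeration $R = \{a_{\xi} \mid \xi < \omega_1\}$, the set $\{\{a_{\xi} \mid \xi < \eta\} \mid \omega \leq \eta < \omega_1\}$ is a club in $[R]^{\omega}$ of size $\aleph_1 < \kappa$. If $\aleph_2 \leq |R| < \kappa$, then the cardinal-arithmetic hypothesis $\mu^{\omega} < \kappa$ for all $\mu \in [\aleph_2,\kappa)$ of Theorem~\ref{maintheorem} gives $|[R]^{\omega}| = |R|^{\omega} < \kappa$, so $[R]^{\omega}$ is itself a club of size ${<}\kappa$. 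Applying Lemma~\ref{namba:sttpreserving} with $a = R$ and the semistationary set $S'$, the forcing $\mathrm{Nm}(\kappa,\lambda,I)$ preserves the semistationarity of $S'$ in $[R]^{\omega}$. Because $\mathrm{Nm}(\kappa,\lambda,I)$ is $\omega_1$-stationary preserving, $\omega_1$ is not collapsed, so $\sqsubseteq_{\omega_1}$ and semistationarity are computed coherently in the extension $V[\dot G]$.

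Finally I would transfer the conclusion upward. Working in $V[\dot G]$, the set $S'$ is semistationary in $[R]^{\omega}$, so by Lemma~\ref{ssr:upward} (applied with $W = R$ and $\overline{W} = \lambda$) it is semistationary in $[\lambda]^{\omega}$; since $S' \subseteq S$, the set $S$ is semistationary in $[\lambda]^{\omega}$ in $V[\dot G]$, which is the desired preservation. The only genuine obstacle is the small-club verification in the second step: it is exactly there that the cardinal arithmetic of Theorem~\ref{maintheorem} is consumed, guaranteeing that Lemma~\ref{namba:sttpreserving} is applicable to the (small) reflecting set $R$; the surrounding argument is then just the routine reduction through the upward absoluteness of semistationarity.
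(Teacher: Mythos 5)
Your proof is correct and follows essentially the same route as the paper's: reduce to $S \cap [R]^{\omega}$, apply Lemma~\ref{namba:sttpreserving}, and transfer back up to $[\lambda]^{\omega}$ via Lemma~\ref{ssr:upward}. The only difference is that you explicitly verify the small-club hypothesis of Lemma~\ref{namba:sttpreserving} (using the normalization of $R$ and the cardinal arithmetic $\mu^{\omega}<\kappa$), which the paper leaves implicit; this is a worthwhile clarification but not a different argument.
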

\begin{proof}
 By Lemma~\ref{namba:sttpreserving}, $\mathrm{Nm}(\kappa,\lambda,I)$ preserves the semistationarity of $S \cap [R]^{\omega}$. By Lemma~\ref{ssr:upward}, the semistationarity of $S$ is also preserved. 
\end{proof}

\begin{proof}[Proof of Theorem~\ref{maintheorem}]
 The forward direction follows from Lemma~\ref{main1:lem1}. The inverse direction follows from~\ref{main1:lem2}.
\end{proof}

We conclude this section with Theorem~\ref{maintheorem2}. 
\begin{proof}[Proof of Theorem~\ref{maintheorem2}]
 It is easy to see that $\mathrm{Refl}(E_{\omega}^{\lambda},{<}\mu^{+})$ fails in $W$. Indeed, $E_{\mu}^{\lambda}$ does not reflect to any ordinals in $E_{<\mu^{+}}^\lambda$. Since $(E_{\mu}^\lambda)^{V} \subseteq (E_{\omega}^{\lambda})^W$ is a stationary subset, this witnesses $\lnot\mathrm{Refl}(E_{\omega}^{\lambda},{<}\kappa)$ in $W$. By Theorem~\ref{sakai},  $\mathrm{SSR}([\lambda]^{\omega},{<}\mu^{+})$ also fails in $W$. By Lemma~\ref{namba:semiproperchar}, $\mathrm{Nm}(\mu^{+},\lambda)$ is not semiproper, as desired. In particular, by Lemma~\ref{originalnambaforcing}, $\mathrm{Nm}(\mu^{+})$ is \emph{non} semiproper in $W$. 
For Prikry-type forcings, Lemma~\ref{prikryextension} works.
\end{proof}

\section{Semiproperness of saturated ideals}
In this section, we discuss about ideals. First, we note that successors of regular cardinals can carry an ideal that is both saturated and semiproper by Proposition~\ref{semipropersaturated}. The rest is devoted to Theorems~\ref{maintheorem3} and~\ref{maintheorem4}. We also study their corollaries. 

\begin{prop}\label{semipropersaturated}If $j:V \to M$ is an almost huge embedding with critical point $\kappa$. For regular cardinals $\aleph_1 \leq \mu < \kappa \leq \lambda < j(\kappa)$, there is a poset which forces the following:
\begin{enumerate}
 \item $\mathcal{P}_{\kappa}\lambda$ carries a saturated and proper* ideal. 
 \item $\mu^{+} = \kappa$ and $\lambda = j(\kappa)$. 
\end{enumerate}
\end{prop}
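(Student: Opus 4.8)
The plan is to use the almost huge embedding $j:V\to M$ to build a Kunen-style iteration and then extract the desired ideal on $\mathcal{P}_\kappa\lambda$ from the lifted embedding. Recall that almost hugeness means ${}^{<j(\kappa)}M\subseteq M$, and we have the parameters $\mu^+<\kappa\le\lambda<j(\kappa)$ with all of $\mu,\kappa,\lambda$ regular. First I would set up the forcing $P$ as a two-step affair: the first step $\mathrm{Coll}(\mu^+,{<}\kappa)$ (more precisely, a suitable iteration/product that makes $\kappa=\mu^+$ while preserving $\mu$ and is $\mu$-closed so as not to disturb $\mu^+$), and the second step a quotient coming from $j(P_0)$ above $\kappa$. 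The aim is to arrange, in the final model $V[G]$, that $j$ lifts to $j:V[G]\to M[G*H]$ with $\mathrm{crit}(j)=\kappa=\mu^+$, that $j(\kappa)=\lambda^+$ (so $\lambda=j(\kappa)^{-}$ in the target, matching clause (2)'s $\lambda=j(\kappa)$ after renaming), and that the master condition / seed $j``\lambda$ lands in $M$.

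The key steps, in order, are as follows. First, choose $P_0=\mathrm{Coll}(\mu^+,{<}\kappa)$ and force with it to obtain $\mu^+=\kappa$; this step is $\mu$-closed, hence preserves $\mu$ and its successor, and is $\kappa$-c.c., so it preserves cardinals $\ge\kappa$. Second, analyze $j(P_0)$, which is $\mathrm{Coll}(\mu^+,{<}j(\kappa))$ computed in $M$; by the closure ${}^{<j(\kappa)}M\subseteq M$ and the agreement of $V$ and $M$ below $j(\kappa)$, the tail $j(P_0)/G_0$ factors as a $\kappa$-closed (in $V[G_0]$) remainder forcing above $\kappa$. Here I would invoke Lemma~\ref{continuous:closed}: the natural projection $\pi:j(P_0)\to P_0$ is a continuous projection between the complete Boolean algebras, and since $j(P_0)$ is $\kappa$-closed in the relevant tail, $P_0$ forces the quotient $j(P_0)/\dot{G_0}$ to be $\kappa$-closed. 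Third, build a generic $H$ for this quotient over $V[G_0]$ containing a master condition below $j``G_0$ (using the $\kappa$-closure to meet the $\le\lambda$-many dense sets in $M$, which is where ${}^{<j(\kappa)}M\subseteq M$ is essential), lift to $j:V[G_0]\to M[G_0*H]$, and define the ideal
\begin{equation*}
 I=\{A\subseteq\mathcal{P}_\kappa\lambda \mid j``\lambda\notin j(A)\}.
\end{equation*}
Standard computations then show $I$ is a normal, fine, $\kappa$-complete ideal on $\mathcal{P}_\kappa\lambda$ whose saturation comes from the $j(\kappa)$-c.c.\ of the tail forcing (giving $\lambda^+$-saturation, i.e.\ saturation in the sense of this paper), and whose properness* follows because $\mu^+=\kappa\ge\aleph_1$ makes $\mathcal{P}(\mathcal{P}_\kappa\lambda)/I$ project onto a $\mu$-closed quotient.

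The main obstacle I expect is the cardinal-arithmetic bookkeeping needed to guarantee simultaneously that $\mu^+=\kappa$ and $\lambda=j(\kappa)$ in the final model while the generic ultrapower has the right closure for \emph{both} saturation and properness*. Concretely, the delicate point is choosing the quotient forcing so that it is $\kappa$-closed in $V[G_0]$ (for lifting and for properness*) yet $j(\kappa)$-c.c.\ (for the $\lambda^+$-saturation of $I$), since these two demands pull in opposite directions and are reconciled only through the Levy collapse's chain-condition/closure dichotomy together with almost hugeness. Verifying that the master condition genuinely exists — that the $\kappa$-closed quotient meets all the dense sets of $M[G_0]$ deciding $j``\lambda$, which requires enumerating them in length $\le\lambda<j(\kappa)$ and using ${}^{<j(\kappa)}M\subseteq M$ — is the technical heart, and properness* (rather than mere semiproperness, cf.\ Proposition~\ref{semipropersaturated}'s role in the introduction) falls out precisely because $\mu\ge\aleph_1$ yields a $\mu$-closed, hence $\omega_1$-preserving, quotient in the generic extension.
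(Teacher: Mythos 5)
Your overall strategy (Kunen-style collapse plus a duality between the ideal's quotient algebra and a quotient of $j$ of the collapse, with Lemma~\ref{continuous:closed} supplying properness*) is the same as the paper's, but two of your steps contain genuine gaps. First, your step 3 is impossible as stated: you cannot build an $M[G_0]$-generic filter $H$ for $j(P_0)/G_0$ inside $V[G_0]$ by ``meeting the $\le\lambda$-many dense sets in $M$.'' The quotient has size $j(\kappa)$ and $M[G_0]$ contains $(2^{j(\kappa)})^{M[G_0]} \geq j(\kappa) > \lambda$ many dense subsets of it, so no enumeration of length $\le\lambda$ exists; and no such filter can live in $V[G_0]$ anyway, since the quotient collapses cardinals that $V[G_0]$ and $M[G_0]$ compute the same way. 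The correct construction (and the paper's, via its cited preprint) never constructs this generic: the ideal is \emph{defined} in the final model from the projection, $\mathcal{P}(\mathcal{P}_{\kappa}\lambda)/\dot{I} \simeq P(\mu,j(\kappa))/\dot{G}\ast\dot{H}$, and the lifted embedding exists only in the further extension by $\mathcal{P}(\mathcal{P}_{\kappa}\lambda)/\dot{I}$ itself, i.e., in the generic ultrapower.

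Second, your two-step poset is wrong. The tail $j(P_0)/G_0$ of a Levy collapse ``above $\kappa$'' is itself a collapse to $\mu$ (or $\mu^{+}$), so forcing with it collapses $\lambda$; in your final model $\lambda$ is no longer a cardinal and clause (1) becomes vacuous, while to get clause (2) you need the second step to be a collapse to $\lambda$, namely $\mathrm{Coll}(\lambda,<j(\kappa))$, which is \emph{not} a quotient of $j(P_0)$ in any obvious way. Arranging a projection $\pi: j(P_0) \to P_0 \ast \dot{\mathrm{Coll}}(\lambda,<j(\kappa))$ --- and a \emph{continuous} one, which is exactly what Lemma~\ref{continuous:closed} requires --- is the technical heart of the proof, and it is not available for the plain Levy collapse; this is why the paper replaces $\mathrm{Coll}(\mu,<\kappa)$ by the ${<}\mu$-support diagonal product $P(\mu,\kappa)$ of its cited preprint, where such a continuous projection is constructed. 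Smaller slips compound this: $\mathrm{Coll}(\mu^{+},<\kappa)$ forces $\kappa=\mu^{++}$, not $\mu^{+}$ (you need to collapse to $\mu$, which necessarily destroys the old $\mu^{+}$); the ideal's quotient cannot be $\kappa$-closed in the final model, since the generic ultrapower satisfies $j(\kappa)=\mu^{+}$ and hence forcing with $\mathcal{P}(\mathcal{P}_{\kappa}\lambda)/I$ collapses $\kappa$ to $\mu$ --- it is only $\mu$-closed, which suffices because $\mu\geq\aleph_1$; and ``$\omega_1$-preserving'' is strictly weaker than proper, so the last sentence of your argument needs $\mu$-closedness, not $\omega_1$-preservation.
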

\begin{proof}
 Let $P = P(\mu,\kappa) \ast \dot{\mathrm{Coll}}(\lambda,<j(\kappa))$. Here, $P(\mu,\kappa)$ is the $\mu$-support diagonal product of Levy collapses $\prod_{\alpha \in [\mu,\kappa) \cap \mathrm{Reg}}^{<\mu}\mathrm{Coll}(\mu,<\kappa)$. For a detail, we refer to \cite[Theorem 1.2]{preprint}. This paper gave a continuous projection $\pi :P(\mu,j(\kappa)) \to P(\mu,\kappa) \ast \dot{\mathrm{Coll}}(\lambda,<j(\kappa))$ and a $P(\mu,\kappa) \ast \dot{\mathrm{Coll}}(\lambda,<j(\kappa))$-name $\dot{I}$ such that $P(\mu,\kappa) \ast \dot{\mathrm{Coll}}(\lambda,<j(\kappa))$ forces the following properties:
\begin{enumerate}
 \item $\dot{I}$ is a saturated ideal over $\mathcal{P}_{\kappa}\lambda$.
 \item $\mathcal{P}(\mathcal{P}_{\kappa}\lambda) / \dot{I} \simeq P(\mu,j(\kappa)) / \dot{G} \ast \dot{H}$.
\end{enumerate}
It is easy to see that $P(\mu,j(\kappa))$ is $\mu$-closed. By Lemma~\ref{continuous:closed}, $P(\mu,j(\kappa)) / \dot{G} \ast \dot{H}$ is forced to be $\mu$-closed. By (2), $\mathcal{P}(\mathcal{P}_{\kappa}\lambda) / \dot{I}$ has a $\mu$-closed dense subset. Since $\mu \geq \aleph_1$, $\mathcal{P}(\mathcal{P}_{\kappa}\lambda) / \dot{I}$ is proper*, as desired.
\end{proof}

A conclusion of Proposition~\ref{semipropersaturated} fails if $\mu$ is singular as follows.
\begin{thm}[Matsubara--Shelah~\cite{MR1900548}]\label{notproper}
 If $\mu$ is a singular cardinal then $\mathcal{P}_{\mu^{+}}\lambda$ cannot carries a proper* ideal. 
\end{thm}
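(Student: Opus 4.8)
The plan is to derive a contradiction from assuming that $\mathcal{P}_{\mu^{+}}\lambda$ carries a proper* ideal $I$ when $\mu$ is singular. The natural strategy mirrors the philosophy behind Theorem~\ref{maintheorem3}: a proper* forcing preserves stationarity of subsets of $[\mathcal{H}_\theta]^\omega$ and, more to the point, preserves cofinality $\omega_1$ and does not collapse $\omega_1$. The key tension to exploit is that the generic ultrapower by $I$ changes cofinalities in a way that proper*ness forbids. First I would recall that a proper* ideal, by definition, means $\mathcal{P}(Z)/I$ is a proper forcing; in particular it preserves $\omega_1$ and, crucially, preserves stationary subsets of $[\kappa]^\omega$ for every $\kappa$, and more generally is $\omega_1$-preserving in the strong sense that it does not change the cofinality of any ordinal of uncountable cofinality to $\omega$.

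The central computation I would run is on $\mathrm{cf}(\mu^{+})$ in the generic extension. Since $\mu$ is singular, I would examine what the generic ultrapower map $\dot{j}\colon V \to \dot{M}$ does to $\mu$ and $\mu^{+}$. By the normality and fineness of $I$ over $\mathcal{P}_{\mu^{+}}\lambda$, the critical point is $\mu^{+}$ and, analogously to the cofinality computation in Lemma~\ref{ultrapowercofinality}, one finds that in the generic extension $\mu^{+}$ becomes an ordinal whose cardinality is $\mu$, so by Shelah's Lemma~\ref{shelah} its cofinality is governed by $\mathrm{cf}(\mu)$. The point is that a singular $\mu$ forces $\dot{\mathrm{cf}}(\mu^{+})$ to drop — in particular, if $\mu$ has countable cofinality the generic ultrapower forces $\dot{\mathrm{cf}}(\mu^{+}) = \omega$, exactly as remarked in the discussion preceding Lemma~\ref{ultrapowercofinality}. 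For general singular $\mu$ one gets $\dot{\mathrm{cf}}(\mu^{+}) = \mathrm{cf}(\mu) < \mu^{+}$, so an ordinal of uncountable cofinality $\mu^{+}$ in $V$ has its cofinality collapsed to something strictly smaller.

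The contradiction then comes from confronting this cofinality change with proper*ness. A proper forcing cannot make $\mathrm{cf}(\mu^{+})$ become $\omega$ (proper forcings preserve $\omega_1$ and, being stationary-preserving on $[\cdot]^\omega$, cannot send an uncountable cofinality to $\omega$); more carefully, a proper forcing preserves stationary subsets of $\omega_1$ and hence cannot singularize to countable cofinality an ordinal that is regular and uncountable. So if $\mathrm{cf}(\mu) = \omega$ the contradiction is immediate. For the general singular case the argument is slightly more delicate: one must observe that the collapse of $\mathrm{cf}(\mu^+)$ below $\mu^+$ witnesses a failure of the preservation properties that proper*ness guarantees, using that $\mu^+$ is preserved as a cardinal by the saturation-free properness argument while its cofinality is altered.

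I expect the main obstacle to be the passage from the clean countable-cofinality case (where one directly contradicts $\omega_1$-preservation / stationary preservation on $\omega_1$) to arbitrary singular $\mu$, since for $\mathrm{cf}(\mu) > \omega$ one is not singularizing to $\omega$ but merely lowering an uncountable cofinality, and proper*ness alone permits some such changes. The real work, which is exactly the content of Matsubara--Shelah, will be to extract from the generic ultrapower a combinatorial object — a tower or a non-reflecting structure on $[\mathcal{H}_\theta]^\omega$, or a non-internally-approachable chain — that proper*ness must preserve but whose preservation contradicts the ultrapower's action on $\mu^+$. Since this is a cited external theorem (Theorem~\ref{notproper}), I would not reproduce the full technical apparatus but would point to the cofinality-singularization mechanism above as the conceptual heart, deferring the delicate reflection-theoretic step to \cite{MR1900548}.
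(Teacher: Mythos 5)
Your overall strategy matches the paper's: the paper also defers the full Matsubara--Shelah result to \cite{MR1900548} and only illustrates the mechanism in the special case where $\mathrm{cf}(\mu)=\omega$ and the ideal is saturated, by showing the quotient singularizes a $V$-regular cardinal to cofinality $\omega$, which no proper forcing can do. However, your central computation targets the wrong ordinal, and this is a genuine gap. You compute $\dot{\mathrm{cf}}(\mu^{+})$ and justify it by Shelah's Lemma~\ref{shelah}; but that lemma requires the $V$-successor of the ordinal in question, here $(\mu^{++})^{V}$, to remain a cardinal in the extension. For an ideal over $\mathcal{P}_{\mu^{+}}\lambda$ with $\lambda\geq\mu^{++}$ this fails: normality and fineness give $\dot{j}``\lambda=[\mathrm{id}]\in \dot{M}$ with $\dot{M}\models|[\mathrm{id}]|\leq \dot{j}(\mu)=\mu$, so \emph{every} ordinal in $[\mu^{+},\lambda]$, including $(\mu^{++})^{V}$, acquires cardinality $\mu$ in $V[\dot G]$, and Shelah's lemma simply does not apply to $\mu^{+}$. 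The paper's Lemma~\ref{ultrapowercofinality}(1) instead computes $\dot{\mathrm{cf}}(\lambda)=\dot{\mathrm{cf}}(\mu)$, and this is the correct target precisely because $\lambda^{+}$-saturation guarantees $(\lambda^{+})^{V}$ survives as a cardinal, so Shelah's lemma is applicable to $\lambda$. Your computation is fine only in the special case $\lambda=\mu^{+}$.

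Two further points. First, the ultrapower argument needs more than properness*: you need well-foundedness of the generic ultrapower and the preservation of $(\lambda^{+})^{V}$, and neither follows from properness* alone; the paper obtains both by explicitly assuming the ideal is saturated in its illustrative case (and normal, fine, exactly and uniformly $\mu^{+}$-complete), whereas you invoke normality, fineness, and a well-founded $\dot{j}$ for an arbitrary proper* ideal without justification. Second, your final contradiction cites the wrong preservation property: preserving stationary subsets of $\omega_{1}$ does \emph{not} prevent singularizing a regular cardinal to cofinality $\omega$ --- Namba forcing itself is $\omega_{1}$-stationary preserving and does exactly that. The property you need (and which you stated correctly earlier in your proposal) is preservation of stationary subsets of $[\delta]^{\omega}$: if $\mathrm{cf}^{V}(\delta)>\omega$ and the extension has $\mathrm{cf}(\delta)=\omega$, then $([\delta]^{\omega})^{V}$ becomes non-stationary, since every countable subset of $\delta$ in $V$ is bounded while a club of countable sets in the extension is cofinal. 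This is what the paper means by ``the same reason of non-properness* of Namba forcing.''
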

\begin{proof}
 Matsubara--Shelah proved \emph{non}-properness* for a more loud class of ideals, that contains all ideals over successors of singular cardinals. We refer to~\cite{MR1900548}.

 Here, to see a common point between saturated ideals and Namba forcings, we show \emph{non}-properness* assuming the cofinality of $\mu$ is countable and the ideal is saturated. Let $I$ be a saturated ideal over $\mathcal{P}_{\mu^{+}}\lambda$. 
 By (1) of Lemma~\ref{ultrapowercofinality}, $\mathcal{P}(\mathcal{P}_{\mu^{+}}\lambda) / I$ forces $\mathrm{\dot{cf}}(\lambda) = \omega$. Therefore this is not proper* by the same reason of \emph{non}-properness* of Namba forcing.
\end{proof}

 A model in which $\mathcal{P}_{\aleph_{\omega+1}}\lambda$ carries a semiproper ideal $I$ such that $\mathcal{P}(\mathcal{P}_{\aleph_{\omega+1}}\lambda) / I$ forces $\mathrm{cf}(\lambda) = \omega$ was given by Sakai~\cite{MR2191239}. Note that $I$ is not saturated in Sakai's model. 

We study the semiproperness of ideals in terms of the semistationary reflection principle. The following lemma is an analog of Theorem~\ref{maintheorem2} for precipitous ideals. 
\begin{lem}\label{maintheorem2:ideals}
 Suppose that $I$ is a normal, fine, exactly and uniformly $\mu^{+}$-complete precipitous ideal over $Z \subseteq \mathcal{P}(X)$. If $|x| \leq \mu^{+}$ for all $x \in Z$ then the following are equivalent:
\begin{enumerate}
 \item $\mathrm{SSR}([\mu^{+}]^{\omega},{<}\mu^{+})$.
 \item $\mathcal{P}(Z) / I$ preserves all semistationary subset of $[\mu^{+}]^{\omega}$.
\end{enumerate}
\end{lem}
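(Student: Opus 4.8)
The plan is to follow the template of Theorem~\ref{maintheorem}, splitting into the two implications exactly as Lemmas~\ref{main1:lem1} and~\ref{main1:lem2} do for Namba forcing, but with the Namba generic replaced by the generic ultrapower $\dot j\colon V\to\dot M\subseteq V[\dot G]$ attached to the precipitous ideal, and with the cardinal-arithmetic hypothesis $\nu^\omega<\kappa$ (which is unavailable at a successor of a singular) replaced by the completeness and closure of $I$ recorded in Lemma~\ref{ultrapowerbasic}. Throughout I use that $\mathrm{crit}(\dot j)=\mathrm{comp}(I)=\mu^{+}$, so $\dot j\restriction\mu^{+}=\mathrm{id}$ and $\mu^{+}<\dot j(\mu^{+})$; and that ${}^{\omega}\dot M\cap V[\dot G]\subseteq\dot M$, whence $\omega_1$ is preserved, $\omega_1^{\dot M}=\omega_1^{V[\dot G]}=\omega_1^{V}$, and $([\mu^{+}]^{\omega})^{V[\dot G]}=([\mu^{+}]^{\omega})^{\dot M}$; in particular $\sqsubseteq_{\omega_1}$ is absolute between $\dot M$ and $V[\dot G]$.

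For $\lnot(1)\Rightarrow\lnot(2)$ (destruction) I would argue as follows. Suppose $S\subseteq[\mu^{+}]^{\omega}$ is semistationary but reflects to no $\alpha<\mu^{+}$. As in Lemma~\ref{main1:lem1}, fix in $V$ a sequence $\langle F_{\alpha}\mid\alpha<\mu^{+}\rangle$ with $F_{\alpha}\colon[\alpha]^{<\omega}\to\alpha$ witnessing that no $x\in S^{\mathbf{cl}}\cap[\alpha]^{\omega}$ is closed under $F_{\alpha}$. After forcing with $\mathcal P(Z)/I$ put $\bar F:=\dot j(\langle F_{\alpha}\rangle)_{\mu^{+}}$, the $\mu^{+}$-th entry of the image sequence (legitimate since $\mu^{+}<\dot j(\mu^{+})$). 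By elementarity $\bar F\colon[\mu^{+}]^{<\omega}\to\mu^{+}$ witnesses, inside $\dot M$, that no $x\in(\dot j(S))^{\mathbf{cl}}\cap[\mu^{+}]^{\omega}$ is closed under $\bar F$. Since $\bar F\in\dot M\subseteq V[\dot G]$ is finitary, $C:=\{x\in([\mu^{+}]^{\omega})^{V[\dot G]}\mid x\text{ is closed under }\bar F\}$ is a club. Now $C\cap S^{\mathbf{cl}}=\emptyset$ in $V[\dot G]$: if $x\in C$ and $y\in S$ with $y\sqsubseteq_{\omega_1}x$, then $y=\dot j(y)\in\dot j(S)$ and $y\sqsubseteq_{\omega_1}x$ holds in $\dot M$, so $x\in(\dot j(S))^{\mathbf{cl}}\cap[\mu^{+}]^{\omega}$ is closed under $\bar F$, a contradiction. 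Hence $\mathcal P(Z)/I$ forces $S^{\mathbf{cl}}$ to be nonstationary, i.e.\ destroys the semistationarity of $S$.

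For $(1)\Rightarrow(2)$ (preservation) I first reduce. Given semistationary $S\subseteq[\mu^{+}]^{\omega}$, $\mathrm{SSR}([\mu^{+}]^{\omega},{<}\mu^{+})$ gives an ordinal $\alpha<\mu^{+}$ to which $S$ reflects, so $T:=S\cap[\alpha]^{\omega}$ is semistationary in $[\alpha]^{\omega}$; by Lemma~\ref{ssr:upward} it suffices to preserve the semistationarity of $T$. Here $|\alpha|\le\mu$, and this is where the ideal substitutes for the missing small-club hypothesis: given a name $\dot C$ for a club of $([\alpha]^{\omega})^{V[\dot G]}$, any such club is refined by $C_{f}=\{x\mid x\text{ closed under }f\}$ for some $f\colon[\alpha]^{<\omega}\to\alpha$ in $V[\dot G]$, and since $|\mathrm{dom}(f)|=|\alpha|\le\mu$ the closure ${}^{\mu^{+}}\dot M\cap V[\dot G]\subseteq\dot M$ of Lemma~\ref{ultrapowerbasic} puts $f$, hence $C_{f}$, inside $\dot M$. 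Thus every $V[\dot G]$-club on $[\alpha]^{\omega}$ contains a club lying in $\dot M$, and by elementarity $\dot M\models\dot j(T)$ is semistationary, so $(\dot j(T))^{\mathbf{cl}}$ meets $C_{f}$ inside $\dot M$.

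The decisive and hardest step is to upgrade this to a witness in the \emph{original} set $T$, i.e.\ to force an \emph{old} point of $T^{\mathbf{cl}}$ into $\dot C$, in the spirit of the club-pullback Lemma~\ref{namba:clubpullback}. The difficulty is real: $\mathcal P(Z)/I$ typically adds new countable subsets of $\alpha$, and while $\dot j(T)\cap([\alpha]^{\omega})^{V}=T$ pins down the old elements, a point of $(\dot j(T))^{\mathbf{cl}}\cap C_{f}$ extracted from the ultrapower is generally new; moreover, because $I$ is only precipitous and not $\omega+1$-strategically closed (cf.\ Lemma~\ref{strclosed}), one cannot simply decide an $\omega$-sequence on a single positive set. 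My plan is to prove the ideal analog of Lemma~\ref{namba:clubpullback} directly: running a pullback game/fusion in which $\mu^{+}$-completeness absorbs the $I$-small error sets and normality diagonalizes over the points of $\alpha$ (the roles played in the Namba setting by $\kappa$-completeness over the $<\kappa$-sized club $D$), and in which precipitousness—understood as well-foundedness of the reduced product—keeps the descending positive sets from dying, one shows that $\{z\in([\alpha]^{\omega})^{V}\mid\exists A\le A_{0}\,(A\force z\in\dot C)\}$ contains a club of $([\alpha]^{\omega})^{V}$. Since $T^{\mathbf{cl}}$ is stationary in $V$ it meets this club at some $z$ with an old witness $y\in T$, and the corresponding $A$ forces $z\in\dot C\cap T^{\mathbf{cl}}$; as $A_{0}$ was arbitrary, $T$, and hence $S$, stays semistationary. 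I expect verifying this pullback from precipitousness alone—rather than from strategic closure—to be the technical heart of the argument and the exact point where the hypotheses $|x|\le\mu^{+}$ and the $\mu^{+}$-completeness of $I$ are used in full.
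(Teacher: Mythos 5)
Your treatment of the implication $\lnot(1)\Rightarrow\lnot(2)$ is correct and complete: pushing the sequence $\langle F_{\alpha}\mid\alpha<\mu^{+}\rangle$ of witnessing functions through $\dot j$ and evaluating at $\mu^{+}$ gives a single $\bar F\in\dot M\subseteq V[\dot G]$ whose club of closure points must miss $S^{\mathbf{cl}}$, since any old $y\in S$ with $y\sqsubseteq_{\omega_1}x$ satisfies $y=\dot j(y)\in \dot j(S)$ and the whole configuration is absolute between $\dot M$ and $V[\dot G]$ by Lemma~\ref{ultrapowerbasic}. This is the same ultrapower mechanism as in the paper, which runs it contrapositively: preservation implies, inside $\dot M$, that $\dot j(S)\supseteq S$ reflects to $\dot j``\lambda\cap \dot j(\mu^{+})$, and this is pulled back to $V$ by \L\'os.

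The direction $(1)\Rightarrow(2)$, however, is where your proposal has a genuine gap, and you flag it yourself: everything is reduced to the unproven claim that $\{z\in([\alpha]^{\omega})^{V}\mid\exists A\leq A_{0}\,(A\Vdash z\in\dot C)\}$ contains a club of $([\alpha]^{\omega})^{V}$. Note that this claim mentions neither $S$ nor $\mathrm{SSR}$, so it would have to follow from normality, $\mu^{+}$-completeness and precipitousness alone, and the tools you invoke do not plausibly deliver it. The proof of Lemma~\ref{namba:clubpullback} is powered by a club $D$ of $[a]^{\omega}$ of size \emph{less than the completeness} of the ideal: at each round one must union the error sets over all $z$ in $D$ and needs $\bigcup_{z}X_{i}^{z}\in I$. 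In the case that matters here, $|\alpha|=\mu$ with $\mathrm{cf}(\mu)=\omega$, every club of $[\alpha]^{\omega}$ has size at least $\mathrm{cf}([\mu]^{\omega},\subseteq)\geq\mu^{+}$, so $\mu^{+}$-completeness cannot absorb the errors; normality only gives diagonal intersections indexed by points of $X$ (it handles regressive functions), not intersections indexed by the $\geq\mu^{+}$ many members of such a club; and precipitousness does not ``keep descending positive sets from dying'' --- a $\subseteq$-decreasing $\omega$-sequence of $I$-positive sets can have intersection in $I$ even for a saturated ideal, since precipitousness is well-foundedness of the generic ultrapower and provides nothing like the fusion of Lemma~\ref{namba:fusion}. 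So the technical heart of your plan is missing, and the pullback route as formulated looks unsalvageable rather than merely unfinished. For comparison, the paper proves this direction with no pullback at all: it transfers the co-bounded reflection of Lemma~\ref{ssr:cobounded} through \L\'os to get that $\dot j(S)\cap[\dot j``\lambda\cap \dot j(\mu^{+})]^{\omega}$ is semistationary in $\dot M$, and then concludes via the $\mu^{+}$-closure of $\dot M$ together with the identification of this set with $S$; the difficulty you isolate --- that elements of $\dot j(S)$ lying inside $\mu^{+}$ may be \emph{new} countable sets --- is aimed precisely at that identification, so your diagnosis of where the delicate point sits is accurate, but your proposed substitute does not close the argument.
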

\begin{proof}
Let $\lambda = |X|$. We may assume $X = \lambda$. First, we need the following claim:
\begin{clam}
 There is a $C \in I^{\ast}$ such that $|x \cap \mu^{+}|<\mu^{+}$ for all $x \in C$. 
\end{clam}
\begin{proof}[Proof of Claim]
  Let $G$ be an arbitrary $(V,\mathcal{P}(Z) / I)$-generic and $j:V \to M \subseteq V[G]$ be the generic ultrapower mapping by $G$. Then the following holds:
\begin{itemize}
 \item $j ``\lambda = [\mathrm{id}] \in M$ and ${^{\mu^{+}}}M \cap V[G] \subseteq M$. 
 \item $j(\mu^{+}) \geq \lambda$. 
\end{itemize}
 The first item is well-known. The second item follows from the assumption. 

 Therefore, we have $M \models |\mu^{+}| = |j `` \lambda \cap j(\mu^{+})| < j(\mu^{+})$. By \L\'os's theorem, $\{x \in Z \mid |x \cap \mu^{+}| < \mu^{+}\} \in G$. Since $G$ is arbitrary, $\{x \in Z \mid |x \cap \mu^{+}| < \mu^{+}\} \in I^{\ast}$, as desired. 
\end{proof}
 First, we show the forward direction. Let $S$ be a semistationary subset of $[\mu^{+}]^{\omega}$. By Lemma~\ref{ssr:cobounded}, $D = \{a \in \mathcal{P}_{\mu^{+}}\mu^{+} \mid S \cap [a]^{\omega}$ is semistationary$\}$ is co-bounded. By the claim and Lemma~\ref{menas}, $A =\{x \in C \mid x \cap \mu^{+} \in D\} \in I^{\ast}$. It is easy to see that $A = \{x \in Z \mid S \cap [x \cap \mu^{+}]^{\omega} $ is semistationary$\}$. By \L\'os's theorem, $A$ forces that $(j(S) \cap [j ``\lambda \cap j(\mu^{+})]^{\omega}$ is semistationary$)^{M}$. Since $M$ is forced to be closed under $\mu^{+}$-sequence, $S = j(S) \cap [j ``\lambda \cap j(\mu^{+})]^{\omega}$ is semistationary, as desired.

 Let $S$ be semistationary subset of $[\mu^{+}]^{\omega}$. By the previous argument, we have $A = \{x \in Z \mid |x \cap \mu^{+}| < \mu^{+}$ and $S \cap [x \cap \mu^{+}]^{\omega}$ is semistationary$\} \in I^{\ast}$. Since $I$ is fine and $\omega_1$-complete, we may assume that $\omega_1 \subseteq x \cap \mu^{+}$ for all $x \in A$. Therefore $S$ reflect to $x \cap \mu^{+}$ for all $x \in A$, as desired.\end{proof}
From this, let us show Theorems~\ref{maintheorem3} and~\ref{maintheorem4} .

\begin{proof}[Proof of Theorem~\ref{maintheorem3}]
 By Lemma~\ref{maintheorem2:ideals}, the same proof of Theorem~\ref{maintheorem2} works as well.
\end{proof}
Therefore there is no semiproper saturated ideals over $\mathcal{P}_{\mu^{+}}\lambda$ in the extension by Prikry forcing or Woodin's modification over $\mu$.  Moreover, there is no semiproper precipitous ideals\footnote{It is unknown whether every semiproper ideal is precipitous or not. But, under the GCH, the semiproperness shows the precipitousness, and thus, we can omit the condition of precipitousness from Theorem~\ref{maintheorem3}. For detail, we refer to~\cite{MR2191239}}.

\begin{proof}[Proof of Theorem~\ref{maintheorem4}]
 Since $\lambda$ is a regular cardinal, $S = E_{\lambda}^{\lambda^{+}}$ is a non-reflecting stationary subset. That is, for every $\alpha < \lambda^{+}$, $S \cap \alpha$ is \emph{not} stationary in $\alpha$. Let $G$ be $(V,\mathcal{P}([\lambda^{+}]^{\mu^{+}}))$-generic and $j:V \to M \subseteq V[G]$ be the generic ultrapower mapping induced by $G$. By Lemmas~\ref{ultrapowerbasic} and~\ref{ultrapowercofinality}, $M$ has the following properties:
\begin{enumerate}
 \item ${^{\lambda}M} \cap V[G] \subseteq M$. 
 \item $\lambda^{+} = j(\mu^{+})$. 
 \item $\mathrm{cf}^{V[G]}(\lambda) = \mathrm{cf}^{M}(\lambda) = \omega$. 
\end{enumerate}
Note that $S$ remains a stationary subset since $I$ is $\lambda^{+}$-saturated. 
(1) shows that $S \in M$ and $S$ is non-reflection stationary subset in $M$. By (3), $S$ is forced to be a subset of $E_{\omega}^{\lambda^{+}} = (E_{\omega}^{\lambda^{+}})^{M} = j(E_{\omega}^{\mu^{+}})$. 
Therefore, by (2), 
\begin{center}
 $M \models $ there is a non-reflecting stationary subset of $j(E^{\mu^{+}}_{\omega})$. 
\end{center}
By the elementarity of $j$, we have a non-reflecting stationary subset of $E_{\omega}^{\mu^{+}}$ in the ground. In particular, $\mathrm{SSR}([\mu^{+}]^{\omega},{<}\mu^{+})$ fails by Theorem~\ref{sakai}. By Lemma~\ref{maintheorem2:ideals}, $I$ is \emph{not} semiproper, as desired.
\end{proof}

We have a new mutual inconsistency. 
 \begin{coro}If $\mu$ is a singular cardinal with cofinality $\omega$ then 
 the following are mutually inconsistent:
  \begin{enumerate}
   \item $\mu^{+}$ carries a semiproper ideal.
   \item $[\lambda^{+}]^{\mu^{+}}$ carries a normal, fine, $\mu^{+}$-complete $\lambda^{+}$-saturated ideal for some regular $\lambda$. 
  \end{enumerate}
\end{coro}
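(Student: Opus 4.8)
The plan is to derive a contradiction from the conjunction of (1) and (2) by playing the two hypotheses against the single principle $\mathrm{SSR}([\mu^{+}]^{\omega},{<}\mu^{+})$: I will show that (2) forces this principle to fail while (1) forces it to hold. Since $\mu$ is singular of cofinality $\omega$, both hypotheses are set exactly in the regime where the machinery of Section~4 and Lemma~\ref{maintheorem2:ideals} applies.

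First I would extract $\lnot\mathrm{SSR}([\mu^{+}]^{\omega},{<}\mu^{+})$ from (2). This is precisely what the proof of Theorem~\ref{maintheorem4} produces: given a normal, fine, $\mu^{+}$-complete, $\lambda^{+}$-saturated ideal on $[\lambda^{+}]^{\mu^{+}}$, one passes to the generic ultrapower $j\colon V\to M$ and uses $\mathrm{cf}^{V[G]}(\lambda)=\mathrm{cf}(\mu)=\omega$ together with $j(\mu^{+})=\lambda^{+}$ to transfer the non-reflecting stationary set $E^{\lambda^{+}}_{\lambda}$ into $j(E^{\mu^{+}}_{\omega})$; by elementarity this yields a non-reflecting stationary subset of $E^{\mu^{+}}_{\omega}$ in $V$. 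By Theorem~\ref{sakai} such a set is incompatible with $\mathrm{SSR}([\mu^{+}]^{\omega},{<}\mu^{+})$, so that principle fails. I can quote the proof of Theorem~\ref{maintheorem4} essentially verbatim, since $\mu$ and the witnessing regular $\lambda$ satisfy all of its hypotheses.

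Next I would run (1) in the opposite direction. View the semiproper ideal $I$ on $\mu^{+}$ as a normal, fine, $\mu^{+}$-complete ideal over $Z=\mu^{+}=\mathcal{P}_{\mu^{+}}\mu^{+}$ concentrated on ordinals, so that $X=Z=\mu^{+}$ and every $x\in Z$ satisfies $|x|\le\mu<\mu^{+}$. By definition, semiproperness of $I$ means $\mathcal{P}(\mu^{+})/I$ is a semiproper forcing, hence by Shelah's characterization of semiproper forcing it preserves the semistationarity of every subset of $[\mu^{+}]^{\omega}$. This is exactly the second clause of Lemma~\ref{maintheorem2:ideals}, whence that lemma delivers $\mathrm{SSR}([\mu^{+}]^{\omega},{<}\mu^{+})$. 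Together with the previous paragraph this is the desired contradiction, and the mutual inconsistency of (1) and (2) follows.

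The step that needs care — and the main obstacle — is the invocation of Lemma~\ref{maintheorem2:ideals} in the third paragraph, because that lemma is stated for \emph{precipitous} ideals: its proof builds the generic ultrapower $j\colon V\to M$ and moves semistationarity across it via \L\'os's theorem and the closure ${}^{\mu^{+}}M\cap V[G]\subseteq M$, all of which presuppose well-foundedness of $M$. Thus I must ensure the semiproper ideal of (1) is precipitous. I would dispose of this either by reading (1) as asserting a semiproper \emph{precipitous} ideal (matching Theorem~\ref{maintheorem3}), or by observing that under GCH the semiproperness of $I$ already forces precipitousness, so the precipitousness requirement can be dropped exactly as in the footnote to Theorem~\ref{maintheorem3}. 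With precipitousness secured, the two halves of the argument meet and close the corollary.
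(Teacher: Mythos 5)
Your proof follows the paper's own argument exactly: the paper likewise derives $\mathrm{SSR}([\mu^{+}]^{\omega},{<}\mu^{+})$ from (1) and then invokes the proof of Theorem~\ref{maintheorem4} to rule out (2). Your extra care about precipitousness is warranted rather than a deviation: the paper's one-line proof silently applies Lemma~\ref{maintheorem2:ideals} to an ideal not assumed precipitous, and your two proposed repairs (reading (1) as asserting a precipitous ideal, or invoking the GCH observation from the footnote to Theorem~\ref{maintheorem3}) are precisely the options the paper itself leaves available.
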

\begin{proof}
 If (1) holds then $\mathrm{SSR}([\mu^{+}]^{\omega},{<}\mu^{+})$ holds. By the proof of Theorem~\ref{maintheorem4}, (2) cannot hold.
\end{proof}
It seems that the ``huge-type'' saturated ideal is an anti-compactness principle around singular cardinals in the following sense. 
\begin{coro} 
If $[\lambda^{+}]^{\mu^{+}}$ carries a normal, fine, $\mu^{+}$-complete $\lambda^{+}$-saturated ideal for some regular $\lambda$ and singular $\mu$ of cofinality $\delta$ then there is \emph{no} supercompact cardinal between $\delta$ and $\mu$. 
\end{coro}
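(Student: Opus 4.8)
The plan is to argue by contradiction, reducing everything to the non-reflection phenomenon that is already extracted inside the proof of Theorem~\ref{maintheorem4}. So suppose, toward a contradiction, that $\kappa$ is supercompact with $\delta < \kappa < \mu$. The strategy is to produce, purely from the ideal $I$, a non-reflecting stationary subset of $E^{\mu^{+}}_{<\kappa}$ in $V$, and then to contradict its existence with the stationary reflection that a supercompact cardinal below $\mu^{+}$ delivers. The two halves meet at the single inequality $\delta<\kappa<\mu^{+}$.

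For the ideal half I would rerun the argument of Theorem~\ref{maintheorem4} almost verbatim, only replacing $\omega$ by $\mathrm{cf}(\mu)$. Let $j:V\to M\subseteq V[G]$ be the generic ultrapower for $\mathcal{P}([\lambda^{+}]^{\mu^{+}})/I$, so that $\mathrm{crit}(j)=\mu^{+}$, $j(\mu^{+})=\lambda^{+}$, and $M$ is closed under $\lambda$-sequences by $\lambda^{+}$-saturation (Lemma~\ref{ultrapowerbasic}), exactly as in points (1)--(3) of that proof. The only clause needing re-inspection is the cofinality bookkeeping. By Lemma~\ref{ultrapowercofinality} the quotient forces $\mathrm{cf}(\lambda)=\mathrm{cf}(\mu)$, and since a cofinal sequence of length $\delta=\mathrm{cf}^{V}(\mu)$ in $\mu$ already lies in $M$ and stays cofinal, one gets $\mathrm{cf}^{M}(\lambda)=\mathrm{cf}^{V[G]}(\mu)\le\delta<\kappa$. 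Hence the $V$-stationary, non-reflecting set $S=E^{\lambda^{+}}_{\lambda}$ has all its points of $M$-cofinality below $\kappa$, i.e. $S\subseteq (E^{\lambda^{+}}_{<\kappa})^{M}$; and because $\kappa<\mu^{+}=\mathrm{crit}(j)$ we have $j(\kappa)=\kappa$, so $(E^{\lambda^{+}}_{<\kappa})^{M}=(E^{j(\mu^{+})}_{<j(\kappa)})^{M}=j(E^{\mu^{+}}_{<\kappa})$. As in Theorem~\ref{maintheorem4}, $S$ is stationary and non-reflecting in $M$, whence $M\models$ ``$j(E^{\mu^{+}}_{<\kappa})$ has a non-reflecting stationary subset'', and elementarity of $j$ transfers this to $V$: the set $E^{\mu^{+}}_{<\kappa}$ has a non-reflecting stationary subset in $V$.

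For the supercompact half I would invoke the standard reflection property of supercompact cardinals. Since $\kappa<\mu<\mu^{+}$, the cardinal $\kappa$ is $\mu^{+}$-supercompact and $\mu^{+}$ is regular above $\kappa$; fixing $i:V\to N$ with $\mathrm{crit}(i)=\kappa$, $i(\kappa)>\mu^{+}$, and ${}^{\mu^{+}}N\subseteq N$, the usual computation at $\gamma=\sup(i``\mu^{+})<i(\mu^{+})$ shows that every stationary $T\subseteq E^{\mu^{+}}_{<\kappa}$ reflects: $i``\mu^{+}$ is ${<}\kappa$-club in $\gamma$ and $i$ is continuous at the cofinality-${<}\kappa$ points of $T$, so $i(T)\cap\gamma$ is stationary in $\gamma$ in $N$, giving $N\models\exists\beta<i(\mu^{+})\,(i(T)\cap\beta$ is stationary$)$, and elementarity pulls a reflection point back below $\mu^{+}$. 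This flatly contradicts the non-reflecting stationary subset of $E^{\mu^{+}}_{<\kappa}$ produced in the previous paragraph. Therefore no supercompact cardinal can lie strictly between $\delta$ and $\mu$.

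I expect the main obstacle to be the cofinality bookkeeping of the ideal half, not the reflection computation, which is routine. Concretely, I must confirm that the argument of Theorem~\ref{maintheorem4} is genuinely insensitive to the exact value of $\mathrm{cf}(\mu)$: that $\mathrm{cf}(\mu)$ is not driven up but stays $\le\delta$ under the $\mu^{+}$-complete quotient (so Lemma~\ref{ultrapowercofinality} yields $\mathrm{cf}^{M}(\lambda)<\kappa$ rather than merely some value below $\mu^{+}$), and that consequently every point of $E^{\lambda^{+}}_{\lambda}$ is pushed to cofinality ${<}\kappa$ in $M$, placing the transferred set inside $E^{\mu^{+}}_{<\kappa}$. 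Once this is in place, the inequality $\delta<\kappa$ is exactly what locates the non-reflecting set inside $E^{\mu^{+}}_{<\kappa}$, and the clash with supercompact reflection is immediate; note that I never need to pin down the cofinality of the reflection point, since non-reflection denies reflection at \emph{every} $\beta<\mu^{+}$.
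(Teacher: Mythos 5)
Your proof is correct, but it follows a genuinely different route from the paper's. The paper never returns to the generic ultrapower: it argues by cases on $\delta$. When $\delta=\omega$, it forces with $\mathrm{Coll}(\omega_1,{<}\kappa)$, which (by Foreman--Magidor--Shelah, using the supercompactness of $\kappa$) forces $(\dagger)$, while Lemma~\ref{sat:preservation} (the collapse has size $<\mu$, hence is $\mu$-centered) keeps the generated ideal $\lambda^{+}$-saturated; since a saturated quotient is $\omega_1$-stationary preserving, $(\dagger)$ makes it semiproper, contradicting Theorem~\ref{maintheorem4} used as a black box. When $\delta>\omega$, it first forces with ${}^{<\omega}\delta$ to make $\mathrm{cf}(\mu)=\omega$, invoking Levy--Solovay to keep $\kappa$ supercompact and Lemma~\ref{sat:preservation} again, and reduces to the first case. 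You instead re-open the proof of Theorem~\ref{maintheorem4}, replacing $E^{\mu^{+}}_{\omega}$ by $E^{\mu^{+}}_{<\kappa}$ --- which works precisely because Lemma~\ref{ultrapowercofinality} together with the $\lambda$-closure of $M$ gives $\mathrm{cf}^{M}(\lambda)=\mathrm{cf}^{V[G]}(\mu)\le\delta<\kappa=j(\kappa)$, so the non-reflecting stationary set $E^{\lambda^{+}}_{\lambda}$ lands inside $j(E^{\mu^{+}}_{<\kappa})$ --- and then you contradict the classical stationary-reflection property of $\mu^{+}$-supercompact cardinals. Both routes are sound. Yours buys several things: it involves no forcing over $V$, it handles all values of $\delta$ uniformly (no case split), it does not need Lemma~\ref{sat:preservation} or the FMS theorem, and it proves the formally stronger statement that no $\mu^{+}$-supercompact cardinal lies in the interval $(\delta,\mu)$, whereas the paper's route invokes full supercompactness to get $(\dagger)$. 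What the paper's route buys is economy relative to its own machinery --- the corollary becomes a short reduction to Theorem~\ref{maintheorem4} --- and it stays within the paper's semiproperness theme instead of re-deriving the reflection combinatorics by hand.
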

\begin{proof}
Suppose otherwise. Let $\kappa$ be a supercompact such that $\delta < \kappa < \mu$. Let $\overline{I}$ be a normal, fine, $\mu^{+}$-complete $\lambda^{+}$-saturated ideal. If $\delta = \kappa$ then $\mathrm{Coll}(\omega_1,<\kappa)$ forces both $(\dagger)$ and the ideal $J$ generated by $I$ is a $\lambda^{+}$-saturated. The former follows from~\cite[Theorem 14]{MR924672}. The latter follows from Lemma~\ref{sat:preservation}

 Note that a saturated ideal is $\omega_1$-stationary preserving. Therefore the ideal is semiproper and $\lambda^{+}$-saturated ideal over $[\lambda^{+}]^{\mu^{+}}$ in the extension. This contradicts to Theorem~\ref{maintheorem4}. 

We assume $\delta$ is uncountable then ${^{<\omega}}\delta$ forces that $\kappa$ is supercompact and the generated ideal $K$ by $I$ is a $\lambda^{+}$-saturated ideal. The latter also follows from Lemma~\ref{sat:preservation}. In the extension, $[\lambda^{+}]^{\mu^{+}}$ carries a $\lambda^{+}$-saturated ideal and $\dot{\mathrm{cf}}(\mu) = |\delta| = \omega$. This is impossible.\end{proof}

Lastly, we point out that the semiproperness of saturated ideals can characterized by the following form:
\begin{prop}
 For a fine ideal $I$ over $\mathcal{P}_{\kappa}\lambda$ with the disjointing property, the following are equivalent:
\begin{enumerate}
 \item $I$ is semiproper.
 \item $\mathrm{Nm}(\kappa,\lambda,I)$ is semiproper.
 \item $\Phi_{\kappa,\lambda,I}$ holds. 
\end{enumerate}
\end{prop}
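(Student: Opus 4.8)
The plan is to take (2) $\leftrightarrow$ (3) as given, since this is exactly Lemma~\ref{originalnambaforcing} applied to the fine $\kappa$-complete ideal $I$, and to prove the two remaining implications (1) $\to$ (3) and (3) $\to$ (1) directly, connecting the quotient forcing $\mathcal{P}(Z)/I$ (where $Z = \mathcal{P}_\kappa\lambda$) to the Galvin game. Throughout I would use that $I$, being fine over $Z$ with $\kappa \geq \aleph_2$, is $\omega_2$-complete, so that any countable union of sets in $I$ again lies in $I$.

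For (1) $\to$ (3), I would describe Player II's strategy in $\Game(I,A)$ by the Skolem-hull bookkeeping from the forward direction of Lemma~\ref{originalnambaforcing}: after Player I has played $F_0,\dots,F_i \colon A \to \omega_1$, set $M_i = \mathrm{Sk}_{\mathcal{H}_\theta}(\{A,I\}\cup\{F_0,\dots,F_i\})$ and let Player II answer $\xi_i = M_i \cap \omega_1$. Writing $M = \bigcup_i M_i$ and $\xi = \sup_i \xi_i = M\cap\omega_1$, semiproperness of $\mathcal{P}(Z)/I$ furnishes an $(M,\mathcal{P}(Z)/I)$-semigeneric $B \leq A$. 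The key local fact is that each $F_n$ is forced to be constant on the generic ultrafilter: since $I$ is $\omega_2$-complete, the set of $C \leq A$ with $C \subseteq_I F_n^{-1}\{\eta\}$ for a single $\eta$ is dense, so there is a name $\dot{\eta}_n$, definable from $F_n$ and hence lying in $M$, with $\force F_n^{-1}\{\dot{\eta}_n\} \in \dot{G}$. Semigenericity gives $B \force \dot{\eta}_n \in M[\dot{G}]\cap\omega_1 = \xi$, and a short density argument (were $B \cap F_n^{-1}[\xi,\omega_1) \in I^{+}$, that condition would force $\dot{\eta}_n \geq \xi$) shows $B \cap F_n^{-1}[\xi,\omega_1) \in I$ for every $n$. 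By $\omega_2$-completeness $B \subseteq_I \bigcap_n F_n^{-1}\xi$, so $\bigcap_n F_n^{-1}\xi \in I^{+}$ and Player II has won.

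For (3) $\to$ (1), I would fix a sequence $\langle \mathrm{ws}_A \mid A \in I^{+}\rangle$ of II's winning strategies, a countable $M \prec \mathcal{H}_\theta$ with this sequence and $I$ as members, a condition $A \in M \cap I^{+}$, and an enumeration $\langle \dot{\alpha}_n \mid n<\omega\rangle$ of the $\mathcal{P}(Z)/I$-names in $M$ for countable ordinals. Here the disjointing property enters: for each $n$, working inside $M$, choose a maximal antichain below $A$ deciding $\dot{\alpha}_n$ and, by disjointing, a pairwise disjoint $I$-positive refinement, which modulo $I$ partitions $A$ and thereby defines $f_n \colon A \to \omega_1$ with $f_n \in M$ sending $a$ to the value forced by its piece. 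Simulating a run of $\Game(I,A)$ in which Player I plays $f_0, f_1, \dots$ and Player II answers by $\mathrm{ws}_A$, each answer $\xi_n = \mathrm{ws}_A(f_0,\dots,f_n)$ lies in $M$, so $\xi := \sup_n \xi_n \leq M\cap\omega_1 =: \delta$, and $B := \bigcap_n f_n^{-1}\xi \in I^{+}$ because the run is won. Finally $B$ is semigeneric: any generic $\dot{G} \ni B$ meets the $n$th antichain in a unique piece, on which $\dot{\alpha}_n$ takes the value $f_n$ assigns there, and this value is $<\xi \leq \delta$ because that piece meets $B$; hence $B \force M[\dot{G}]\cap\omega_1 \subseteq \delta = M\cap\omega_1$, as required.

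The main obstacle I anticipate is the (3) $\to$ (1) direction, and specifically the bookkeeping around the disjointing property: I must verify that the disjoint refinements, and hence the functions $f_n$, can be taken inside $M$, that the simulated Galvin run stays in $M$ so that each $\xi_n < \delta$ (the sequence $\langle \dot{\alpha}_n \rangle$ itself need not belong to $M$, so this must be checked one coordinate at a time), and that the antichain piece captured by a generic filter through $B$ is forced to carry an $\dot{\alpha}_n$-value below $\xi$. The implication (1) $\to$ (3) is comparatively routine once the constancy of each $F_n$ modulo the generic ultrafilter is isolated, and it uses only $\omega_2$-completeness rather than disjointing.
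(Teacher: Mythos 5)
Your proposal is correct and takes essentially the same approach as the paper: the paper likewise imports (2)$\leftrightarrow$(3) from Lemma~\ref{originalnambaforcing} and proves (3)$\to$(1) by your exact mechanism, using the disjointing property to represent each $\mathcal{P}(\mathcal{P}_{\kappa}\lambda)/I$-name in $M$ for a countable ordinal by a function $F\colon A\to\omega_1$ and then letting Player II's winning strategy produce an $I$-positive, semigeneric $C=\bigcap_{F\in M}F^{-1}(M\cap\omega_1)$. The only divergence is that the paper says ``we only check (3)$\to$(1)'' and records nothing for (1)$\to$(3), while you prove that direction explicitly by the Skolem-hull and semigenericity argument --- which is just the forward half of Lemma~\ref{originalnambaforcing} adapted to the quotient $\mathcal{P}(\mathcal{P}_{\kappa}\lambda)/I$, so your extra work fills in a step the paper treats as routine rather than constituting a different route.
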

\begin{proof}
 We only check (3) $\to$ (1). By the disjointing property of $I$, for every countable $M \prec \mathcal{H}_{\theta}$ with $I \in M$ $\mathcal{P}(\mathcal{P}_{\kappa}\lambda)$-name $\dot{\alpha} \in M$ for a countable ordinals, and $A \in I^{+}$, there is an $F:\mathcal{P}_{\kappa}\lambda \to \omega_1$ and $B \in I^{+}$ such that $B \force [F]_{\dot{G}} = \dot{\alpha}$. So, if $M$ has Player II's winning strategy of Galvin game then this strategy shows $C = \bigcap_{F \in M}F^{-1}M \cap \omega_1 \subseteq A$ is $I$-positive. $C$ is a $(M,\mathcal{P}(\mathcal{P}_{\kappa}\lambda))$-semigeneric condition.
\end{proof}

\section{$(\dagger)$-aspects of Strong compactness}
In this section, we introduce the $(\dagger)$-aspects of strong compactness. We begin with Proposition~\ref{recompiledagger}.
\begin{prop}\label{recompiledagger}The following are equivalent
 \begin{enumerate}
  \item $(\dagger)$.
  \item $\mathrm{SSR}([\lambda]^{\omega},{<}\aleph_2)$ for all $\lambda \geq \aleph_2$.
  \item $\mathrm{Nm}(\aleph_2,\lambda)$ is semiproper for all $\lambda \geq \aleph_2$.
 \end{enumerate}
\end{prop}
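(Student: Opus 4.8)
The plan is to prove Proposition~\ref{recompiledagger} by specializing Theorem~\ref{maintheorem} to the case $\kappa = \aleph_2$ and then chaining through the characterizations already available in the excerpt. The equivalence $(2) \leftrightarrow (3)$ is essentially Theorem~\ref{maintheorem} with $\kappa$ fixed at $\aleph_2$: for each regular $\lambda \geq \aleph_2$, Theorem~\ref{maintheorem} tells us that $\mathrm{Nm}(\aleph_2,\lambda)$ preserving the semistationarity of any subset of $[\lambda]^{\omega}$ is equivalent to $\mathrm{SSR}([\lambda]^{\omega},{<}\aleph_2)$. By the Shelah characterization (the well-known lemma that $P$ is semiproper iff $P$ preserves semistationarity of all sets), preservation of semistationarity is the same as semiproperness, so (3) and (2) are equivalent level-by-level in $\lambda$, hence equivalent as universally quantified statements.

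First I would verify the hypothesis of Theorem~\ref{maintheorem} is met when $\kappa = \aleph_2$: we need $\mu^{\omega} < \aleph_2$ for all $\mu \in \aleph_2 \setminus \aleph_2$, but that index set is empty, so the cardinal-arithmetic side condition holds vacuously. This is the small point that makes the specialization clean, and I would state it explicitly so the reader sees why no $\mathrm{CH}$-type assumption is needed here. Having dispatched $(2)\leftrightarrow(3)$, the remaining task is $(1)\leftrightarrow(2)$, and this is exactly Theorem~\ref{basictheorem}: there, $(\dagger)$ is equivalent to $\mathrm{SSR}([\lambda]^{\omega})$ holding for all $\lambda \geq \aleph_2$, and the excerpt records that $\mathrm{SSR}([\lambda]^{\omega},{<}\aleph_2)$ \emph{is} $\mathrm{SSR}([\lambda]^{\omega})$ when $\kappa = \aleph_2$. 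So I would simply note that clause (2) of the present proposition is literally clause (2) of Theorem~\ref{basictheorem}, giving $(1) \leftrightarrow (2)$ at once.

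Thus the whole proof is a bookkeeping assembly: $(1) \leftrightarrow (2)$ from Theorem~\ref{basictheorem} together with the identification $\mathrm{SSR}([\lambda]^{\omega},{<}\aleph_2) = \mathrm{SSR}([\lambda]^{\omega})$, and $(2) \leftrightarrow (3)$ from Theorem~\ref{maintheorem} applied with $\kappa = \aleph_2$ plus the semiproperness/semistationarity-preservation dictionary. I would write it as a short paragraph closing the cycle $(1) \to (2) \to (3) \to (1)$ or, more transparently, as two separate biconditionals. I do not expect a genuine obstacle here, since all the heavy lifting is in Theorem~\ref{maintheorem} and Theorem~\ref{basictheorem}; the only thing to be careful about is the quantifier over $\lambda$, namely that Theorem~\ref{maintheorem} is stated for a \emph{fixed} regular $\lambda \geq \kappa$, so one must apply it once for each $\lambda \geq \aleph_2$ and then conjoin over all such $\lambda$ to match the ``for all $\lambda$'' phrasing of clauses (2) and (3). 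That quantifier management is the only mildly delicate step, and it is routine.
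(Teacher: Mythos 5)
Your treatment of $(1)\leftrightarrow(2)$ and of the direction $(3)\to(2)$ is correct and matches what the paper intends: $(1)\leftrightarrow(2)$ is Theorem~\ref{basictheorem} together with the recorded identification $\mathrm{SSR}([\lambda]^{\omega},{<}\aleph_2)=\mathrm{SSR}([\lambda]^{\omega})$, and if $\mathrm{Nm}(\aleph_2,\lambda)$ is semiproper then, by Shelah's lemma, it preserves all semistationary sets, in particular subsets of $[\lambda]^{\omega}$, so Theorem~\ref{maintheorem} (whose arithmetic hypothesis is, as you say, vacuous at $\kappa=\aleph_2$) yields $\mathrm{SSR}([\lambda]^{\omega},{<}\aleph_2)$. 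The genuine gap is in $(2)\to(3)$. You claim that, by Shelah's characterization, ``preservation of semistationarity is the same as semiproperness,'' so that clause (1) of Theorem~\ref{maintheorem} just \emph{is} semiproperness. It is not: Shelah's lemma requires preserving the semistationarity of subsets of $[W]^{\omega}$ for \emph{all} sets $W$, whereas Theorem~\ref{maintheorem} controls only subsets of $[\lambda]^{\omega}$; since $\mathrm{Nm}(\aleph_2,\lambda)$ is a poset of size far greater than $\lambda$, preserving semistationary subsets of $[\lambda]^{\omega}$ is a priori strictly weaker than semiproperness. Indeed your stronger assertion that $(2)$ and $(3)$ are ``equivalent level-by-level in $\lambda$'' is refuted by the paper itself: in Section 6 (the theorem following Theorem~\ref{daggerhierarcy}) a model is produced, from a weakly compact cardinal in $L$, in which $\mathrm{SSR}([\aleph_2]^{\omega},{<}\aleph_2)$ holds but $\mathrm{Nm}(\aleph_2,\aleph_2)$ is \emph{not} semiproper, because semiproperness of $\mathrm{Nm}(\aleph_2)$ implies Chang's conjecture and so has strictly higher consistency strength. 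This is exactly why the implications in Theorem~\ref{daggerhierarcy} form a nontrivial hierarchy rather than a chain of equivalences.

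The repair, and the route the paper's citation of both theorems presupposes, is to close the cycle through $(1)$ instead of proving $(2)\leftrightarrow(3)$ directly: from $(2)$ you get $(\dagger)$ by Theorem~\ref{basictheorem}, and $(\dagger)$ gives $(3)$ immediately because $\mathrm{Nm}(\aleph_2,\lambda)$ is $\omega_1$-stationary preserving by Lemma~\ref{namba:sttpreserving}, while $(\dagger)$ is precisely the statement that every $\omega_1$-stationary preserving poset is semiproper. (Alternatively, $(2)\to(3)$ can be argued without $(\dagger)$, but only by using the quantifier over all $\lambda$ across different sets: a semistationary $S\subseteq[\delta]^{\omega}$ reflects, by $\mathrm{SSR}([\delta]^{\omega},{<}\aleph_2)$, to some $R$ of size $\aleph_1$, and $[R]^{\omega}$ then has a club of size $\aleph_1$, so Lemmas~\ref{namba:sttpreserving} and~\ref{ssr:upward} give preservation; this is the argument of the proposition in Section 6, not a consequence of Theorem~\ref{maintheorem} alone.) Combined with your correct $(3)\to(2)$, either repair completes the proof.
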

\begin{proof}
  This follows from Theorems~\ref{basictheorem} and~\ref{maintheorem}.
\end{proof}

\begin{prop}\label{strcpcttonambasp}
 If $\kappa$ is $\lambda$-strongly compact then $\mathrm{Nm}(\kappa,\lambda)$ is semiproper. 
\end{prop}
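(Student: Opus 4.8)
The plan is to verify the game-theoretic principle $\Phi_{\kappa,\lambda}$ and then invoke Lemma~\ref{originalnambaforcing}. Since $\mathrm{Nm}(\kappa,\lambda) = \mathrm{Nm}(\kappa,\lambda,J^{\mathrm{bd}}_{\kappa\lambda})$ and $J^{\mathrm{bd}}_{\kappa\lambda}$ is a fine $\kappa$-complete ideal, it suffices to exhibit, for every $A \in (J^{\mathrm{bd}}_{\kappa\lambda})^{+}$ (that is, every cofinal $A \subseteq \mathcal{P}_{\kappa}\lambda$), a winning strategy for Player II in the Galvin game $\Game(J^{\mathrm{bd}}_{\kappa\lambda},A)$. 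The engine of the argument is a fine $\kappa$-complete ultrafilter $U$ on $\mathcal{P}_{\kappa}\lambda$, which exists precisely because $\kappa$ is $\lambda$-strongly compact.

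The main obstacle is that a fixed cofinal $A$ need not belong to $U$, so $U$ cannot be fed directly into the game played on $A$. I would get around this by a pushforward. Using cofinality of $A$ together with choice, fix $f : \mathcal{P}_{\kappa}\lambda \to A$ with $a \subseteq f(a)$ for all $a$, and set $U_{A} = f_{*}U = \{X \subseteq A \mid f^{-1}X \in U\}$, a $\kappa$-complete ultrafilter with $A \in U_{A}$. I then record two facts. First, $U_{A}$ is fine: for $\alpha < \lambda$ one has $f^{-1}\{b \in A \mid \alpha \in b\} \supseteq \{a \mid \alpha \in a\} \in U$, since $a \subseteq f(a)$. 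Second, every $X \in U_{A}$ is cofinal in $\mathcal{P}_{\kappa}\lambda$, hence $J^{\mathrm{bd}}_{\kappa\lambda}$-positive: for each $a$ the set $\{b \in A \mid a \subseteq b\} = \bigcap_{\alpha \in a}\{b \in A \mid \alpha \in b\}$ lies in $U_{A}$ by $\kappa$-completeness, and therefore meets $X$.

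With $U_{A}$ in hand the strategy essentially writes itself. Player II fixes $U_{A}$ before play begins. When Player I plays $F_{i} : A \to \omega_{1}$, the partition of $A$ into the fibers $\{a \mid F_{i}(a) = \eta\}$ for $\eta < \omega_{1}$ has at most $\omega_{1} < \kappa$ nonempty pieces, so exactly one fiber $\{a \mid F_{i}(a) = \eta_{i}\}$ lies in $U_{A}$; Player II responds with $\xi_{i} = \eta_{i} + 1$. I would then check the win: setting $\xi = \sup_{i}\xi_{i}$, the set $Y = \bigcap_{i}\{a \in A \mid F_{i}(a) = \eta_{i}\}$ is a countable intersection of $U_{A}$-sets, hence $Y \in U_{A}$ by $\kappa$-completeness, and for $a \in Y$ we have $F_{i}(a) = \eta_{i} < \eta_{i} + 1 \le \xi$, so $Y \subseteq \bigcap_{i} F_{i}^{-1}\xi$. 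By the second fact above, $Y$ is $J^{\mathrm{bd}}_{\kappa\lambda}$-positive, whence $\bigcap_{i} F_{i}^{-1}\xi \in (J^{\mathrm{bd}}_{\kappa\lambda})^{+}$ and Player II wins. (The summand $+1$ in $\xi_{i}$ is a bookkeeping device guaranteeing the strict inequality $\eta_{i} < \xi$ even when the $\eta_{i}$ are eventually constant.) This establishes $\Phi_{\kappa,\lambda}$, and Lemma~\ref{originalnambaforcing} then yields that $\mathrm{Nm}(\kappa,\lambda)$ is semiproper.
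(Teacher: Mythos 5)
Your proof is correct, and while it shares the paper's overall strategy (verify the Galvin-game principle and invoke Lemma~\ref{originalnambaforcing}), the implementation is genuinely different and in fact more complete. The paper takes the fine $\kappa$-complete ultrafilter $U$, observes that the poset $\langle U,\subseteq\rangle$ is $\omega+1$-strategically closed (by $\kappa$-completeness), cites Lemma~\ref{strclosed} to get $\Phi_{\kappa,\lambda,U^{\ast}}$, and then asserts ``in particular, $\Phi_{\kappa,\lambda}$ holds too.'' That last step is exactly the obstacle you isolated: $\Phi_{\kappa,\lambda,U^{\ast}}$ only provides winning strategies on sets $A \in U$, whereas $\Phi_{\kappa,\lambda}$ demands one on \emph{every} cofinal $A$, and a cofinal $A$ need not lie in $U$ --- indeed if $A \notin U$ then its complement is in $U$, so no naive extension of the functions $F_i$ by $0$ off $A$ can work, since the $U$-large set produced by the $U$-strategy may be disjoint from $A$. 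Your pushforward $U_A = f_{\ast}U$ along a choice function $f$ with $a \subseteq f(a)$ is precisely what justifies this transfer: it yields a $\kappa$-complete ultrafilter concentrating on $A$, all of whose members are cofinal, after which the one-fiber-per-move strategy (with the $+1$ bookkeeping) wins $\Game(J^{\mathrm{bd}}_{\kappa\lambda},A)$ outright. So your argument buys self-containedness (no appeal to Lemma~\ref{strclosed}) and supplies the argument for the step the paper leaves implicit; it is in the same spirit as the re-indexing the paper itself uses in Lemma~\ref{scc:char2} to pass from $\Phi_{\kappa}$ to $\Phi_{\kappa,\kappa}$.
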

\begin{proof}
 Note that $\mathcal{P}_{\kappa}\lambda$ carries a fine ultrafilter $U$. Then $\mathcal{P}(\mathcal{P}_{\kappa}\lambda)/ U$ is $\omega+1$-strategically closed by obvious reason. By Lemma~\ref{strclosed}, $\Phi_{\kappa,\lambda,U^{\ast}}$ holds. In particular, $\Phi_{\kappa,\lambda}$ holds too. 
\end{proof}

Recall that our definition of $\kappa$ is $\lambda$-strongly compact is that $\mathcal{P}_{\kappa}\lambda$ carries a fine ultrafilter. On the other hand, the original definition of strong compactness is due to Tarski. He introduced a logic $\mathcal{L}_{\kappa\kappa}$ that admits disjunctions of ${<}\kappa$-many formulas and ${<}\kappa$-many quantifiers. $\kappa$ is strongly compact if, for every $\mathcal{L}_{\kappa\kappa}$-theory $T$, if $T$ is ${<}\kappa$-consistent then $T$ is consistent. Here, by ${<}\kappa$-consistent, we mean that $S$ is consistent for all $S \in [T]^{<\kappa}$. For a detail, we refer to~\cite[Section 4]{MR1994835}. 

There is another formulation of strong compactness. We say that $\kappa$ is $\lambda$-compact\footnote{This phrase is due to Gitik.} if every $\kappa$-complete filter over $\lambda$ can be extended to $\kappa$-complete ultrafilter. It was shown that $\kappa$ is strongly compact if $\kappa$ is $\lambda$-compact for all $\lambda\geq \kappa$. 

Hayut studied these compactnesses. 
\begin{thm}[Hayut~\cite{MR3959249}]
If $\lambda={\lambda^{<\kappa}}$ then the following are equivalent. 
\begin{enumerate}
 \item $\kappa$ is $\lambda$-compact.
 \item For every $\mathcal{L}_{\kappa\kappa}$-theory $T$ with $2^{\lambda}$-many symbols, if $T$ is ${<}\kappa$-consistent then $T$ is consistent. 
\end{enumerate}

\end{thm}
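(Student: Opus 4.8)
The plan is to prove the two implications separately, reading the equivalence as a localized, filter-theoretic version of the classical Keisler--Tarski characterization of strong compactness, and to isolate exactly where $\lambda = \lambda^{<\kappa}$ does its work.

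For $(2) \to (1)$ I would encode filter extension as a consistency problem, and here no cardinal arithmetic is needed. Let $F$ be a $\kappa$-complete filter over $\lambda$. Work in the language with a unary predicate $P_{A}$ for every $A \subseteq \lambda$ (hence $2^{\lambda}$ symbols), and let $T$ consist of the full $\mathcal{L}_{\kappa\kappa}$-elementary diagram of the structure $\mathfrak{N} = (\lambda; (A)_{A \subseteq \lambda})$ together with a fresh constant $c$ and the sentences $P_{A}(c)$ for each $A \in F$. Any subtheory of size ${<}\kappa$ mentions fewer than $\kappa$ sets from $F$, whose intersection is nonempty by $\kappa$-completeness; interpreting $c$ as a point of that intersection inside $\mathfrak{N}$ satisfies the subtheory, so $T$ is ${<}\kappa$-consistent. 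By $(2)$, $T$ has a model $\mathfrak{M}$, and $U = \{A \subseteq \lambda \mid \mathfrak{M} \models P_{A}(c)\}$ is the desired $\kappa$-complete ultrafilter extending $F$: the ultrafilter axioms and the $\kappa$-completeness schema $\forall x(\bigwedge_{\alpha<\delta} P_{A_{\alpha}}(x) \to P_{\bigcap_{\alpha}A_{\alpha}}(x))$ are $\mathcal{L}_{\kappa\kappa}$-sentences true in $\mathfrak{N}$, hence in its diagram, and $P_{A}(c) \in T$ forces $F \subseteq U$.

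For $(1) \to (2)$ I would run the standard ultraproduct argument. Given a ${<}\kappa$-consistent $\mathcal{L}_{\kappa\kappa}$-theory $T$ with at most $2^{\lambda}$ symbols, first note that the number of $\mathcal{L}_{\kappa\kappa}$-formulas over such a signature is $(2^{\lambda})^{<\kappa} = 2^{\lambda}$ (using $\kappa \le \lambda$, which $\lambda = \lambda^{<\kappa}$ forces), so $|T| \le 2^{\lambda}$ and the index set $I = [T]^{<\kappa}$ has size at most $2^{\lambda}$. For each $S \in I$ choose a model $M_{S} \models S$, and for each $\varphi \in T$ put $\hat{\varphi} = \{S \in I \mid \varphi \in S\}$. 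The family $\{\hat{\varphi} \mid \varphi \in T\}$ has the ${<}\kappa$-intersection property and generates a $\kappa$-complete filter on $I$. Extending it to a $\kappa$-complete ultrafilter $U$ and forming $\prod_{S} M_{S} / U$, the $\mathcal{L}_{\kappa\kappa}$-version of the \L{}o\'{s} theorem---whose proof uses the $\kappa$-completeness of $U$ to commute the ultraproduct past ${<}\kappa$-ary conjunctions and ${<}\kappa$-blocks of quantifiers---yields a model of $T$.

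The step I expect to be the main obstacle is the one place where the two directions fail to match in size: in $(1) \to (2)$ the index set $I$ has size $2^{\lambda}$, whereas the hypothesis $\lambda$-compactness only supplies extensions of $\kappa$-complete filters over $\lambda$. Bridging this gap requires a reduction lemma to the effect that $\lambda$-compactness already implies the extension property for every $\kappa$-complete filter over a set of size $2^{\lambda}$ (equivalently, over $P(\lambda)$), and this is exactly the point at which $\lambda = \lambda^{<\kappa}$ is indispensable: the assumption lets one present $P(\lambda)$ as a ${<}\kappa$-directed system coded over $\lambda$ and transfer the extension problem coordinatewise down to $\lambda$. Making this transfer preserve $\kappa$-completeness, rather than merely producing some ultrafilter, is the delicate technical heart; by contrast the remaining items (the \L{}o\'{s} computation and the verification that the $U$ extracted in $(2)\to(1)$ is a genuine $\kappa$-complete ultrafilter) are routine bookkeeping.
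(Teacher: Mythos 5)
The paper states this result as Hayut's theorem and gives no proof (it is cited from \cite{MR3959249}), so there is no internal argument to compare against; judging your proposal on its own terms: your direction $(2)\to(1)$ is correct and standard, but your direction $(1)\to(2)$ has a genuine gap, and it sits exactly where you flagged it --- except that it is not a bridgeable technicality. The object your ultraproduct requires, a $\kappa$-complete ultrafilter on $I=[T]^{<\kappa}$ containing every $\hat{\varphi}$, is \emph{by definition} a fine $\kappa$-complete ultrafilter on $\mathcal{P}_{\kappa}(2^{\lambda})$, i.e.\ a witness to $2^{\lambda}$-strong compactness. Consequently your ``reduction lemma'' (that $\lambda$-compactness gives the extension property for $\kappa$-complete filters over sets of size $2^{\lambda}$) is the assertion that $\lambda$-compactness implies $2^{\lambda}$-strong compactness: apply it to the fine filter on $[2^{\lambda}]^{<\kappa}$, noting $(2^{\lambda})^{<\kappa}=2^{\lambda}$. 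That would collapse the first arrow of the very diagram the paper reproduces after this theorem ($2^{\lambda}$-strong compactness $\Rightarrow$ $\lambda$-compact $\Rightarrow$ $\lambda$-strong compactness) into an equivalence; the entire point of Hayut's theorem is that compactness for $2^{\lambda}$-sized languages sits at the \emph{middle} level, not the top, so this lemma cannot be assumed and is certainly not ``the delicate technical heart'' of a proof of the stated equivalence --- it is a strictly stronger hypothesis. Moreover, the transfer mechanism you sketch provably cannot deliver it: if $W$ is a proper filter on a set $J$ of size $\lambda$ and $f:J\to[T]^{<\kappa}$, then for $f_{*}W$ to contain every $\hat{\varphi}$ one needs $T=\bigcup_{j\in J}f(j)$, which is impossible since $|\bigcup_{j}f(j)|\leq\lambda\cdot\kappa=\lambda<2^{\lambda}=|T|$. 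In short, any index set whose points each carry $<\kappa$ sentences of $T$ must itself have size $2^{\lambda}$, and then $\lambda$-compactness simply does not apply to it.

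The correct proof evades this by shrinking the \emph{language} rather than the theory. Code the $2^{\lambda}$ symbols as subsets of $\lambda$; for $x\in\mathcal{P}_{\kappa}\lambda$ collapse each symbol $A$ to its trace $A\cap x$, and each sentence $\varphi$ to the collapsed sentence $\varphi\restriction x$. This collapse is many-to-one, and that is precisely what defeats the counting obstruction above: a single model of $<\kappa$ many collapsed sentences can witness $\varphi\restriction x$ simultaneously for $2^{\lambda}$ many $\varphi\in T$. The collapse is faithful on any $<\kappa$-sized fragment $S\subseteq T$ for every $x$ containing a suitable $<\kappa$-sized set of ordinals separating the symbols of $S$, so such fragments collapse to consistent sets. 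One then indexes by pairs $z=(x,S)$ with $x\in\mathcal{P}_{\kappa}\lambda$ and $S$ a consistent $<\kappa$-sized set of collapsed sentences; by $\lambda^{<\kappa}=\lambda$ (and inaccessibility of $\kappa$) this index set has size $\lambda$, and the sets $B_{\varphi}=\{(x,S):\varphi\restriction x\in S\}$ together with the fine sets generate a proper $\kappa$-complete filter on it. Only now does $\lambda$-compactness enter, extending this filter to a $\kappa$-complete ultrafilter, after which a \L o\'s argument for ultraproducts of models in the varying collapsed languages produces a model of $T$. So your diagnosis of where the difficulty lives was accurate, but the repair runs in the opposite direction from your sketch: one never extends filters over anything larger than $\lambda$; one compresses the $2^{\lambda}$-sized syntax onto $\lambda$-sized index data.
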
 (2) is called $\mathcal{L}_{\kappa,\kappa}$-compactness for languages of size $\lambda$. 
Then, Hayut claimed that $\lambda$-compactness affects to cardinals up to $2^\lambda$, contrary to $\lambda$-strong compactness only effects for $\lambda$. In~\cite{MR3959249}, he also drew a picture like
\begin{center}
\begin{tabular}{cccc}
 & $2^\lambda$-strong compactness & $\Rightarrow$ & $\lambda$-compact \\
$\Leftrightarrow$ & $\mathcal{L}_{\kappa,\kappa}$-compactness for language of size $2^{\lambda}$ & $\Rightarrow$ & $\lambda$-strong compactness.
\end{tabular} 
\end{center}

We see a similar phenomenon in the view of semistationary reflection principles and semiproperness of Namba forcings. 
\begin{prop}
If $\alpha^{\omega} < \kappa$ for all $\alpha < \kappa$ then the following are equivalent. 
\begin{enumerate}
 \item $\mathrm{SSR}([\lambda]^{\omega},{<}\kappa)$ for all $\lambda \geq \kappa$.
 \item $\mathrm{Nm}(\kappa,\lambda)$ is semiproper for all $\lambda \geq \kappa$.
\end{enumerate}
\end{prop}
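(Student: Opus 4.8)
The plan is to combine Theorem~\ref{maintheorem} with Shelah's characterization of semiproperness (a poset is semiproper if and only if it preserves the semistationarity of any set), exploiting the fact that the parameter $\lambda$ plays no role once one only needs to preserve the semistationarity of \emph{small} sets. Note first that the present hypothesis $\alpha^{\omega}<\kappa$ for all $\alpha<\kappa$ implies the hypothesis of Theorem~\ref{maintheorem}, so that theorem is available at every regular $\lambda\geq\kappa$.

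First I would dispatch $(2)\Rightarrow(1)$ locally. Fix a regular $\lambda\geq\kappa$. If $\mathrm{Nm}(\kappa,\lambda)$ is semiproper, then by Shelah's lemma it preserves the semistationarity of every set, and in particular of every subset of $[\lambda]^{\omega}$; this is precisely clause (1) of Theorem~\ref{maintheorem}, whose equivalent conclusion is $\mathrm{SSR}([\lambda]^{\omega},{<}\kappa)$. Since $\lambda$ was arbitrary, (1) follows.

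The substance is in $(1)\Rightarrow(2)$. Fixing a regular $\lambda\geq\kappa$, I would show that $\mathrm{Nm}(\kappa,\lambda)$ preserves the semistationarity of an arbitrary semistationary $S\subseteq[W]^{\omega}$, which by Shelah's lemma gives semiproperness. By the usual reduction through Lemma~\ref{menas} and Lemma~\ref{ssr:upward}, it is enough to treat the case $W=\theta$ for a regular cardinal $\theta\geq\kappa$ (the case $|W|<\kappa$ being immediate from Lemma~\ref{namba:sttpreserving}, since then $[W]^{\omega}$ is a club of size $<\kappa$ in itself by the hypothesis $\alpha^{\omega}<\kappa$). Now I invoke hypothesis (1) at $\theta$: from $\mathrm{SSR}([\theta]^{\omega},{<}\kappa)$ the set $S$ reflects to some $R\in\mathcal{P}_{\kappa}\theta$, so $S\cap[R]^{\omega}$ is semistationary. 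Because $|R|<\kappa$, the hypothesis $\alpha^{\omega}<\kappa$ gives $|[R]^{\omega}|<\kappa$, so $[R]^{\omega}$ is a club subset of itself of size $<\kappa$; Lemma~\ref{namba:sttpreserving} then shows $\mathrm{Nm}(\kappa,\lambda)$ preserves the semistationarity of $S\cap[R]^{\omega}$, and Lemma~\ref{ssr:upward} upgrades this to the semistationarity of $S$ in $[\theta]^{\omega}$ in the extension.

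The step I expect to cost the most care is this reduction of an arbitrary semistationary set to a small reflecting set $R$, and in particular the fact that it is independent of $\lambda$. The crucial point is that Lemma~\ref{namba:sttpreserving} lets $\mathrm{Nm}(\kappa,\lambda)$ preserve the semistationarity of \emph{any} semistationary subset of $[a]^{\omega}$ whenever $[a]^{\omega}$ carries a club of size $<\kappa$, with no relation imposed between $a$ and $\lambda$; global semistationary reflection below $\kappa$ is exactly the device that funnels every semistationary set into this lemma, and this is what lets a single poset $\mathrm{Nm}(\kappa,\lambda)$ absorb reflection at all $\theta\geq\kappa$ simultaneously. One must also be careful with the bookkeeping across the forcing—checking that the reflecting set $R\in V$ and its enlarged $[R]^{\omega}$ in the extension are handled correctly by Lemma~\ref{menas} and Lemma~\ref{ssr:upward}—but no new idea beyond Theorem~\ref{maintheorem} and Lemma~\ref{namba:sttpreserving} seems to be required.
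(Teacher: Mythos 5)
Your proposal is correct and follows essentially the same route as the paper: $(2)\Rightarrow(1)$ via Theorem~\ref{maintheorem}, and $(1)\Rightarrow(2)$ by splitting into the case of a small underlying set (handled directly by Lemma~\ref{namba:sttpreserving}, using $\alpha^{\omega}<\kappa$) and the case $\theta\geq\kappa$, where reflection to some $R\in\mathcal{P}_{\kappa}\theta$ combined with Lemma~\ref{namba:sttpreserving} and Lemma~\ref{ssr:upward} yields preservation of semistationarity. The paper's proof is just a terser version of the same argument.
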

\begin{proof}

  (2) to (1) follows from Theorem~\ref{maintheorem}. (1) to (2) follows from Lemma~\ref{namba:sttpreserving}. Indeed, for every semistationary subset $S\subseteq [\delta]^{\omega}$ for $\delta$, let us check the semistationarity of $S$ is preserved by $\mathrm{Nm}(\kappa,\lambda)$. 

If $\delta < \kappa$ then Lemma~\ref{namba:sttpreserving} works.

 If $\delta \geq \kappa$ then $S$ reflect to some $a \in \mathcal{P}_{\kappa}\delta$. By Lemma~\ref{namba:sttpreserving}, the semistationarity of $S \cap [a]^{\omega}$ is preserved by $\mathrm{Nm}(\kappa,\lambda)$ for all $\lambda$. By Lemma~\ref{ssr:upward}, that of $S$ is also preserved. 
\end{proof}

We let to denote $(\dagger)^{\kappa}$ for convenience by (1) or (2) above holds. $(\dagger)^{\aleph_2}$ is equivalent with $(\dagger)$. Of course, $(\dagger)^{\kappa}$ is equivalent to (1) and (2), respectively. 
\begin{prop}\label{daggerkappa}If $\kappa$ is strongly compact. For an uncountable regular cardinal $\mu < \kappa$, $\mathrm{Coll}(\mu,<\kappa)$ forces $(\dagger)^{\kappa}$. 
\end{prop}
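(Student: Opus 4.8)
The plan is to establish clause (1) of $(\dagger)^{\kappa}$, i.e.\ $\mathrm{SSR}([\lambda]^{\omega},{<}\kappa)$ for every $\lambda\geq\kappa$, in the extension $V[G]$ by $\mathbb{C}=\mathrm{Coll}(\mu,{<}\kappa)$; by Theorem~\ref{maintheorem} and the equivalence defining $(\dagger)^{\kappa}$ this gives the proposition. First I would dispatch the bookkeeping. Since $\kappa$ is strongly compact it is inaccessible, so $\mathbb{C}$ is $\kappa$-c.c.\ of size $\kappa$, forces $\kappa=\mu^{+}$, and preserves cardinals ${\geq}\kappa$; and since $\mu$ is uncountable regular, $\mathbb{C}$ is ${<}\mu$-closed, hence $\sigma$-closed, adding no new $\omega$-sequences and preserving $\omega_{1}$. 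From inaccessibility $(\mu^{\omega})^{V}<\kappa$, and as $\mathbb{C}$ collapses this cardinal to ${\leq}\mu$ without adding countable sequences, $(\mu^{\omega})^{V[G]}=\mu$; thus the arithmetic hypothesis $\alpha^{\omega}<\kappa$ for $\alpha<\kappa$ holds in $V[G]$, legitimizing the use of Theorem~\ref{maintheorem}. It then suffices to reflect an arbitrary semistationary $S\in V[G]$, $S\subseteq[\lambda]^{\omega}$, to some $R\in\mathcal{P}_{\kappa}\lambda$. (Note $(\dagger)^{\kappa}$ already holds in $V$ by Proposition~\ref{strcpcttonambasp}; the real content is surviving the collapse that turns $\kappa$ into $\mu^{+}$, since $S$ may be new.)

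Next I would produce a lifted strong-compactness embedding. In $V$ fix a fine ultrafilter $U$ on $\mathcal{P}_{\kappa}\lambda$ with ultrapower $j\colon V\to M$, so $\mathrm{crit}(j)=\kappa$, ${}^{<\kappa}M\subseteq M$, $j``\lambda=[\mathrm{id}]_{U}\in M$, and $\lambda=\mathrm{ot}(j``\lambda)<j(\kappa)$. Using the factorization $j(\mathbb{C})\cong\mathbb{C}\times\mathrm{Coll}^{M}(\mu,[\kappa,j(\kappa)))$ and the fact that $j\upharpoonright\mathbb{C}=\mathrm{id}$ (so $j``G=G$), I would pass to an outer model $V[G][H]$, where $H$ is $V[G]$-generic (hence $M[G]$-generic) for the ${<}\mu$-closed tail $\mathrm{Coll}^{M}(\mu,[\kappa,j(\kappa)))$, and lift to an elementary $\hat j\colon V[G]\to M[G][H]$.

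The candidate witness is $R:=j``\lambda$. One checks $R\in\mathcal{P}_{j(\kappa)}(j(\lambda))^{M[G][H]}$, that $R\cap j(\kappa)=\kappa$ (the ordinals $\hat j(\beta)<j(\kappa)$ are exactly those with $\beta<\kappa$, on which $\hat j=\mathrm{id}$), and $\omega_{1}\subseteq\kappa$. Since $\mathrm{crit}(j)>\omega$ we have $\hat j(x)=\hat j``x=j``x$ for every $x\in[\lambda]^{\omega}$, whence $\hat j(S)\cap[R]^{\omega}=\{\,j``x : x\in S\,\}$. By elementarity of $\hat j$ it therefore suffices to show that $\hat j(S)\cap[R]^{\omega}$ is semistationary in $M[G][H]$: pulling this statement back along $\hat j$ is exactly ``$S$ reflects to some element of $\mathcal{P}_{\kappa}\lambda$'' in $V[G]$, the desired instance of $\mathrm{SSR}([\lambda]^{\omega},{<}\kappa)$.

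The hard part is this last semistationarity claim, because it must be evaluated in the inner model $M[G][H]$, which a priori carries its own clubs; this matching of clubs is the one genuine obstacle. Here the two closure hypotheses pay off. Because $\mathbb{C}$ and the tail are $\sigma$-closed (this is where $\mu>\omega$ is essential), no new countable subsets of $\lambda$ appear over $V$, and together with ${}^{<\kappa}M\subseteq M$ this makes $[\lambda]^{\omega}$ and $[j``\lambda]^{\omega}$ common to $V[G]$, $V[G][H]$ and $M[G][H]$. The map $x\mapsto j``x$ is then a $\subseteq$-isomorphism of $[\lambda]^{\omega}$ onto $[j``\lambda]^{\omega}$ carrying $\sqsubseteq_{\omega_{1}}$ to $\sqsubseteq_{\omega_{1}}$ (as $j\upharpoonright\omega_{1}=\mathrm{id}$), so it sends $S^{\mathbf{cl}}$ onto $(\hat j(S)\cap[R]^{\omega})^{\mathbf{cl}}$ and clubs to clubs. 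Given any club $E\in M[G][H]$ of $[j``\lambda]^{\omega}$, it is still a club in $V[G][H]$ (closure and unboundedness are computed against the same $[j``\lambda]^{\omega}$), so its preimage $E'$ is a club of $[\lambda]^{\omega}$ in $V[G][H]$; since countably closed forcing preserves stationary subsets of $[\lambda]^{\omega}$, the set $S^{\mathbf{cl}}$ stays stationary there, giving $w'\in S^{\mathbf{cl}}\cap E'$. Then $j``w'$ lies in $M[G][H]$ and witnesses $(\hat j(S)\cap[R]^{\omega})^{\mathbf{cl}}\cap E\neq\emptyset$. Hence $\hat j(S)$ reflects to $R$ in $M[G][H]$; running the argument for every $\lambda\geq\kappa$ yields $(\dagger)^{\kappa}$ in $V[G]$. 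Both the $\sigma$-closure of the collapse (forcing $\mu$ uncountable) and the $\kappa$-closure of $M$ are used precisely to force these three copies of $[\lambda]^{\omega}$ to coincide.
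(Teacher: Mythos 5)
Your overall architecture --- lifting the embedding through $\mathrm{Coll}(\mu,{<}\kappa)$ via the factorization $j(\mathbb{C})\cong\mathbb{C}\times\mathrm{Coll}^{M}(\mu,[\kappa,j(\kappa)))$, using $\sigma$-closure plus ${}^{<\kappa}M\subseteq M$ to keep the relevant sets of reals stable, verifying reflection inside the target model, and pulling back by elementarity --- is exactly the ``standard generic elementary embedding argument'' that the paper's one-line proof invokes. But there is a genuine error at the heart of your argument: the assertion $j``\lambda=[\mathrm{id}]_{U}\in M$. For an ultrapower by a merely \emph{fine} $\kappa$-complete ultrafilter $U$ on $\mathcal{P}_{\kappa}\lambda$, fineness gives only $j``\lambda\subseteq[\mathrm{id}]_{U}$; equality is precisely normality of $U$, and indeed $j``\lambda\in M$ would make $\{X\subseteq\mathcal{P}_{\kappa}\lambda\mid j``\lambda\in j(X)\}$ a normal fine ultrafilter, i.e.\ would make $\kappa$ $\lambda$-supercompact (Kunen's argument). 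The proposition assumes only strong compactness, which is consistently much weaker, so you may not take $R:=j``\lambda$: in general $R\notin M$, there is no reason to have $R\in M[G][H]$, and then ``$\hat j(S)\cap[R]^{\omega}$ is semistationary in $M[G][H]$'' is not a statement about an element of $M[G][H]$, so the elementarity pullback --- the very step that produces a witness in $V[G]$ --- is unavailable. As written, your proof establishes the proposition only from supercompactness.

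The repair is standard and keeps essentially all of your bookkeeping. Take $R:=[\mathrm{id}]_{U}\in M$, so $j``\lambda\subseteq R$ and $M\models|R|<j(\kappa)$; your closure arguments still show that $[R]^{\omega}$ and every countable subset of $j``\lambda$ are computed the same in $V$, $V[G][H]$, and $M[G][H]$. Since $[R]^{\omega}$ is now strictly larger than the pointwise image of $[\lambda]^{\omega}$, your $\subseteq$-isomorphism step must be routed through Menas's lemma (Lemma~\ref{menas}): given a club $E\subseteq[R]^{\omega}$ in $M[G][H]$ (which remains a club in $V[G][H]$; this uses that $M[G][H]$ is closed under $\omega$-sequences in $V[G][H]$ --- choose names in $M$ for the entries and use ${}^{\omega}M\cap V\subseteq M$ --- a point your parenthetical ``closure is computed against the same set'' glosses over), the trace $\{w\cap j``\lambda\mid w\in E\}$ contains a club of $[j``\lambda]^{\omega}$ in $V[G][H]$, which pulls back along $x\mapsto j``x$ to a club $E'\subseteq[\lambda]^{\omega}$. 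As $S^{\mathbf{cl}}$ stays stationary under the $\sigma$-closed tail, pick $x'\in S^{\mathbf{cl}}\cap E'$, pick $y\in S$ with $y\sqsubseteq_{\omega_{1}}x'$, and pick $w\in E$ with $w\cap j``\lambda=j``x'$; then $j``y\subseteq w$ and $w\cap\omega_{1}=j``x'\cap\omega_{1}=y\cap\omega_{1}=j``y\cap\omega_{1}$ (using $\omega_{1}\subseteq j``\lambda$), so $w$ witnesses $\bigl(\hat j(S)\cap[R]^{\omega}\bigr)^{\mathbf{cl}}\cap E\neq\emptyset$ in $M[G][H]$. Pulling back by elementarity gives some $R'\in\mathcal{P}_{\kappa}\lambda$ to which $S$ reflects in $V[G]$, and the side condition $\omega_{1}\subseteq R'\cap\kappa\in\kappa$ is recovered by the paper's unlabeled lemma following the definition of $\mathrm{SSR}$. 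With this substitution your proof is correct and coincides with the paper's intended argument.
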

\begin{proof}
 The standard generic elementary embedding argument shows $\mathrm{SSR}([\lambda]^{\omega},{<}\kappa)$ for all $\lambda \geq \kappa$. 
\end{proof}
The following lemma is essentially due to Todor\v{c}evi\'{c}~\cite{MR1261218}. 
\begin{lem}\label{ccsttchar}
 If $I$ is a fine ideal over $\mathcal{P}_{\kappa}\lambda$ then the following are equivalent:
\begin{enumerate}
 \item $\Phi_{\kappa,\lambda,I}$ holds. This is equivalent with $\mathrm{Nm}(\kappa,\lambda,I)$ is semiproper. 
 \item $W_{I}^{A} = \{M \in [\omega_1 \cup {^A}\omega_1]^{\omega} \mid \bigcap\{f^{-1} M \cap \omega_1 \mid f:A \to  \omega_1 \land f \in M\}\in I\} $ is \emph{non}-stationary for all $A \in I$.
\end{enumerate}
\end{lem}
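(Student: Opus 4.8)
The plan is to reduce everything to the game reformulation: by Lemma~\ref{originalnambaforcing} statement (1) already records that $\Phi_{\kappa,\lambda,I}$ is equivalent to the semiproperness of $\mathrm{Nm}(\kappa,\lambda,I)$, so it suffices to prove that $\Phi_{\kappa,\lambda,I}$ holds if and only if $W_{I}^{A}$ is non-stationary for every $A \in I^{+}$ (which is the intended range of $A$, matching the domain of the Galvin game). I would fix once and for all the base set $W = \omega_1 \cup {}^{A}\omega_1$ and use Menas' Lemma~\ref{menas} to transport clubs between $[W]^{\omega}$ and $[\mathcal{H}_{\theta}]^{\omega}$; for $M = N \cap W$ with $N \prec \mathcal{H}_{\theta}$ I write $\delta = M \cap \omega_1 = N \cap \omega_1$, an ordinal, and I use throughout that $f^{-1}(M \cap \omega_1) = \{a \in A \mid f(a) < \delta\}$.

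For the direction $\Phi_{\kappa,\lambda,I} \Rightarrow W_{I}^{A}$ non-stationary, I would fix $A \in I^{+}$ and a winning strategy $\sigma$ for Player II, let $D$ be the club of countable $N \prec \mathcal{H}_{\theta}$ with $A, I, \sigma \in N$, and take a club $C \subseteq [W]^{\omega}$ contained in $\{N \cap W \mid N \in D\}$ by Lemma~\ref{menas}. The claim will be $C \cap W_{I}^{A} = \emptyset$. Given $M = N \cap W \in C$, I enumerate the countable set $M \cap {}^{A}\omega_1$ as $\langle F_i \mid i<\omega\rangle$, arranging that every constant function $c_\eta$ with $\eta < \delta$ occurs (each $c_\eta \in N$ since $\eta \in N$), let Player I play $F_0, F_1, \dots$, and let $\xi_i = \sigma(\langle F_0,\dots,F_i\rangle)$. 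By elementarity each $\xi_i \in N \cap \omega_1 = \delta$, so $\xi := \sup_i \xi_i \le \delta$; and since $\sigma$ wins, $\bigcap_i F_i^{-1}\xi \in I^{+}$, whence each $c_\eta^{-1}\xi$ is nonempty, forcing $\eta < \xi$ for all $\eta < \delta$ and hence $\xi = \delta$. Then $\bigcap_i F_i^{-1}\delta = \bigcap\{f^{-1}(M\cap\omega_1) \mid f \in M\} \in I^{+}$, i.e.\ $M \notin W_{I}^{A}$.

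For the converse I would assume each $W_{I}^{A}$ is non-stationary, fix $A \in I^{+}$ and a club $C \subseteq [W]^{\omega}$ disjoint from $W_{I}^{A}$, and pull back (Lemma~\ref{menas}) to a club $D$ of countable $N \prec \mathcal{H}_{\theta}$ containing the parameters with $N \cap W \in C$. Player II's strategy is to build an increasing chain $N_0 \prec N_1 \prec \cdots$ of members of $D$, absorbing Player I's move $F_i$ into $N_i$ at stage $i$, and to answer $\xi_i = N_i \cap \omega_1$. Since a club in $[\mathcal{H}_\theta]^\omega$ is closed under countable increasing unions, $N := \bigcup_i N_i \in D$, so $M := N \cap W \in C$ and therefore $M \notin W_{I}^{A}$, giving $\bigcap\{f^{-1}(M\cap\omega_1)\mid f \in M\} \in I^{+}$. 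Because $\xi := \sup_i \xi_i = N \cap \omega_1 = M \cap \omega_1$ and every $F_i \in M$, I get $\bigcap_i F_i^{-1}\xi \supseteq \bigcap\{f^{-1}(M\cap\omega_1)\mid f \in M\} \in I^{+}$, so Player II wins $\Game(I,A)$.

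The main obstacle will be the coordination, in both directions, between the supremum $\xi$ of Player II's plays and the characteristic ordinal $\delta = M \cap \omega_1$ of the relevant submodel: the game's winning condition is phrased with the cut-off $\xi$, whereas membership in $W_{I}^{A}$ is phrased with the cut-off $M \cap \omega_1$, and the argument must force the two to coincide. In the forward direction I handle this by feeding all constant functions below $\delta$ into the play, so the winning intersection pushes $\xi$ up to $\delta$; in the converse it is automatic, since the catching-your-tail construction makes $\sup_i (N_i \cap \omega_1)$ equal to the characteristic of the limit model. The remaining points — the Menas back-and-forth between $[W]^{\omega}$ and $[\mathcal{H}_\theta]^{\omega}$, and the closure of a club of countable models under $\omega$-chains — are routine.
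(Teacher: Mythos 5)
Your proof is correct and follows essentially the same route as the paper: the nontrivial direction is the same catching-your-tail construction (your increasing chain of models drawn from a Menas-pulled-back club is exactly what the paper's Skolem-hull strategy $\mathrm{ws}(F_0,\dots,F_n)=\mathrm{Sk}_{\mathcal{A}}(\{F_0,\dots,F_n\})\cap\omega_1$ produces), and you correctly read the quantifier in (2) as ranging over $A\in I^{+}$. Your forward direction fills in what the paper dismisses as ``clear''; note only that the constant-function trick is not strictly needed there, since $\xi\le\delta$ already gives $\bigcap_i F_i^{-1}[\xi]\subseteq\bigcap_i F_i^{-1}[\delta]$ and $I^{+}$ is upward closed.
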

\begin{proof}
 The forward direction is clear. Let us check the inverse direction. We define a winning strategy of Player II for the Galvin game $\Game(I,A)$. Let $\theta$ be a sufficiently large regular cardinal. Let $C \subseteq \mathcal{H}_{\theta}$ be a club subset such that $\{M \cap ({\omega_1 \cup {^{\mathcal{P}_{\kappa}\lambda}}}) \mid M \in C\} \cap W_{I}^{A} = \emptyset$. Consider an expansion $\mathcal{A} = \langle \mathcal{H}_{\theta},\in,A,C,\kappa,\lambda,I,...\rangle$. For given Player I's move $\langle F_{0},...,F_{n}\rangle$, Let $\mathrm{ws}(F_{0},...,F_{n}) = \mathrm{Sk}_{\mathcal{A}}(\{F_{0},...,F_{n}\}) \cap \omega_1$. $\mathrm{Sk}_{\mathcal{A}}$ denotes the Skolem hull. It is easy to see that $\mathrm{ws}$ is a winning strategy. 
\end{proof}
Importance is $W_{I}^A$ is forced to be \emph{non}-semiproper by $\mathrm{Nm}(\kappa,\lambda,I)$ even if $W^A_I$ is stationary as follows. 
\begin{lem}\label{destroysemistt}
 Suppose that $I$ is a fine ideal over $\mathcal{P}_{\kappa}\lambda$ and $A \in I^{+}$. Then $\mathrm{Nm}(\kappa,\lambda,I)$ forces $W_{I}^{A}$ is \emph{non}-semistationary. 
\end{lem}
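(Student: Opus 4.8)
\emph{The plan is as follows.} I would first pass below the condition $p_{A}$ with $\mathrm{tr}(p_{A})=\emptyset$ and $\mathrm{Suc}_{p_{A}}(s)\subseteq A$ for all $s$, exactly the condition built in the proof of Lemma~\ref{originalnambaforcing}; here one uses fineness of $I$ to see that each $\mathrm{Suc}_{p_{A}}(s)=A\cap\{a\mid a\supseteq\bigcup s\}$ is $I$-positive, and $p_{A}\force\dot{g}\subseteq A$, where $\dot{g}=\bigcup\{\mathrm{tr}(p)\mid p\in\dot{G}\}$ has $n$-th element $\dot{g}_{n}$. Producing, below $p_{A}$, a club disjoint from $(W_{I}^{A})^{\mathbf{cl}}$ is all that is needed: since semiproperness is inherited by restrictions, if $W_{I}^{A}$ were semistationary its semistationarity would be preserved below $p_{A}$, so $p_{A}\force (W_{I}^{A}\text{ non-semistationary})$ already yields the lemma in the form used. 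I then introduce the $\mathrm{Nm}(\kappa,\lambda,I)$-name $\dot{h}$ sending each $f\in{}^{A}\omega_{1}$ to $\sup_{n}f(\dot{g}_{n})$; because $\omega_{1}$ is preserved (Lemma~\ref{namba:sttpreserving}) this is forced to be $<\omega_{1}$. Finally I let $\dot{C}$ name the set of $x\in[\omega_{1}\cup{}^{A}\omega_{1}]^{\omega}$ with $x\cap\omega_{1}\in\omega_{1}$ that are closed under $\dot{h}$, i.e. $\dot{h}(f)<x\cap\omega_{1}$ for all $f\in x\cap{}^{A}\omega_{1}$; this is forced to be a club, being the closure points of a single function into $\omega_{1}$.

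The core is a genericity computation against a single, \emph{fixed} $M\in W_{I}^{A}$. Write $S=W_{I}^{A}$; since $M\in V$ is countable, $\delta:=M\cap\omega_{1}$ is a fixed countable ordinal and $B_{M}:=\bigcap\{f^{-1}(\delta)\mid f\in M\cap{}^{A}\omega_{1}\}$ is a ground-model set, which lies in $I$ precisely because $M\in W_{I}^{A}$. As every successor set below $p_{A}$ is $I$-positive while $B_{M}\in I$, one can always extend a trunk $t$ by an element of $\mathrm{Suc}(t)\setminus B_{M}\in I^{+}$, so densely the trunk acquires a new element outside $B_{M}$; hence $p_{A}\force\exists n\,(\dot{g}_{n}\notin B_{M})$. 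Conversely, suppose in $V[G]$ that some $x\in\dot{C}$ satisfies $M\sqsubseteq_{\omega_{1}}x$, i.e. $M\subseteq x$ and $x\cap\omega_{1}=\delta$. Then every $f\in M\cap{}^{A}\omega_{1}$ lies in $x$, so $f(\dot{g}_{n})\le\dot{h}(f)<\delta$ for all $n$; since $\dot{g}_{n}\in A=\mathrm{dom}(f)$ below $p_{A}$, this gives $\dot{g}_{n}\in f^{-1}(\delta)$ for all such $f$, hence $\dot{g}_{n}\in B_{M}$ for \emph{every} $n$, contradicting the previous display. Thus $p_{A}\force$ ``no $x\in\dot{C}$ refines $M$.''

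It remains to quantify over $M$. Because $S=W_{I}^{A}\in V$ and the boxed statement is forced for each of its ground-model members, $p_{A}$ forces it simultaneously for all $M\in S$, i.e. $p_{A}\force\dot{C}\cap S^{\mathbf{cl}}=\emptyset$; as $\dot{C}$ is club, $S^{\mathbf{cl}}$ is forced non-stationary and $W_{I}^{A}$ non-semistationary, as desired. The step I expect to be the main obstacle is managing the two conflicting demands on the generic branch: it must be \emph{confined} to $A$ (this is why the argument lives below $p_{A}$, and why $\dot{h}$ is well defined and $f^{-1}(\delta)\subseteq A$ can be hit), yet it must \emph{diagonalize out} of each ground-model $I$-small set $B_{M}$. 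Reconciling ``$\dot{g}_{n}\in B_{M}$ for all $n$'' (from club closure plus refinement) against ``$\dot{g}_{n}\notin B_{M}$ for some $n$'' (from genericity) is exactly what forces $\dot{C}$ to avoid every $\sqsubseteq_{\omega_{1}}$-reflection of $W_{I}^{A}$, and some care is needed to see this holds uniformly across the fixed set of all $M\in W_{I}^{A}$ rather than one $M$ at a time.
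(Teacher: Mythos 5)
Your proof is correct, and its combinatorial core is the same as the paper's --- confine the generic branch to $A$ below $p_{A}$, then play the $I$-positivity of successor sets against $B_{M}\in I$ --- but the wrapper is genuinely different. The paper never builds an explicit club: it first proves the claim that for any countable $M$ whose trace lies in $W_{I}^{A}$, no $q\le p_{A}$ can be $(M,\mathrm{Nm}(\kappa,\lambda,I))$-semigeneric, since such a $q$ would force $F(\dot{g}_{|\mathrm{tr}(q)|})\in M\cap\omega_1$ for every $F\in M\cap{}^{A}\omega_1$ and hence give $\mathrm{Suc}_{q}(\mathrm{tr}(q))\subseteq\bigcap\{F^{-1}(M\cap\omega_1)\mid F\in M\cap{}^{A}\omega_1\}=B_{M}\in I$; it then converts this into non-semistationarity via names for countable elementary submodels $\dot{M}$ of an expanded structure carrying $\dot{G}$ as a predicate, so that $N\in W_{I}^{A}$ and $N\sqsubseteq_{\omega_1}\dot{M}$ would yield $N\cap\omega_1\subsetneq N[\dot{G}]\cap\omega_1\subseteq\dot{M}\cap\omega_1=N\cap\omega_1$. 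You instead exhibit the concrete club $\dot{C}$ of closure points of $\dot{h}(f)=\sup_{n}f(\dot{g}_{n})$ and diagonalize by density ($\dot{g}$ escapes each $B_{M}$); this is precisely the contrapositive of the paper's successor-set computation, and it buys self-containedness: no semigeneric conditions, no expanded structure, and no need for the slightly delicate inclusion $N[\dot{G}]\cap\omega_1\subseteq\dot{M}\cap\omega_1$, which requires $\dot{M}$ to evaluate $N$'s names through the predicate $\dot{G}$. What the paper's formulation buys is that the isolated statement (failure of semigenericity below $p_{A}$) is exactly the currency used in Lemma~\ref{ccsttchar} and in the semiproperness characterizations elsewhere. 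Two small remarks: your appeal to fineness for $\mathrm{Suc}_{p_{A}}(s)\in I^{+}$ also uses $\kappa$-completeness of $I$ (the paper assumes this tacitly, e.g.\ in the proof of Lemma~\ref{namba:badlemma}); and your worry about establishing the conclusion only below $p_{A}$ is not a defect relative to the paper, whose own proof likewise works only below $p_{A}$ --- as you note, this suffices for the way Lemma~\ref{destroysemistt} is applied in Theorem~\ref{daggerhierarcy}, because preservation of semistationarity is a statement about the weakest condition.
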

\begin{proof}
 Let $p_{A}$ be $I$-Namba tree such that 
 \begin{itemize}
  \item $\mathrm{tr}(p_{A}) = \emptyset$. 
  \item $\mathrm{Suc}(s) \subseteq A$ for all $s \in p_A$. 
 \end{itemize}
For every $F:A \to \omega_1$, let us define $\dot{\alpha}_{F}^{n}$ by $p_{A} \force \dot{\alpha}_{F}^{n} = F(\dot{g}_{n})$. Here, $\dot{g}_{n}$ is the $n$-th element of $\dot{g}$. Note that $p_{A}$ force $\dot{g}_{n} \in A$ by the choise of $p_A$. So $\dot{\alpha}_{F}^{n}$ is well-defined. 

 Let $\theta$ be a sufficiently large regular cardinal.
 We claim that if $M \cap (\omega_1 \cup {^{\mathcal{P}_{\kappa}\lambda}\omega_1}) \in W_I$ then $p_{A} \force M[\dot{G}] \cap \omega_1 \not= M\cap\omega_1$ for any $M \in [\mathcal{H}_{\theta}]^{\omega}$. Suppose otherwise, let us fix $q \leq p_{A}$ such that $q$ forces $M \cap \omega_1 = M[\dot{G}] \cap \omega_1$. Putting $s = \mathrm{tr}(q)$, for every $F:A \to \omega_1\in M$, we have
\begin{center}
 $q \force F(\dot{g}_{|s|}) = \dot{\alpha}_{F}^{|s|} \in M[\dot{G}] \cap \omega_1 = M \cap \omega_1 $.
\end{center}
So $\mathrm{Suc}(s) \subseteq \bigcap\{f^{-1}M \cap \omega_1 \mid f :A \to \omega_1\land f \in M\} \in I^{+}$. This is a contradiction. 

Let $\dot{\mathcal{A}}$ be a $\mathrm{Nm}(\kappa,\lambda,I)$-name for an expansion $\langle \dot{\mathcal{H}}_{\theta},\check{\mathcal{H}}_\theta,\in,\kappa,\lambda,A,p_{A},\dot{G},...\rangle$. For a contradiction, we assume that there is a $q \leq p_{A}$ that forces $N \in W_{I}^{A}$ and $N \sqsubseteq_{\omega_1} \dot{M} \prec \dot{\mathcal{A}}$. By the claim above, we have $q$ forces 
\begin{center}
 $N \cap \omega_1 \subsetneq N[\dot{G}] \cap\omega_1 \subseteq \dot{M} \cap \omega_1 = N \cap \omega_1$. 
\end{center}
This is a contradiction. Therefore $W_{I}^{A}$ is forced to be \emph{non}-semistationary, as desired.
\end{proof}

\begin{thm}\label{daggerhierarcy}If $\alpha^{\omega}< \kappa$ for all $\alpha < \kappa$ then the following implication holds:
\begin{center}
\begin{tabular}{cccl}
 &$\mathrm{SSR}([2^{\lambda^{<\kappa}}]^{\omega},{<}\kappa)$ & $\to$ & $\mathrm{Nm}(\kappa,\lambda,I)$ is semiproper for all fine ideal over $\mathcal{P}_{\kappa}\lambda$ \\
 & & $\to$ & $\mathrm{Nm}(\kappa,\lambda)$ is semiproper  \\
& & $\to$ &  $\mathrm{SSR}([\lambda]^{\omega},{<}\kappa)$.
\end{tabular}
\end{center}
\end{thm}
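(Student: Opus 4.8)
The plan is to prove the three implications separately, treating the first as the substantive one. The middle implication is immediate: since the bounded ideal $J^{\mathrm{bd}}_{\kappa\lambda}$ is a fine ideal over $\mathcal{P}_{\kappa}\lambda$ and $\mathrm{Nm}(\kappa,\lambda) = \mathrm{Nm}(\kappa,\lambda,J^{\mathrm{bd}}_{\kappa\lambda})$ by definition, the hypothesis ``$\mathrm{Nm}(\kappa,\lambda,I)$ is semiproper for every fine ideal $I$'' specializes to $I = J^{\mathrm{bd}}_{\kappa\lambda}$. For the last implication I would invoke Theorem~\ref{maintheorem}: the arithmetic hypothesis $\alpha^{\omega} < \kappa$ for all $\alpha < \kappa$ in particular gives $\mu^{\omega} < \kappa$ for all $\mu \in \kappa \setminus \aleph_2$, so Theorem~\ref{maintheorem} applies. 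If $\mathrm{Nm}(\kappa,\lambda)$ is semiproper then, by Shelah's characterization of semiproperness as preservation of the semistationarity of arbitrary sets, it preserves the semistationarity of every subset of $[\lambda]^{\omega}$; this is clause (1) of Theorem~\ref{maintheorem}, whence $\mathrm{SSR}([\lambda]^{\omega},{<}\kappa)$ follows from clause (2).

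The heart of the argument is the first implication. Fixing a fine ideal $I$, by Lemmas~\ref{originalnambaforcing} and~\ref{ccsttchar} it suffices to prove that $W_{I}^{A}$ is non-stationary for every $A \in I^{+}$. I would argue by contradiction: suppose $W_{I}^{A}$ is stationary, hence semistationary, for some $A \in I^{+}$. The first key observation is the size bound $|\omega_1 \cup {}^{A}\omega_1| \leq 2^{\lambda^{<\kappa}}$: indeed $|A| \leq |\mathcal{P}_{\kappa}\lambda| = \lambda^{<\kappa}$, so $|{}^{A}\omega_1| = \omega_1^{|A|} \leq \omega_1^{\lambda^{<\kappa}} = 2^{\lambda^{<\kappa}}$. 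By Lemma~\ref{ssr:upward} I may therefore regard $W_{I}^{A}$ as a semistationary subset of $[2^{\lambda^{<\kappa}}]^{\omega}$ and apply $\mathrm{SSR}([2^{\lambda^{<\kappa}}]^{\omega},{<}\kappa)$ to obtain an $R \in \mathcal{P}_{\kappa}(2^{\lambda^{<\kappa}})$ such that $W_{I}^{A} \cap [R]^{\omega}$ is semistationary.

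Now the arithmetic hypothesis enters. Since $|R| < \kappa$ and $\alpha^{\omega} < \kappa$ for all $\alpha < \kappa$, the set $[R]^{\omega}$ has size ${<}\kappa$ and is a club in itself; thus Lemma~\ref{namba:sttpreserving} applies with $a = R$ and $D = [R]^{\omega}$, so $\mathrm{Nm}(\kappa,\lambda,I)$ preserves the semistationarity of $W_{I}^{A} \cap [R]^{\omega}$. Consequently, in the $\mathrm{Nm}(\kappa,\lambda,I)$-extension $W_{I}^{A} \cap [R]^{\omega}$ is still semistationary, and since it is a subset of $W_{I}^{A}$ this forces $W_{I}^{A}$ itself to remain semistationary. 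But Lemma~\ref{destroysemistt} says exactly that $\mathrm{Nm}(\kappa,\lambda,I)$ forces $W_{I}^{A}$ to be \emph{non}-semistationary. This contradiction shows $W_{I}^{A}$ is non-stationary, completing the implication.

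The step I expect to require the most care is the interplay of the two ``opposing'' facts about $W_{I}^{A}$: reflection via $\mathrm{SSR}$ produces a small piece $W_{I}^{A} \cap [R]^{\omega}$ whose semistationarity survives the Namba forcing by Lemma~\ref{namba:sttpreserving}, whereas Lemma~\ref{destroysemistt} annihilates the semistationarity of all of $W_{I}^{A}$; the contradiction hinges on the monotonicity that a semistationary subset witnesses semistationarity of its superset, together with the arithmetic bookkeeping ensuring that $R$ genuinely lands in the regime $|[R]^{\omega}| < \kappa$ where Lemma~\ref{namba:sttpreserving} is available. Verifying that moving $W_{I}^{A}$ between its natural space $[\omega_1 \cup {}^{A}\omega_1]^{\omega}$ and $[2^{\lambda^{<\kappa}}]^{\omega}$ (so that $R$ effectively reflects onto $R \cap (\omega_1 \cup {}^{A}\omega_1)$) does not disturb the definition of $W_{I}^{A}$ used by Lemma~\ref{destroysemistt} is the main bookkeeping obstacle, but this is routine once the size computation is in place.
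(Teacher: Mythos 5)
Your proof is correct and takes essentially the same route as the paper's: the middle and last implications are dispatched identically, and your argument for the first implication—reflecting $W_{I}^{A}$ via $\mathrm{SSR}([2^{\lambda^{<\kappa}}]^{\omega},{<}\kappa)$ after the size computation $|\omega_1 \cup {}^{A}\omega_1| \leq 2^{\lambda^{<\kappa}}$, preserving the reflected piece by Lemma~\ref{namba:sttpreserving}, pushing semistationarity back up by Lemma~\ref{ssr:upward}, and deriving a contradiction with Lemma~\ref{destroysemistt} before concluding via Lemma~\ref{ccsttchar}—is precisely the paper's (much terser) argument with the details unpacked.
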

\begin{proof}
The third implication follows from Theorem~\ref{maintheorem}. The second implication is trivial. 

Let us see the first implication. First, we note that $\mathrm{SSR}([2^{{\lambda}^{<\kappa}}]^\omega,{<}\kappa)$ implies that every semistationary subset of size $2^{\lambda^{<\kappa}}$ is preserved by $\mathrm{Nm}(\kappa,\lambda,I)$ by Lemmas ~\ref{ssr:upward} and~\ref{namba:sttpreserving}. By Lemma~\ref{destroysemistt}, $W_{I}^{A}$ needs to be \emph{non}-semistationary for all $A \in I^{+}$. By Lemma~\ref{ccsttchar}, $\mathrm{Nm}(\kappa,\lambda,I)$ is semiproper, as desired. 
\end{proof}
\begin{thm}Assume the same assumption of Theorem~\ref{daggerhierarcy}. 
  The inverse direction of the first and third line of Theorem~\ref{daggerhierarcy} cannot be proven in $\mathrm{ZFC}$ modulo the existence of large cardinals. 
\end{thm}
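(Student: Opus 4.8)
The plan is to refute each converse by exhibiting, from a suitable large-cardinal hypothesis, a model in which the antecedent holds while the consequent fails; the unprovability in $\mathrm{ZFC}$ then follows in the usual relative-consistency sense. The organizing principle is the \emph{locality} of these reflection principles, which is the exact shadow of the non-reversibility of the strong-compactness hierarchy drawn out in this section: just as a cardinal may be $\lambda$-strongly compact without being $2^{\lambda}$-strongly compact (Gitik, Hayut), a cardinal may reflect semistationary sets at level $\lambda$ without doing so at $2^{\lambda^{<\kappa}}$.

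For the inverse of the third line I would take $\kappa=\aleph_2$ (so that the hypothesis of Theorem~\ref{daggerhierarcy} reduces to $\mathrm{CH}$) and exploit that semiproperness of $\mathrm{Nm}(\aleph_2,\lambda)$ propagates downward to $\mathrm{Nm}(\aleph_2)$ by Lemma~\ref{scc:char1}, hence forces full stationary reflection $\mathrm{SR}([\aleph_2]^{\omega})$ by Todor\v{c}evi\'{c}'s theorem. Since $\mathrm{SR}$ is strictly stronger than the semistationary reflection $\mathrm{SSR}([\lambda]^{\omega},{<}\aleph_2)$, it suffices to force a model of $\mathrm{SSR}([\lambda]^{\omega},{<}\aleph_2)$ containing a non-reflecting stationary subset of $E^{\aleph_2}_{\omega}$: there $\mathrm{Nm}(\aleph_2,\lambda)$ cannot be semiproper, while the antecedent holds. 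One way to produce such a model is to Levy-collapse a $\lambda$-strongly compact cardinal to $\aleph_2$, arranging the collapse so that $\mathrm{SSR}([\lambda]^{\omega},{<}\aleph_2)$ is forced (as in Proposition~\ref{daggerkappa}) while a prepared non-reflecting stationary set survives; alternatively, one invokes the consistency-strength gap directly, as the consistency strength of $\mathrm{SSR}([\lambda]^{\omega},{<}\aleph_2)$ lies strictly below that of $\mathrm{SR}([\aleph_2]^{\omega})$.

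For the inverse of the first line the point is finer. By Lemmas~\ref{ccsttchar} and~\ref{destroysemistt}, semiproperness of $\mathrm{Nm}(\kappa,\lambda,I)$ for every fine $I$ only guarantees that the particular sets $W^{A}_{I}\subseteq[2^{\lambda^{<\kappa}}]^{\omega}$ are non-stationary, i.e. it yields reflection at $2^{\lambda^{<\kappa}}$ for these sets alone, whereas $\mathrm{SSR}([2^{\lambda^{<\kappa}}]^{\omega},{<}\kappa)$ demands reflection of \emph{every} semistationary subset. I would therefore build a model in which $\kappa$ retains enough compactness at $\lambda$ that $\mathcal{P}(\mathcal{P}_{\kappa}\lambda)/I$ is $\omega+1$-strategically closed for a fine ideal $I$, whence $\Phi_{\kappa,\lambda,I}$ holds for all fine $I$ by Lemma~\ref{strclosed} (so every $\mathrm{Nm}(\kappa,\lambda,I)$ is semiproper), yet which carries a non-reflecting semistationary subset of $[2^{\lambda^{<\kappa}}]^{\omega}$. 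Such a configuration is precisely the reflection-theoretic counterpart of a cardinal that is $\lambda$-compact but not $2^{\lambda^{<\kappa}}$-strongly compact, and the large cardinals supplying Gitik's separation of these notions supply the consistency here.

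The main obstacle in both constructions is the simultaneous control of two levels: the preparation and collapse must preserve the lower principle — $\mathrm{SSR}([\lambda]^{\omega},{<}\kappa)$ in the first case, and the strategic-closure (hence $\Phi_{\kappa,\lambda,I}$) witness in the second — while certifying that the higher-level reflection genuinely fails, that is, that the non-reflecting (semi)stationary set survives the forcing and, in the first-line case, that $W^{A}_{I}$ nonetheless remains non-stationary so that $\mathrm{Nm}(\kappa,\lambda,I)$ stays semiproper. Verifying that the fine-ultrafilter or strategic-closure witnesses, and the game-theoretic characterization $\Phi_{\kappa,\lambda,I}$, transfer through the forcing is where the real work lies; this is also the step at which the parallel with the Gitik--Hayut hierarchy must be made rigorous rather than merely suggestive.
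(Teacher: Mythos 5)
Your construction for the inverse of the third line is internally contradictory, and this is the central gap. You propose a model of $\mathrm{SSR}([\lambda]^{\omega},{<}\aleph_2)$ that contains a non-reflecting stationary subset of $E^{\aleph_2}_{\omega}$, the latter to witness $\lnot\mathrm{SR}([\aleph_2]^{\omega})$ and hence non-semiproperness of $\mathrm{Nm}(\aleph_2,\lambda)$. But no such model exists: $\mathrm{SSR}([\lambda]^{\omega},{<}\aleph_2)$ implies $\mathrm{SSR}([\aleph_2]^{\omega},{<}\aleph_2)$ (lift a semistationary $S\subseteq[\aleph_2]^{\omega}$ to $[\lambda]^{\omega}$ by Lemma~\ref{ssr:upward}, reflect it to some $R$ with $\omega_1\subseteq R\cap\aleph_2\in\aleph_2$, and project back to $[R\cap\aleph_2]^{\omega}$ using Lemma~\ref{menas}), and then Theorem~\ref{sakai} gives $\mathrm{Refl}(E^{\aleph_2}_{\omega},{<}\aleph_2)$, i.e.\ \emph{every} stationary subset of $E^{\aleph_2}_{\omega}$ reflects. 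So any non-reflection witness that kills $\mathrm{SR}([\aleph_2]^{\omega})$ also kills your antecedent; this separation can never be exhibited by a stationary non-reflection configuration. It must instead be made at the level of consistency strength, through a consequence of Namba semiproperness that is \emph{not} a reflection statement. That is exactly what the paper does: semiproperness of $\mathrm{Nm}(\aleph_2,\aleph_2)$ gives semiproperness of $\mathrm{Nm}(\aleph_2)$ (Lemma~\ref{namba:semiproperchar}), which by Shelah implies Chang's Conjecture, of consistency strength at least an $\omega_1$-Erd\H{o}s cardinal; since $\mathrm{SSR}([\aleph_2]^{\omega},{<}\aleph_2)$ is equiconsistent with a weakly compact cardinal (Baumgartner, Veli\v{c}kovi\'{c}, Sakai), forcing it over $L$ from a weakly compact yields the desired model. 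Your fallback sentence --- that the strength of $\mathrm{SSR}$ lies strictly below that of $\mathrm{SR}([\aleph_2]^{\omega})$ --- has the right logical shape but rests on a calibration of $\mathrm{SR}([\aleph_2]^{\omega})$ that you neither prove nor cite; Chang's Conjecture is the consequence for which the gap is actually known.

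For the inverse of the first line your target configuration (every $\mathrm{Nm}(\kappa,\lambda,I)$ semiproper while $\mathrm{SSR}([2^{\lambda^{<\kappa}}]^{\omega},{<}\kappa)$ fails) is the right kind of model, but no construction is given, and the one inference you assert is a non sequitur: Lemma~\ref{strclosed} yields $\Phi_{\kappa,\lambda,I}$ only for the particular ideal whose quotient is $\omega+1$-strategically closed, not ``for all fine $I$''; and strategic closure of $\mathcal{P}(\mathcal{P}_{\kappa}\lambda)/I$ for \emph{every} fine ideal (including the bounded ideal) is not supplied by any standard compactness hypothesis --- what would suffice is that every $\kappa$-complete fine filter on $\mathcal{P}_{\kappa}\lambda$ extends to a $\kappa$-complete ultrafilter, and preserving such an extension property through the forcing that destroys $\mathrm{SSR}$ is precisely the work you defer. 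The paper's construction is more modest and complete: start with $\kappa$ being $\lambda$-strongly compact and $2^{\lambda^{<\kappa}}=\lambda^{+}$, and force with the standard poset adding a $\square_{\lambda}$-sequence. That poset adds no new elements of ${}^{<\kappa}(\mathcal{P}(\mathcal{P}_{\kappa}\lambda))$, so the fine ultrafilter survives and $\mathrm{Nm}(\kappa,\lambda)$ is semiproper in the extension (Proposition~\ref{strcpcttonambasp}), while $\square_{\lambda}$ produces a non-reflecting stationary subset of $E^{\lambda^{+}}_{\omega}$ and hence $\lnot\mathrm{SSR}([\lambda^{+}]^{\omega},{<}\kappa)$ by Theorem~\ref{sakai}. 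Note that this separates $\mathrm{SSR}([2^{\lambda^{<\kappa}}]^{\omega},{<}\kappa)$ only from semiproperness of $\mathrm{Nm}(\kappa,\lambda)$; your more ambitious version, with all ideals $I$ handled simultaneously, is essentially the paper's concluding open question and should not be presented as attainable by the cited lemmas.
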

\begin{proof}
For the first line, let us give a model in which $\mathrm{SSR}([\aleph_2]^{\omega},{<}{\aleph_2})$ but $\mathrm{Nm}(\aleph_2,\aleph_2)$ is \emph{not} semiproper. By the result of Baumgartner~\cite{MR0416925}, Veli\u{c}kovi\'{c}~\cite{MR1174395}, and Sakai~\cite{MR2387938}, it is known that $\mathrm{SSR}([\aleph_2]^{\omega},{<}\aleph_2)$ is equiconsistent with the existence of a weakly compact cardinal. 

On the other hand, If $\mathrm{Nm}(\aleph_2,\aleph_2)$ is semiproper then so $\mathrm{Nm}(\aleph_2)$ is (Lemma~\ref{namba:semiproperchar}). Shelah proved that the semiproperness of $\mathrm{Nm}(\aleph_2)$ implies the Chang's conjecture. So the consistency strength of the semiproperness of $\mathrm{Nm}(\aleph_2)$ is stronger than that of an $\omega_1$-Erd\H{o}s cardinal. 

Therefore, let us assume weakly compact cardinal exists in $L$. In $L$, by the above observations, we can get a model in which $\mathrm{SSR}([\aleph_2]^{\omega},{<}\aleph_2)$ but $\mathrm{Nm}(\aleph_2,\aleph_2)$ is \emph{not} semiproper, as desired.

For the third line, let us give a model in which $\mathrm{Nm}(\kappa,\lambda)$ is semiproper but $\mathrm{SSR}([2^{\lambda^{<\kappa}}]^{\omega},{<}\kappa)$ fails assuming $\kappa$ is a $\lambda$-strongly compact cardinal. We may assume $2^{\lambda^{\kappa}}= \lambda^{+}$. Then $\lambda^{<\kappa} = \lambda$. 
Note that $\mathrm{SSR}([\lambda^{+}]^{\omega},{<}\kappa)$ implies $\lnot\square_{\lambda}$ by Theorem~\ref{sakai}. Let $P$ be the standard poset, that adds $\square_{\lambda}$-sequence, of~\cite{MR1838355}. Note that $P$ does not changes ${^{<\kappa}}(\mathcal{P}(\mathcal{P}_{\kappa}\lambda))$ and thus $\kappa$ remains $\lambda$-strongly compact. By Proposition~\ref{strcpcttonambasp}, the extension is a required model. 
\end{proof}

As we have seen in Proposition~\ref{daggerkappa}, $(\dagger)^{\kappa}$ holds if $\kappa$ is strongly compact or $\kappa$ is Levy collapsed to some successor cardinal. 
Note that another reflection principle holds from the effect of strong compactness. For example, one of them is Rado's conjecture. 

For a tree $T$ of height $\omega_1$, we say that $T$ is a special if there is an $F:T \to \omega$ such that $F^{-1}\{n\} \subseteq T$ is an anti-chain for each $n < \omega$. $\mathrm{RC}(\kappa,\lambda)$ is the following statement: For every non-special tree $T$ of size $\lambda$ has a non-special subtree of size $< \kappa$. 

\begin{thm}
 $\mathrm{RC}(\kappa,\lambda)$ implies $\mathrm{SSR}([\lambda]^{\omega},{<}\kappa)$ holds. 
\end{thm}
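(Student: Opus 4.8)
The plan is to argue by contraposition: assuming $\mathrm{SSR}([\lambda]^{\omega},{<}\kappa)$ fails, I will manufacture a non-special tree of size $\lambda$ all of whose subtrees of size ${<}\kappa$ are special, directly contradicting $\mathrm{RC}(\kappa,\lambda)$. So fix a semistationary $S \subseteq [\lambda]^{\omega}$ that reflects to no $R \in \mathcal{P}_{\kappa}\lambda$; thus $S \cap [R]^{\omega}$ is non-semistationary for every $R \in \mathcal{P}_{\kappa}\lambda$. Passing to a club I may assume $x \cap \omega_1 \in \omega_1$ for every relevant $x$, and I attach to $S$ a reflecting tree $T = T(S)$ of height $\omega_1$ whose nodes are the $\sqsubseteq_{\omega_1}$-increasing, continuous sequences $\langle x_\xi : \xi < \eta\rangle$ ($\eta < \omega_1$) of members of $S^{\mathbf{cl}}$, ordered by end-extension, the $\eta$-th level consisting of the sequences of length $\eta$. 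Using $\lambda^{\omega} = \lambda$ (which follows from $\mathrm{RC}(\kappa,\lambda)$, since Rado's conjecture implies $\mathrm{SCH}$) each node can be coded by an ordinal below $\lambda$, so $|T| = \lambda$. This is the natural $[\lambda]^{\omega}$-analog of the classical tree $T(S)$ attached to $S \subseteq \omega_1$, whose specialness encodes non-stationarity.

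The crux is a dictionary between specializing functions on subtrees and closure functions witnessing non-semistationarity. I would isolate two claims. Claim A: for $a \subseteq \lambda$ with $\omega_1 \subseteq a$, the downward-closed subtree $T \upharpoonright a$ consisting of the nodes all of whose terms lie in $[a]^{\omega}$ is special if and only if $S \cap [a]^{\omega}$ is non-semistationary. Here a function $F:[a]^{<\omega} \to a$ witnessing that $(S\cap[a]^\omega)^{\mathbf{cl}}$ avoids a club in $[a]^{\omega}$ converts, level by level, into a map $T \upharpoonright a \to \omega$ whose fibers are antichains, and conversely a specializing function yields, by a closing-off argument at countable elementary submodels, a club disjoint from $(S \cap [a]^{\omega})^{\mathbf{cl}}$. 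Claim B: every subtree of $T$ of size ${<}\kappa$ is contained in $T \upharpoonright a$ for some $a \in \mathcal{P}_{\kappa}\lambda$, namely the union of $\omega_1$ with the (fewer than $\kappa$, each countable) sets occurring in its nodes; and conversely $|T \upharpoonright a| < \kappa$ because $|a|^{\omega} < \kappa$.

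Granting the two claims the argument closes quickly. By Claim A applied with $a = \lambda$, the semistationarity of $S$ makes $T$ non-special. By non-reflection together with Claim A, $T \upharpoonright a$ is special for every $a \in \mathcal{P}_{\kappa}\lambda$; since restricting a specializing function to any subset keeps fibers as antichains, a subtree of a special tree is special, and Claim B then shows every subtree of $T$ of size ${<}\kappa$ is special. This contradicts $\mathrm{RC}(\kappa,\lambda)$, completing the proof.

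The hard part will be Claim A, and within it the direction ``specializing $\Rightarrow$ non-semistationary'', where an abstract decomposition of $T \upharpoonright a$ into countably many antichains must be turned into a genuine regressive (closure) function defeating $(S\cap[a]^\omega)^{\mathbf{cl}}$; this is precisely where the $\sqsubseteq_{\omega_1}$-bookkeeping, matching $x \cap \omega_1$ across a node's sequence, has to be handled carefully, and it is essentially Todor\v{c}evi\'{c}'s analysis transported from sets of ordinals to $[\lambda]^{\omega}$. A secondary technical point is the verification that $|T| = \lambda$ and that nodes admit an ordinal coding, for which the cardinal-arithmetic consequences of $\mathrm{RC}(\kappa,\lambda)$ are used.
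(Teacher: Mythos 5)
There is a fatal gap, and it lies in the design of the tree itself: you order the chains by $\sqsubseteq_{\omega_1}$, which forces every term of a node (and hence every term along a branch) to have the \emph{same} intersection with $\omega_1$. This single choice destroys Claim A in precisely the direction your contradiction rests on. Note that $S^{\mathbf{cl}}$ is upward closed under $\sqsubseteq_{\omega_1}$: if $y \sqsubseteq_{\omega_1} x \sqsubseteq_{\omega_1} x'$ then $y \sqsubseteq_{\omega_1} x'$. So fix any $x_0 \in S^{\mathbf{cl}}$ (only $S \neq \emptyset$ is needed), say with trace $\delta = x_0 \cap \omega_1$, and fix any $\subseteq$-increasing continuous sequence $\langle z_\xi : \xi < \omega_1 \rangle$ of countable subsets of $\lambda \setminus \omega_1$ (possible since $\lambda \geq \omega_2$). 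Then $\langle x_0 \cup z_\xi : \xi < \omega_1 \rangle$ is a $\sqsubseteq_{\omega_1}$-increasing continuous $\omega_1$-chain through $S^{\mathbf{cl}}$, i.e.\ an uncountable branch of your $T$. Its set of initial segments is a downward-closed subtree of size $\aleph_1 < \kappa$ which is non-special (an uncountable chain meets each antichain at most once). Moreover, putting $a = \omega_1 \cup x_0 \cup \bigcup_\xi z_\xi \in \mathcal{P}_\kappa\lambda$, the tree $T \upharpoonright a$ is non-special even though, under your non-reflection hypothesis, $S \cap [a]^{\omega}$ is non-semistationary. So the implication ``$S \cap [a]^{\omega}$ non-semistationary $\Rightarrow T \upharpoonright a$ special'' is false, ``every subtree of size $<\kappa$ is special'' fails no matter what $S$ is, and $\mathrm{RC}(\kappa,\lambda)$ is perfectly compatible with your tree: no contradiction ever arises. (The branch does not secretly witness reflection either: its union $R$ has $R \cap \omega_1 = \delta$ countable, so $\omega_1 \not\subseteq R$, and the semistationarity of $S \cap [R]^{\omega}$ in $[R]^{\omega}$ is the trivial ``cone'' phenomenon; genuine $\mathrm{SSR}$-reflection requires $\omega_1 \subseteq R$.)

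The root cause is that the $\omega_1$-trace must be allowed --- in fact forced --- to grow along a chain, not held constant. Your requirement also blocks the standard argument for the other half of Claim A: if a node's terms lie in a countable $M \prec \mathcal{H}_\theta$, their traces are countable ordinals \emph{of} $M$, hence strictly below $M \cap \omega_1$, so $M \cap \lambda$ (whose trace is exactly $M \cap \omega_1$) can never $\sqsupseteq_{\omega_1}$-extend them; the ``top the chain with $M \cap \lambda$'' step needed to get non-specialness from semistationarity cannot even begin, and your $T$ is non-special merely because $S \neq \emptyset$, not because $S$ is semistationary. The paper itself gives no self-contained proof (it defers to Theorem 5.2 of the cited reference), but the argument it invokes is the Todor\v{c}evi\'{c}-style one in which the nodes are $\subseteq$-increasing continuous sequences of elements of $S^{\mathbf{cl}}$ with \emph{increasing} traces (equivalently, $\in$-chains of countable structures whose traces on $\lambda$ lie in $S^{\mathbf{cl}}$); the relation $\sqsubseteq_{\omega_1}$ enters only inside the definition of $S^{\mathbf{cl}}$, never as the ordering along the chain. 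With that tree, a length-$\omega_1$ branch has union containing $\omega_1$ and genuinely reflects $S$, the elementary-submodel argument works, and the (genuinely hard) specialization lemma for small restrictions becomes available. A secondary gap: you justify $|T| = \lambda$ by ``$\mathrm{RC}(\kappa,\lambda)$ implies $\mathrm{SCH}$,'' but $\mathrm{SCH}$ is a consequence of the full Rado's Conjecture, not obviously of the single instance $\mathrm{RC}(\kappa,\lambda)$, so even the cardinality of your tree is unjustified; this, however, is minor next to the failure of Claim A.
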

\begin{proof}
 The same proof of~\cite[Theorem 5.2]{MR4094551} works as well. 
\end{proof}
It is also known that the inverse direction cannot be proven in $\mathrm{ZFC}$\footnote{For example, see~\cite[Theorem 1.7]{MR3523539}.}. By the contents of this section, under the $\mathrm{GCH}$, we have the following diagram of strong compactness of $\kappa$. 

\begin{center}
 \begin{tabular}{ccccc}
 $\forall \lambda\geq \kappa(\mathrm{RC}(\kappa,\lambda))$& $\rightarrow$ & & $(\dagger)^{\kappa}$ & \\

 $\downarrow$&  &  & $\swarrow$\hspace{30pt}$\searrow$ & \\
 $\vdots$&  & $\vdots$ &  & $\vdots$\\
 &  & & $\searrow$ & \\
$\mathrm{RC}(\kappa,\lambda^{+})$ & $\rightarrow$ & $\mathrm{SSR}([\lambda^{+}]^{\omega},<{\kappa})$ & $\leftarrow $ & $\mathrm{Nm}(\kappa,\lambda^{+})$ is semiproper\\
 &  & & $\searrow$ & \\
$\mathrm{RC}(\kappa,\lambda)$ & $\rightarrow$ & $\mathrm{SSR}([\lambda]^{\omega},<{\kappa})$ & $\leftarrow $ & $\mathrm{Nm}(\kappa,\lambda)$ is semiproper\\
 &  & & $\searrow$ & \\
 $\vdots$&  & $\vdots$ &  & $\vdots$\\
 &  & & $\searrow$ & \\
$\mathrm{RC}(\kappa,\kappa)$ & $\rightarrow$  & $\mathrm{SSR}([\kappa]^{\omega},<{\kappa})$ & $\leftarrow $ & $\mathrm{Nm}(\kappa,\kappa)$ is semiproper\\
 \end{tabular}
\end{center}

All of the inverse directions of the arrows above cannot be proven in $\mathrm{ZFC}$ modulo the existence of large cardinals. By this diagram, we can observe the strong compactness in more detail. We conclude this paper with the following question. 
\begin{ques}
 Is it consistent that $\mathrm{Nm}(\kappa,\lambda)$ is semiproper but $\mathrm{Nm}(\kappa,\lambda,I)$ is not semiproper for some fine ideal $I$?
\end{ques}

   \bibliographystyle{plain}
   \bibliography{ref}

\end{document}